\numberwithin{equation}{section}
\newcommand*{\rom}[1]{\expandafter\@slowromancap\romannumeral #1@}
\newtheorem{theorem}{Theorem}[section]
\newtheorem{lemma}[theorem]{Lemma}
\newtheorem{proposition}{Proposition}[section]
\theoremstyle{remark}
\theoremstyle{definition}
\newtheorem{definition}{Definition}[section]
\title{Cycle structure of Mallows permutation model with the $L^1$ distance}
\author{Chenyang Zhong}
\affil{Department of Statistics, Columbia University}
\date{\today}
\begin{document}
\maketitle
\begin{abstract}

Introduced by Mallows as a ranking model in statistics, Mallows permutation model is a class of non-uniform probability distributions on the symmetric group $S_n$. The model depends on a distance metric on $S_n$ and a scale parameter $\beta$. In this paper, we take the distance metric to be the $L^1$ distance (also known as Spearman's footrule in the statistics literature), and investigate the cycle structure of random permutations drawn from Mallows permutation model with the $L^1$ distance. 

We focus on the parameter regime where $\beta>0$. We show that the expected length of the cycle containing a given point is of order $\min\{\max\{\beta^{-2},1\},n\}$, and the expected diameter of the cycle containing a given point is of order $\min\{e^{-2\beta}\max\{\beta^{-2},1\}, n-1\}$. Moreover, when $\beta\ll n^{-1\slash 2}$, the sorted cycle lengths (in descending order) normalized by $n$ converge in distribution to the Poisson-Dirichlet law with parameter $1$. The proofs of the results rely on the hit and run algorithm, a Markov chain for sampling from the model. 
\end{abstract}

\section{Introduction}\label{Sect.1}

Recently, there has been much interest from the probability and combinatorics community in understanding the cycle structure of various models of random permutations. For uniformly random permutations, many aspects of the cycle structure are known. For example, for a uniformly random permutation of $\{1,2,\cdots,n\}$, the length of the cycle that contains a given point from $\{1,2,\cdots,n\}$ has a uniform distribution on $\{1,2,\cdots,n\}$, and the sorted cycle lengths (in descending order) normalized by $n$ converge in distribution to the Poisson-Dirichlet law with parameter $1$ as $n$ approaches infinity (see e.g. \cite{SL,ABT}). In this paper, we investigate the cycle structure of a \emph{non-uniform} probability distribution on permutations called Mallows permutation model with the $L^1$ distance. We introduce this permutation model in the following. 

Mallows permutation model was originally introduced by Mallows \cite{Mal} as a ranking model in statistics. Since its introduction, the model has been applied to statistics, social science, machine learning, and various other fields. The model is a class of non-uniform probability distributions on permutations, and depends on a distance metric $d(\sigma,\tau)$ on the symmetric group $S_n$, a scale parameter $\beta\in\mathbb{R}$, and a location parameter $\sigma_0\in S_n$. The probability of picking $\sigma\in S_n$ under the model is proportional to $\exp(-\beta d(\sigma,\sigma_0))$. We refer to \cite[Section 1]{Zho1} for a review of the literature on Mallows permutation model.

The distance metric $d(\sigma,\tau)$ can be chosen from a collection of metrics on permutations. Below we list several popular choices of such metrics (see \cite[Chapter 6]{D} for further discussions):
\begin{itemize}
    \item $L^1$ distance, or Spearman's footrule: $d(\sigma,\tau)=\sum_{i=1}^n|\sigma(i)-\tau(i)|$;
    \item $L^2$ distance, or Spearman's rank correlation: $d(\sigma,\tau)=\sum_{i=1}^n (\sigma(i)-\tau(i))^2$;
    \item Kendall's $\tau$: $d(\sigma,\tau)=$ minimum number of pairwise adjacent transpositions taking $\sigma^{-1}$ to $\tau^{-1}$;
    \item Cayley distance: $d(\sigma,\tau)=$ minimum number of transpositions taking $\sigma$ to $\tau$. 
\end{itemize}
In this paper, we focus on Mallows permutation model with the $L^1$ distance, which will also be referred to as ``the $L^1$ model'' in the rest of this paper. We restrict our attention to the parameter regime where $\beta>0$ and $\sigma_0=Id$, the identity permutation. For any $\sigma,\tau\in S_n$, we denote by $H(\sigma,\tau):=\sum_{j=1}^n |\sigma(j)-\tau(j)|$ the $L^1$ distance between them. We also denote by $\mathbb{P}_{n,\beta}$ the probability measure that corresponds to the $L^1$ model. Thus for any $\sigma\in S_n$,
\begin{equation*}
    \mathbb{P}_{n,\beta}(\sigma)=Z_{n,\beta}^{-1}\exp(-\beta H(\sigma,Id)),
\end{equation*}
where $Z_{n,\beta}$ is the normalizing constant. Note that the model is biased towards the identity permutation. Due to this spatial structure, the model is known as a ``spatial random permutation'' in mathematical physics (see e.g. \cite{GRU,BV,FM}).

For a random permutation picked from Mallows permutation model, Borodin, Diaconis, and Fulman \cite{BDJ2} asked ``What is the distribution of the cycle structure, longest increasing subsequence, \dots?'' In this paper, we address the question regarding the cycle structure for Mallows permutation model with the $L^1$ distance. Our results show that the expected length of the cycle containing a given point is of order $\min\{\max\{\beta^{-2},1\},n\}$, and the expected diameter of the cycle containing a given point is of order $\min\{e^{-2\beta}\max\{\beta^{-2},1\}, n-1\}$. Moreover, when $\beta\ll n^{-1\slash 2}$, the sorted cycle lengths (in descending order) normalized by $n$ converge in distribution to the Poisson-Dirichlet law with parameter $1$.

In the following, we introduce some notations that will be used throughout this paper. We denote by $\mathbb{N}$ the set of nonnegative integers and $\mathbb{N}^{*}$ the set of positive integers. We denote $[0]:=\emptyset$ and $[n]:=\{1,2,\cdots,n\}$ for any $n \in \mathbb{N}^{*}$. We use $C,c$ to denote positive absolute constants, and the values of these constants may change from line to line. For any two quantities $x,y$, we write $x\asymp y$ if and only if $cy\leq c\leq Cy$ for some positive absolute constants $C,c$. We fix a probability space $(\Omega,\mathcal{F},\mathbb{P})$.

We also introduce the following definition.

\begin{definition}
For any $n\in\mathbb{N}^{*}$, $\sigma\in S_n$, and $j\in [n]$, we denote by $\mathcal{C}_j(\sigma)$ the set of points in the cycle of $\sigma$ that contains $j$.
\end{definition}

In the following, we present the main results of this paper.

\subsection{Main results}\label{Sect.1.1}

The following theorem gives the order of the expected length of the cycle containing a given point. It implies the one-dimensional case of \cite[Conjecture 1.19]{FM}. 

\begin{theorem}\label{Thm1.1}
Assume that $n\in \mathbb{N}^{*}$ and $\beta>0$, and let $\sigma$ be drawn from $\mathbb{P}_{n,\beta}$. For any $s\in [n]$, we have 
\begin{equation*}
    \mathbb{E}[|\mathcal{C}_s(\sigma)|]\asymp \min\{\max\{\beta^{-2},1\},n\}.
\end{equation*}
\end{theorem}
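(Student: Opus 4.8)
The plan is to analyze the $L^1$ model through its block structure and the hit-and-run Markov chain mentioned in the abstract. The starting point is the well-known fact (see work of Bhatnagar--Peled and Gladkich--Peled on the $L^1$ model) that under $\mathbb{P}_{n,\beta}$ the displacements $|\sigma(j)-j|$ have exponential tails on scale $\max\{\beta^{-1},1\}$; more precisely, with high probability $\sigma$ decomposes into a sequence of ``blocks'' — maximal intervals $I$ with $\sigma(I)=I$ — whose typical size is of order $\max\{\beta^{-2},1\}$ when $\beta \lesssim 1$ and of order $1$ when $\beta \gtrsim 1$, capped at $n$. Since $\mathcal{C}_s(\sigma)$ is contained in the block containing $s$, the upper bound $\mathbb{E}[|\mathcal{C}_s(\sigma)|] \lesssim \min\{\max\{\beta^{-2},1\},n\}$ should follow from tail estimates on the size of the block containing a fixed point $s$, which in turn reduce to estimating $\mathbb{P}_{n,\beta}(\sigma(j) > k \text{ for some } j \le s, \text{ or } \sigma(j) \le s \text{ for some } j > s+k)$ — i.e. controlling the ``flow'' of mass across a cut, a quantity that is governed by the one-point marginal bounds and a union bound, carefully summed against the geometric decay.

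For the matching lower bound I would use the hit-and-run chain. Recall that one step of hit-and-run picks two coordinates $i<j$ (uniformly, or according to the relevant rule) and resamples the restriction of $\sigma$ to $\{i,j\}$ and their images from the correct conditional distribution, which for the $L^1$ model is an explicit two-point update. The key structural point is that this chain preserves $\mathbb{P}_{n,\beta}$ and, started from the identity, merges points into common cycles over time; conversely, one can run it to show that with probability bounded below, the point $s$ sits in a cycle of length $\gtrsim \max\{\beta^{-2},1\}$. Concretely, I would (i) show that conditionally on the block $I$ containing $s$ having size $m \asymp \max\{\beta^{-2},1\}$ (an event of probability bounded below, from the block-size lower tail), the conditional law of $\sigma|_I$ is the $L^1$ model on $S_m$ with the same $\beta$; (ii) within a block of size $m$ with $\beta\sqrt{m}\asymp 1$, argue that $\mathbb{E}[|\mathcal{C}_s|] \asymp m$ — heuristically because on this scale the model is ``close to uniform'' (the energy $\beta H$ is $O(\sqrt m \cdot \beta \sqrt m) = O(1)$ per block worth of fluctuation), so the cycle containing $s$ occupies a positive fraction of the block. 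Step (ii) is where I would lean on the $\beta \ll n^{-1/2}$ Poisson--Dirichlet result of the paper applied at the block scale: if the whole-permutation statement is that for $\beta \ll n^{-1/2}$ the normalized cycle lengths converge to $\mathrm{PD}(1)$, then in particular $\mathbb{E}[|\mathcal{C}_s(\sigma)|]/n \to \mathbb{E}[\text{size-biased } \mathrm{PD}(1) \text{ coordinate}] = $ a positive constant, and applying this inside a typical block (of size $m$ with $\beta \ll m^{-1/2}$) gives $\mathbb{E}[|\mathcal{C}_s| \mid I] \asymp |I|$.

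Assembling: $\mathbb{E}[|\mathcal{C}_s(\sigma)|] = \sum_{m} \mathbb{E}[|\mathcal{C}_s| \mid |I_s|=m]\,\mathbb{P}(|I_s|=m)$, where $I_s$ is the block of $s$; the upper bound comes from $|\mathcal{C}_s| \le |I_s|$ plus the geometric upper tail on $|I_s|$, and the lower bound comes from restricting the sum to $m \asymp \max\{\beta^{-2},1\}$ (a positive-probability event) and using $\mathbb{E}[|\mathcal{C}_s| \mid |I_s|=m] \gtrsim m$ on that range. The regime $\beta \gtrsim 1$ is easy: then $\max\{\beta^{-2},1\}=1$ and one just needs $\mathbb{E}[|\mathcal{C}_s(\sigma)|] \asymp 1$, which follows from $\mathbb{P}_{n,\beta}(\sigma(s)=s) \ge c > 0$ uniformly, so I would treat $\beta \le 1$ as the main case.

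The main obstacle, I expect, is step (ii) of the lower bound — controlling the cycle length \emph{inside a block} on the critical scale $\beta \sqrt{m} \asymp 1$ (not $\beta\sqrt m \to 0$), where the model is genuinely non-uniform and the clean Poisson--Dirichlet statement does not directly apply. Handling this will likely require a direct hit-and-run argument: show that, starting from the identity on $[m]$, after $\Theta(m)$ well-chosen hit-and-run steps the point $s$ lies in a cycle of size $\ge cm$ with probability $\ge c$, using that each accepted two-point swap that straddles the current cycle of $s$ enlarges it, and that the $L^1$ bias only mildly penalizes swaps of nearby coordinates (those with $|i-j| \lesssim \beta^{-1} \asymp \sqrt m$). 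Quantifying the number and effectiveness of such swaps — essentially a coupon-collector/connectivity estimate for a random-interval graph on $[m]$ with edge lengths $\asymp \sqrt m$ — is the technical heart, and everything else (one-point marginals, block decomposition, the $\beta \gtrsim 1$ case, and the final summation) is comparatively routine.
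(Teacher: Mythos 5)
Your architecture rests on a false premise about the block structure. A maximal invariant interval boundary at $j$ requires $\mathcal{D}_j(\sigma)=\emptyset$, i.e.\ no point of $[j]$ is mapped above $j$; but $|\mathcal{D}_j(\sigma)|$ concentrates at scale $\beta^{-1}$ (this is the content of Proposition \ref{P2.1} and, in the auxiliary-variable form, Proposition \ref{P3.1}: roughly $\beta^{-1}$ indices within distance $\beta^{-1}$ to the left of $j$ each cross the cut with constant probability). Hence $\mathbb{P}(\mathcal{D}_j(\sigma)=\emptyset)\approx e^{-c/\beta}$ and the block containing $s$ has size of order $e^{c/\beta}$, not $\max\{\beta^{-2},1\}$. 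This breaks both halves of your argument: the containment $\mathcal{C}_s(\sigma)\subseteq I_s$ only yields an upper bound $e^{c/\beta}$, which is far too weak, and the event $|I_s|\asymp\beta^{-2}$ on which your lower bound conditions has exponentially small probability rather than probability bounded below. The real content of the theorem is precisely that the cycle of $s$ is much smaller than the block of $s$. Separately, your fallback for the lower bound (applying the Poisson--Dirichlet statement at block scale) cannot work as stated: Theorem \ref{Thm1.2} requires $\beta\sqrt{m}\to 0$ while you need $\beta\sqrt{m}\asymp 1$, and in this paper Theorem \ref{Thm1.2} is itself derived from the machinery that proves Theorem \ref{Thm1.1}.

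You have also misdescribed the key tool. The hit and run algorithm of Section \ref{Sect.2.1} is not a two-coordinate Gibbs update; one step samples auxiliary variables $b_i$ and then resamples the \emph{entire} permutation uniformly from $\{\tau:\tau(i)\le b_i\}$, and the paper uses a \emph{single} step started from stationarity, not a long run of the dynamics. The cycle information comes from the sequential placement $\sigma(Y_l)=l$ for $l=n,\dots,1$, which builds the functional graph of $\sigma$ by merging ``arcs.'' The upper bound is then obtained by tracking the tail $Z_t$ of the open arc containing $s$: each step moves left by $\asymp\beta^{-1}$ (Lemma \ref{L3.5}) and the arc closes with probability $1/N_{Z_{t-1}}\asymp\beta$ per step, giving total displacement $\asymp\beta^{-2}$ (and $|\mathcal{C}_s|\le\mathrm{diam}+1$). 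The lower bound uses that, on the event that all $b_j$ exceed $d\asymp\beta^{-7/16}$, the matching of the $d$ open arcs at step $d+1$ is a uniformly random permutation of $[d]$, so two surviving arcs lie in a common cycle with probability $1/2$; this, plus the estimate that a fixed point survives in an open arc down to step $d+1$ with probability bounded below (Lemmas \ref{L3.11} and \ref{L3.12}), replaces your block conditioning. Your ``coupon-collector over two-point swaps'' heart of the argument conflates the mixing dynamics of a Markov chain with the static cycle structure of a stationary sample, and would need an entirely different justification.
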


The following theorem gives the order of the expected diameter of the cycle containing a given point.

\begin{theorem}\label{Thm1.3.1}
Assume that $n\in \mathbb{N}^{*}$ and $\beta>0$, and let $\sigma$ be drawn from $\mathbb{P}_{n,\beta}$. For any $s\in [n]$, we have
\begin{equation*}
  \mathbb{E}[\max(\mathcal{C}_s(\sigma))-\min(\mathcal{C}_s(\sigma))] \asymp \min\{e^{-2\beta}\max\{\beta^{-2},1\}, n-1\}.
\end{equation*}
\end{theorem}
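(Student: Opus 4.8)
\subsection*{Proof proposal for Theorem \ref{Thm1.3.1}}

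The plan is to squeeze $\mathbb{E}[\max(\mathcal{C}_s(\sigma))-\min(\mathcal{C}_s(\sigma))]$ between two more tractable quantities. From below, any set of $k$ integers has diameter at least $k-1$, so $\max(\mathcal{C}_s(\sigma))-\min(\mathcal{C}_s(\sigma))\ge|\mathcal{C}_s(\sigma)|-1$ pointwise, and the right-hand side is controlled in expectation by Theorem \ref{Thm1.1}. From above, I would use the \emph{interval-block decomposition} of $\sigma$: let the ``cut points'' be the $i\in\{0,1,\dots,n\}$ with $\sigma(\{1,\dots,i\})=\{1,\dots,i\}$, and let the interval blocks be the intervals $\{a+1,\dots,b\}$ between consecutive cut points. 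If $\{a+1,\dots,b\}$ is the block containing $s$, then $\sigma$ maps this interval to itself, so $\mathcal{C}_s(\sigma)\subseteq\{a+1,\dots,b\}$ and $\max(\mathcal{C}_s(\sigma))-\min(\mathcal{C}_s(\sigma))\le b-a-1$. Thus it suffices to show $\mathbb{E}[(\text{size of block of }s)-1]\lesssim e^{-2\beta}\max\{\beta^{-2},1\}$ (the bound $\le n-1$ being trivial). Both comparisons already give the right order when $\beta\lesssim n^{-1/2}$ (where everything is of order $n-1$) and when $\beta\asymp1$; the real content is the $e^{-2\beta}$ decay for large $\beta$, where length and diameter of the block must be carefully distinguished.

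For the upper bound I would first record the exact factorization $\mathbb{P}_{n,\beta}(\sigma(\{1,\dots,i\})=\{1,\dots,i\})=Z_{i,\beta}Z_{n-i,\beta}Z_{n,\beta}^{-1}$, which holds because for such a $\sigma$ the $L^1$ distance $H(\sigma,Id)$ is the sum of the $L^1$ distances of the two restrictions. Iterating it shows that, conditionally on the set of cut points, the restrictions of $\sigma$ to consecutive interval blocks are independent, each an $L^1$ model conditioned to be indecomposable; equivalently the interval blocks form a renewal-type (Gibbs) partition of $[n]$ with per-part weight $Z_\ell^{\mathrm{ind}}$. Writing $\mathbb{E}[(\text{size of block of }s)-1]$ as a sum over $m\ge1$ of $\mathbb{P}(\sigma\text{ preserves none of }\{1,\dots,s\},\dots,\{1,\dots,s+m-1\})$ plus the mirror-image sum to the left of $s$, it is then enough to establish
\begin{equation*}
\mathbb{P}\big(\sigma\text{ preserves none of }\{1,\dots,s\},\dots,\{1,\dots,s+m-1\}\big)\ \lesssim\ e^{-2\beta}\big(1-c\min\{\beta^2,1\}\big)^{m-1}
\end{equation*}
and sum over $m$ (the sum truncates automatically at $m\le n$). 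This is where the hit and run algorithm is used: I would show via the chain that the block-length law has mean of order $\max\{\beta^{-2},1\}$ (capped at $n$) with exponentially decaying tails on that scale, and — the point for large $\beta$ — that a block has size at least $2$ only with probability of order $e^{-2\beta}$ (such a block forces a value to cross a fixed bond, contributing at least $2\beta$ to the exponent), so the size-biased block at $s$ has expected excess of order $e^{-2\beta}\max\{\beta^{-2},1\}$.

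For the lower bound I would split on the size of $\beta$. When $\beta$ is below an absolute constant (so $e^{-2\beta}\asymp1$ and $\max\{\beta^{-2},1\}=\beta^{-2}$), Theorem \ref{Thm1.1} gives $\mathbb{E}[|\mathcal{C}_s(\sigma)|]\asymp\min\{\beta^{-2},n\}$, hence $\mathbb{E}[\mathrm{diam}]\ge\mathbb{E}[|\mathcal{C}_s(\sigma)|]-1\gtrsim\min\{\beta^{-2},n-1\}$ once this quantity exceeds an absolute constant; the leftover (bounded $\beta$ and bounded $n$) is handled by $\mathbb{E}[\mathrm{diam}]\ge\mathbb{P}(\sigma(s)\ne s)\ge e^{-2\beta}/Z_{n,\beta}\gtrsim1$. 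When $\beta>1$ the target is $e^{-2\beta}$, and $\mathbb{E}[\mathrm{diam}]\ge\mathbb{P}(\sigma(s)\ne s)\ge\mathbb{P}(\sigma\text{ transposes }s\text{ with a neighbour})=e^{-2\beta}Z'_{n,\beta}Z_{n,\beta}^{-1}$, where $Z'_{n,\beta}$ is the $L^1$ partition function on $[n]$ with that neighbouring pair deleted; the bound $Z'_{n,\beta}\asymp Z_{n,\beta}$ needed here follows from the quasi-multiplicativity $Z_{a+b,\beta}\asymp Z_{a,\beta}Z_{b,\beta}$ for $\beta\gtrsim1$, itself a consequence of the factorization identity together with $\mathbb{P}_{n,\beta}(\sigma(\{1,\dots,i\})=\{1,\dots,i\})\asymp1$ for $\beta\gtrsim1$.

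The main obstacle is the block-length estimate behind the upper bound: controlling, uniformly in $n$ and for all $\beta$ down to order $n^{-1/2}$, the mean ($\asymp\max\{\beta^{-2},1\}$, capped at $n$) and the exponential tails of the renewal partition of $[n]$ into indecomposable $L^1$ blocks, and in particular the sharper ``excess $\asymp e^{-2\beta}$'' statement that separates the cycle diameter from the cycle length when $\beta$ is large. I expect this to require running the hit and run chain and coupling two copies — one started from stationarity and one in which the configuration near a chosen bond is reset — to prove that conditioning on a long run of uncrossed bonds still leaves an order-$\min\{\beta^2,1\}$ chance of a cut at the next bond; the factorization/regeneration structure then upgrades this to the displayed geometric tail bound. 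The remaining ingredients (the factorization identity, quasi-multiplicativity of $Z_{n,\beta}$ for $\beta\gtrsim1$, and the order of $\mathbb{P}(\sigma(s)\ne s)$) should be routine given the machinery already set up for Theorem \ref{Thm1.1}.
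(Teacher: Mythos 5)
Your lower bound is essentially the paper's argument: the paper likewise combines the pointwise inequality $\max(\mathcal{C}_s(\sigma))-\min(\mathcal{C}_s(\sigma))\ge|\mathcal{C}_s(\sigma)|-1$ with the lower bound of Theorem \ref{Thm1.1} for bounded $\beta$, and for large $\beta$ shows $\mathbb{P}(\sigma(s)\ne s)\ge\tfrac12 e^{-2\beta}$ (the paper gets this from the hit and run chain, via $\mathbb{P}(N_s\ge 2)\ge e^{-2\beta}$ and $\mathbb{P}(Y_s\ne s\mid\mathcal{B}_s)\ge\tfrac12$ on that event, rather than from your partition-function comparison; your route would additionally require proving the quasi-multiplicativity $Z_{a+b,\beta}\asymp Z_{a,\beta}Z_{b,\beta}$, but that is a minor variation).

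The upper bound, however, has a fatal gap. The reduction $\max(\mathcal{C}_s)-\min(\mathcal{C}_s)\le(\text{block size})-1$ is correct but far too lossy, and your displayed inequality $\mathbb{P}(\text{no cut among }s,\dots,s+m-1)\lesssim e^{-2\beta}(1-c\min\{\beta^2,1\})^{m-1}$ is false throughout the regime $n^{-1/2}\ll\beta\ll 1$. A cut at bond $i$ is the event $|\mathcal{D}_i(\sigma)|=0$: \emph{every} $k\le i$ within the correlation length $\beta^{-1}$ of $i$ must satisfy $\sigma(k)\le i$. Each of these $\asymp\beta^{-1}$ constraints fails with probability bounded below by an absolute constant, and they are essentially decorrelated, so the per-bond cut probability is $e^{-\Theta(1/\beta)}$ — a surface-tension-type event — not $1-O(\beta^2)$. (Consistently, a uniformly random permutation, the $\beta\to0$ limit, is indecomposable with high probability, since $\sum_i\binom{n}{i}^{-1}\to 0$ over bulk $i$.) Already at $m=2$ your bound would require $1-2e^{-c/\beta}\le e^{-2\beta}(1-c\beta^2)\approx 1-2\beta$, which fails for small $\beta$. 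The consequence is that the block containing $s$ has expected size of order $\min\{e^{c/\beta},n\}$: the block is the interval spanned by an entire chain of mutually interleaved cycles, each of diameter $\asymp\beta^{-2}$ but overlapping the next, and this chain percolates over exponentially long distances. So, e.g., for $\beta=1/\log n$ the theorem demands $\mathbb{E}[\max(\mathcal{C}_s)-\min(\mathcal{C}_s)]\asymp(\log n)^2$ while the block bound cannot give better than a power of $n$. The quantity with mean $\asymp\max\{\beta^{-2},1\}$ is the cycle length, not the block length, and conflating them is exactly what the decomposition forces you to do. This is why the paper does not use invariant prefixes at all: it isolates the single cycle through $s$ by following the tail $Z_t$ of the open arc containing $s$ in the hit and run sampler, bounding each increment $Z_{t-1}-Z_t$ via $\sum_{k\in\mathcal{H}_l}(l-k)\lesssim\beta^{-2}$ (Lemma \ref{L3.5}) and the closing probability $1/N_{Z_{t-1}}\asymp\beta$ at each step (Lemmas \ref{L3.6}--\ref{L3.9}). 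Your approach is viable only for $\beta$ above a fixed constant, where Proposition \ref{P2.1_n} makes each bond a cut with probability $1-O(e^{-2\beta})$ and blocks genuinely are small; that is the easy half of the upper bound.
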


The following theorem gives the limiting distribution of the lengths of long cycles and the length of the cycle containing a given point when $\beta>0$ and $\beta\ll n^{-1\slash 2}$. The result shows that, for this parameter regime, the limiting distributions are the same as those for uniformly random permutations. Throughout this paper, we denote by $U([0,1])$ the uniform distribution on $[0,1]$.

\begin{theorem}\label{Thm1.2}
Let $(\beta_n)_{n=1}^{\infty}$ be an arbitrary sequence of positive numbers such that $\lim_{n\rightarrow\infty} n^{1\slash 2} \beta_n=0$. Let $\sigma$ be drawn from $\mathbb{P}_{n,\beta_n}$, and let $l_1(\sigma)\geq l_2(\sigma)\geq \cdots$ be the sorted cycle lengths of $\sigma$ (taking $l_k(\sigma)=0$ for any integer $k$ that is larger than the number of cycle of $\sigma$). Then as $n\rightarrow\infty$,
\begin{eqnarray}\label{PoissonDirichlet}
&&    \frac{1}{n}(l_1(\sigma),l_2(\sigma),\cdots)\xrightarrow{d}\text{ Poisson-Dirichlet law with parameter } 1.
\end{eqnarray}
Moreover, for any deterministic sequence $(s_n)_{n=1}^{\infty}$ such that $s_n\in [n]$ for every $n\in\mathbb{N}^{*}$, as $n\rightarrow\infty$,
\begin{equation}\label{Unif}
    \frac{|\mathcal{C}_{s_n}(\sigma)|}{n}\xrightarrow{d} U([0,1]).
\end{equation}
\end{theorem}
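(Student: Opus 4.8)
The plan is to leverage the hit-and-run algorithm (the Markov chain mentioned in the abstract) together with a coupling argument that compares $\sigma$ drawn from $\mathbb{P}_{n,\beta_n}$ to a uniformly random permutation. The key structural fact I would establish first is that when $\beta \ll n^{-1/2}$, the $L^1$ model is, in a suitable total-variation or coupling sense, asymptotically indistinguishable from the uniform measure on $S_n$ *at the level of cycle statistics*. A naive attempt to bound $\|\mathbb{P}_{n,\beta_n} - \mathrm{Unif}(S_n)\|_{TV}$ directly will fail, since that distance does not go to zero (the partition function $Z_{n,\beta}$ behaves like $n!$ times a correction that is not negligible even for $\beta \asymp n^{-1/2}$, as $\beta \sum_j \mathbb{E}|\sigma(j)-j| \asymp \beta n^{3/2}$, which is $o(1)$ only when $\beta \ll n^{-3/2}$). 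So TV-closeness to uniform is too much to ask; instead I would aim to show that the *cycle structure* of $\sigma$ converges to that of a uniform permutation, which is a weaker statement about a coarser $\sigma$-algebra.

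The main tool should be a Chinese-restaurant-type or Feller-coupling-type construction obtained by running the hit-and-run chain. Concretely, I would analyze how the cycle containing a tagged point $s_n$ grows: starting from $s_n$, one reveals the orbit $s_n \mapsto \sigma(s_n) \mapsto \sigma^2(s_n) \mapsto \cdots$ one step at a time, and shows that, conditioned on the partial orbit revealed so far (of size $k$, say), the next point $\sigma^{k}(s_n)$ is, up to an error that is uniformly $o(1)$ when $\beta \ll n^{-1/2}$, uniformly distributed on the $n-k$ unrevealed points together with a probability $\asymp 1/(n-k)$ of closing the cycle. This is exactly the Chinese restaurant / Feller coupling dynamics that produces the uniform distribution on cycle counts, and it is a classical fact (see e.g.\ \cite{ABT}) that these dynamics yield both the $U([0,1])$ limit for the normalized length of the tagged cycle and the Poisson--Dirichlet$(1)$ limit for the normalized sorted cycle lengths. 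The reason $\beta \ll n^{-1/2}$ is the right threshold: the bias $\exp(-\beta|\sigma(j)-j|)$ perturbs the ``where does the chain jump next'' probabilities by a multiplicative factor $1 \pm O(\beta \cdot (\text{typical displacement}))$, and the typical displacement scales like $n^{1/2}$ in the relevant regime, so the perturbation is $O(\beta n^{1/2}) = o(1)$ precisely under the hypothesis. I would make this rigorous by using the hit-and-run transition probabilities to get explicit one-step conditional laws, bounding the relevant ratios of partition-function-like sums.

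For the sorted-cycle-lengths statement \eqref{PoissonDirichlet}, I would deduce it from the tagged-cycle result by the standard size-biasing / exchangeability argument: iteratively remove the cycle of a uniformly chosen remaining point, showing at each stage that the fraction of remaining points consumed is asymptotically $U([0,1])$ and (approximately) independent across stages, which is the stick-breaking characterization of Poisson--Dirichlet$(1)$ (the GEM$(1)$ representation). Equivalently, one can check convergence of the joint law of $(l_1/n, l_2/n, \dots)$ via the moments of $\sum_k (l_k/n)^r$ or via the Ewens-sampling-formula structure of the cycle counts; either route reduces to the one-point estimate plus a uniform control on the error as cycles are successively deleted.

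The step I expect to be the main obstacle is establishing the one-step conditional law of the orbit-revealing process with a *uniform* $o(1)$ error, valid even when $k$ (the number of already-revealed points) is close to $n$ and even when $s_n$ and the already-revealed points sit near the boundary $\{1, n\}$ where the $L^1$ bias behaves asymmetrically. Controlling the ratios of the restricted partition functions $Z$ over permutations of a shrinking ground set, uniformly in the positions already fixed, is where the real work lies; I anticipate this requires the quantitative regularity estimates on $\mathbb{P}_{n,\beta}$ (e.g.\ concentration of $\sigma(j)$ around $j$, and bounds on one- and two-point marginals) that presumably underlie the proofs of Theorems \ref{Thm1.1} and \ref{Thm1.3.1}, combined with a careful treatment of the hit-and-run chain's stationary behavior on induced sub-permutations.
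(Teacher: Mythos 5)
There is a genuine gap: the one-step conditional law you propose for the orbit-revealing process is false over most of the parameter range covered by the theorem. For $n^{-1}\ll\beta_n\ll n^{-1/2}$ (say $\beta_n=n^{-3/4}$), the displacement $|\sigma(j)-j|$ concentrates on the scale $\beta_n^{-1}=o(n)$ (this is the content of Proposition \ref{P3.4}, with exponential tails beyond $C\beta_n^{-1}$), so $\sigma^{k}(s_n)$ is emphatically \emph{not} approximately uniform on the unrevealed points: it sits within $O(\beta_n^{-1})$ of $\sigma^{k-1}(s_n)$, and the probability of closing the cycle at a given step is essentially zero unless the current point happens to lie within $O(\beta_n^{-1})$ of $s_n$. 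The Chinese-restaurant/Feller dynamics therefore do not hold step by step, and your heuristic that the perturbation is $O(\beta_n\cdot n^{1/2})$ misidentifies the typical displacement, which is $\min\{\beta_n^{-1},n\}$, not $n^{1/2}$ (the threshold $n^{-1/2}$ comes from the cycle \emph{length} scale $\beta_n^{-2}$ reaching $n$, per Theorem \ref{Thm1.1}, not from a displacement scale). Your observation that TV-closeness to uniform fails is correct, but the fix cannot be approximate uniformity of the orbit increments.

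The paper's actual route avoids any approximate one-step uniformity. Running one iteration of the hit-and-run sampler from stationarity, one tracks the partition of $[n]$ into open and closed arcs as the symbols $n,n-1,\dots$ are placed. At step $d+1$ with $d\approx\beta_n^{-7/16}$, on the high-probability event that all $b_j\geq d$, the remaining $d$ placements glue the $d$ open arcs together according to an \emph{exactly} uniform random permutation $\tau$ of $[d]$; the cycles of $\sigma$ are then concatenations of open arcs along cycles of $\tau$, plus the (negligible) closed arcs. The theorem reduces to showing that the open arcs at step $d+1$ individually have length $o(n)$ (Lemmas \ref{L3.13}--\ref{L3.15}, where the hypothesis $\beta_n^2 n\to 0$ enters) while collectively covering $n(1-o(1))$ points (Lemma \ref{L3.11}), after which Poisson--Dirichlet$(1)$ and $U([0,1])$ are inherited from the uniform $\tau$. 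If you want to salvage your outline, you would need to replace the pointwise orbit-revealing step with some such block or arc structure for which exact (not approximate) exchangeability holds.
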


\subsection{Acknowledgement}

The author wishes to thank his PhD advisor, Persi Diaconis, for encouragement, support, and many helpful conversations. The author also thanks Sumit Mukherjee and Wenpin Tang for their helpful comments.

\section{Hit and run algorithm and preliminary results}\label{Sect.2}

We review the hit and run algorithm for sampling from the $L^1$ model in Section \ref{Sect.2.1}, and present some preliminary results in Section \ref{Sect.2.2}. Throughout this section, we assume that $n\in \mathbb{N}^{*}$ and $\beta>0$. 

\subsection{Hit and run algorithm for sampling from the $L^1$ model}\label{Sect.2.1}

In this subsection, we review the hit and run algorithm for
sampling from the $L^1$ model. This algorithm, introduced in \cite{Zho1}, is a Markov chain having $\mathbb{P}_{n,\beta}$ as its stationary distribution. Each step of the Markov chain consists of two sequential parts as given below:
\begin{itemize}
  \item Starting from $\sigma$, for each $i\in [n]$, independently sample $u_i$ from the uniform distribution on $[0, e^{-2\beta (\sigma(i)-i)_{+} }]$. Let $b_i=i-\frac{\log(u_i)}{2\beta}$ for every $i\in [n]$.
  \item Sample $\sigma'$ uniformly from the set $\{\tau\in S_n: \tau(i)\leq b_i\text{ for every }i\in [n] \}$. The next state of the Markov chain is $\sigma'$.
\end{itemize}
The second part can be done as follows: Look at the places $i$ where $b_i\geq n$, and place the symbol $n$ at a uniform choice among these places; look at the places $i$ where $b_i\geq n-1$, and place the symbol $n-1$ at a uniform choice among these places (with the place where $n$ was placed deleted); and so on. This gives the permutation $\sigma'$. Here, we say that the symbol $j$ is placed at the place $i$ if $\sigma'(i)=j$.

\subsection{Preliminary results}\label{Sect.2.2}

In this subsection, we present some preliminary results, which will be used in the proofs of the main results. We start with the following two definitions.

\begin{definition}\label{Def2.2}
For any $\sigma\in S_n$, we define $S(\sigma):=\{(j,\sigma(j)):j\in [n]\}$. For any $j\in [n]$ and any $\sigma\in S_n$, we define
\begin{equation}\label{Defnd}
     \mathcal{D}_j(\sigma):=\{k\in [n]: k\leq j, \sigma(k)\geq j+1\}, 
\end{equation}
\begin{equation}
    \mathcal{D}_j'(\sigma):=\{k\in [n]: k\geq j+1, \sigma(k)\leq j\}.
\end{equation}
Note that
\begin{equation}\label{Eq5.1.1}
    |\mathcal{D}_j(\sigma)|=j-|\{k\in [n]:k\leq j,\sigma(k)\leq j\}|=|\mathcal{D}'_j(\sigma)|.
\end{equation}
We also define $\mathcal{D}_0(\sigma):=\emptyset$ and $\mathcal{D}'_0(\sigma):=\emptyset$.
\end{definition}

\begin{definition}
For any $j\in [n]$, $\Delta>0$, and $\sigma\in S_n$, we define
\begin{equation*}
    \mathcal{R}_{j,\Delta}(\sigma):=S(\sigma)\cap \{(x,y)\in [n]^2: x\geq j+\Delta,y\leq j-\Delta\},
\end{equation*}
\begin{equation*}
    \mathcal{R}'_{j,\Delta}(\sigma):=S(\sigma) \cap \{(x,y)\in [n]^2: x\leq j-\Delta,y\geq j+\Delta\}. 
\end{equation*}
\end{definition}

We have the following three propositions. Proposition \ref{P2.1} follows from the proof of \cite[Proposition 5.3.1]{Zhong2} (when $\beta>1$, replace $\beta$ by $1$ in proper places of the proof), and the proof of Proposition \ref{P2.1_n} is given in the appendix.

\begin{proposition}\label{P2.1}
Assume that $\beta>0$. There exists a positive absolute constant $C_0$, such that for any $r\geq C_0\max\{\beta^{-1},1\}$ and any $j\in [n]$, 
\begin{equation*}
     \mathbb{P}_{n,\beta}(|\mathcal{D}_j(\sigma)|\geq r)\leq 3\exp(-r\slash 4). 
\end{equation*}
\end{proposition}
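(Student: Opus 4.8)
The plan is to bound the upper tail of $|\mathcal{D}_j(\sigma)|$ by exhibiting, for each realization with $|\mathcal{D}_j(\sigma)|$ large, a cheaper permutation (in the sense of smaller $L^1$ distance to $\mathrm{Id}$) obtained by uncrossing, and to count how many $\sigma$ map to a given target. Recall $\mathcal{D}_j(\sigma)=\{k\le j:\sigma(k)\ge j+1\}$ and $\mathcal{D}'_j(\sigma)=\{k\ge j+1:\sigma(k)\le j\}$ have equal size $m$ by \eqref{Eq5.1.1}. If $m\ge r$, then $\sigma$ contains at least $m$ pairs $(k,\sigma(k))$ with $k\le j<j+1\le\sigma(k)$ and at least $m$ pairs with $\sigma(k)\le j<j+1\le k$; each such "inversion across the cut at $j$" forces $|\sigma(k)-k|\ge 1$ in a way that, summed, is expensive. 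More precisely, I would show directly that $H(\sigma,\mathrm{Id})\ge H(\sigma^{*},\mathrm{Id})+(\text{gain})$, where $\sigma^{*}$ is a canonically "sorted at the cut" permutation and the gain grows with $m$.

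The cleanest route is the following. Fix $j$ and condition on the value of $|\mathcal{D}_j(\sigma)|$. Write $A=\{k\le j:\sigma(k)\ge j+1\}$ (size $m$), $A'=\{k\ge j+1:\sigma(k)\le j\}$ (size $m$), and let $A=\{a_1<\dots<a_m\}$, with images sorted as $\sigma$-values, and similarly for $A'$. Define $\sigma^{*}$ by keeping $\sigma$ on $[n]\setminus(A\cup A')$ and "rotating" the crossing pairs: reassign the small values (those $\le j$ currently sitting to the right of the cut) to the positions in $A$ in increasing order, and the large values (those $\ge j+1$ currently to the left) to the positions in $A'$ in increasing order. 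One checks $\sigma^{*}\in S_n$ and, by a standard rearrangement/triangle-inequality computation, $H(\sigma,\mathrm{Id})-H(\sigma^{*},\mathrm{Id})\ge 2\sum_{i=1}^m(\text{gap across the cut})\ge cm$ for an absolute constant $c$ — in fact each crossing pair contributes at least $2$ to the difference after uncrossing, giving a gain of at least $2m$. Hence
\begin{equation*}
\mathbb{P}_{n,\beta}(\sigma)\le e^{-2\beta m}\,\mathbb{P}_{n,\beta}(\sigma^{*})\quad\text{up to the map } \sigma\mapsto\sigma^{*}.
\end{equation*}
The map $\sigma\mapsto\sigma^{*}$ is not injective, so the second ingredient is a bound on its fibers: given $\sigma^{*}$ and given $m$, the number of $\sigma$ with that image and that value of $m$ is at most (number of ways to pick $A$ inside $[j]$, i.e. the positions that were "lifted up") times (number of ways to choose the reassignment), which is crudely at most $\binom{j}{m}\binom{n-j}{m} m!\le (Cn)^{2m}$ — but this is too lossy.

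To get the stated clean exponential bound, I would instead peel off the crossings one at a time and use a union bound over which positions are involved, paying only a per-crossing entropy factor that is beaten by the energy gain once $\beta r$ is large, i.e. once $r\ge C_0\max\{\beta^{-1},1\}$. Concretely: for $\sigma$ with $|\mathcal{D}_j(\sigma)|\ge r$, there is an "elementary uncrossing" swapping the images of one $k\in A$ and one $k'\in A'$ that strictly decreases $H(\cdot,\mathrm{Id})$ by at least $2(\text{something}\ge 1)$ while decreasing $|\mathcal{D}_j|$ by exactly $1$; iterating and tracking the reverse map gives, for each step, a factor $e^{-2\beta}$ in probability against at most $n^2$ choices of the pair $(k,k')$, but crucially the decrease in $H$ at the step where $|\mathcal{D}_j|=t$ is at least of order $t$ (the crossing pair has horizontal and vertical displacement each $\gtrsim t$ when things are sorted), so the product telescopes to a bound like $\prod_{t=1}^{r}(n^2 e^{-c\beta t})$, which is summable and yields $\mathbb{P}_{n,\beta}(|\mathcal{D}_j(\sigma)|\ge r)\le 3e^{-r/4}$ after absorbing constants. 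The main obstacle is exactly this bookkeeping: making the uncrossing map concrete enough that (i) the reverse map has controllable multiplicity and (ii) the energy gained at each stage is genuinely linear in the current crossing count rather than just $\ge 1$ — the latter is what converts an entropy-vs-energy trade-off that would only work for $\beta\gtrsim\log n$ into one that works down to $\beta\asymp 1/r$. I expect this is precisely why the cited Proposition 5.3.1 of \cite{Zhong2} already contains the hard estimate, and the present statement follows by substituting $\min\{\beta,1\}$ for $\beta$ in the exponent wherever $\beta>1$ would otherwise give a stronger (but here unneeded) bound.
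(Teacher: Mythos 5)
Your overall strategy---uncross the pairs straddling the cut at $j$ via a switching map $\sigma\mapsto\sigma^{*}$, lower-bound the energy gain $H(\sigma,Id)-H(\sigma^{*},Id)$, and control the multiplicity of the map---is exactly the shape of the argument in \cite{Zhong2} (compare the proof of Proposition~\ref{P2.1_n} in the appendix, which is a variant of the same scheme). The per-pair gain of at least $2$ that you compute is also correct; it is identity (\ref{E1}) specialized to a crossing pair. But the proof as proposed does not close, and the gap is precisely where you flag it: the entropy bookkeeping. Your ``peel off one crossing at a time'' scheme pays a factor of $n^2$ per step for the choice of the pair $(k,k')$, so even granting an energy gain of order $t$ at the step where $|\mathcal{D}_j|=t$, the telescoped bound is of the form $n^{2r}\exp(-c\beta r^2)$. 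For this to beat $e^{-r/4}$ you need $\beta r\gtrsim\log n$, i.e.\ $r\gtrsim\beta^{-1}\log n$, whereas the proposition must hold already for $r\geq C_0\beta^{-1}$. So the regime $\beta^{-1}\le r\ll\beta^{-1}\log n$ is not covered, and no choice of $C_0$ repairs this. The same defect kills the cruder $\binom{j}{m}\binom{n-j}{m}m!$ count you correctly reject.

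The missing idea is to index crossing points not by their position in $[n]$ but by their \emph{level}, i.e.\ their distance to the cut, as in Definition \ref{Regions}: for each $l\in\mathbb{N}$ the sets $\mathcal{G}_{j,l},\mathcal{H}_{j,l},\mathcal{G}'_{j,l},\mathcal{H}'_{j,l}$ each have one coordinate pinned (e.g.\ $x=j-l$), hence each contains at most one point of $S(\sigma)$. Consequently the ``data'' one must record to invert the switching map is a family of $\{0,1\}$-indicators over levels, and the tail bound becomes a sum over occupied level sets that is \emph{independent of $n$} (this is the content of (\ref{Eq4.2})--(\ref{Eq4.6})). Two further payoffs of the level indexing are essential: (i) the energy gain of a pair involving levels $l\le l'$ is at least $2\beta(l'+1)$ or $2\beta(l+l'+1)$, not merely $2\beta$; and (ii) since $|\mathcal{D}_j(\sigma)|=m$ forces $m$ \emph{distinct} occupied levels on each side of the cut, the total gain is at least of order $\beta\sum_{l=0}^{m-1}l\asymp\beta m^2$, which for $m\geq C_0\beta^{-1}$ is at least $cC_0m$ and dominates the ($n$-free) level-choice entropy $\binom{M}{m}\le(eM/m)^m$. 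This quadratic-in-$m$ energy is what converts the threshold from $\beta\gtrsim\log n$ down to $r\gtrsim\beta^{-1}$, and it is absent from your proposal, which at each stage only extracts a gain linear in the current crossing count against an $n$-dependent entropy.
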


\begin{proposition}\label{P2.1_n}
Assume that $\beta\geq c_0$ for a fixed positive constant $c_0$. There exists a positive constant $C_0$ that only depends on $c_0$, such that for any $r\in \mathbb{N}^{*}$ and any $j\in [n]$, we have
\begin{equation*}
    \mathbb{P}_{n,\beta}(|\mathcal{D}_j(\sigma)|\geq r)\leq C_0e^{-2\beta r}.
\end{equation*}
\end{proposition}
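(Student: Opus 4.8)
The plan is to obtain the exponential bound $\mathbb{P}_{n,\beta}(|\mathcal{D}_j(\sigma)|\geq r)\leq C_0 e^{-2\beta r}$ by running the hit and run chain for one step from stationarity and tracking how $|\mathcal{D}_j|$ can be large after a step. Since $\mathbb{P}_{n,\beta}$ is stationary, it suffices to show that for $\sigma$ drawn from $\mathbb{P}_{n,\beta}$ and $\sigma'$ the next state of the chain, $\mathbb{P}(|\mathcal{D}_j(\sigma')|\geq r)\leq C_0 e^{-2\beta r}$. The key structural observation is that an index $k\le j$ contributes to $\mathcal{D}_j(\sigma')$ only if $\sigma'(k)\ge j+1$, which by the construction of the second part of the algorithm requires $b_k\ge j+1$, i.e. $b_k-k\ge j+1-k\ge 1$. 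From the definition $b_k=k-\frac{\log u_k}{2\beta}$ with $u_k$ uniform on $[0,e^{-2\beta(\sigma(k)-k)_+}]$, one computes that $\mathbb{P}(b_k-k\ge t\mid \sigma)=e^{-2\beta t}\cdot e^{2\beta(\sigma(k)-k)_+}\wedge 1$ for $t\ge 0$; in particular the event $b_k\ge j+1$ has conditional probability at most $e^{-2\beta(j+1-k)}e^{2\beta(\sigma(k)-k)_+}$, and crucially this event is only possible at all (for those $k$ with $\sigma(k)-k\le j-k$, i.e. $\sigma(k)\le j$) with probability decaying like $e^{-2\beta}$ per unit of ``overshoot''.

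The main steps I would carry out: (1) Condition on $\sigma$ and compute the conditional law of the thresholds $b_k$, isolating the indices $k\le j$ with $\sigma(k)\le j$ (these are the ``new'' candidates, numbering $j-|\mathcal{D}_j(\sigma)|$ of them) versus those with $\sigma(k)\ge j+1$ (these already lie in $\mathcal{D}_j(\sigma)$, and by \eqref{Eq5.1.1} there are $|\mathcal{D}_j(\sigma)|=|\mathcal{D}_j'(\sigma)|$ of them). (2) Bound $|\mathcal{D}_j(\sigma')|$ by the number of indices $k\le j$ with $b_k\ge j+1$; this is stochastically dominated, conditionally on $\sigma$, by a sum of independent indicators, where for the ``new'' candidates at distance $d=j+1-k\ge 1$ from $j$ the success probability is at most $e^{-2\beta d}$, and for the $|\mathcal{D}_j(\sigma)|$ ``old'' candidates the probability is at most $1$. (3) Therefore $|\mathcal{D}_j(\sigma')|$ is dominated by $|\mathcal{D}_j(\sigma)|\wedge(\text{something}) $ plus an independent sum whose generating function is controlled by $\prod_{d\ge 1}(1+(e^{s}-1)e^{-2\beta d})$, which converges and is $1+O(e^{-2\beta})$ for the relevant range of $s$. (4) Feed in Proposition \ref{P2.1} — which already gives $\mathbb{P}_{n,\beta}(|\mathcal{D}_j(\sigma)|\ge r)\le 3e^{-r/4}$, and in particular (since $\beta\ge c_0$) an exponential-in-$r$ tail — to control the ``old'' contribution, and combine with the binomial-type estimate on the ``new'' contribution via a union bound over how the total $r$ is split between old and new parts. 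Choosing $s\approx 2\beta$ in a Chernoff bound on the new part, and using that the old part already has a tail no worse than $e^{-cr}$ with $c$ bounded below, yields a bound of the form $C_0 e^{-2\beta r}$ after absorbing constants depending on $c_0$.

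The hard part will be step (3)–(4): getting the constant in the exponent to be exactly $2\beta$ rather than something like $\beta$ or $2\beta - O(1)$. The ``new'' candidates naturally give the clean factor $e^{-2\beta d}$ per index at distance $d$, so a single new index at distance $\ge r$ already contributes $\asymp e^{-2\beta r}$, and $r$ new indices each at distance $\ge 1$ contribute $\asymp e^{-2\beta r}$ as well — so the new part is fine. The delicate point is that the ``old'' candidates $k$ (with $\sigma(k)\ge j+1$) can survive into $\mathcal{D}_j(\sigma')$ with probability close to $1$, so one cannot afford more than $O(1)$ of them unless their own tail (from Proposition \ref{P2.1}) beats $e^{-2\beta r}$; since $\beta\ge c_0$ the bound $3e^{-r/4}$ is exponential in $r$ but with rate $1/4$, not $2\beta$, so when $\beta$ is only moderately larger than $c_0$ this must be handled by noting that having $\ge r$ old candidates means $|\mathcal{D}_j(\sigma)|\ge r$, an event of probability $\le 3e^{-r/4}\le C_0 e^{-2\beta r}$ once $\beta$ is bounded — wait, that inequality fails for large $\beta$. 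The genuine resolution, which I expect is what the appendix does, is subtler: one must re-examine the chain so that even the ``old'' candidates are pushed down with an $e^{-2\beta}$-type cost, using that the conditional law of $\sigma(k)$ for $k\le j$ under $\mathbb{P}_{n,\beta}$ (equivalently, a more careful one-step analysis using both the $u_i$'s and the placement rule, or an inductive/bootstrapping argument on the statement itself) already makes it exponentially unlikely for many indices $k\le j$ to have $\sigma(k)$ far above $j$. Making this quantitative — matching the $2\beta$ rate on both the ``old'' and ``new'' contributions simultaneously — is the crux, and I would expect to handle it either by a bootstrap (assume the bound with a worse constant, improve it) or by a direct computation of $\mathbb{E}_{n,\beta}[z^{|\mathcal{D}_j(\sigma)|}]$ for $z$ slightly below $e^{2\beta}$ using the explicit one-step transition.
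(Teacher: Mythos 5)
Your approach (run one step of the hit and run chain from stationarity and control $|\mathcal{D}_j(\sigma')|$) is genuinely different from what the paper does, and it has a gap at exactly the point you flag yourself: the ``old'' candidates. By Lemma \ref{L3.1} one has $b_k\geq\max\{k,\sigma(k)\}$ \emph{deterministically}, so every $k\leq j$ with $\sigma(k)\geq j+1$ satisfies $b_k\geq j+1$ with probability $1$. Your domination $|\mathcal{D}_j(\sigma')|\leq|\{k\leq j:b_k\geq j+1\}|\leq|\mathcal{D}_j(\sigma)|+X$ therefore carries the old part over without any contraction, and stationarity only yields $\mathbb{P}(|\mathcal{D}_j(\sigma)|\geq r)\leq\sum_{v}\mathbb{P}(|\mathcal{D}_j(\sigma)|\geq r-v)\,q^{v}$ with $q\asymp e^{-2\beta}$, which is vacuous (take $v=0$). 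A bootstrap of the form ``assume rate $\alpha$, improve it'' does not close either: the recursion never improves $\alpha$ past whatever the old part already had, and the only available a priori input, Proposition \ref{P2.1}, gives rate $1/4$, which is weaker than $2\beta$ for all but the smallest admissible $\beta$. To make your scheme work you would have to extract a quantitative contraction of the old candidates from the random \emph{placement} step (the uniform choice of $Y_n,Y_{n-1},\dots$), i.e., show that an old candidate receives a symbol $\leq j$ with probability bounded away from $0$ in a way that compounds to an $e^{-2\beta}$ cost per point; you gesture at this but do not supply it, and it is not clear it is true uniformly in $j$ and $\sigma$.

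The paper's proof (in the appendix) avoids the chain entirely. It is a static switching/injection argument on $\mathbb{P}_{n,\beta}$ itself: using $|\mathcal{D}_j(\sigma)|=|\mathcal{D}'_j(\sigma)|$ from (\ref{Eq5.1.1}), each point $(k,\sigma(k))$ with $k\leq j$, $\sigma(k)\geq j+1$ is paired with a point $(k',\sigma(k'))$ with $k'\geq j+1$, $\sigma(k')\leq j$, and transposing each pair strictly decreases $\sum_i(\sigma(i)-i)_{+}$ by at least $1$ per pair (identity (\ref{E1})). The resulting map is shown to be injective once a bounded amount of combinatorial ``data'' is recorded, so $\mathbb{P}_{n,\beta}(|\mathcal{D}_j(\sigma)|\geq r)\leq e^{-2\beta r}$ times an entropy factor, and the entropy factor is summable to a constant depending only on $c_0$ precisely because $\beta\geq c_0$. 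This is where the clean rate $2\beta$ comes from; your dynamic route would need an additional idea to recover it.
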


\begin{proposition}[Proposition 5.3.3 of \cite{Zhong2}]\label{P2.3}
Assume that $0<\beta\leq C_0$ for a fixed positive constant $C_0$. Then there exist positive constants $C_1,C_2,C_3,c_1$ that only depend on $C_0$, such that for any $j\in [n]$, $\Delta\geq C_1\beta^{-1}$, and $r\geq e^{-C_1^{-1}\beta\Delta}$, we have
\begin{eqnarray*}
   && \max\{\mathbb{P}_{n,\beta}(|\mathcal{R}_{j,\Delta}(\sigma)|\geq C_2 r\beta^{-1}),\mathbb{P}_{n,\beta}(|\mathcal{R}'_{j,\Delta}(\sigma)|\geq C_2 r\beta^{-1})\}\nonumber\\
    &\leq & C_3\max\{\beta r^{-1}, \Delta\} \exp(-c_1\min\{r,1\}\beta^{-1}). 
\end{eqnarray*}
\end{proposition}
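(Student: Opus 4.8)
The plan is to reduce the proposition to an upper-tail estimate for a sum of weakly dependent indicators, handled by a first-moment computation together with a Chernoff bound. First I would observe that both $\mathcal{R}_{j,\Delta}$ and $\mathcal{R}'_{j,\Delta}$ are controlled by the crossing count at the single cut $j$: if $k$ is counted by $\mathcal{R}_{j,\Delta}(\sigma)$ then $k\ge j+\Delta\ge j+1$ and $\sigma(k)\le j-\Delta\le j$, so $k\in\mathcal{D}'_j(\sigma)$, while if $k$ is counted by $\mathcal{R}'_{j,\Delta}(\sigma)$ then $k\le j-\Delta\le j$ and $\sigma(k)\ge j+\Delta\ge j+1$, so $k\in\mathcal{D}_j(\sigma)$; by \eqref{Eq5.1.1} this gives $\max\{|\mathcal{R}_{j,\Delta}(\sigma)|,|\mathcal{R}'_{j,\Delta}(\sigma)|\}\le|\mathcal{D}_j(\sigma)|$. (The same argument with any cut $m\in\{j-\Delta,\dots,j+\Delta-1\}$ gives $|\mathcal{R}_{j,\Delta}(\sigma)|\le\min_m|\mathcal{D}_m(\sigma)|$, but a single cut is already enough for part of the argument.) Hence, whenever $C_2 r\beta^{-1}\ge C_0\max\{\beta^{-1},1\}$ --- which holds once $r$ exceeds a suitable constant --- Proposition \ref{P2.1} at the cut $j$ shows that both probabilities in the statement are at most $3\exp(-C_2 r\beta^{-1}/4)$, and since $\Delta\ge C_1\beta^{-1}$ forces $\max\{\beta r^{-1},\Delta\}$ to be bounded below, one checks this is $\le C_3\max\{\beta r^{-1},\Delta\}\exp(-c_1\min\{r,1\}\beta^{-1})$ provided $c_1$ is small and $C_3$ large. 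So the substantive case is small $r$, where $C_2 r\beta^{-1}$ lies below the typical order of $|\mathcal{D}_j(\sigma)|$, a one-cut bound is useless, and the hypothesis $\beta\Delta\ge C_1\log(1/r)$ must be exploited.

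For small $r$ I would estimate the upper tail of $N:=|\mathcal{R}_{j,\Delta}(\sigma)|=\sum_{k=j+\Delta}^n\mathbbm{1}[\sigma(k)\le j-\Delta]$ (and symmetrically $N':=|\mathcal{R}'_{j,\Delta}(\sigma)|$) through an exponential moment. Two inputs are needed. First, a first-moment bound: using the displacement tail estimate $\mathbb{P}_{n,\beta}(|\sigma(k)-k|\ge t)\le C\exp(-c\beta t)$ valid for $t\gtrsim\beta^{-1}$ --- part of the same circle of estimates underlying Propositions \ref{P2.1}--\ref{P2.3} --- with $t=k-(j-\Delta)\ge 2\Delta$ for each $k\ge j+\Delta$ and summing the resulting geometric series gives $\nu:=\mathbb{E}_{n,\beta}[N]\le C\beta^{-1}\exp(-c\beta\Delta)$, and likewise for $\mathbb{E}_{n,\beta}[N']$. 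Second, a decoupling of Poisson type, $\mathbb{E}_{n,\beta}[e^{\lambda N}]\le\exp\!\big(C\nu(e^{C\lambda}-1)\big)$ for $\lambda$ up to a fixed constant, which I would read off the hit-and-run description of Section \ref{Sect.2.1}: in its second step the symbols $\le j-\Delta$ are placed into positions by a uniform-without-replacement procedure, so that $N$ is a functional of negatively associated increments --- or one could extract it from a direct conditioning argument. Granting these, a Chernoff bound yields $\mathbb{P}_{n,\beta}(N\ge M)\le\exp(-cM)$ for $M:=C_2 r\beta^{-1}$, since the hypothesis $r\ge\exp(-C_1^{-1}\beta\Delta)$ forces $\nu\le C\beta^{-1}r^{cC_1}$, hence $M/\nu\ge c' C_2\,r^{1-cC_1}$, which can be made as large as we please by taking $C_1$ and then $C_2$ large; and $\exp(-cM)=\exp(-cC_2 r\beta^{-1})\le C_3\max\{\beta r^{-1},\Delta\}\exp(-c_1 r\beta^{-1})$ once $c_1\le cC_2$ and $C_3\max\{\beta r^{-1},\Delta\}\ge 1$ (which holds since $\Delta\ge C_1\beta^{-1}\ge 1$). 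The bound for $N'$ is obtained in the same way, or from the reflection $\sigma\mapsto(i\mapsto n+1-\sigma(n+1-i))$, which preserves $\mathbb{P}_{n,\beta}$ and interchanges $\mathcal{R}_{j,\Delta}$ with $\mathcal{R}'_{n+1-j,\Delta}$.

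The step I expect to be the real obstacle is the decoupling in the second input. A plain first-moment bound on $N$ does \emph{not} suffice in the borderline sub-regime $M\asymp\beta^{-1}$, where $\nu$ need not even be $o(1)$; one genuinely needs that $k$ long ``wrong-way'' crossings cost an $\exp(-\Omega(k))$ factor rather than the $\Theta(1/k!)$ that a crude inclusion argument would give. The hit-and-run construction morally supplies this near-independence, but the sets of positions eligible to receive the small symbols are themselves random and positively correlated across symbols, so turning ``sequential uniform placement'' into an honest sub-Poisson exponential-moment estimate --- for instance via negative association, or a stochastic-domination reduction to independent Bernoulli variables --- is where the real work lies. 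Proving the uniform one-point displacement bound $\mathbb{P}_{n,\beta}(|\sigma(k)-k|\ge t)\le C\exp(-c\beta t)$ used in the first input is a comparable, though by now fairly standard, ingredient.
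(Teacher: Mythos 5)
First, a point of reference: this paper does not prove Proposition \ref{P2.3} at all --- it is imported verbatim from \cite{Zhong2} (Proposition 5.3.3 there). The only window the present paper gives onto the method is the appendix proof of Proposition \ref{P2.1_n}, which proceeds by a combinatorial pairing-and-switching injection $T$ on $S_n$ with explicit energy bookkeeping: each ``wrong-way'' point is paired with a compensating point, each switch lowers $H(\cdot,Id)$ by an amount read off from (\ref{E1}), and injectivity of $T$ converts the total energy gain into a probability bound. For $\mathcal{R}_{j,\Delta}$ every such point sits at distance at least $2\Delta$ from the cut, so each switch gains at least order $\Delta$ in energy, which is how the $e^{-c\beta\Delta}$-per-point cost (and hence the hypothesis $r\ge e^{-C_1^{-1}\beta\Delta}$) enters. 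That argument handles arbitrarily many crossing points simultaneously and needs no independence or decoupling input. Your route is structurally different, and your first step (the reduction of the case $r\ge r_0$ for a constant $r_0$ to Proposition \ref{P2.1} via $\mathcal{R}_{j,\Delta}\subseteq$ the crossings at the cut $j$ and (\ref{Eq5.1.1})) is correct and clean.

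The problem is that for small $r$ --- which you correctly identify as the substantive case --- both of your inputs are unproven, and neither is routine. Input (ii), the sub-Poisson exponential moment $\mathbb{E}[e^{\lambda N}]\le\exp(C\nu(e^{C\lambda}-1))$, you concede outright; as it stands this is the entire content of the proposition in the regime $C_2r\beta^{-1}\lesssim\beta^{-1}$, so the proposal is an outline rather than a proof. Input (i) is also not available: the displacement bound $\mathbb{P}_{n,\beta}(|\sigma(k)-k|\ge t)\le Ce^{-c\beta t}$ you invoke is not what this circle of estimates provides. Proposition \ref{P3.4} gives $8\lceil u\rceil e^{-v/8}+2e^{-u/v}$ with $v\ge C_0\max\{\beta^{-1},1\}$; taking the minimal $v$ leaves an additive term $\lceil u\rceil e^{-c\beta^{-1}}$ that does not decay in $u$, and summing it over the up-to-$n$ indices $k\ge j+\Delta$ produces a contribution of order $n^2e^{-c\beta^{-1}}$, which is not bounded uniformly in $n$; optimizing $v$ instead yields only stretched-exponential decay $e^{-c\sqrt{\beta u}}$ (consistent with Proposition \ref{P3.6}, which even for $\beta\ge 2$ only gives $e^{-\sqrt{u}/2}$). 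With $\nu\lesssim\beta^{-1}e^{-c\sqrt{\beta\Delta}}$ in place of $\beta^{-1}e^{-c\beta\Delta}$, the hypothesis $r\ge e^{-C_1^{-1}\beta\Delta}$ no longer forces $\nu\lesssim M$: for small $r$ one gets $\nu\gtrsim\beta^{-1}e^{-c\sqrt{C_1\log(1/r)}}\gg r\beta^{-1}=M/C_2$, and the Chernoff step collapses because the threshold sits below the mean. So the clean first-moment bound $\mathbb{E}|\mathcal{R}_{j,\Delta}|\le C\beta^{-1}e^{-c\beta\Delta}$ is itself a statement of essentially the same depth as the proposition, and proving it by the available tools leads you back to the switching argument. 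To repair the proof you would either need to establish both inputs from scratch, or adapt the pairing-and-switching injection of the appendix to the region $\{x\ge j+\Delta,\ y\le j-\Delta\}$, which is evidently what \cite{Zhong2} does.
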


\section{Proofs of the main results}\label{Sect.3}

In this section, we present the proofs of the main results stated in Section \ref{Sect.1.1}. Throughout this section, we assume that $n\in \mathbb{N}^{*}$ and $\beta>0$. 

\subsection{Arc evolution for the hit and run algorithm}\label{Sect.3.1}

The proofs of our main results rely on keeping track of the sampling process of the hit and run algorithm for the $L^1$ model. We first sample $\sigma_0$ from $\mathbb{P}_{n,\beta}$, and then run one iteration of the hit and run algorithm for the $L^1$ model (as introduced in Section \ref{Sect.2.1}) started from $\sigma_0$ to obtain $\sigma$. This can be specifically done as follows:
\begin{itemize}
    \item[(a)] For each $j\in [n]$, we independently sample $u_j$ from the uniform distribution on $[0, e^{-2\beta (\sigma_0(j)-j)_{+}}]$. Let $b_j:=j-\log(u_j)\slash (2\beta)$ for each $j\in [n]$.
    \item[(b)] For each $j\in [n]$, let
    \begin{equation}\label{DefinitionNj}
        N_j:=|\{k\in [n]:b_k\geq j\}|-n+j.
    \end{equation}
    We sample $\sigma$ uniformly from the set 
\begin{equation*}
    \{\tau\in S_n: \tau(j)\leq b_j\text{ for every }j\in [n]\}
\end{equation*}
through the following procedure. Look at the $N_n$ integers $j\in [n]$ with $b_j\geq n$, and pick $Y_n$ uniformly from these integers; then look at the $N_{n-1}$ remaining integers $j\in[n]$ with $b_j\geq n-1$ (with $Y_n$ deleted from the list), and pick $Y_{n-1}$ uniformly from these integers; and so on. In this way we obtain $\{Y_j\}_{j=1}^n$. Finally, we let $\sigma\in S_n$ be such that $\sigma(Y_j)=j$ for every $j\in [n]$.
\end{itemize}

We denote by $\mathcal{B}_n$ the $\sigma$-algebra generated by $\sigma_0$ and $\{b_j\}_{j=1}^n$. For each $l\in \{0\}\cup [n-1]$, we denote by $\mathcal{B}_l$ the $\sigma$-algebra generated by $\sigma_0$, $\{b_j\}_{j=1}^n$, and $\{Y_j\}_{j=l+1}^n$. We also introduce the following definition.

\begin{definition}\label{Defi3.1}
Consider any $l\in [n]$ and any sequence $\mathbf{a}=(a_1,a_2,\cdots,a_s)$ (where $s\in \mathbb{N}^{*}$ and $a_1,a_2,\cdots,a_s$ are distinct integers from $[n]$). We denote by $|\mathbf{a}|=s$ the number of elements in the sequence $\textbf{a}$. We say that $\mathbf{a}$ forms an \textbf{open arc at step $l$} if $a_1,\cdots,a_{s-1}\geq l$, $a_s\leq l-1$, $a_1\notin \{Y_{l},\cdots,Y_n\}$, and $Y_{a_i}=a_{i+1}$ for every $i\in [s-1]$. In this case, we call $a_1$ the \textbf{head} of the open arc and $a_s$ the \textbf{tail} of the open arc. We say that $\textbf{a}$ forms a \textbf{closed arc at step $l$} if $a_1,\cdots,a_s\geq l$, $Y_{a_i}=a_{i+1}$ for every $i\in [s-1]$, and $Y_{a_s}=a_1$. Both open arcs and closed arcs are referred to as \textbf{arcs}. 

Two sequences that are both closed arcs at step $l$ and differ only by a sequence of circular shifts are viewed as the same closed arc. We denote by $\mathcal{A}_O(l)$ the set of open arcs at step $l$ and $\mathcal{A}_C(l)$ the set of closed arcs at step $l$. 
\end{definition}

Note that for any $l\in [n]$, the collection of open and closed arcs at step $l$ forms a partition of $[n]$. Moreover, $\mathcal{A}_O(1)=\emptyset$ and each closed arc in $\mathcal{A}_C(1)$ corresponds to a cycle of $\sigma$. We define the set of open arcs at step $n+1$ as $\mathcal{A}_O(n+1):=\{(1),(2),\cdots,(n)\}$ and the set of closed arcs at step $n+1$ as $\mathcal{A}_C(n+1):=\emptyset$.

Now we study the evolution of open and closed arcs in part (b) of the above sampling process. We sequentially consider $l=n,n-1,\cdots,1$. The transition from $\mathcal{A}_O(l+1)$ and $\mathcal{A}_O(l+1)$ to $\mathcal{A}_O(l)$ and $\mathcal{A}_O(l)$ (referred to as ``step $l$ transition'' hereafter) can be described as follows. Suppose that $Y_{n}, Y_{n-1}, \cdots, Y_{l+1}$ have been sampled and $Y_1,\cdots,Y_{l}$ are not yet generated; to determine $Y_l$, we sample $Y_l$ uniformly from $\{j\in [n]: b_j\geq l\}\backslash \{Y_{l+1},\cdots,Y_{n}\}$. This yields the following changes for open and closed arcs (note that each of $l$ and $Y_l$ is contained in an open arc at step $l+1$): 
\begin{itemize}
    \item If $l$ and $Y_l$ are contained in the same open arc at step $l+1$ (denoted by $\mathbf{a}=(a_1,\cdots,a_s)$), then necessarily $a_1=Y_l$ and $a_s=l$, and at step $l$ $\mathbf{a}$ forms a closed arc.
    \item If $l$ and $Y_l$ are contained in two different open arcs at step $l+1$ (denoted by $\mathbf{a}=(a_1,\cdots,a_s)$ and $\mathbf{c}=(c_1,\cdots,c_t)$, respectively), then necessarily $a_s=l$ and $c_1=Y_l$, and at step $l$ $\mathbf{a}$ and $\mathbf{c}$ are merged into a new open arc $(a_1,\cdots,a_s,c_1,\cdots,c_t)$.  
\end{itemize}

For any $j\in [n]$, we define $W_t^{(j)}\in \{0,1\}$ and $Z_t^{(j)}\in [n]$ for any $t\in \mathbb{N}$ as follows. Let $W_0^{(j)}=1$ and $Z_0^{(j)}=j$. Below we define $W_t^{(j)}\in \{0,1\}$ and $Z_t^{(j)}\in [n]$ for any $t\in\mathbb{N}^{*}$ inductively. For any $t \in \mathbb{N}^{*}$, suppose that $W_{t-1}^{(j)}\in \{0,1\}$ and $Z_{t-1}^{(j)}\in [n]$ have been defined. If $W_{t-1}^{(j)}=0$, we let $W_t^{(j)}=0$ and $Z_t^{(j)}=Z_{t-1}^{(j)}$. Below we consider the case where $W_{t-1}^{(j)}=1$. If $j$ is contained in a closed arc at step $Z_{t-1}^{(j)}$, we let $W_t^{(j)}=0$ and $Z_t^{(j)}=Z_{t-1}^{(j)}$; if $j$ is contained in an open arc at step $Z_{t-1}^{(j)}$, we take $W_t^{(j)}=1$ and let $Z_t^{(j)}$ be the tail of this open arc.

In the rest of this subsection, we state and prove some preliminary results that will be used in the proofs of our main results.

The following lemma on $\{b_j\}_{j=1}^n$ is straightforward.

\begin{lemma}\label{L3.1}
For any $j\in [n]$, we have $b_j\geq \max\{j,\sigma_0(j)\}$ and
\begin{equation*}
    \mathbb{P}(b_j\geq x|\sigma_0)=e^{-2\beta (x-\max\{j,\sigma_0(j)\})_{+}}, \quad  \forall x\in\mathbb{R}.
\end{equation*}
\end{lemma}

Note that by (\ref{DefinitionNj}) and Lemma \ref{L3.1}, for any $j\in [n]$, 
\begin{equation}\label{Basic_n}
  N_j=1+\sum_{k=1}^{j-1}\mathbbm{1}_{b_k\geq j}\geq 1.
\end{equation}
Propositions \ref{P3.1}-\ref{P3.5} below bound $\{N_j\}_{j=1}^n$. 

\begin{proposition}\label{P3.1}
For any $j\in [n]$ such that $j\geq \beta^{-1}$, we have
\begin{equation}\label{Eq3.6}
    \mathbb{P}(N_j\leq e^{-2}\beta^{-1} \slash 4 )\leq \exp(-c\beta^{-1}),
\end{equation}
where $c$ is a positive absolute constant.
\end{proposition}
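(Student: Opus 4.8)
The plan is to analyze the random variable $N_j = 1 + \sum_{k=1}^{j-1} \mathbbm{1}_{b_k \geq j}$ from \eqref{Basic_n} and show that it is unlikely to be much smaller than its typical value of order $\beta^{-1}$. The key point is that, conditionally on $\sigma_0$, the indicators $\mathbbm{1}_{b_k \geq j}$ for $k \in [j-1]$ are independent, so $N_j - 1$ is a sum of independent Bernoulli random variables and standard concentration (Chernoff-type lower tail bounds) applies. By Lemma \ref{L3.1}, for $k < j$ we have $\mathbb{P}(b_k \geq j \mid \sigma_0) = e^{-2\beta(j - \max\{k, \sigma_0(k)\})_+}$. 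The strategy is to produce a large \emph{deterministic} lower bound on $\sum_{k=1}^{j-1}\mathbb{P}(b_k \geq j \mid \sigma_0)$ that holds for \emph{every} realization of $\sigma_0$, and then apply the Chernoff bound conditionally on $\sigma_0$ and average out.

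First I would bound the conditional mean from below. Restrict attention to indices $k$ in the window $k \in \{j - \lceil \beta^{-1}\rceil, \dots, j-1\}$ (assuming $j \geq \beta^{-1}$ so this window lies in $[j-1]$, possibly starting at $1$). For such $k$, if additionally $\sigma_0(k) \leq j - 1$, then $\max\{k,\sigma_0(k)\} \leq j - 1$, so $(j - \max\{k,\sigma_0(k)\})_+ \leq \lceil \beta^{-1}\rceil \leq \beta^{-1} + 1$, giving $\mathbb{P}(b_k \geq j \mid \sigma_0) \geq e^{-2\beta(\beta^{-1}+1)} = e^{-2-2\beta} \geq$ (a constant) for $\beta$ bounded, and in general $\geq e^{-2}e^{-2\beta}$; but we want a clean absolute constant, so I would instead shrink the window to $k \in \{j - \lfloor (2\beta)^{-1}\rfloor, \dots, j-1\}$ so that $(j - \max\{k,\sigma_0(k)\})_+ \leq (2\beta)^{-1}$ whenever $\sigma_0(k) < j$, yielding $\mathbb{P}(b_k\geq j\mid\sigma_0) \geq e^{-1}$. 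The number of $k$ in this window with $\sigma_0(k) \geq j$ is at most $|\mathcal{D}_{j-1}(\sigma_0)|$-type count — but more simply, among any set of $m$ consecutive indices ending at $j-1$, the values $\sigma_0(k)$ are distinct, so at most $n - (j-1) \leq n$ of them exceed $j-1$; that is too weak. The cleaner route: among the $m := \lfloor (2\beta)^{-1}\rfloor$ indices $k \in \{j-m,\dots,j-1\}$, the number with $\sigma_0(k) \geq j$ equals $|\{k : j - m \leq k \leq j-1, \sigma_0(k) \geq j\}| \leq |\mathcal{D}_{j-1}(\sigma_0)|$ only if these $k \leq j-1$, which holds. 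So deterministically the conditional mean is at least $e^{-1}(m - |\mathcal{D}_{j-1}(\sigma_0)|)$, and I would invoke Proposition \ref{P2.1} to say $|\mathcal{D}_{j-1}(\sigma_0)| \leq m/2$ except on an event of probability $\leq 3e^{-cm} \leq \exp(-c\beta^{-1})$ (valid once $m \geq C_0\max\{\beta^{-1},1\}$; if $\beta$ is large this regime needs separate, easy handling since then $N_j \geq 1$ already exceeds $e^{-2}\beta^{-1}/4 < 1$).

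On the complementary event, the conditional mean of $N_j - 1$ is at least $e^{-1}m/2 \geq e^{-1}(4\beta)^{-1} - O(1) \geq c'\beta^{-1}$, and a Chernoff lower-tail bound for sums of independent Bernoullis (e.g. $\mathbb{P}(S \leq \mu/2) \leq e^{-\mu/8}$) gives $\mathbb{P}(N_j \leq e^{-2}\beta^{-1}/4 \mid \sigma_0) \leq \exp(-c''\beta^{-1})$, after checking the constants line up so that $e^{-2}\beta^{-1}/4$ is below half the conditional mean. Combining with the bad event from the previous step and a union bound gives \eqref{Eq3.6}. The main obstacle is bookkeeping the constants so that the threshold $e^{-2}\beta^{-1}/4$ genuinely sits below half the conditional mean while keeping the window large enough for Proposition \ref{P2.1} to apply; the factor $e^{-2}$ in the statement strongly suggests the intended window is essentially $\{j - \beta^{-1}, \dots, j-1\}$ (so that $(j-k)_+ \leq \beta^{-1}$ and $\mathbb{P}(b_k \geq j\mid\sigma_0) \geq e^{-2}$), and I would match that choice, splitting off the two trivial regimes $\beta^{-1} < 1$ (where the claim is about $N_j \geq $ something $< 1$, hence follows from $N_j \geq 1$) and $\beta^{-1}$ not much larger than the Proposition \ref{P2.1} threshold.
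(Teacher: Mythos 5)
Your overall strategy is the paper's: condition on $\sigma_0$, note that $N_j-1$ is a sum of independent indicators, lower-bound the conditional success probabilities over a window of $\asymp\beta^{-1}$ indices just below $j$, and apply a Chernoff/Hoeffding lower-tail bound, with the regime $\beta>1/2$ dispatched trivially via $N_j\geq 1$. However, there is one step that both fails as written and is unnecessary. You treat indices $k$ in the window with $\sigma_0(k)\geq j$ as problematic and propose to excise them by bounding their number by $|\mathcal{D}_{j-1}(\sigma_0)|$ and invoking Proposition \ref{P2.1} at the level $r=m/2$ with $m=\lfloor(2\beta)^{-1}\rfloor$. Proposition \ref{P2.1} only applies for $r\geq C_0\max\{\beta^{-1},1\}$, where $C_0$ is an unspecified absolute constant; since $m/2\approx\beta^{-1}/4$, the proposition says nothing at this threshold unless $C_0\leq 1/4$, which you cannot assume. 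Your parenthetical suggests you think this is only an issue for large $\beta$, but it is an issue for every $\beta$: the threshold you need is a fixed small multiple of $\beta^{-1}$, below the proposition's range.

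The fix is to drop the detour entirely. By Lemma \ref{L3.1}, $b_k\geq\max\{k,\sigma_0(k)\}$ and $\mathbb{P}(b_k\geq j\mid\sigma_0)=e^{-2\beta(j-\max\{k,\sigma_0(k)\})_{+}}$; since $\max\{k,\sigma_0(k)\}\geq k$, we have $(j-\max\{k,\sigma_0(k)\})_{+}\leq(j-k)_{+}\leq\beta^{-1}$ for every $k$ in the window, so \emph{every} such index has conditional success probability at least $e^{-2}$, with no condition on $\sigma_0(k)$. The indices you worried about, with $\sigma_0(k)\geq j$, in fact satisfy $b_k\geq j$ with probability one — a large $\sigma_0(k)$ only helps. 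Your closing paragraph essentially notices this (you write that $(j-k)_{+}\leq\beta^{-1}$ alone gives $\mathbb{P}(b_k\geq j\mid\sigma_0)\geq e^{-2}$), but you still plan to keep a ``Proposition \ref{P2.1} threshold'' regime, which should not appear at all. With that correction, the argument is exactly the paper's proof.
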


\begin{proof}

Consider any $j\in [n]$ such that $j\geq \beta^{-1}$. If $\beta> 1\slash 2$, then $e^{-2}\beta^{-1}\slash 4<1$; by (\ref{Basic_n}), $\mathbb{P}(N_j\leq e^{-2}\beta^{-1}\slash 4)=\mathbb{P}(N_j=0)=0$ and (\ref{Eq3.6}) holds. Below we assume that $\beta\leq 1\slash 2$; note that this implies $\lfloor \beta^{-1}\rfloor \geq 
 \beta^{-1}\slash 2$.

By Lemma \ref{L3.1}, for any $k\in \{j-\lfloor \beta^{-1}\rfloor +1,\cdots, j\}$, 
\begin{equation*}
    \mathbb{P}(b_k\geq j|\sigma_0)=e^{-2\beta(j-\max\{k,\sigma_0(k)\})_{+}}\geq e^{-2}. 
\end{equation*}
By (\ref{Basic_n}) and Hoeffding's inequality (see e.g. \cite[Theorem 2.8]{BLM}), for any $t\geq 0$, 
\begin{equation*}
    \mathbb{P}(N_j\leq (e^{-2}-t)\lfloor \beta^{-1}\rfloor|\sigma_0)\leq  \exp(-2\lfloor \beta^{-1}\rfloor t^2)\leq \exp(-\beta^{-1}t^2).
\end{equation*}
Taking $t=e^{-2}\slash 2$, we obtain that
\begin{equation*}
    \mathbb{P}(N_j\leq e^{-2}\beta^{-1}\slash 4|\sigma_0)\leq \exp(-c\beta^{-1}),
\end{equation*}
hence
\begin{equation*}
    \mathbb{P}(N_j\leq e^{-2}\beta^{-1}\slash 4)=\mathbb{E}[\mathbb{P}(N_j\leq e^{-2}\beta^{-1}\slash 4|\sigma_0)]\leq \exp(-c\beta^{-1}).
\end{equation*}

\end{proof}

\begin{proposition}\label{P3.1.v}
For any $j\in [n]$ such that $j\leq \beta^{-1}$, we have
\begin{equation*}
   \mathbb{P}(N_j\leq e^{-2} j\slash 2)\leq \exp(-cj),
\end{equation*}
where $c$ is a positive absolute constant.
\end{proposition}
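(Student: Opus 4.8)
The plan is to mirror the proof of Proposition \ref{P3.1} almost verbatim, with the only change being that in the small-$j$ regime we cannot afford to throw away a $\lfloor\beta^{-1}\rfloor$-sized block of indices, so instead we use all of the indices $k\in[j-1]$ together with the deterministic contribution $1$ coming from (\ref{Basic_n}). First I would recall from (\ref{Basic_n}) that $N_j=1+\sum_{k=1}^{j-1}\mathbbm{1}_{b_k\ge j}$. If $j=1$ the sum is empty, $N_1=1>e^{-2}/2$, and the claimed bound holds trivially (the right-hand side is at most $1$). So I would assume $j\ge 2$.

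Next, by Lemma \ref{L3.1}, for every $k\in[j-1]$ we have
\begin{equation*}
    \mathbb{P}(b_k\ge j\mid\sigma_0)=e^{-2\beta(j-\max\{k,\sigma_0(k)\})_{+}}\ge e^{-2\beta(j-1)}\ge e^{-2},
\end{equation*}
where the last inequality uses $j\le\beta^{-1}$ (so $\beta(j-1)<\beta j\le 1$). Hence, conditionally on $\sigma_0$, the random variable $\sum_{k=1}^{j-1}\mathbbm{1}_{b_k\ge j}$ stochastically dominates a sum of $j-1$ independent Bernoulli($e^{-2}$) variables (the $b_k$ are conditionally independent given $\sigma_0$), and so has conditional mean at least $e^{-2}(j-1)$. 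Applying Hoeffding's inequality (\cite[Theorem 2.8]{BLM}) to this conditionally independent sum, for any $t\ge 0$,
\begin{equation*}
    \mathbb{P}\big(N_j\le 1+(e^{-2}-t)(j-1)\,\big|\,\sigma_0\big)\le\exp\big(-2(j-1)t^2\big).
\end{equation*}
Taking $t=e^{-2}/2$ gives $1+(e^{-2}-t)(j-1)=1+e^{-2}(j-1)/2\ge e^{-2}j/2$ (here I should check $1\ge e^{-2}j/2-e^{-2}(j-1)/2=e^{-2}/2$, which is clear), so
\begin{equation*}
    \mathbb{P}\big(N_j\le e^{-2}j/2\,\big|\,\sigma_0\big)\le\exp\big(-(j-1)e^{-4}/2\big)\le\exp(-cj),
\end{equation*}
the last step using $j-1\ge j/2$ for $j\ge 2$. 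Averaging over $\sigma_0$ finishes the proof.

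There is essentially no obstacle here: the argument is a routine adaptation of the proof of Proposition \ref{P3.1}. The only points requiring care are the bookkeeping around the additive $+1$ from (\ref{Basic_n}) (which must be tracked so that the threshold lands at $e^{-2}j/2$ rather than $e^{-2}(j-1)/2$), the separate trivial treatment of $j=1$, and confirming that the inequality $\mathbb{P}(b_k\ge j\mid\sigma_0)\ge e^{-2}$ indeed holds for \emph{all} $k\le j-1$ in this regime (it does, since $\max\{k,\sigma_0(k)\}\ge k$ could be as large as $n$, but $(j-\max\{k,\sigma_0(k)\})_{+}\le (j-k)_+\le j-1$ regardless, and $\beta(j-1)\le 1$). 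Conditioning on $\sigma_0$ throughout handles the fact that the $b_k$ are only conditionally independent.
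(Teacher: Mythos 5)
Your proof is correct and follows essentially the same route as the paper: condition on $\sigma_0$, lower-bound each $\mathbb{P}(b_k\ge j\mid\sigma_0)$ by $e^{-2}$ using $j\le\beta^{-1}$, and apply Hoeffding's inequality with $t=e^{-2}/2$. The only cosmetic difference is that the paper absorbs the additive $1$ from (\ref{Basic_n}) into a sum of $j$ indicators (using $b_j\ge j$), whereas you keep it separate and run Hoeffding on $j-1$ terms with the extra bookkeeping and the trivial case $j=1$; both are fine.
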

\begin{proof}

Consider any $j\in [n]$ such that $j\leq \beta^{-1}$. By Lemma \ref{L3.1}, for any $k\in [j]$,
\begin{equation*}
    \mathbb{P}(b_k\geq j |\sigma_0)= e^{-2\beta (j-\max\{k,\sigma_0(k)\})_{+}}\geq e^{-2\beta (j-k)}\geq e^{-2}. 
\end{equation*}
By (\ref{Basic_n}) and Hoeffding's inequality, for any $t\geq 0$, 
\begin{equation*}
    \mathbb{P}(N_j\leq (e^{-2}-t)j|\sigma_0)\leq \exp(-2 j t^2).
\end{equation*}
Taking $t=e^{-2}\slash 2$, we obtain that
\begin{equation*}
    \mathbb{P}(N_j\leq e^{-2} j\slash 2|\sigma_0) \leq \exp(-cj),
\end{equation*}
hence
\begin{equation*}
    \mathbb{P}(N_j\leq e^{-2} j\slash 2)=\mathbb{E}[\mathbb{P}(N_j\leq e^{-2} j\slash 2|\sigma_0)]\leq \exp(-cj).
\end{equation*}

\end{proof}

\begin{proposition}\label{P3.3}
There exists a positive absolute constant $C_0\geq 1$, such that for any $j\in [n]$ and any $u\geq C_0\max\{\beta^{-1},1\}$, we have
\begin{equation*}
    \mathbb{P}(N_j\geq u)\leq 4\exp(-u\slash 8). 
\end{equation*}
\end{proposition}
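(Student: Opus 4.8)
The plan is to exploit the formula $N_j = 1 + \sum_{k=1}^{j-1}\mathbbm{1}_{b_k\geq j}$ from (\ref{Basic_n}) together with the conditional law of the $b_k$ given in Lemma \ref{L3.1}. The indicators $\mathbbm{1}_{b_k\geq j}$ are conditionally independent given $\sigma_0$, with conditional success probabilities $p_k := e^{-2\beta(j-\max\{k,\sigma_0(k)\})_+}$. So conditionally on $\sigma_0$, $N_j - 1$ is a sum of independent Bernoulli variables, and a Chernoff/Bernstein-type upper tail bound applies. The key quantity to control is $M := \sum_{k=1}^{j-1} p_k = \mathbb{E}[N_j - 1 \mid \sigma_0]$.

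First I would bound $M$ uniformly over $\sigma_0$. Split the sum according to whether $\sigma_0(k)\geq k$ or not. For those $k$ with $\sigma_0(k)<k$ we have $p_k = e^{-2\beta(j-k)_+}\leq e^{-2\beta(j-k)_+}$, and summing $\sum_{k=1}^{j-1} e^{-2\beta(j-k)} = \sum_{m=1}^{j-1} e^{-2\beta m} \leq \tfrac{1}{e^{2\beta}-1}\leq \tfrac{1}{2\beta}$ (using $e^x-1\geq x$), which is $\leq \max\{\beta^{-1},1\}$. For those $k$ with $\sigma_0(k)\geq k$, we have $p_k = e^{-2\beta(j-\sigma_0(k))_+}$; since $\sigma_0$ is a permutation, the values $\sigma_0(k)$ over such $k$ are distinct integers, so $\sum_k e^{-2\beta(j-\sigma_0(k))_+}\leq \sum_{v\in[n]} e^{-2\beta(j-v)_+}\leq 1 + \sum_{m\geq 1} e^{-2\beta m}\leq 1+\tfrac{1}{2\beta}$. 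Altogether $M\leq C\max\{\beta^{-1},1\}$ for an absolute constant $C$; absorbing the $+1$ from $N_j$, we get $\mathbb{E}[N_j\mid\sigma_0]\leq C'\max\{\beta^{-1},1\}$ with $C'$ absolute. Choosing $C_0$ in the statement large enough that $u\geq C_0\max\{\beta^{-1},1\}\geq 2C'\max\{\beta^{-1},1\}\geq 2\,\mathbb{E}[N_j\mid\sigma_0]$ puts us comfortably in the regime where the deviation $u$ exceeds twice the mean.

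Then I would apply a standard upper-tail Chernoff bound for sums of independent Bernoullis: for a sum $X$ of independent Bernoulli variables with mean $\mu$ and any $u\geq 2\mu$, one has $\mathbb{P}(X\geq u)\leq e^{-u/8}$ (this follows from $\mathbb{P}(X\geq u)\leq \inf_{\lambda>0} e^{-\lambda u}\mathbb{E}[e^{\lambda X}]\leq e^{-\lambda u + (e^\lambda-1)\mu}$, taking $\lambda=1$ and using $(e-1)\mu \leq (e-1)u/2 \leq u/2 + u/8$, so the exponent is $\leq -u + u/2 + u/8 = -3u/8 \leq -u/8$; one can be slightly more careful to get the constant $8$ cleanly, or just enlarge $C_0$). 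Applying this conditionally on $\sigma_0$ to $X = N_j - 1$ with threshold $u - 1$, and then noting $u-1\geq u/2$ for $u\geq 2$ (which holds once $C_0\geq 2$), gives $\mathbb{P}(N_j\geq u\mid\sigma_0)\leq e^{-(u-1)/8}\leq 4e^{-u/8}$ after adjusting constants — the factor $4$ giving generous room. Finally, take expectation over $\sigma_0$ to remove the conditioning.

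The main obstacle is the uniform bound on $M = \mathbb{E}[N_j\mid\sigma_0]$: one must be careful that the bound holds for \emph{every} realization of $\sigma_0$, not just on average, and the permutation structure (distinctness of the values $\sigma_0(k)$) is exactly what makes the sum over the ``ascent'' indices $k$ with $\sigma_0(k)\geq k$ geometrically summable rather than linearly growing. Once that deterministic bound is in hand, everything reduces to a routine one-parameter Chernoff estimate, and the slack built into the hypothesis $u\geq C_0\max\{\beta^{-1},1\}$ and the constant $4$ in the conclusion makes the bookkeeping of constants painless.
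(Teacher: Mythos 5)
There is a genuine gap at the heart of your argument: the claimed uniform bound $M=\mathbb{E}[N_j-1\mid\sigma_0]\leq C\max\{\beta^{-1},1\}$ for \emph{every} realization of $\sigma_0$ is false. For the indices $k\leq j-1$ with $\sigma_0(k)\geq j$ you have $p_k=e^{-2\beta(j-\sigma_0(k))_+}=1$, and your estimate $\sum_{v\in[n]}e^{-2\beta(j-v)_+}\leq 1+\sum_{m\geq 1}e^{-2\beta m}$ silently counts only one value of $v$ with $(j-v)_+=0$, when in fact every $v\geq j$ contributes $1$, so the correct bound is $(n-j+1)+\tfrac{1}{2\beta}$. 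Concretely, the set of offending indices is exactly $\mathcal{D}_{j-1}(\sigma_0)=\{k\leq j-1:\sigma_0(k)\geq j\}$, whose cardinality can be as large as $\min\{j-1,n-j+1\}$ for a worst-case permutation (e.g.\ the reversal), so no deterministic bound of the desired order exists. The distinctness of the values $\sigma_0(k)$ only helps for the indices with $\sigma_0(k)\leq j-1$, where the exponents are genuinely positive.

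The fix requires a probabilistic, not deterministic, control of $|\mathcal{D}_{j-1}(\sigma_0)|$ under $\mathbb{P}_{n,\beta}$, which is precisely the content of Proposition \ref{P2.1} (and of the cited external lemma the paper's own proof leans on); that bound in turn rests on a switching/injection argument on the Mallows measure of the kind carried out in the appendix for Proposition \ref{P2.1_n}, and is not a routine Chernoff estimate. The correct structure is the one used in the paper's proof of Proposition \ref{P3.5}: write $N_j\leq 1+|\mathcal{D}_{j-1}(\sigma_0)|+\sum_{k\in[j-1]:\sigma_0(k)\leq j-1}\mathbbm{1}_{b_k\geq j}$, bound $\mathbb{P}(|\mathcal{D}_{j-1}(\sigma_0)|\geq u/2)$ by the exponential tail of Proposition \ref{P2.1}, and apply your (essentially correct) conditional Chernoff argument only to the second sum, whose conditional mean is indeed at most $\beta^{-1}$ uniformly in $\sigma_0$ by the geometric-summation and distinctness argument you describe. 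As written, your proof omits the first, and hardest, of these two ingredients.
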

\begin{proof}

If $\beta\in (0,1]$, the conclusion follows from \cite[Lemma 5.3.2]{Zhong2}. If $\beta>1$, the conclusion follows by modifying the proof of \cite[Lemma 5.3.2 and Proposition 5.3.1]{Zhong2} (replacing $\beta$ by $1$ in proper places).
 
\end{proof}

\begin{proposition}\label{P3.5}
Assume that $\beta\geq c_0$ for a fixed positive constant $c_0$. Then there exists a positive constant $C_0\geq 1$ that only depends on $c_0$, such that for any $j\in [n]$ and $u\in \mathbb{N}^{*}$,
\begin{equation*}
    \mathbb{P}(N_j\geq 1+u)\leq C_0\Big(\frac{2 e^{-2\beta}}{1-e^{-2\beta}}\Big)^{u}.
\end{equation*}
\end{proposition}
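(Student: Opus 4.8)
\textbf{Proof proposal for Proposition \ref{P3.5}.}

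The plan is to condition on $\sigma_0$ and estimate the conditional probability, then take expectation. By (\ref{Basic_n}) we have $N_j = 1 + \sum_{k=1}^{j-1}\mathbbm{1}_{b_k\geq j}$, so $N_j\geq 1+u$ if and only if at least $u$ of the indicators $\mathbbm{1}_{b_k\geq j}$ (for $k\in[j-1]$) equal $1$. Given $\sigma_0$, these indicators are independent, and by Lemma \ref{L3.1},
\begin{equation*}
\mathbb{P}(b_k\geq j\mid \sigma_0) = e^{-2\beta(j-\max\{k,\sigma_0(k)\})_{+}} \leq e^{-2\beta(j-k)} \quad\text{whenever } k\leq j-1 \text{ and } \sigma_0(k)\leq k,
\end{equation*}
but in general one only has $\mathbb{P}(b_k\geq j\mid\sigma_0)\leq 1$, so the naive bound must account for the points $k$ with $\sigma_0(k)$ large. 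The key point is that $\{k\in[j-1]: b_k\geq j\}$ splits into those $k$ with $\sigma_0(k)\leq j-1$ (which are ``well-behaved'') and those $k\in\mathcal{D}_{j-1}(\sigma_0)$, i.e. $k\leq j-1$ with $\sigma_0(k)\geq j$ (for which $b_k\geq\sigma_0(k)\geq j$ automatically). So I would write $N_j = 1 + |\mathcal{D}_{j-1}(\sigma_0)| + |\{k\in[j-1]: \sigma_0(k)\leq j-1,\ b_k\geq j\}|$.

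For the second term, Proposition \ref{P2.1_n} (applicable since $\beta\geq c_0$) gives $\mathbb{P}_{n,\beta}(|\mathcal{D}_{j-1}(\sigma_0)|\geq r)\leq C_0 e^{-2\beta r}$, which is geometric-type decay of exactly the right shape. For the third term, conditionally on $\sigma_0$ it is a sum of independent indicators with success probabilities bounded by $e^{-2\beta(j-k)}$ for distinct values $j-k\in\{1,2,\dots\}$; such a sum is stochastically dominated by a sum of independent Bernoulli variables with parameters $e^{-2\beta m}$, $m\geq 1$, whose tail I would control by a union bound over which $u'$ indices are the successful ones: for any subset of size $u'$ the probability is at most $\prod$ of the corresponding $e^{-2\beta m_i}$, and summing $\left(\sum_{m\geq 1} e^{-2\beta m}\right)^{u'} = \left(\tfrac{e^{-2\beta}}{1-e^{-2\beta}}\right)^{u'}$ over the number of successes (times an extra factor $2^{u'}$ absorbed from the binomial/inclusion bookkeeping, or more carefully via $\prod(1+p_m)\leq \exp(\sum p_m)$) yields a bound of the claimed form $\big(\tfrac{2e^{-2\beta}}{1-e^{-2\beta}}\big)^{u'}$. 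Then I would combine the two pieces: if $N_j\geq 1+u$, then either $|\mathcal{D}_{j-1}(\sigma_0)|\geq u/2$ or the third term is $\geq u/2$, and both events have probability at most $C\,\big(\tfrac{2e^{-2\beta}}{1-e^{-2\beta}}\big)^{u/2}$ up to constants depending on $c_0$; a final adjustment of constants (or splitting at $\lceil u/2\rceil$ and noting $(\cdot)^{u/2}\leq(\cdot)^{\lfloor u/2\rfloor}$) gives the stated inequality with a constant $C_0$ depending only on $c_0$.

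The main obstacle I anticipate is getting the geometric ratio in the exponent to come out as exactly $\tfrac{2e^{-2\beta}}{1-e^{-2\beta}}$ rather than something merely of the same order: one has to be careful with the inclusion–exclusion / union-bound step for the sum of independent indicators so that the combinatorial factor (the number of ways to choose which indicators fire, together with the ``geometric'' spacing of the success probabilities) is absorbed cleanly into the base of the exponential rather than into the prefactor. Using $\prod_{m\geq1}(1+p_m)\leq\exp\!\big(\sum_{m\geq1}p_m\big)$ together with a change of the summation variable should do it, but keeping the constant from $\mathcal{D}_{j-1}$ and the constant from the indicator sum consistent — and ensuring the split at $u/2$ does not degrade the base — is the fiddly part. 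The regime $\beta\geq c_0$ is essential here: it guarantees $e^{-2\beta}\leq e^{-2c_0}<1$ is bounded away from $1$, so the geometric series converges with a ratio bounded away from $1$ and all the constants can be taken to depend only on $c_0$.
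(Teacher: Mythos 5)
Your decomposition $N_j\leq 1+|\mathcal{D}_{j-1}(\sigma_0)|+X$ with $X:=\sum_{k\in[j-1]:\sigma_0(k)\leq j-1}\mathbbm{1}_{b_k\geq j}$ is exactly the paper's, and your bounds on the two pieces separately are essentially right (one small correction: the conditional success probabilities are $e^{-2\beta(j-\max\{k,\sigma_0(k)\})}$, and the exponents $j-\max\{k,\sigma_0(k)\}$ are \emph{not} distinct across $k$ — each value can occur twice, once from $k=m$ and once from $\sigma_0(k)=m$ — which is precisely where the $2$ in the numerator of the base comes from, rather than from ``binomial bookkeeping''). The genuine gap is in your combination step. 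Splitting the event as $\{|\mathcal{D}_{j-1}(\sigma_0)|\geq u/2\}\cup\{X\geq u/2\}$ yields a bound of the form $C\rho^{u/2}$ with $\rho=\tfrac{2e^{-2\beta}}{1-e^{-2\beta}}$, i.e.\ a bound with base $\rho^{1/2}$, not $\rho$. Since $\rho<1$ for $\beta$ large, $\rho^{u/2}/\rho^{u}=\rho^{-u/2}\to\infty$, so no ``final adjustment of constants'' can recover the stated inequality; the loss grows with $u$. This matters downstream: the proposition is used with $u=1$ to get $\mathbb{P}(N_j\geq 2)\leq Ce^{-2\beta}$ (in Proposition \ref{P3.6}, Lemma \ref{L3.9}, and (\ref{Eq3.98})), and the halved exponent would only give $Ce^{-\beta}$, destroying the $e^{-2\beta}$ factor in Theorem \ref{Thm1.3.1}.

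The fix is the convolution bound the paper uses: write
\begin{equation*}
\mathbb{P}(N_j\geq 1+u)\leq \sum_{v=0}^{u}\mathbb{P}(|\mathcal{D}_{j-1}(\sigma_0)|\geq u-v,\ X\geq v),
\end{equation*}
and exploit that your tail bound $\mathbb{P}(X\geq v\mid\sigma_0)\leq\rho^{v}$ is uniform in $\sigma_0$, so each summand factors as $\rho^{v}\,\mathbb{P}(|\mathcal{D}_{j-1}(\sigma_0)|\geq u-v)\leq C_1\rho^{v}e^{-2\beta(u-v)}=C_1\big(\tfrac{2}{1-e^{-2\beta}}\big)^{v}e^{-2\beta u}$. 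The remaining sum over $v$ is geometric with ratio $\tfrac{2}{1-e^{-2\beta}}\geq 2$, hence dominated by a constant times its $v=u$ term, giving $C_0\rho^{u}$ with $C_0$ depending only on $c_0$. The point is that both tails carry the same $e^{-2\beta}$ per unit, so the convolution preserves the base, whereas the crude max-split halves it.
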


\begin{proof}

Recall Definition \ref{Def2.2}. By (\ref{Basic_n}), we have   
\begin{equation*}
    N_j\leq  1+|\mathcal{D}_{j-1}(\sigma_0)|+\sum_{\substack{k\in [j-1]:\\\sigma_0(k)\leq j-1}}\mathbbm{1}_{b_k\geq j}.
\end{equation*}
Let $X:=\sum\limits_{\substack{k\in [j-1]:\\\sigma_0(k)\leq j-1}}\mathbbm{1}_{b_k\geq j}$. We have
\begin{equation}\label{Eq3.82}
    \mathbb{P}(N_j\geq 1+u)\leq \sum_{v=0}^u \mathbb{P}(|\mathcal{D}_{j-1}(\sigma_0)|\geq u-v, X\geq v).
\end{equation}

For any $v\in \mathbb{N}^{*}$, by Lemma \ref{L3.1}, we have
\begin{eqnarray}\label{Eq3.81}
   \mathbb{P}(X\geq v|\sigma_0) &\leq& \sum_{\substack{k_1,\cdots,k_v\in [j-1]:\\\sigma_0(k_1),\cdots,\sigma_0(k_v)\leq j-1,\\k_1<\cdots<k_v}}\mathbb{P}(b_{k_1}\geq j,\cdots,b_{k_v}\geq j|\sigma_0) \nonumber\\
   &=& \sum_{\substack{k_1,\cdots,k_v\in [j-1]:\\\sigma_0(k_1),\cdots,\sigma_0(k_v)\leq j-1,\\k_1<\cdots<k_v}}\prod_{t=1}^v e^{-2\beta(j-\max\{k_t,\sigma_0(k_t)\})_{+}} \nonumber\\
   &\leq& \Big(2\sum_{k=1}^{n} e^{-2\beta k}\Big)^v\leq \Big(\frac{2 e^{-2\beta}}{1-e^{-2\beta}}\Big)^v.
\end{eqnarray}
Note that (\ref{Eq3.81}) also holds for $v=0$. Hence for any $v \in\{0\}\cup [u]$, by Proposition \ref{P2.1_n}, we have
\begin{eqnarray}
  &&  \mathbb{P}(|\mathcal{D}_{j-1}(\sigma_0)|\geq u-v, X\geq v)=\mathbb{E}[\mathbbm{1}_{|\mathcal{D}_{j-1}(\sigma_0)|\geq u-v}\mathbb{P}(X\geq v|\sigma_0)] \nonumber\\
  &\leq& \Big(\frac{2 e^{-2\beta}}{1-e^{-2\beta}}\Big)^v\mathbb{P}(|\mathcal{D}_{j-1}(\sigma_0)|\geq u-v)\leq C_1\Big(\frac{2 e^{-2\beta}}{1-e^{-2\beta}}\Big)^v e^{-2\beta(u-v)}\nonumber\\
  &=& C_1 \Big(\frac{2}{1-e^{-2\beta}}\Big)^v e^{-2\beta u},
\end{eqnarray}
where $C_1$ is a positive constant that only depends on $c_0$. Hence by (\ref{Eq3.82}),
\begin{eqnarray}
    \mathbb{P}(N_j\geq 1+u)&\leq& C_1 e^{-2\beta u} \sum_{v=0}^u \Big(\frac{2}{1-e^{-2\beta}}\Big)^v\leq   C_1 e^{-2\beta}  \Big(\frac{2}{1-e^{-2\beta}}\Big)^{u+1}  \nonumber\\
    &\leq& C_1 e^{-2\beta}  \frac{2}{1-e^{-2c_0}}\Big(\frac{2}{1-e^{-2\beta}}\Big)^{u} \leq C_0\Big(\frac{2e^{-2\beta}}{1-e^{-2\beta}}\Big)^{u},
\end{eqnarray}
where $C_0\geq 1$ is a positive constant that only depends on $c_0$.

\end{proof}

For $\sigma$ drawn from the $L^1$ model, we bound $|\sigma(j)-j|$ for any $j\in [n]$ in Propositions \ref{P3.4}-\ref{P3.6} below.

\begin{proposition}\label{P3.4}
Let $\sigma$ be drawn from $\mathbb{P}_{n,\beta}$, and let $C_0$ be the constant in Proposition \ref{P3.3}. For any $j\in [n]$, $u>0$, and $v\geq C_0\max\{\beta^{-1},1\}$, we have 
\begin{equation*}
   \mathbb{P}(|\sigma(j)-j|\geq u) \leq 8\lceil u \rceil \exp(-v\slash 8)+2\exp(-u v^{-1}).
\end{equation*}
\end{proposition}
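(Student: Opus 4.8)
The plan is to estimate $\mathbb{P}(|\sigma(j)-j|\ge u)$ by separately controlling the two events $\{\sigma(j)\ge j+u\}$ and $\{\sigma(j)\le j-u\}$, using the $b$-variables and the uniform-placement description of part (b) of the sampling process. For the upper tail $\{\sigma(j)\ge j+u\}$: since $\sigma(j)=m$ means the symbol $m$ is placed at position $j$, and at the moment the symbol $m$ is placed the eligible positions are exactly those $k$ with $b_k\ge m$ that have not yet been used, the conditional probability that $j$ receives symbol $m$ given $\mathcal B_m$ is at most $1\slash N_m$ on the event $\{b_j\ge m\}$ (and $0$ otherwise), where $N_m=|\{k:b_k\ge m\}|-n+m\ge 1$. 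Summing over $m\ge j+u$ and using that $\mathbb{P}(b_j\ge m\,|\,\sigma_0)=e^{-2\beta(m-\max\{j,\sigma_0(j)\})_+}\le e^{-2\beta(m-j)}$ from Lemma \ref{L3.1}, together with the lower bound $N_m\ge 1$ always, one already gets a geometric-type bound; but to get the stated form one instead splits on whether $N_m$ is large. Precisely, write
\[
\mathbb{P}(\sigma(j)\ge j+u)\le \sum_{m\ge j+u}\mathbb{E}\Big[\mathbbm 1_{b_j\ge m}\,\frac{1}{N_m}\Big]
\le \sum_{m\ge j+u}\Big(\mathbb{P}(N_m< v)+\tfrac1v\,\mathbb{P}(b_j\ge m)\Big),
\]
where $v\ge C_0\max\{\beta^{-1},1\}$. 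Wait — $\mathbb P(N_m<v)$ is not small for small $m$, so the sum over $m\ge j+u$ of $\mathbb P(N_m<v)$ must be handled more carefully: one restricts to those $m$ with $b_j\ge m$, of which there are at most $\lceil u\rceil$ "bad" indices beyond... Let me restate: on $\{\sigma(j)\ge j+u\}$ we have $b_j\ge \sigma(j)\ge j+u$, and then $\sigma(j)$ is uniform-ish among at least $N_{\sigma(j)}$ choices; conditioning first on $\{b_j\ge j+u\}$ (probability $\le e^{-2\beta u}$ by Lemma \ref{L3.1}) and then using $\mathbb P(\,\cdot\,\text{placed at }j\mid \mathcal B_{\sigma(j)})\le 1/N_{\sigma(j)}$ gives the $\lceil u\rceil$ factor from a union bound over the possible values of $\sigma(j)$ in a window, combined with $\mathbb P(N_m<v)\le 4e^{-v/8}$ (Proposition \ref{P3.3}) for those $m$ and the residual $e^{-u/v}$-type term. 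The lower tail $\{\sigma(j)\le j-u\}$ is symmetric: $\sigma(j)=m\le j-u$ forces some position $k=j$ to receive a small symbol, equivalently $\sigma^{-1}(m)=j$; tracking the placement of symbol $m$ again, $j$ is eligible only among the $N_m$ positions with $b_k\ge m$, and one needs $b_j\ge m$ which is automatic, so instead the smallness must come from the event that many of positions $1,\dots,j-1$ with $b$-values $\ge$ small thresholds got filled — here one uses that $\sigma(j)\le j-u$ together with the structure forces $|\mathcal D'_{j-u}(\sigma)|$ or a related quantity to be large, but more directly: $\mathbb P(\sigma(j)=m)=\mathbb E[\mathbbm 1_{b_j\ge m}/N_m]$ still holds and $b_j\ge m$ holds with probability one when $m\le j$, so $\mathbb P(\sigma(j)\le j-u)\le \sum_{m\le j-u}\mathbb E[1/N_m]\le \sum_{m\le j-u}(\mathbb P(N_m<v)+1/v)$, which is too big — so the lower tail genuinely requires the finer fact that once symbol $m$ lands at $j$, all of symbols $m+1,\dots,j$ must avoid $j$, contributing a factor $\prod_{i=m+1}^{j}(1-1/N_i)$, i.e. effectively $\mathbb P(\sigma(j)\le j-u)\le \mathbb E[\exp(-\sum_{i=j-u+1}^{j} 1/N_i)] \le \mathbb P(\exists\, u/2 \text{ indices }i \text{ with }N_i<v)+e^{-u/(2v)}$, and $\mathbb P(N_i<v)\le 4e^{-v/8}$ handles the first term after a union bound over the $\lceil u\rceil$ indices.

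Concretely the steps are: (1) derive the identity/bound $\mathbb P(\sigma(j)=m\mid \mathcal B_m)\le \mathbbm 1_{b_j\ge m}/N_m$ from the uniform-placement construction; (2) for the upper tail, sum over $m>j+u$, pull out $\mathbb P(b_j\ge m)\le e^{-2\beta(m-j)}$, bound $1/N_m$ by $1/v + \mathbbm 1_{N_m<v}$, and use Proposition \ref{P3.3} on the second piece restricted to the $\le\lceil u\rceil$ relevant thresholds near $j+u$ — actually $\mathbb P(b_j\ge m)$ decays geometrically so $\sum_{m>j+u}\mathbb P(N_m<v)\,\mathbb P(b_j\ge m)$... hmm, but $N_m$ and $b_j$ are not independent; one uses $\mathbb P(N_m<v, b_j\ge m)\le \mathbb P(N_m<v)\le 4e^{-v/8}$ and the number of $m$ that can possibly have $b_j\ge m$ with $\sigma(j)=m$ is effectively cut off, giving the $8\lceil u\rceil e^{-v/8}$ term, while $\sum_{m>j+u}\tfrac1v e^{-2\beta(m-j)}\le \tfrac{1}{v}\cdot\tfrac{e^{-2\beta u}}{1-e^{-2\beta}}$, and one checks $v\ge C_0\beta^{-1}$ makes this $\le e^{-uv^{-1}}$-ish up to constants; (3) for the lower tail, use the "avoidance product" $\prod(1-1/N_i)$ over $i$ from $\sigma(j)+1$ to $j$ together with $N_i\ge 1$ and $1-x\le e^{-x}$, reducing to the event that fewer than half of $u$ consecutive $N_i$'s exceed $v$, controlled again by Proposition \ref{P3.3} and a union bound contributing the $\lceil u\rceil$ factor, plus the clean bound $e^{-uv^{-1}/2}$ absorbed into $e^{-uv^{-1}}$ after adjusting constants; (4) add the two tails and tidy constants into the stated $8\lceil u\rceil e^{-v/8}+2e^{-uv^{-1}}$.

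The main obstacle I anticipate is the lower-tail estimate: unlike the upper tail, there is no cheap large-deviation input like $\mathbb P(b_j\ge j+u)\le e^{-2\beta u}$ to seed the bound, so one must genuinely exploit the sequential uniform-placement mechanism — specifically the fact that if $\sigma(j)$ is small then the symbols between $\sigma(j)$ and $j$ all had to miss position $j$ despite $j$ being eligible for each of them, yielding the telescoping product $\prod_{i}(1-1/N_i)$. Turning this heuristic into a rigorous conditional-expectation argument (choosing the right filtration $\mathcal B_l$, handling the dependence between successive placements, and matching the combinatorial factor to $\lceil u\rceil$ rather than something larger) is the delicate part; the upper tail and the final bookkeeping of absolute constants are comparatively routine given Lemma \ref{L3.1} and Proposition \ref{P3.3}.
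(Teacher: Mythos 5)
Your lower-tail argument is essentially the paper's proof: if $\sigma(j)\le j-\lceil u\rceil$ then, since $b_j\ge j$ always (Lemma \ref{L3.1}), position $j$ is eligible for each of the symbols $j-\lceil u\rceil+1,\dots,j$ and must be missed by all of them, giving the telescoping product $\prod_{l=j-\lceil u\rceil+1}^{j}(1-1/N_l)$ after conditioning on the filtration $\mathcal{B}_l$; one then splits on the event that all $N_l$ in this window are at most $v$ (whose complement costs $4\lceil u\rceil e^{-v/8}$ by Proposition \ref{P3.3} and a union bound) and bounds the product by $(1-v^{-1})^{\lceil u\rceil}\le e^{-u/v}$ on that event. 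Be careful with directions, though: you repeatedly write $\mathbb{P}(N_i<v)\le 4e^{-v/8}$, whereas Proposition \ref{P3.3} bounds $\mathbb{P}(N_i\ge v)$; the event $\{N_i<v\}$ is the typical one, and the argument only works with the inequality the right way around.

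The genuine gap is the upper tail. Your direct attempt, $\mathbb{P}(\sigma(j)\ge j+u)\le\sum_{m\ge j+u}\mathbb{E}[\mathbbm{1}_{b_j\ge m}/N_m]$ followed by the split $1/N_m\le v^{-1}+\mathbbm{1}_{N_m<v}$, does not close: the term $\sum_m\mathbb{P}(N_m<v,\,b_j\ge m)$ cannot be controlled by Proposition \ref{P3.3} (again, $\{N_m<v\}$ is not rare), so you are thrown back on $\sum_m\mathbb{P}(b_j\ge m)$. And the "cheap large-deviation input" $\mathbb{P}(b_j\ge j+u)\le e^{-2\beta u}$ that you invoke is itself unavailable: by Lemma \ref{L3.1}, $\mathbb{P}(b_j\ge j+u\mid\sigma_0)=e^{-2\beta(j+u-\max\{j,\sigma_0(j)\})_+}$, which is small only if $\sigma_0(j)-j$ is small --- precisely the quantity the proposition is trying to bound, since $\sigma_0\sim\mathbb{P}_{n,\beta}$ as well. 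The paper sidesteps all of this with a reflection symmetry: the permutation $\bar\sigma$ defined by $\bar\sigma(k)=n+1-\sigma(n+1-k)$ is again distributed as $\mathbb{P}_{n,\beta}$, so $\mathbb{P}(\sigma(j)\ge j+u)=\mathbb{P}(\bar\sigma(n+1-j)\le n+1-j-u)$ and the upper tail reduces to the lower tail at the reflected index. Without this (or some substitute idea), your proof only establishes half of the claimed bound; interestingly, you identified the lower tail as the hard half, when in the paper's scheme it is the only half one needs to prove directly.
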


\begin{proof}

Consider any $j\in [n]$, $u>0$, and $v\geq C_0\max\{\beta^{-1},1\}$. Let $\sigma$ be generated as in the preceding (so the distribution of $\sigma$ is given by $\mathbb{P}_{n,\beta}$). We bound $\mathbb{P}(\sigma(j)\leq j-u)$ in the following. If $\lceil u\rceil \geq j$, $\mathbb{P}(\sigma(j)\leq j-u)=0$. Below we assume that $\lceil u\rceil\leq j-1$. 

Let $\mathcal{E}_0$ be the event that $Y_l\neq j$ for any $l\in [j+1,n]\cap\mathbb{N}$. For any $l\in [j]$, let $\mathcal{E}_l$ be the event that $Y_l\neq j$. Note that
\begin{equation}\label{Eq3.8}
    \{\sigma(j)\leq j-u\}=\{\sigma(j)\leq j-\lceil u\rceil\} \subseteq \mathcal{E}_0\bigcap \Big( \bigcap_{l=j-\lceil u \rceil+1}^{j} \mathcal{E}_l\Big).  
\end{equation}
By Lemma \ref{L3.1}, we have $b_j\geq j$, hence
\begin{eqnarray*}
    \mathbb{E}\Big[\mathbbm{1}_{\mathcal{E}_0}\prod_{l=j-\lceil u\rceil +1}^j \mathbbm{1}_{\mathcal{E}_l}\Big|\mathcal{B}_{j-\lceil u \rceil+1}\Big]&=&\mathbbm{1}_{\mathcal{E}_0}\Big(\prod_{l=j-\lceil u\rceil +2}^j \mathbbm{1}_{\mathcal{E}_l}\Big)\mathbb{E}[\mathbbm{1}_{\mathcal{E}_{j-\lceil u \rceil+1}}|\mathcal{B}_{j-\lceil u \rceil+1}] \nonumber\\
    &=& \mathbbm{1}_{\mathcal{E}_0}\Big(\prod_{l=j-\lceil u\rceil +2}^j \mathbbm{1}_{\mathcal{E}_l}\Big)\Big(1-\frac{1}{N_{j-\lceil u \rceil+1}}\Big).
\end{eqnarray*}
Hence
\begin{equation*}
    \mathbb{E}\Big[\mathbbm{1}_{\mathcal{E}_0}\prod_{l=j-\lceil u\rceil +1}^j \mathbbm{1}_{\mathcal{E}_l}\Big|\mathcal{B}_j\Big]=\Big(1-\frac{1}{N_{j-\lceil u \rceil+1}}\Big)\mathbb{E}\Big[\mathbbm{1}_{\mathcal{E}_0}\Big(\prod_{l=j-\lceil u\rceil +2}^j \mathbbm{1}_{\mathcal{E}_l}\Big)\Big|\mathcal{B}_j\Big].
\end{equation*}
Continuing in the same manner, we obtain that
\begin{equation*}
     \mathbb{E}\Big[\mathbbm{1}_{\mathcal{E}_0}\prod_{l=j-\lceil u\rceil +1}^j \mathbbm{1}_{\mathcal{E}_l}\Big|\mathcal{B}_j\Big]=\mathbbm{1}_{\mathcal{E}_0}\prod_{l=j-\lceil u \rceil+1}^j \Big(1-\frac{1}{N_l}\Big),
\end{equation*}
which leads to
\begin{eqnarray}\label{Eq3.7}
 \mathbb{P}\Big(\mathcal{E}_0\bigcap \Big(\bigcap_{l=j-\lceil u \rceil+1}^{j}\mathcal{E}_l\Big)\Big|\mathcal{B}_n \Big)
 &=& \prod_{l=j-\lceil u \rceil+1}^j \Big(1-\frac{1}{N_l}\Big) \mathbb{E}[\mathbbm{1}_{\mathcal{E}_0}|\mathcal{B}_n] \nonumber\\
 &\leq& \prod_{l=j-\lceil u \rceil+1}^j \Big(1-\frac{1}{N_l}\Big).
\end{eqnarray}

Now let $\mathcal{W}$ be the event that $N_l\leq v$ for any $l\in [j-\lceil u \rceil+1,j]\cap\mathbb{N}$. By Proposition \ref{P3.3} and the unioun bound, 
\begin{eqnarray}\label{Eq3.10}
    \mathbb{P}(\mathcal{W}^c) \leq  4\lceil u \rceil \exp(-v\slash 8).
\end{eqnarray}
By (\ref{Eq3.7}), 
\begin{equation}
    \mathbb{P}\Big(\mathcal{W}\bigcap\mathcal{E}_0\bigcap \Big(\bigcap_{l=j-\lceil u \rceil+1}^{j}\mathcal{E}_l\Big)\Big|\mathcal{B}_n\Big)\leq \mathbbm{1}_{\mathcal{W}} \prod_{l=j-\lceil u \rceil+1}^j \Big(1-\frac{1}{N_l}\Big)\leq (1-v^{-1})^{\lceil u \rceil},
\end{equation}
hence
\begin{equation}\label{Eq3.9}
    \mathbb{P}\Big(\mathcal{W}\bigcap\mathcal{E}_0\bigcap \Big(\bigcap_{l=j-\lceil u \rceil+1}^{j}\mathcal{E}_l\Big)\Big)\leq (1-v^{-1})^{\lceil u \rceil}. 
\end{equation}
By (\ref{Eq3.8}), (\ref{Eq3.10}), (\ref{Eq3.9}), and the union bound,
\begin{equation}\label{Eq3.11}
    \mathbb{P}(\sigma(j)\leq j-u)\leq \mathbb{P}\Big(\mathcal{E}_0\bigcap \Big( \bigcap_{l=j-\lceil u \rceil+1}^{j} \mathcal{E}_l\Big)\Big)\leq   4\lceil u \rceil \exp(-v\slash 8)+(1-v^{-1})^{\lceil u \rceil}.
\end{equation}

Let $\bar{\sigma}\in S_n$ be such that $\bar{\sigma}(j)=n+1-\sigma(n+1-j)$ for any $j\in [n]$. As the distribution of $\sigma$ is $\mathbb{P}_{n,\beta}$, it can be checked that the distribution of $\bar{\sigma}$ is also $\mathbb{P}_{n,\beta}$. By (\ref{Eq3.11}) (with $j$ replaced by $n+1-j$),
\begin{eqnarray}\label{Eq3.12}
    \mathbb{P}(\sigma(j)\geq j+u)&=& \mathbb{P}(\bar{\sigma}(n+1-j)\leq n+1-j-u) \nonumber\\
     &=&  \mathbb{P}(\sigma(n+1-j)\leq n+1-j-u) \nonumber\\
     &  \leq & 4\lceil u \rceil \exp(-v\slash 8)+(1-v^{-1})^{\lceil u \rceil}.
\end{eqnarray}
By (\ref{Eq3.11}), (\ref{Eq3.12}), and the union bound, we conclude that
\begin{eqnarray*}
    \mathbb{P}(|\sigma(j)-j|\geq u) &\leq& 8\lceil u \rceil \exp(-v\slash 8)+2(1-v^{-1})^{\lceil u \rceil}\nonumber\\
    &\leq&  8\lceil u \rceil \exp(-v\slash 8)+2\exp(-u v^{-1}) .
\end{eqnarray*}

\end{proof}

\begin{proposition}\label{P3.6}
Assume that $\beta\geq 2$, and let $\sigma$ be drawn from $\mathbb{P}_{n,\beta}$. For any $j\in [n]$ and $u\in \mathbb{N}^{*}$, we have
\begin{equation*}
   \mathbb{P}(|\sigma(j)-j|\geq  u)\leq C\exp(-(2\beta+\sqrt{u}\slash 2)).
\end{equation*}
\end{proposition}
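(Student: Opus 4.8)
The plan is to bound $\mathbb{P}(\sigma(j)\le j-u)$ and $\mathbb{P}(\sigma(j)\ge j+u)$ separately and add. As in the proof of Proposition~\ref{P3.4}, the reflected permutation $\bar\sigma(k):=n+1-\sigma(n+1-k)$ again has law $\mathbb{P}_{n,\beta}$ and $\{\sigma(j)\ge j+u\}=\{\bar\sigma(n+1-j)\le (n+1-j)-u\}$, so it is enough to prove $\mathbb{P}(\sigma(j)\le j-u)\le Ce^{-(2\beta+\sqrt u/2)}$ for every $j$ and every $u\in\mathbb{N}^{*}$; the case $u\ge j$ is vacuous, so assume $u\le j-1$. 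Running the coupled sampling of Section~\ref{Sect.3.1}, the inclusion~(\ref{Eq3.8}) together with the estimate~(\ref{Eq3.7}) (with $\lceil u\rceil=u$), after taking expectations, gives
\[
\mathbb{P}(\sigma(j)\le j-u)\;\le\;\mathbb{E}\Big[\prod_{l=j-u+1}^{j}\Big(1-\frac{1}{N_l}\Big)\Big].
\]
The observation I would exploit is that the random variable under the expectation is nonnegative and, since $1-1/N_l=0$ when $N_l=1$, it \emph{vanishes unless $N_l\ge 2$ for every $l\in[j-u+1,j]$}; in particular it is supported on $\{N_j\ge 2\}$.

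I would then set $v:=\lceil\sqrt u\,\rceil$ and $\mathcal{W}:=\{N_l\le v\text{ for all }l\in[j-u+1,j]\}$ and split the expectation over $\mathcal{W}$ and $\mathcal{W}^{c}$. On $\mathcal{W}$ the product is at most $(1-1/v)^u\le e^{-u/v}$ (and $0$ if some $N_l=1$), and since it lives on $\{N_j\ge2\}$,
\[
\mathbb{E}\Big[\mathbbm{1}_{\mathcal{W}}\prod_{l=j-u+1}^{j}\big(1-\tfrac{1}{N_l}\big)\Big]\;\le\;e^{-u/v}\,\mathbb{P}(N_j\ge 2)\;\le\;C\,e^{-2\beta}\,e^{-u/v},
\]
the last step by Proposition~\ref{P3.5} with $u=1$ (legitimate since $\beta\ge2$). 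On $\mathcal{W}^{c}$ I would bound the product by $1$ and use the union bound and Proposition~\ref{P3.5} once more:
\[
\mathbb{E}\Big[\mathbbm{1}_{\mathcal{W}^{c}}\prod_{l=j-u+1}^{j}\big(1-\tfrac{1}{N_l}\big)\Big]\;\le\;\mathbb{P}(\mathcal{W}^{c})\;\le\;u\max_{l}\mathbb{P}(N_l\ge v+1)\;\le\;u\,C_0\Big(\frac{2e^{-2\beta}}{1-e^{-2\beta}}\Big)^{v}.
\]

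To finish, I would substitute $v=\lceil\sqrt u\,\rceil$. In the first bound $u/v\ge u/(\sqrt u+1)\ge\sqrt u-1$, so $e^{-u/v}\le e\cdot e^{-\sqrt u/2}$. In the second, $\beta\ge2$ gives $\frac{2e^{-2\beta}}{1-e^{-2\beta}}\le 2.04\,e^{-2\beta}$, hence $\big(\frac{2e^{-2\beta}}{1-e^{-2\beta}}\big)^{v}\le e^{-2\beta}\cdot e^{4}\,(2.04\,e^{-4})^{v}$, and since $2.04\,e^{-4}<1$ one checks $u\,(2.04\,e^{-4})^{\sqrt u}\le e^{-\sqrt u/2}$ for all $u\ge1$. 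Both right-hand sides are therefore $\le Ce^{-(2\beta+\sqrt u/2)}$; adding them and then invoking the reflection to treat $\{\sigma(j)\ge j+u\}$ completes the argument.

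The step that has to be gotten right — and essentially the only one that is not routine — is the choice of which tail bound to use for $\mathbb{P}(\mathcal{W}^{c})$. Bounding it via Proposition~\ref{P3.3} would produce a contribution of the form $u\,e^{-c\sqrt u}$ with no $e^{-2\beta}$ factor, which is far too large when $\beta$ is big; likewise a Cauchy–Schwarz or Hölder split of $\mathbb{E}[\mathbbm{1}_{N_j\ge2}\prod_l(1-1/N_l)]$ would degrade $e^{-2\beta}$ to $e^{-\beta}$. Proposition~\ref{P3.5}, by contrast, has $e^{-2\beta}$ built into its tail and moreover decays geometrically in the threshold (at rate $\asymp e^{-2\beta}$ per unit for $\beta\ge2$), so the single threshold $v\asymp\sqrt u$ simultaneously makes the ``survival product'' $e^{-u/v}$ of size $\approx e^{-\sqrt u}$ and makes $\mathbb{P}(\mathcal{W}^{c})\lesssim u\,e^{-2\beta}(\mathrm{const})^{v}$, the geometric decay in $v$ comfortably absorbing the polynomial-in-$u$ loss from the union bound while retaining the factor $e^{-2\beta}$ in both pieces. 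I do not anticipate any further serious obstacle.
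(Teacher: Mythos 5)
Your proposal is correct and follows essentially the same route as the paper's proof: both reduce to the lower tail via the reflection $\bar\sigma$, bound $\mathbb{P}(\sigma(j)\leq j-u)$ by $\mathbb{E}\big[\prod_{l=j-u+1}^{j}(1-N_l^{-1})\big]$, split on the event $\{N_l\leq v \text{ for all } l\in[j-u+1,j]\}$ with $v\asymp\sqrt{u}$, and crucially use Proposition \ref{P3.5} (rather than Proposition \ref{P3.3}) on both pieces so that the factor $e^{-2\beta}$ is retained, with the support restriction to $\{N_j\geq 2\}$ supplying it on the good event. The only differences are cosmetic (you fix $v=\lceil\sqrt{u}\,\rceil$ at the outset instead of optimizing a general $v$ at the end, and your numerical constants differ slightly), so there is nothing to add.
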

\begin{proof}

Assume that $\beta\geq 2$, and consider any $j\in [n]$ and $u\in \mathbb{N}^{*}$. Let $\sigma$ be generated as in the preceding (so the distribution of $\sigma$ is given by $\mathbb{P}_{n,\beta}$). We bound $\mathbb{P}(\sigma(j)\leq j-u)$ in the following. If $u\geq j$, $\mathbb{P}(\sigma(j)\leq j-u)=0$. Below we assume that $u\leq j-1$. 

Following the argument between (\ref{Eq3.8}) and (\ref{Eq3.7}), we obtain that
\begin{equation}\label{Eq3.83}
    \mathbb{P}(\sigma(j)\leq j-u|\mathcal{B}_n)\leq  \prod_{l=j- u 
 +1}^j \Big(1-\frac{1}{N_l}\Big). 
\end{equation}
For any $v\in \mathbb{N}^{*}$, let $\mathcal{W}_v$ be the event that $N_l\leq v$ for any $l\in [j-u+1,j]\cap \mathbb{N}$. By Proposition \ref{P3.5} and the union bound, 
\begin{equation}\label{Eq3.84}
    \mathbb{P}((\mathcal{W}_v)^c) \leq \sum_{l=j-u+1}^j \mathbb{P}(N_l\geq v+1) \leq C u \Big(\frac{2 e^{-2\beta}}{1-e^{-2\beta}}\Big)^{v}.
\end{equation}
By (\ref{Eq3.83}) and Proposition \ref{P3.5}, 
\begin{eqnarray}\label{Eq3.85}
  &&  \mathbb{P}(\{\sigma(j)\leq j-u\}\cap\mathcal{W}_v)=\mathbb{E}[\mathbbm{1}_{\mathcal{W}_v}\mathbb{P}(\sigma(j)\leq j-u|\mathcal{B}_n)] \nonumber\\
  &\leq& \mathbb{E}\Big[\mathbbm{1}_{\mathcal{W}_v} \prod_{l=j- u 
 +1}^j \Big(1-\frac{1}{N_l}\Big)\Big]\leq \mathbb{E}[(1-v^{-1})^u\mathbbm{1}_{N_j\geq 2}]\nonumber\\
 &=& (1-v^{-1})^u \mathbb{P}(N_j\geq 2)\leq C(1-v^{-1})^u e^{-2\beta }.
\end{eqnarray}
By (\ref{Eq3.84}), (\ref{Eq3.85}), and the union bound, for any $v\in \mathbb{N}^{*}$, 
\begin{eqnarray*}
    \mathbb{P}(\sigma(j)\leq j-u) &\leq& Cu \Big(\frac{2 e^{-2\beta}}{1-e^{-2\beta}}\Big)^{v}+C(1-v^{-1})^u e^{-2\beta }\nonumber\\
    &\leq& Cu(e^{1-2\beta})^v+C\exp(-u v^{-1})e^{-2\beta}\nonumber\\
    &\leq& Ce^{-2\beta}(ue^{-v}+\exp(-u v^{-1})),
\end{eqnarray*}
where we note that $(2\beta-1)v\geq 2\beta+v-2$ in the third line. Taking $v=\lfloor \sqrt{u} \rfloor\in \mathbb{N}^{*}$, we obtain that
\begin{equation*}
    \mathbb{P}(\sigma(j)\leq j-u)\leq Cu\exp(-(2\beta+\sqrt{u}))\leq C\exp(-(2\beta+\sqrt{u}\slash 2)). 
\end{equation*}

Arguing similarly as in (\ref{Eq3.12}), we obtain the conclusion of the proposition.

\end{proof}

Throughout the rest of the section, we assume the setup given in this subsection. 

\subsection{Proof of the upper bound in Theorem \ref{Thm1.3.1}}\label{Sect.3.2}

In this subsection, we give the proof of the upper bound in Theorem \ref{Thm1.3.1}. Assuming an auxiliary lemma (Lemma \ref{L3.5}), we present this proof in Section \ref{Sect.3.2.1}. The proof of Lemma \ref{L3.5} is given in Section \ref{Sect.3.2.3}.

\subsubsection{Proof of the upper bound in Theorem \ref{Thm1.3.1}}\label{Sect.3.2.1}

In the following, we fix an arbitrary $s\in [n]$. For any $t\in \mathbb{N}$, we denote $W_t^{(s)},Z_t^{(s)}$ by $W_t,Z_t$, respectively (recall the notations in Section \ref{Sect.3.1}).

We note that $W_n=0$, and let $T:=\min\{t\in [n]: W_t=0\}$. We have $\min(\mathcal{C}_s(\sigma))=Z_{T-1}$ and $Z_{T-1}<\cdots<Z_0$. Hence
\begin{equation}\label{Eq5.69}
  \mathbb{E}[s-\min(\mathcal{C}_s(\sigma))]=\mathbb{E}[s-Z_{T-1}]=\sum_{t=1}^n \mathbb{E}[(s-Z_{t-1})\mathbbm{1}_{T=t}].
\end{equation}
Let $\mathcal{F}_0:=\mathcal{B}_n$. For any $t\in \mathbb{N}^{*}$, let $\mathcal{F}_t$ be the $\sigma$-algebra generated by $\sigma_0$, $\{b_j\}_{j=1}^n$, $\{W_j\}_{j=1}^t$, and $\{Z_j\}_{j=1}^{t}$.

For any $l\in [n]$, we denote $\mathcal{H}_l:=\{j\in [n]: j\leq l-1, b_j \geq l \}$. We let $\mathcal{O}_l$ be the set of open arcs $\mathbf{a}=(a_1,\cdots,a_s)$ (where $s\in \mathbb{N}^{*}$) at step $l+1$ such that $a_1 \in \{j\in [n]:b_j\geq l\}\backslash \{Y_{l+1},\cdots,Y_n\}$, and denote by $\mathcal{H}_l'$ the set of tails of the open arcs in $\mathcal{O}_l$. 

\begin{lemma}\label{L3.2}
For any $l\in [n]$, $|\mathcal{H}_l|=N_l-1$, $|\mathcal{H}_l'|=N_l$, and $\mathcal{H}_l'=\mathcal{H}_l\cup \{l\}$.
\end{lemma}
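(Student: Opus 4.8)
The statement is a bookkeeping identity about the step-$l$ transition of the hit-and-run sampling process, so the proof should be a careful unwinding of the definitions in Section \ref{Sect.3.1} together with the counting identity \eqref{Basic_n}. First I would recall that at step $l$ we are sampling $Y_l$ uniformly from the set $\{j\in[n]:b_j\geq l\}\setminus\{Y_{l+1},\dots,Y_n\}$, and that each element of $[n]$ lies in exactly one open or closed arc at step $l+1$. The key observation is that a point $j$ with $b_j\geq l$ is \emph{available} to be chosen as $Y_l$ (equivalently, $j\notin\{Y_{l+1},\dots,Y_n\}$) precisely when $j$ is the head of its open arc at step $l+1$: indeed $j\in\{Y_{l+1},\dots,Y_n\}$ iff some $Y_k=j$ with $k>l$ has already been drawn, which by the arc-merging description happens iff $j$ is not the head of the open arc containing it (heads are exactly the points $a_1$ that have never been realized as some $Y_k$ with $k\geq l+1$). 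Hence $\mathcal{O}_l$ is exactly the set of open arcs at step $l+1$ whose head $j$ satisfies $b_j\geq l$, and $\{j\in[n]:b_j\geq l\}\setminus\{Y_{l+1},\dots,Y_n\}$ is exactly the set of heads of arcs in $\mathcal{O}_l$.

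Next I would compute $|\mathcal{H}_l'|$. Since the map sending an open arc to its tail is a bijection between arcs and their tails, $|\mathcal{H}_l'|=|\mathcal{O}_l|=|\{j\in[n]:b_j\geq l\}\setminus\{Y_{l+1},\dots,Y_n\}|$. By \eqref{DefinitionNj} this last set has size $N_l$: the total number of $j$ with $b_j\geq l$ is $N_l+n-l$, and exactly $n-l$ of the indices $Y_{l+1},\dots,Y_n$ have already been used and all of them satisfy $b_{Y_k}\geq k\geq l+1>l$ (so they are among the $j$ with $b_j\geq l$), giving $N_l$ available heads. Thus $|\mathcal{H}_l'|=N_l$, and combined with \eqref{Basic_n} this also re-derives $N_l\geq 1$ (there is always at least one available head, e.g. the arc whose tail will become $Z$'s next value).

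For $\mathcal{H}_l=\mathcal{H}_l'\setminus\{l\}$ I would argue as follows. A point $l-1\geq j$... more precisely, $\mathcal{H}_l=\{j\in[n]: j\leq l-1,\ b_j\geq l\}$. I claim the tails of arcs in $\mathcal{O}_l$ are exactly these points together with $l$ itself. The point $l$ is the tail of the (open) arc at step $l+1$ containing $l$ — here I use that $l$, not yet processed, sits at the tail end of its open arc (its arc is open at step $l+1$ by the partition property, and $l\leq (l+1)-1$ forces $l$ to be the tail), and its head $a_1$ satisfies $b_{a_1}\geq a_1\geq l+1>l$... wait, I must instead check $a_1\in\{j:b_j\geq l\}\setminus\{Y_{l+1},\dots\}$, which holds since $a_1$ is a head. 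Hence the arc containing $l$ belongs to $\mathcal{O}_l$, so $l\in\mathcal{H}_l'$. Conversely, a tail $a_s$ of an arc in $\mathcal{O}_l$ satisfies $a_s\leq l$ (tails of open arcs at step $l+1$ are $\leq l$), and either $a_s=l$ or $a_s\leq l-1$; in the latter case I need $b_{a_s}\geq l$, which holds because $a_s=Y_{a_{s-1}}$ (or $a_s=a_1$ if the arc is a singleton, in which case $b_{a_s}=b_{a_1}\geq l$) and the sampling constraint $Y_k$ chosen at step $k$ has $b_{Y_k}\geq k$; the arc being in $\mathcal{O}_l$ and the merging history then pin down $a_s=Y_k$ for some $k$ with $b_{a_s}\geq k$, and tracking when $a_s$ was merged in shows $k\geq l$. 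Finally, distinct tails are distinct points and $l\notin\mathcal{H}_l$ (since $\mathcal{H}_l$ requires $j\leq l-1$), so $|\mathcal{H}_l|=|\mathcal{H}_l'|-1=N_l-1$.

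The main obstacle I anticipate is the third part — verifying $\mathcal{H}_l'=\mathcal{H}_l\cup\{l\}$ rigorously — because it requires correctly matching the informal ``arc evolution'' description to the formal constraints $b_{Y_k}\geq k$ and the timing of merges, and in particular confirming that a tail $a_s\leq l-1$ of an available arc must satisfy $b_{a_s}\geq l$ rather than merely $b_{a_s}\geq a_s$. Once one sets up the right invariant — \emph{at step $l+1$, every point appearing as a non-head vertex of an open arc, say $a_{i}=Y_{a_{i-1}}$, satisfies $b_{a_i}\geq$ (the step at which $Y_{a_{i-1}}$ was drawn) $>l$, while the head's $b$-value is unconstrained beyond $b_{a_1}\geq a_1$} — the three claims follow by routine case analysis, so I would state this invariant explicitly and prove it by downward induction on $l$ before concluding.
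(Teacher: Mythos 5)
Your overall strategy is sound and most of the individual steps match the paper's proof: you correctly identify that the indices available at step $l$ are exactly the heads $a_1$ of open arcs at step $l+1$ satisfying $b_{a_1}\geq l$, that tails of distinct arcs are distinct so $|\mathcal{H}_l'|=|\mathcal{O}_l|$, and that $|\mathcal{O}_l|=N_l$ because the $n-l$ indices $Y_{l+1},\dots,Y_n$ are distinct and all satisfy $b_{Y_k}\geq k>l$. The step you flag as the main obstacle, namely $\mathcal{H}_l'\subseteq\mathcal{H}_l\cup\{l\}$, in fact needs no invariant and no downward induction: for an arc $(a_1,\dots,a_s)\in\mathcal{O}_l$ with $s\geq 2$ and tail $a_s\leq l-1$, Definition \ref{Defi3.1} gives $a_s=Y_{a_{s-1}}$, and $Y_{a_{s-1}}$ is drawn at step $a_{s-1}\geq l+1$ from a subset of $\{k\in[n]:b_k\geq a_{s-1}\}$, so $b_{a_s}\geq a_{s-1}>l$; for $s=1$ the membership of the arc in $\mathcal{O}_l$ says directly that $b_{a_1}\geq l$. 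So that direction is a one-line consequence of the definitions.

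The genuine gap is at the end. You establish $\{l\}\subseteq\mathcal{H}_l'$ and $\mathcal{H}_l'\subseteq\mathcal{H}_l\cup\{l\}$ and then conclude $|\mathcal{H}_l|=|\mathcal{H}_l'|-1$. That does not follow: those two facts only give $|\mathcal{H}_l'|\leq|\mathcal{H}_l|+1$, since a priori some $j\in\mathcal{H}_l$ might fail to be the tail of an arc in $\mathcal{O}_l$. You need the reverse inclusion $\mathcal{H}_l\subseteq\mathcal{H}_l'$, which you verify only for the point $l$ itself. The missing argument is the same as the one you gave for $l$, and it is precisely the inclusion the paper proves: any $j\leq l-1$ lies in an open arc at step $l+1$ and must be its tail (all non-tail entries of an open arc at step $l+1$ are $\geq l+1$); its head $a_1$ is not in $\{Y_{l+1},\dots,Y_n\}$ by the definition of an open arc, and $b_{a_1}\geq l$ (either the arc has length $\geq 2$, so $a_1\geq l+1$ and $b_{a_1}\geq a_1$, or $a_1=j$ and $b_j\geq l$ because $j\in\mathcal{H}_l$); hence the arc belongs to $\mathcal{O}_l$ and $j\in\mathcal{H}_l'$. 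Alternatively, you could avoid proving this inclusion by counting $|\mathcal{H}_l|$ directly as the paper does in (\ref{Eq3.4}), using $b_j\geq j$ to get $|\mathcal{H}_l|=|\{j\in[n]:b_j\geq l\}|-(n-l+1)=N_l-1$. Either patch closes the argument; the paper's route is one inclusion plus two cardinality counts, whereas yours is the opposite inclusion plus one count, so each needs the piece the other supplies.
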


\begin{proof}
Consider any $j\in \mathcal{H}_l\cup\{l\}$. As $j\leq l$, $j$ is the tail of some open arc at step $l+1$. We denote this open arc by $\mathbf{a}=(a_1,a_2,\cdots,a_q)$, where $q\in \mathbb{N}^{*}$ and $a_q=j$. Note that $b_j\geq l$ (if $j\in \mathcal{H}_l$, this follows from the definition of $\mathcal{H}_l$; if $j=l$, by Lemma \ref{L3.1}, $b_j\geq j=l$). If $q\geq 2$, then $a_1\geq l+1$, and by Lemma \ref{L3.1}, $b_{a_1}\geq a_1\geq l+1$; if $q=1$, then $a_1=j$ and $b_{a_1}=b_j\geq l$. As $\mathbf{a}$ is an open arc at step $l+1$, $a_1\notin \{Y_{l+1},\cdots,Y_n\}$. Hence $a_1\in \{j\in [n]:b_j\geq l\}\backslash \{Y_{l+1},\cdots,Y_n\}$ and $j=a_q\in \mathcal{H}_l'$. Therefore, we have $\mathcal{H}_l\cup \{l\}\subseteq \mathcal{H}_l'$. 

By Lemma \ref{L3.1}, we have 
\begin{eqnarray}\label{Eq3.4}
    |\mathcal{H}_l|&=&|\{j\in [n]:b_j\geq l\}|-|\{j\in [n]:j\geq l,b_j\geq l\}|\nonumber\\
    &=& |\{j\in [n]:b_j\geq l\}|-|\{j\in [n]:j\geq l\}|\nonumber\\
    &=& |\{j\in [n]:b_j\geq l\}|-(n-l+1).
\end{eqnarray}
Moreover, by (\ref{DefinitionNj}), 
\begin{eqnarray}\label{Eq3.5}
    |\mathcal{H}_l'|&=&|\mathcal{O}_l|=|\{j\in [n]:b_j\geq l\}\backslash \{Y_{l+1},\cdots,Y_n\}|\nonumber\\
    &=& N_l=|\{j\in [n]:b_j\geq l\}|-(n-l).
\end{eqnarray}
Comparing (\ref{Eq3.4}) and (\ref{Eq3.5}), we obtain that $|\mathcal{H}_l'|=|\mathcal{H}_l|+1=|\mathcal{H}_l\cup \{l\}|$. Hence $|\mathcal{H}_l|=N_l-1$, $|\mathcal{H}_l'|=N_l$, and $\mathcal{H}_l'=\mathcal{H}_l\cup \{l\}$.

\end{proof}

By Lemma \ref{L3.2}, for any $t\in [T-1]$,
\begin{equation}\label{Eq3.57}
    Z_t\in \mathcal{H}'_{Z_{t-1}}\backslash\{Z_{t-1}\}=\mathcal{H}_{Z_{t-1}}.
\end{equation}
We also have the following lemma.

\begin{lemma}\label{L3.6}
For any $t\in\mathbb{N}^{*}$ and $a\in [n]$,
\begin{equation}\label{EEq3.1}
    \mathbb{P}(Z_t=a, W_t=1|\mathcal{F}_{t-1})=\frac{\mathbbm{1}_{a\in \mathcal{H}_{Z_{t-1}}}\mathbbm{1}_{W_{t-1}=1}}{N_{Z_{t-1}}},
\end{equation}
\begin{equation}\label{c2}
    \mathbb{P}(W_t=0|\mathcal{F}_{t-1})=\frac{\mathbbm{1}_{W_{t-1}=1}}{N_{Z_{t-1}}}+\mathbbm{1}_{W_{t-1}=0}.
\end{equation}
Note that by Lemma \ref{L3.2}, (\ref{EEq3.1}) implies that
\begin{equation*}
    \mathbb{P}(W_t=1|\mathcal{F}_{t-1})=\Big(1-\frac{1}{N_{Z_{t-1}}}\Big)\mathbbm{1}_{W_{t-1}=1}.
\end{equation*}
\end{lemma}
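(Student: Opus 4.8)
The plan is to unwind the definitions of $W_t, Z_t$ from Section \ref{Sect.3.1} and reduce the identity to the conditional law of a single uniform pick in the hit-and-run sampling procedure. First I would record the structural observation that, by Definition \ref{Defi3.1}, the point $s$ lies in exactly one arc at each step $l$, and the sequence $Z_0 > Z_1 > \cdots$ tracks the \emph{tails} of the open arcs containing $s$ as $l$ decreases. Concretely, if $W_{t-1}=1$ then $s$ lies in an open arc at step $Z_{t-1}$ whose tail is $Z_{t-1}$ itself (this is exactly the content of $\mathcal{H}_{Z_{t-1}}' = \mathcal{H}_{Z_{t-1}} \cup \{Z_{t-1}\}$ from Lemma \ref{L3.2}, together with the fact that the tail of any open arc at step $l$ is $\leq l$, so the tail of $s$'s arc at step $Z_{t-1}$ can only be $Z_{t-1}$). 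Passing from step $Z_{t-1}$ to step $Z_{t-1}-1$ performs the ``step $l$ transition'' with $l = Z_{t-1}$: we draw $Y_l$ uniformly from $\{j \in [n]: b_j \geq l\} \setminus \{Y_{l+1},\dots,Y_n\}$, a set of size $N_{Z_{t-1}}$ by (\ref{Eq3.5}). Here I should note that $\mathcal{F}_{t-1}$ determines $\sigma_0$, all the $b_j$, and $Z_{t-1}$, hence determines which $Y_m$ with $m > Z_{t-1}$ have already been sampled and which arcs are present at step $Z_{t-1}$ — in particular it determines $\mathcal{H}_{Z_{t-1}}$ and $N_{Z_{t-1}}$; but it does \emph{not} reveal $Y_{Z_{t-1}}$, which is the only new randomness relevant to determining $(W_t, Z_t)$.

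Next I would do the case analysis. On $\{W_{t-1}=0\}$: by the inductive definition, $W_t=0$ and $Z_t = Z_{t-1}$ with probability one, so the right-hand sides of (\ref{EEq3.1}) and (\ref{c2}) (namely $0$ and $1$) are immediate. On $\{W_{t-1}=1\}$: $s$ sits in an open arc $\mathbf{a} = (a_1, \dots, a_q)$ at step $Z_{t-1}$ with tail $a_q = Z_{t-1}$. The transition closes this arc precisely when $Y_{Z_{t-1}} = a_1$, i.e. when the uniformly chosen $Y_{Z_{t-1}}$ equals the head $a_1$; since $a_1$ is one of the $N_{Z_{t-1}}$ admissible choices (indeed $a_1 \in \mathcal{H}'_{Z_{t-1}}$, the set of heads, by the definition of $\mathcal{O}_l$), this event has conditional probability $1/N_{Z_{t-1}}$, and on it $W_t = 0$, $Z_t = Z_{t-1}$ — this gives (\ref{c2}). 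Otherwise $Y_{Z_{t-1}}$ is the head of some \emph{other} open arc $\mathbf{c} = (c_1, \dots, c_r)$ at step $Z_{t-1}$, and $\mathbf{a}, \mathbf{c}$ merge into $(a_1,\dots,a_q,c_1,\dots,c_r)$, a new open arc still containing $s$, whose tail is $c_r$. Thus $W_t = 1$ and $Z_t = c_r$. The heads of the other open arcs are exactly $\mathcal{H}'_{Z_{t-1}} \setminus \{a_1\}$, and a uniform choice of head among all of $\mathcal{H}'_{Z_{t-1}}$ that avoids $a_1$ induces a uniform choice of the merging arc $\mathbf{c}$, hence of its tail $c_r$; but by (\ref{Eq3.57})/Lemma \ref{L3.2} the set of possible values of $c_r$ (the tails of the other open arcs whose heads are admissible) is precisely $\mathcal{H}'_{Z_{t-1}} \setminus \{Z_{t-1}\} = \mathcal{H}_{Z_{t-1}}$, and distinct other arcs have distinct tails. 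Therefore for each $a \in \mathcal{H}_{Z_{t-1}}$ the event $\{Z_t = a, W_t = 1\}$ corresponds to the single outcome $Y_{Z_{t-1}} = (\text{head of the arc with tail } a)$, which has conditional probability $1/N_{Z_{t-1}}$ — this is (\ref{EEq3.1}). The displayed consequence $\mathbb{P}(W_t = 1 \mid \mathcal{F}_{t-1}) = (1 - 1/N_{Z_{t-1}})\mathbbm{1}_{W_{t-1}=1}$ then follows by summing (\ref{EEq3.1}) over $a \in \mathcal{H}_{Z_{t-1}}$ and using $|\mathcal{H}_{Z_{t-1}}| = N_{Z_{t-1}} - 1$ from Lemma \ref{L3.2}.

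The main obstacle — and the only place where care is genuinely needed — is the measurability/conditioning bookkeeping: one must check that $\mathcal{F}_{t-1}$ really does pin down the entire arc configuration at step $Z_{t-1}$ (so that ``which arc has tail $a$'' is $\mathcal{F}_{t-1}$-measurable) while leaving $Y_{Z_{t-1}}$ conditionally uniform on the size-$N_{Z_{t-1}}$ admissible set. This requires observing that $Z_{t-1}$, though random, is a ``stopping level'' in the decreasing sweep $l = n, n-1, \dots$: the values $W_1,\dots,W_{t-1}$ and $Z_1,\dots,Z_{t-1}$ are determined by $\sigma_0$, $\{b_j\}$, and $\{Y_m : m \geq Z_{t-1}\}$ (since the arcs containing $s$ at all steps $\geq Z_{t-1}$ depend only on those), so $\mathcal{F}_{t-1} \subseteq \sigma(\sigma_0, \{b_j\}, \{Y_m\}_{m \geq Z_{t-1}})$, and conditionally on the latter, $Y_{Z_{t-1}}$ is still uniform on $\{j: b_j \geq Z_{t-1}\} \setminus \{Y_m\}_{m > Z_{t-1}}$ by the definition of the sampling procedure in Section \ref{Sect.3.1}; averaging over the extra information in $\sigma(\sigma_0,\{b_j\},\{Y_m\}_{m\geq Z_{t-1}})$ not in $\mathcal{F}_{t-1}$ does not change the answer since the computed conditional probabilities are already $\mathcal{F}_{t-1}$-measurable. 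Once this is in place, everything else is a direct translation of the ``step $l$ transition'' rules.
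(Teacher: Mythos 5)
Your proof is correct and follows essentially the same route as the paper: the combinatorial identification of the transition at level $Z_{t-1}$ (the arc containing $s$ closes iff the uniform pick $Y_{Z_{t-1}}$ hits its own head, and otherwise merges with the unique other arc whose tail is the prescribed $a\in\mathcal{H}_{Z_{t-1}}$, each outcome having probability $1/N_{Z_{t-1}}$), followed by the measure-theoretic reduction from conditioning on $\mathcal{B}_l$ on the event $\{Z_{t-1}=l,W_{t-1}=1\}$ to conditioning on $\mathcal{F}_{t-1}$, which the paper carries out by decomposing over $l$ and checking $\mathcal{B}_l$-measurability of $\mathbbm{1}_{Z_{t-1}=l,W_{t-1}=1}\mathbbm{1}_A$. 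The only blemish is the index slip ``$\{Y_m:m\geq Z_{t-1}\}$'' where you mean $m>Z_{t-1}$ (as you correctly write two lines later), since $Y_{Z_{t-1}}$ is precisely the randomness that must remain unrevealed.
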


\begin{proof}

For any $t\in\mathbb{N}^{*}$ and $a,l\in [n]$, we have $\{Z_{t-1}=l,W_{t-1}=1\}\in\mathcal{B}_l$ and 
\begin{equation*}
    \mathbb{P}(Z_t=a,W_t=1|\mathcal{B}_l)\mathbbm{1}_{Z_{t-1}=l,W_{t-1}=1}=\frac{\mathbbm{1}_{a\in \mathcal{H}_l}}{N_l}\mathbbm{1}_{Z_{t-1}=l,W_{t-1}=1}.
\end{equation*}
Consider any $A\in \mathcal{F}_{t-1}$. We identify $\sigma_0$ with $(\sigma_0(1),\cdots,\sigma_0(n))\in \mathbb{R}^n$. There exists a Borel measurable function $g:\mathbb{R}^{2(n+t-1)}\rightarrow\mathbb{R}$, such that 
\begin{equation*}
    \mathbbm{1}_A=g(\sigma_0,b_1,\cdots,b_n,W_1,\cdots,W_{t-1},Z_1,\cdots,Z_{t-1}).
\end{equation*}
We assume that $g\in [0,1]$ (otherwise we can replace $g$ by $\max\{0,\min\{g,1\}\}$). For any $t\in\mathbb{N}^{*}$ and $a\in [n]$,
\begin{eqnarray*}
 &&\mathbb{E}[\mathbbm{1}_{Z_t=a,W_t=1} \mathbbm{1}_A]=\mathbb{E}[\mathbbm{1}_{Z_t=a,W_t=1}g(\sigma_0,b_1,\cdots,b_n,W_1,\cdots,W_{t-1},Z_1,\cdots,Z_{t-1})] \nonumber\\
 &=& \sum_{l=1}^n \mathbb{E}[\mathbbm{1}_{Z_t=a,W_t=1}\mathbbm{1}_{Z_{t-1}=l,W_{t-1}=1}g(\sigma_0,b_1,\cdots,b_n,W_1,\cdots,W_{t-1},Z_1,\cdots,Z_{t-1})] \nonumber\\
 &=& \sum_{l=1}^n \mathbb{E}[\mathbb{P}(Z_t=a,W_t=1|\mathcal{B}_{l})\mathbbm{1}_{Z_{t-1}=l,W_{t-1}=1}g(\sigma_0,b_1,\cdots,b_n,W_1,\cdots,W_{t-1},Z_1,\cdots,Z_{t-1})] \nonumber\\
 &=& \sum_{l=1}^n \mathbb{E}\Big[\frac{\mathbbm{1}_{a\in \mathcal{H}_l}}{N_l} 
\mathbbm{1}_{Z_{t-1}=l,  W_{t-1}=1}g(\sigma_0,b_1,\cdots,b_n,W_1,\cdots,W_{t-1},Z_1,\cdots,Z_{t-1})\Big] \nonumber\\
 &=& \sum_{l=1}^n \mathbb{E}\Big[\frac{\mathbbm{1}_{a\in \mathcal{H}_{Z_{t-1}}}}{N_{Z_{t-1}}} 
\mathbbm{1}_{Z_{t-1}=l,W_{t-1}=1}\mathbbm{1}_A \Big] =
\mathbb{E}\Big[\frac{\mathbbm{1}_{a\in \mathcal{H}_{Z_{t-1}}}\mathbbm{1}_{W_{t-1}=1}}{N_{Z_{t-1}}}\mathbbm{1}_A\Big],
\end{eqnarray*}
where we use the fact that $W_t=1$ implies $W_{t-1}=1$ in the second line and note that $\mathbbm{1}_{Z_{t-1}=l,W_{t-1}=1}g(\sigma_0,b_1,\cdots,b_n,W_1,\cdots,W_{t-1},Z_1,\cdots,Z_{t-1})$ is $\mathcal{B}_{l}$-measurable in the third line. As $\mathbbm{1}_{a\in\mathcal{H}_{Z_{t-1}}}\slash N_{Z_{t-1}}$ is $\mathcal{F}_{t-1}$-measurable, we have
\begin{equation*}
    \mathbb{P}(Z_t=a,W_t=1|\mathcal{F}_{t-1})=\frac{\mathbbm{1}_{a\in \mathcal{H}_{Z_{t-1}}}\mathbbm{1}_{W_{t-1}=1}}{N_{Z_{t-1}}}.
\end{equation*}

Now note that for any $t\in\mathbb{N}^{*}$ and $l\in [n]$,
\begin{equation*}
  \mathbb{P}(W_t=0|\mathcal{B}_{l})\mathbbm{1}_{Z_{t-1}=l,W_{t-1}=1}=\frac{1}{N_l}\mathbbm{1}_{Z_{t-1}=l,W_{t-1}=1}. 
\end{equation*}
Arguing similarly as above, we obtain (\ref{c2}).
    
\end{proof}

For any $t\in [n]$ and $r>0$, we define $\mathcal{K}_{t,r}$ to be the event that for any $l\in [s-r t \min\{\beta,1\}^{-4} ,s]\cap [n]$, $\max_{j\in \mathcal{H}_l} \{l-j\}\leq r$ and $N_l\leq r\min\{\beta,1\}^{-1}$. Note that $\mathcal{K}_{t,r}\in \mathcal{B}_n$.

\begin{lemma}\label{L3.3}
For any $t,t'\in [n]$ satisfying $t\geq t'$ and any $r>0$, when the event $\{T\geq t'\}\cap \mathcal{K}_{t,r}$ holds, we have $s-Z_k\leq rk$ for any $k\in \{0\}\cup [t'-1]$.
\end{lemma}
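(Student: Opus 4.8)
\textbf{Plan for proving Lemma \ref{L3.3}.} The statement is about the chain $(Z_k)$ decreasing slowly: on the event $\{T\geq t'\}\cap\mathcal{K}_{t,r}$ we want $s-Z_k\leq rk$ for all $k\in\{0\}\cup[t'-1]$. The plan is to induct on $k$. The base case $k=0$ is immediate since $Z_0=s$. For the inductive step, suppose $s-Z_{k-1}\leq r(k-1)$ for some $k\in[t'-1]$. Since $T\geq t'\geq k+1$ (we need $Z_k$ to be defined via a genuine arc step, i.e.\ $W_{k-1}=1$, which holds because $k\leq t'-1<T$), relation (\ref{Eq3.57}) gives $Z_k\in\mathcal{H}_{Z_{k-1}}$. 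The key point is then that on $\mathcal{K}_{t,r}$, provided $Z_{k-1}$ lies in the relevant window $[s-rt\min\{\beta,1\}^{-4},s]\cap[n]$, we have $\max_{j\in\mathcal{H}_{Z_{k-1}}}\{Z_{k-1}-j\}\leq r$, hence $Z_{k-1}-Z_k\leq r$, and therefore $s-Z_k=(s-Z_{k-1})+(Z_{k-1}-Z_k)\leq r(k-1)+r=rk$.

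The only thing to check carefully is that $Z_{k-1}$ does in fact fall in the window where the bound $\max_{j\in\mathcal{H}_l}\{l-j\}\leq r$ is guaranteed by $\mathcal{K}_{t,r}$. By the inductive hypothesis $Z_{k-1}\geq s-r(k-1)\geq s-rt$ (using $k-1\leq t'-1\leq t-1<t$), and since $\min\{\beta,1\}^{-4}\geq 1$ we have $s-rt\geq s-rt\min\{\beta,1\}^{-4}$, so indeed $Z_{k-1}\in[s-rt\min\{\beta,1\}^{-4},s]$; it is also in $[n]$ since $Z_{k-1}\in[n]$ always. Thus the window in the definition of $\mathcal{K}_{t,r}$ was chosen (with the generous factor $t\min\{\beta,1\}^{-4}$ rather than the sharper $k$) precisely so that this containment is automatic throughout the induction, and no circularity arises.

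I expect the proof to be short; the main (very mild) obstacle is just the bookkeeping of indices — making sure the window bound in $\mathcal{K}_{t,r}$ covers every $Z_{k-1}$ encountered for $k$ up to $t'-1$, which is why one tracks the crude bound $s-Z_{k-1}\leq r(k-1)\leq r(t'-1)\leq rt$ rather than trying to be sharp. One should also note at the outset that the hypothesis $\{T\geq t'\}$ ensures $W_{k-1}=1$ for all $k\leq t'-1$, so that each step $Z_{k-1}\mapsto Z_k$ genuinely moves to the tail of an open arc and (\ref{Eq3.57}) applies; without this, $Z_k$ could equal $Z_{k-1}$ (when a closed arc has formed) and the bound would be trivial anyway, but stating it via $\{T\geq t'\}$ keeps the induction clean.
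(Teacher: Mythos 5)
Your proof is correct and is essentially identical to the paper's: the same induction on $k$, with the same key observations that the inductive bound $s-Z_{k-1}\leq r(k-1)\leq rt\leq rt\min\{\beta,1\}^{-4}$ places $Z_{k-1}$ in the window where $\mathcal{K}_{t,r}$ controls $\max_{j\in\mathcal{H}_{Z_{k-1}}}\{Z_{k-1}-j\}$, and that $T\geq t'$ makes (\ref{Eq3.57}) applicable. Nothing is missing.
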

\begin{proof}

We prove by induction on $k$. When $k=0$, $s-Z_0=0$. Now assume that $k\in [t'-1]$ and $s-Z_{k-1}\leq r(k-1)$. We have $Z_{k-1}\geq s-rt\geq s-rt\min\{\beta,1\}^{-4}$, hence $Z_{k-1}\in [s-r t \min\{\beta,1\}^{-4} ,s]\cap [n]$. By (\ref{Eq3.57}), $Z_k\in \mathcal{H}_{Z_{k-1}}$, hence 
\begin{equation*}
    Z_{k-1}-Z_k\leq \max_{j\in \mathcal{H}_{Z_{k-1}}}\{ Z_{k-1}-j \}\leq r,
\end{equation*}
and $s-Z_k=(s-Z_{k-1})+(Z_{k-1}-Z_k)\leq rk$. Therefore, $s-Z_k\leq rk$ for any $k\in \{0\}\cup [t'-1]$. 
    
\end{proof}

\begin{lemma}\label{L3.8}
For any $t\in [n]$, $r\geq 1$, and $A\in \mathcal{B}_n$ such that $A\subseteq \mathcal{K}_{t,r}$, we have 
\begin{equation*}
    \mathbb{E}[(s-Z_{t-1})\mathbbm{1}_{T=t}\mathbbm{1}_{A}]\leq  r(t-1)(1-\min\{\beta,1\}r^{-1})^{t-1}\mathbb{P}(A\cap\{N_s\geq 2\}).
\end{equation*}
\end{lemma}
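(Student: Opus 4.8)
The plan is to condition on $\mathcal{B}_n$ and exploit the Markov structure of $(W_t, Z_t)$ described in Lemma \ref{L3.6}, together with the deterministic control on $s - Z_{t-1}$ provided by Lemma \ref{L3.3}. First I would note that since $A \subseteq \mathcal{K}_{t,r}$ and $A \in \mathcal{B}_n$, on the event $A \cap \{T = t\}$ we have $T \geq t$, so Lemma \ref{L3.3} (applied with $t' = t$) gives $s - Z_{t-1} \leq r(t-1)$ pointwise on this event. Hence
\begin{equation*}
    \mathbb{E}[(s - Z_{t-1})\mathbbm{1}_{T=t}\mathbbm{1}_A] \leq r(t-1)\,\mathbb{P}(A \cap \{T = t\}).
\end{equation*}
So the task reduces to showing $\mathbb{P}(A \cap \{T = t\}) \leq (1 - \min\{\beta,1\}r^{-1})^{t-1}\mathbb{P}(A \cap \{N_s \geq 2\})$.

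Next I would unwind the event $\{T = t\}$. Recall $T = \min\{t : W_t = 0\}$, so $\{T = t\} = \{W_1 = 1, \dots, W_{t-1} = 1, W_t = 0\} \subseteq \{W_1 = \cdots = W_{t-1} = 1\}$. I would bound $\mathbb{P}(A \cap \{T=t\})$ by $\mathbb{P}(A \cap \{W_1 = \cdots = W_{t-1} = 1\})$ and then peel off the factors one at a time using the filtration $(\mathcal{F}_t)$. Concretely, using the tower property and the consequence of Lemma \ref{L3.6} that $\mathbb{P}(W_k = 1 \mid \mathcal{F}_{k-1}) = (1 - N_{Z_{k-1}}^{-1})\mathbbm{1}_{W_{k-1}=1}$, I would write, for each $k \in \{2, \dots, t-1\}$,
\begin{equation*}
    \mathbb{E}\Big[\mathbbm{1}_A \prod_{j=1}^{k}\mathbbm{1}_{W_j = 1}\Big] = \mathbb{E}\Big[\mathbbm{1}_A \Big(\prod_{j=1}^{k-1}\mathbbm{1}_{W_j = 1}\Big)\Big(1 - \frac{1}{N_{Z_{k-1}}}\Big)\Big].
\end{equation*}
On the event $A \cap \{W_1 = \cdots = W_{k-1} = 1\}$, Lemma \ref{L3.3} gives $Z_{k-1} \geq s - r(k-1) \geq s - rt$, and since $A \subseteq \mathcal{K}_{t,r}$ this forces $Z_{k-1} \in [s - rt\min\{\beta,1\}^{-4}, s] \cap [n]$, so $N_{Z_{k-1}} \leq r\min\{\beta,1\}^{-1}$ and therefore $1 - N_{Z_{k-1}}^{-1} \leq 1 - \min\{\beta,1\}r^{-1}$ (this is $\geq 0$ since $r \geq 1 \geq \min\{\beta,1\}$). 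Iterating from $k = t-1$ down to $k = 2$ pulls out $t - 2$ factors of $(1 - \min\{\beta,1\}r^{-1})$. For the final ($k=1$) step I would use that $W_0 = 1$ and $Z_0 = s$, so $\mathbb{P}(W_1 = 1 \mid \mathcal{F}_0) = (1 - N_s^{-1})\mathbbm{1}_{N_s \geq 2} \leq (1 - \min\{\beta,1\}r^{-1})\mathbbm{1}_{N_s \geq 2}$ on $A$ (here $\{W_1 = 1\}$ already entails $N_s \geq 2$, since $W_1 = 1$ requires $s$ to lie in an open arc at step $Z_1$, equivalently the arc through $s$ is not yet closed, which needs $N_s \geq 2$; and $A \subseteq \mathcal{K}_{t,r}$ with $Z_0 = s$ in the relevant range gives $N_s \leq r\min\{\beta,1\}^{-1}$). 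Collecting the $t-1$ factors yields
\begin{equation*}
    \mathbb{P}(A \cap \{T = t\}) \leq (1 - \min\{\beta,1\}r^{-1})^{t-1}\,\mathbb{P}(A \cap \{N_s \geq 2\}),
\end{equation*}
which combined with the first display gives the claim.

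The main technical point to get right — and the step I expect to be the real obstacle — is the bookkeeping in the iterated conditioning: one must verify at each stage that the indicator of $A \cap \{W_1 = \cdots = W_{k-1} = 1\}$ together with the bound $N_{Z_{k-1}} \leq r\min\{\beta,1\}^{-1}$ holds \emph{before} taking the conditional expectation with respect to $\mathcal{F}_{k-1}$, i.e.\ that $Z_{k-1}$ and the event $A$ are $\mathcal{F}_{k-1}$-measurable and that the inductive hypothesis $s - Z_{k-1} \leq r(k-1)$ from Lemma \ref{L3.3} is available on the right event. A clean way to organize this is to define $G_k := \mathbbm{1}_A \prod_{j=1}^{k}\mathbbm{1}_{W_j=1}$, observe $G_k$ is $\mathcal{F}_k$-measurable, and prove by downward/upward induction that $\mathbb{E}[G_{t-1}] \leq (1 - \min\{\beta,1\}r^{-1})^{t-1-k}\mathbb{E}[G_k]$ for each $k$, using at the inductive step that $G_k \leq G_{k-1}$ and that $G_{k-1} \cdot (1 - N_{Z_{k-1}}^{-1}) \leq G_{k-1}(1 - \min\{\beta,1\}r^{-1})$ because $G_{k-1} \neq 0$ forces (via Lemma \ref{L3.3} and $A \subseteq \mathcal{K}_{t,r}$) the bound on $N_{Z_{k-1}}$. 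Everything else is routine.
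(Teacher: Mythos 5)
Your proposal is correct and follows essentially the same route as the paper: bound $s-Z_{t-1}$ by $r(t-1)$ via Lemma \ref{L3.3}, then peel off the indicators $\mathbbm{1}_{W_k=1}$ one at a time through Lemma \ref{L3.6}, using $A\subseteq\mathcal{K}_{t,r}$ and Lemma \ref{L3.3} to bound $N_{Z_{k-1}}\leq r\min\{\beta,1\}^{-1}$ at each stage. The only cosmetic difference is where the event $\{N_s\geq 2\}$ enters — the paper inserts it up front via $\{T=t\}\subseteq\{N_s\geq 2\}$, while you recover it at the last peeling step from $(1-N_s^{-1})=(1-N_s^{-1})\mathbbm{1}_{N_s\geq 2}$ — and both are valid.
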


\begin{proof}

When $t=1$, $\mathbb{E}[(s-Z_{t-1})\mathbbm{1}_{T=t}\mathbbm{1}_{A}]=0$. Below we consider the case where $t\geq 2$. Note that $N_s=1$ implies $W_1=0$ and $T=1$. Hence $\{T=t\}\subseteq \{N_s\geq 2\}$. By Lemma \ref{L3.3}, as $A\subseteq \mathcal{K}_{t,r}$,
\begin{eqnarray}\label{Eq3.60}
   \mathbb{E}[(s-Z_{t-1})\mathbbm{1}_{T=t}\mathbbm{1}_{A}]&\leq& r(t-1)\mathbb{E}[\mathbbm{1}_{T=t}\mathbbm{1}_{A}]= r(t-1)\mathbb{E}[\mathbbm{1}_{T=t}\mathbbm{1}_{A}\mathbbm{1}_{N_s\geq 2}]  \nonumber\\
  &\leq& r(t-1)  \mathbb{E}[\mathbbm{1}_{A\cap\{N_s\geq 2\}}\mathbbm{1}_{W_1=1}\cdots\mathbbm{1}_{W_{t-1}=1}].
\end{eqnarray}
By Lemma \ref{L3.6}, we have
\begin{eqnarray}\label{Eq3.58}
&&  \mathbb{E}[\mathbbm{1}_{A\cap\{N_s\geq 2\}}\mathbbm{1}_{W_1=1}\cdots\mathbbm{1}_{W_{t-1}=1}]  \nonumber\\
&=& \mathbb{E}[\mathbbm{1}_{A\cap\{N_s\geq 2\}}\mathbbm{1}_{W_1=1}\cdots\mathbbm{1}_{W_{t-2}=1}\mathbb{P}(W_{t-1}=1|\mathcal{F}_{t-2})]\nonumber\\
  &=& \mathbb{E}\Big[\mathbbm{1}_{A\cap\{N_s\geq 2\}}\mathbbm{1}_{W_1=1}\cdots\mathbbm{1}_{W_{t-2}=1}\Big(1-\frac{1}{N_{Z_{t-2}}}\Big)\Big].
\end{eqnarray}
When the event $A\cap \{W_{t-2}=1\}$ holds, the event $\mathcal{K}_{t,r}\cap \{T\geq t-1\}$ also holds, hence by Lemma \ref{L3.3}, $s-Z_{t-2}\leq r(t-2)$ and $Z_{t-2}\in [s-rt\min\{\beta,1\}^{-4},s]\cap [n]$; by the definition of $\mathcal{K}_{t,r}$, $N_{Z_{t-2}}\leq r\min\{\beta,1\}^{-1}$, hence
\begin{equation}\label{Eq3.59}
    \mathbbm{1}_{A\cap\{N_s\geq 2\}}\mathbbm{1}_{W_{t-2}=1} \Big(1-\frac{1}{N_{Z_{t-2}}}\Big)\leq (1-\min\{\beta,1\}r^{-1})\mathbbm{1}_{A\cap\{N_s\geq 2\}}\mathbbm{1}_{W_{t-2}=1}.
\end{equation}
By (\ref{Eq3.58}) and (\ref{Eq3.59}),
\begin{eqnarray}
  &&  \mathbb{E}[\mathbbm{1}_{A\cap\{N_s\geq 2\}}\mathbbm{1}_{W_1=1}\cdots\mathbbm{1}_{W_{t-1}=1}] \nonumber\\
  &\leq& (1-\min\{\beta,1\}r^{-1})\mathbb{E}[\mathbbm{1}_{A\cap\{N_s\geq 2\}}\mathbbm{1}_{W_1=1}\cdots\mathbbm{1}_{W_{t-2}=1}].
\end{eqnarray}

Arguing similarly as above, we obtain that
\begin{equation}\label{Eq3.62}
    \mathbb{E}[\mathbbm{1}_{A\cap\{N_s\geq 2\}}\mathbbm{1}_{W_1=1}\cdots\mathbbm{1}_{W_{t-1}=1}]\leq (1-\min\{\beta,1\}r^{-1})^{t-1} \mathbb{P}(A\cap\{N_s\geq 2\}).
\end{equation}
By (\ref{Eq3.60}) and (\ref{Eq3.62}), we conclude that
\begin{equation}
    \mathbb{E}[(s-Z_{t-1})\mathbbm{1}_{T=t}\mathbbm{1}_{A}]\leq r(t-1)(1-\min\{\beta,1\}r^{-1})^{t-1}\mathbb{P}(A\cap\{N_s\geq 2\}).
\end{equation}

\end{proof}

\begin{lemma}\label{L3.7}
There exists a positive absolute constant $C_0\geq 2$, such that for any $t\in [n]$ and $r\geq C_0\max\{\beta^{-1},1\}$, we have
\begin{equation*}
    \mathbb{P}((\mathcal{K}_{t,r})^c) \leq  Cr^2t\max\{\beta^{-6},1\}\exp(-c\min\{\beta^{1\slash 2},  1\} \sqrt{r}
    ).
\end{equation*}
\end{lemma}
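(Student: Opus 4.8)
The plan is to bound $(\mathcal{K}_{t,r})^{c}$ by a union bound over the integers $l$ in the window $I:=[s-rt\min\{\beta,1\}^{-4},s]\cap[n]$ of the two failure events $\{N_l> r\min\{\beta,1\}^{-1}\}$ and $\{\max_{j\in\mathcal{H}_l}(l-j)>r\}$. Since $r\geq C_0$, one has $|I|\leq 2rt\min\{\beta,1\}^{-4}$ and $r^{2}t\min\{\beta,1\}^{-6}\asymp\big(r\min\{\beta,1\}^{-2}\big)\cdot\big(rt\min\{\beta,1\}^{-4}\big)$, so it suffices to prove that for each fixed $l\in I$ the sum of the probabilities of the two failure events is at most $Cr\min\{\beta,1\}^{-2}\exp(-c\min\{\beta^{1/2},1\}\sqrt r)$ and then sum over $l\in I$. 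The first event is immediate from Proposition \ref{P3.3}: since $r\min\{\beta,1\}^{-1}\geq C_0\max\{\beta^{-1},1\}$ we get $\mathbb{P}(N_l> r\min\{\beta,1\}^{-1})\leq 4\exp(-r\max\{\beta^{-1},1\}/8)$, and using $r\geq C_0\max\{\beta^{-1},1\}$ one checks $r\max\{\beta^{-1},1\}/8\geq c\min\{\beta^{1/2},1\}\sqrt r$, which is far stronger than needed.

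For the second event, observe that $\{\max_{j\in\mathcal{H}_l}(l-j)>r\}=\{\exists\,j\leq l-r-1:\ b_j\geq l\}$. Conditioning on $\sigma_0$ and invoking Lemma \ref{L3.1}, its conditional probability is at most $\sum_{j\leq l-r-1}\exp(-2\beta(l-\max\{j,\sigma_0(j)\})_{+})$, and I would split this sum according to whether $\max\{j,\sigma_0(j)\}\leq l-\lceil r/2\rceil$ or not. For the indices with $\max\{j,\sigma_0(j)\}\leq l-\lceil r/2\rceil$, reindexing by $a=\max\{j,\sigma_0(j)\}$ (each value of $a$ is shared by at most two such indices, one with $j=a$ and one with $\sigma_0(j)=a$) gives the deterministic bound $2\sum_{k\geq\lceil r/2\rceil}e^{-2\beta k}\leq C\max\{\beta^{-1},1\}e^{-\beta r}$; since $\beta r\geq C_0$ this is at most $C\max\{\beta^{-1},1\}e^{-\sqrt{C_0}\min\{\beta^{1/2},1\}\sqrt r}$, within the target. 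For the remaining indices, since $j\leq l-r-1<l-\lceil r/2\rceil$ the condition $\max\{j,\sigma_0(j)\}>l-\lceil r/2\rceil$ forces $\sigma_0(j)\geq l-\lceil r/2\rceil+1$, so after dropping the requirement $b_j\geq l$ this part of the second event is contained in $\{\exists\,j\leq l-r-1:\ \sigma_0(j)\geq l-\lceil r/2\rceil+1\}$, an event depending on $\sigma_0\sim\mathbb{P}_{n,\beta}$ alone, and every such $j$ satisfies $|\sigma_0(j)-j|\geq\lfloor r/2\rfloor+2$.

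The heart of the argument is to bound $\mathbb{P}(\exists\,j\leq l-r-1:\ \sigma_0(j)\geq l-\lceil r/2\rceil+1)$ by a quantity \emph{free of $n$}. I would union-bound over $j$ — it is at most $\sum_{j\leq l-r-1}\mathbb{P}(|\sigma_0(j)-j|\geq u_j)$ with $u_j:=(l-\lceil r/2\rceil+1)-j$, each value $u_j\gtrsim r$ attained once — and then apply Proposition \ref{P3.4} to each summand with the \emph{per-term} choice $v\asymp\max\{\sqrt{u_j},\max\{\beta^{-1},1\}\}$ rather than a single global $v$. For $u_j\gtrsim\max\{\beta^{-2},1\}$ this yields $\mathbb{P}(|\sigma_0(j)-j|\geq u_j)\leq Cu_j e^{-c\sqrt{u_j}}$, whose sum over $u_j\gtrsim r$ converges to $\leq Cr^{3/2}e^{-c\sqrt r}$; for the smaller values of $u_j$ (which occur only when $r\lesssim\max\{\beta^{-2},1\}$) one picks up an extra $Cu_j e^{-c\max\{\beta^{-1},1\}}+Ce^{-u_j\min\{\beta,1\}/C_0}$, summing to $C\max\{\beta^{-4},1\}e^{-c\max\{\beta^{-1},1\}}+C\max\{\beta^{-1},1\}e^{-cr\min\{\beta,1\}}$. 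Each of these three contributions is compared to $Cr\min\{\beta,1\}^{-2}\exp(-c\min\{\beta^{1/2},1\}\sqrt r)$ using $r\geq C_0$ and, where relevant, the constraint $r\lesssim\max\{\beta^{-2},1\}$; this routine but fiddly bookkeeping is what forces the power $\min\{\beta,1\}^{-6}$ and fixes the constant in the exponent. Combining with the type-(I) bound and the $N_l$ bound completes the per-$l$ estimate.

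I expect the main obstacle to be precisely this last step: the probability that $\sigma_0$ displaces \emph{some} point by $\asymp r$ (equivalently, that $\sigma_0$ has a long crossing, $|\mathcal{R}'_{J,\Delta}(\sigma_0)|\geq 1$ with $\Delta\asymp r$) must be controlled without an $n$-factor, whereas a careless union bound using Proposition \ref{P3.4} with a single $v$ reintroduces an $n^{2}$ through its $\lceil u\rceil e^{-v/8}$ error term. The point is to optimise $v$ term by term so that the per-$j$ bound decays like $u e^{-c\sqrt u}$, a summable tail; and one must separately check that in the corner $\beta\ll 1$, $r\lesssim\beta^{-2}$ the residual term $\max\{\beta^{-4},1\}e^{-c\max\{\beta^{-1},1\}}$ is dominated by the claimed bound, which it is because in that regime the claimed bound is itself at least a constant multiple of $\max\{\beta^{-3},1\}e^{-c\max\{\beta^{-1/2},1\}}$. (For $0<\beta\lesssim 1$ the type-(II) estimate can alternatively be phrased through Proposition \ref{P2.3} applied to $\mathcal{R}'_{J,\Delta}(\sigma_0)$; for $\beta\gtrsim 2$ the sharper Proposition \ref{P3.6} gives the displacement tail cleanly.)
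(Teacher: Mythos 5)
Your proposal is correct and follows essentially the same route as the paper: a union bound over $l$ in the window $[s-rt\min\{\beta,1\}^{-4},s]\cap[n]$, Proposition \ref{P3.3} for the $N_l$ tail, and for the $\max_{j\in\mathcal{H}_l}\{l-j\}$ tail a conditioning on $\sigma_0$ via Lemma \ref{L3.1} followed by the split into a deterministic geometric sum (indices with $\max\{j,\sigma_0(j)\}\leq l-r/2$) plus a displacement event for $\sigma_0$ handled by Proposition \ref{P3.4} with a per-index choice of $v$ scaling like $\sqrt{u_j}$ (the paper takes $v=C_0'\max\{\beta^{-1/2},1\}\sqrt{u}$), which is exactly the device that removes the $n$-dependence. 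The only differences are cosmetic bookkeeping (your reindexing by $a=\max\{j,\sigma_0(j)\}$ versus the paper's two separate geometric sums, and a slightly different but equally valid parametrization of $v$).
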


\begin{proof}

We let $C_0$ be the constant appearing in Proposition \ref{P3.3}. Without loss of generality, we assume that $C_0\geq 2$. By the union bound, we have 
\begin{eqnarray}\label{Eq5.66}
  &&  \mathbb{P}((\mathcal{K}_{t,r})^c) \nonumber\\
  &\leq& \sum_{l\in [s-r t \min\{\beta,1\}^{-4} ,s]\cap [n]} (\mathbb{P}(\max_{j\in\mathcal{H}_l}\{l-j\}>r)+\mathbb{P}(N_l> r\min\{\beta,1\}^{-1})). \nonumber\\
  && 
\end{eqnarray} 

By Proposition \ref{P3.3}, as $r\geq C_0$, for any $l\in [s-r t \min\{\beta,1\}^{-4} ,s]\cap [n]$,
\begin{equation}\label{Eq5.67}
    \mathbb{P}(N_l>r\min\{\beta,1\}^{-1})\leq C\exp(-c r \max\{\beta^{-1},1\}). 
\end{equation}

Consider any $l\in [s-r t \min\{\beta,1\}^{-4} ,s]\cap [n]$. Note that when the event $\max_{j\in\mathcal{H}_l}\{l-j\}>r$ holds, there exists some $j\in [n]$ such that $j\leq l-r$ and $b_j\geq l$. Hence by the union bound and Lemma \ref{L3.1}, 
\begin{equation}\label{Eq3.86}
    \mathbb{P}(\max_{j\in\mathcal{H}_l}\{l-j\}>r|\sigma_0) \leq\sum_{\substack{j\in [n]:\\ j\leq l-r}} \mathbb{P}(b_j\geq l|\sigma_0) =\sum_{\substack{j\in [n]:\\ j\leq l-r}} e^{-2\beta(l-\max\{j,\sigma_0(j)\})_{+}}.
\end{equation}
Hence
\begin{eqnarray}\label{Eq3.63}
   && \mathbb{P}(\max_{j\in\mathcal{H}_l}\{l-j\}>r)
    \leq \sum_{\substack{j\in [n]:\\ j\leq l-r}} \mathbb{E}[e^{-2\beta(l-\max\{j,\sigma_0(j)\})_{+}}] \nonumber\\
    &\leq& \sum_{\substack{j\in [n]:\\ j\leq l-r}}(\mathbb{E}[e^{-2\beta(l-\max\{j,\sigma_0(j)\})_{+}}\mathbbm{1}_{\sigma_0(j)\leq l-r\slash 2}]+\mathbb{P}(\sigma_0(j)>l-r\slash 2))\nonumber\\
    &\leq& \sum_{\substack{j\in [n]:\\ j\leq l-r}} e^{-2\beta(l-j)}+\sum_{\substack{j\in [n]:\\ j\leq l-r\slash 2}}e^{-2\beta(l-j)}+\sum_{\substack{j\in [n]:\\ j\leq l-r}}\mathbb{P}(\sigma_0(j)>l-r\slash 2).\nonumber\\
    &&
\end{eqnarray}
Note that
\begin{eqnarray}\label{Eq3.64}
  && \sum_{\substack{j\in [n]:\\ j\leq l-r}} e^{-2\beta(l-j)}+\sum_{\substack{j\in [n]:\\ j\leq l-r\slash 2}}e^{-2\beta(l-j)}\leq 2\sum_{j=\lceil r\slash 2 \rceil}^{\infty} e^{-2\beta j}\nonumber\\
 &=&\frac{2e^{-2\beta\lceil r\slash 2\rceil}}{1-e^{-2\beta}}\leq \frac{2e^{-\beta r}}{1-e^{-\min\{\beta,1\}}}\leq C\max\{\beta^{-1},1\}e^{-\beta r}.
\end{eqnarray}
For any $j\in [n]$ such that $j\leq l-r$, by Proposition \ref{P3.4} (where we take $u=l-j-r\slash 2$ and $v=C_0\max\{\beta^{-1\slash 2},1\}\sqrt{u}$; as $r\geq 2\max\{\beta^{-1},1\}$, we have $v\geq C_0\max\{\beta^{-1},1\}$),
\begin{eqnarray*}
  &&  \mathbb{P}(\sigma_0(j)>l-r\slash 2)\leq \mathbb{P}(|\sigma_0(j)-j|>l-j-r\slash 2)   \nonumber\\
  &\leq& C(l-j-r\slash 2)\exp(-c\min\{\beta^{1\slash 2},1\}\sqrt{l-j-r\slash 2}).
\end{eqnarray*}
Hence
\begin{eqnarray}\label{Eq3.65}
  &&  \sum_{\substack{j\in [n]:\\ j\leq l-r}}\mathbb{P}(\sigma_0(j)>l-r\slash 2)  \nonumber\\
  &\leq& C\sum_{\substack{j\in [n]:\\ j\leq l-r}}(l-j-r\slash 2)\exp(-c\min\{\beta^{1\slash 2},1\}\sqrt{l-j-r\slash 2})\nonumber\\
  &\leq& C\sum_{j=0}^{\infty} (r\slash 2+j) \exp(-c\min\{\beta^{1\slash 2},1\}\sqrt{r\slash 2+j}) \nonumber\\
  &\leq& Cr\exp(-c\min\{\beta^{1\slash 2},1\}\sqrt{r})\Big(1+\sum_{j=1}^{\infty} j \exp(-c\min\{\beta^{1\slash 2},1\}\sqrt{j})\Big)\nonumber\\
  &\leq&  Cr\exp(-c\min\{\beta^{1\slash 2},1\}\sqrt{r})\Big(\sum_{j=1}^{\infty} j^3 \exp(-c\min\{\beta^{1\slash 2},1\}(j-1))\Big)\nonumber\\
  &\leq& \frac{Cr\exp(-c\min\{\beta^{1\slash 2},1\}\sqrt{r})}{(1-\exp(-c\min\{\beta^{1\slash 2},1\}))^4}\nonumber\\
  &\leq& Cr\max\{\beta^{-2},  1\}\exp(-c\min\{\beta^{1\slash 2},1\}\sqrt{r}).
\end{eqnarray}
By (\ref{Eq3.63})-(\ref{Eq3.65}), as $r\geq 2\max\{\beta^{-1},1\}$, we have
\begin{equation}\label{Eq5.68}
    \mathbb{P}(\max_{j\in\mathcal{H}_l}\{l-j\}>r)\leq Cr\max\{\beta^{-2},  1\}\exp(-c\min\{\beta^{1\slash 2},  1\} \sqrt{r}
    ).
\end{equation}

By (\ref{Eq5.66}), (\ref{Eq5.67}), and (\ref{Eq5.68}), we conclude that
\begin{equation}
    \mathbb{P}((\mathcal{K}_{t,r})^c)\leq Cr^2t\max\{\beta^{-6},1\}\exp(-c\min\{\beta^{1\slash 2},  1\} \sqrt{r}
    ).
\end{equation}

\end{proof}

\begin{lemma}\label{L3.9}
Assume that $\beta\geq 2$. For any $t\in [n]$ and $r\geq 2$, we have
\begin{equation*}
    \mathbb{P}((\mathcal{K}_{t,r})^c )\leq Crt e^{-2\beta-c\sqrt{r}}.
\end{equation*}
\end{lemma}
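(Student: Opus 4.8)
Lemma \ref{L3.9} is the $\beta\geq 2$ analogue of Lemma \ref{L3.7}, and the plan is to follow the proof of Lemma \ref{L3.7} line by line, but substitute the sharper tail bounds available in the large-$\beta$ regime: Proposition \ref{P3.5} in place of Proposition \ref{P3.3} for bounding $N_l$, and Proposition \ref{P3.6} in place of Proposition \ref{P3.4} for bounding $|\sigma_0(j)-j|$. Since $\beta\geq 2$ we have $\min\{\beta,1\}=1$, so the event $\mathcal{K}_{t,r}$ is simply that for every $l\in[s-rt,s]\cap[n]$ one has $\max_{j\in\mathcal{H}_l}\{l-j\}\leq r$ and $N_l\leq r$; this already simplifies all the $\min\{\beta,1\}^{-k}$ factors to $1$. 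The union bound (\ref{Eq5.66}) has at most $rt$ terms.

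First I would bound $\mathbb{P}(N_l>r)$ for each relevant $l$. By Proposition \ref{P3.5}, $\mathbb{P}(N_l\geq 1+u)\leq C_0\big(2e^{-2\beta}/(1-e^{-2\beta})\big)^u$; since $\beta\geq 2$, the base is at most $2e^{-4}/(1-e^{-4})<e^{-1}$, so with $u=\lfloor r\rfloor$ or $u=\lceil r\rceil-1$ this gives $\mathbb{P}(N_l>r)\leq Ce^{-cr}$, which is certainly $\leq Ce^{-2\beta-c\sqrt r}$ after absorbing a constant (using $\beta\geq 2$ and $r\geq 2$, so $e^{-2\beta}$ is bounded and $e^{-cr}\leq e^{-c\sqrt r}$). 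Next, for $\mathbb{P}(\max_{j\in\mathcal{H}_l}\{l-j\}>r)$, I follow (\ref{Eq3.86})--(\ref{Eq3.63}): this event forces some $j\leq l-r$ with $b_j\geq l$, and splitting on whether $\sigma_0(j)\leq l-r/2$ gives the three sums in (\ref{Eq3.63}). The geometric sums $\sum_{j\leq l-r}e^{-2\beta(l-j)}+\sum_{j\leq l-r/2}e^{-2\beta(l-j)}\leq 2\sum_{j\geq\lceil r/2\rceil}e^{-2\beta j}\leq Ce^{-\beta r}$, and $e^{-\beta r}\leq e^{-2\beta}\cdot e^{-c\sqrt r}$ once $\beta\geq 2$ (since $\beta r=2\beta+\beta(r-2)\geq 2\beta+2(r-2)\geq 2\beta+c\sqrt r$ for $r\geq 2$). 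For the third sum, apply Proposition \ref{P3.6} with $u=l-j-\lfloor r/2\rfloor$ (so $u\in\mathbb{N}^{*}$) to get $\mathbb{P}(\sigma_0(j)>l-r/2)\leq\mathbb{P}(|\sigma_0(j)-j|\geq u)\leq C\exp(-(2\beta+\sqrt u/2))$, then sum over $j\leq l-r$: writing $j'=l-r-j\geq 0$ the exponent has $u\geq r/2+j'-1$, so $\sum_{j'\geq 0}\exp(-\sqrt{r/2+j'}/2)\leq C\exp(-c\sqrt r)$ by comparing with a geometric series as in (\ref{Eq3.65}). This yields $\mathbb{P}(\max_{j\in\mathcal{H}_l}\{l-j\}>r)\leq Ce^{-2\beta-c\sqrt r}$ per $l$.

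Finally, combining the two per-$l$ bounds with the union bound over the at most $rt$ values of $l$ gives $\mathbb{P}((\mathcal{K}_{t,r})^c)\leq Crt\,e^{-2\beta-c\sqrt r}$, as claimed. I do not expect a genuine obstacle here — the work is entirely a matter of rerunning the Lemma \ref{L3.7} argument with the exponential-in-$\beta$ bounds of Propositions \ref{P3.5} and \ref{P3.6}; the one point requiring mild care is making sure the factor $e^{-2\beta}$ is carried through correctly (rather than being absorbed into a generic constant) in each of the geometric and Poisson-type sums, which is exactly why the bound is stated with the explicit $e^{-2\beta}$ prefactor rather than just $e^{-c\sqrt r}$.
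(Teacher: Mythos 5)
Your plan is the same as the paper's: rerun the proof of Lemma \ref{L3.7} over the range $[s-rt,s]\cap[n]$ (using $\min\{\beta,1\}=1$), with Proposition \ref{P3.5} replacing Proposition \ref{P3.3} and Proposition \ref{P3.6} replacing Proposition \ref{P3.4}. The treatment of the geometric sums and of the $\sum_j\mathbb{P}(\sigma_0(j)>l-r/2)$ term matches the paper's (\ref{Eq3.88}) and (\ref{Eq3.89}), and there the $e^{-2\beta}$ prefactor is carried correctly.

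However, your bound on $\mathbb{P}(N_l>r)$ does not establish what is needed. You bound the entire base $2e^{-2\beta}/(1-e^{-2\beta})$ by $e^{-1}$, obtaining $\mathbb{P}(N_l>r)\leq Ce^{-cr}$, which discards the $\beta$-dependence entirely; the subsequent claim that $Ce^{-cr}\leq Ce^{-2\beta-c\sqrt{r}}$ ``since $e^{-2\beta}$ is bounded'' is false. Being bounded above is exactly the wrong direction: fix $r=2$ and let $\beta\to\infty$; the left side is a fixed positive constant while the right side tends to $0$, so no absolute constant $C$ makes the inequality hold. As written, this term would only give $\mathbb{P}((\mathcal{K}_{t,r})^c)\leq Crte^{-cr}+\cdots$, which does not imply the stated $Crte^{-2\beta-c\sqrt r}$. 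The fix is the one-line computation the paper does in (\ref{Eq3.91}): peel off a single factor of the base, $\bigl(\tfrac{2e^{-2\beta}}{1-e^{-2\beta}}\bigr)^{\lceil r\rceil-1}\leq \tfrac{2e^{-2\beta}}{1-e^{-2\beta}}\cdot e^{-(\lceil r\rceil-2)}\leq Ce^{-2\beta-r}\leq Ce^{-2\beta-c\sqrt r}$, so that exactly one power of $e^{-2\beta}$ survives and the remaining powers supply the decay in $r$. This is precisely the ``carry the $e^{-2\beta}$ through'' point you flag in your closing paragraph but then fail to apply in this step. With that correction the argument goes through and coincides with the paper's proof.
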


\begin{proof}

By the union bound, we have
\begin{equation}\label{Eq3.90}
   \mathbb{P}((\mathcal{K}_{t,r})^c)
  \leq \sum_{l\in [s-r t,s]\cap [n]} (\mathbb{P}(\max_{j\in\mathcal{H}_l}\{l-j\}>r)+\mathbb{P}(N_l> r)). 
\end{equation} 

By Proposition \ref{P3.5}, for any $l\in [s-rt,s]\cap [n]$, 
\begin{equation}\label{Eq3.91}
    \mathbb{P}(N_l>r)\leq \mathbb{P}(N_l\geq \lceil r\rceil)  \leq C\Big(\frac{2e^{-2\beta}}{1-e^{-2\beta}}\Big)^{\lceil r\rceil-1}\leq Ce^{-2\beta-r}.
\end{equation}

Consider any $l\in [s-rt,s]\cap [n]$. Following the argument in (\ref{Eq3.86}) and (\ref{Eq3.63}), we obtain that 
\begin{eqnarray}\label{Eq3.87}
\mathbb{P}(\max_{j\in\mathcal{H}_l}\{l-j\}>r) 
&\leq& \sum_{\substack{j\in [n]:\\j\leq  l-r}} e^{-2\beta(l-j)}+\sum_{\substack{j\in [n]:\\j\leq  l-r\slash 2}}e^{-2\beta(l-j)}\nonumber\\
&& +\sum_{\substack{j\in [n]:\\j\leq  l-r}}\mathbb{P}(\sigma_0(j)>l-r\slash 2).
\end{eqnarray}
Note that 
\begin{eqnarray}\label{Eq3.88}
  && \sum_{\substack{j\in [n]:\\j\leq  l-r}} e^{-2\beta(l-j)}+\sum_{\substack{j\in [n]:\\j\leq  l-r\slash 2}}e^{-2\beta(l-j)}\leq 2\sum_{j=\lceil r\slash 2 \rceil}^{\infty} e^{-2\beta j}\nonumber\\
 &=&\frac{2e^{-2\beta\lceil r\slash 2\rceil}}{1-e^{-2\beta}}\leq 4e^{-\beta r}\leq C e^{-2\beta-r}.
\end{eqnarray}
For any $j\in [n]$ such that $j\leq l-r$, by Proposition \ref{P3.6} (taking $u=l-j-\lfloor r\slash 2 \rfloor\geq r-\lfloor r\slash 2 \rfloor   \geq 1$), 
\begin{eqnarray*}
   \mathbb{P}(\sigma_0(j)>l-r\slash 2)&\leq& \mathbb{P}(|\sigma_0(j)-j|\geq l-j-\lfloor r\slash 2 \rfloor) \nonumber\\
   &\leq& C e^{-2\beta-\sqrt{l-j-\lfloor r\slash 2\rfloor}\slash 2}.
\end{eqnarray*}
Hence
\begin{eqnarray}\label{Eq3.89}
   \sum_{\substack{j\in [n]:\\j\leq  l-r}} \mathbb{P}(\sigma_0(j)>l-r\slash 2)&\leq& Ce^{-2\beta}\sum_{\substack{j\in [n]:\\j\leq  l-r}} e^{-\sqrt{l-j-\lfloor r\slash 2\rfloor}\slash 2} \nonumber\\
  &\leq& Ce^{-2\beta}\sum_{j=\lceil r\slash 2\rceil}^{\infty} e^{-\sqrt{j}\slash 2}\leq  Ce^{-2\beta-c\sqrt{r}}.
\end{eqnarray}
By (\ref{Eq3.87})-(\ref{Eq3.89}), we have
\begin{equation}\label{Eq3.92}
    \mathbb{P}(\max_{j\in\mathcal{H}_l}\{l-j\}>r) \leq Ce^{-2\beta-c\sqrt{r}}. 
\end{equation}

By (\ref{Eq3.90}), (\ref{Eq3.91}), and (\ref{Eq3.92}), we conclude that
\begin{equation}
    \mathbb{P}((\mathcal{K}_{t,r})^c)\leq Crt e^{-2\beta-c\sqrt{r}}. 
\end{equation}
    
\end{proof}

Let $C_0\geq 2$ be the constant appearing in Lemma \ref{L3.7}. Note that for any $t\in [n]$ and $r\geq n$, $\mathcal{K}_{t,r}=\Omega$. Let $l_0\in \mathbb{N}$ be such that $2^{l_0+1} C_0\max\{\beta^{-2},1\}\geq n$. For any $t\in [n]$, we have 
\begin{eqnarray*}
  &&  \mathbb{E}[(s-Z_{t-1})\mathbbm{1}_{T=t}]= \mathbb{E}[(s-Z_{t-1})\mathbbm{1}_{T=t}\mathbbm{1}_{\mathcal{K}_{t, 2^{l_0+1}C_0\max\{\beta^{-2},1\}}}]   \nonumber\\
  &\leq& \sum_{l=0}^{l_0} \mathbb{E}[(s-Z_{t-1})\mathbbm{1}_{T=t}\mathbbm{1}_{\mathcal{K}_{t,2^{l+1}C_0\max\{\beta^{-2},1\}}\backslash \mathcal{K}_{t,2^{l}C_0\max\{\beta^{-2},1\}}}]\nonumber\\
  && +\mathbb{E}[(s-Z_{t-1})\mathbbm{1}_{T=t}\mathbbm{1}_{\mathcal{K}_{t,C_0\max\{\beta^{-2},1\}}}].
\end{eqnarray*}
Hence by (\ref{Eq5.69}),
\begin{eqnarray}\label{Eq3.95}
  &&   \mathbb{E}[s-\min(\mathcal{C}_s(\sigma))] \nonumber\\
  &\leq& \sum_{t=1}^n\sum_{l=0}^{l_0} \mathbb{E}[(s-Z_{t-1})\mathbbm{1}_{T=t}\mathbbm{1}_{\mathcal{K}_{t,2^{l+1}C_0\max\{\beta^{-2},1\}}\backslash \mathcal{K}_{t,2^{l}C_0\max\{\beta^{-2},1\}}}] \nonumber\\
  && + \sum_{t=1}^n \mathbb{E}[(s-Z_{t-1})\mathbbm{1}_{T=t}\mathbbm{1}_{\mathcal{K}_{t,C_0\max\{\beta^{-2},1\}}}].
\end{eqnarray}  

By Lemmas \ref{L3.8} and \ref{L3.7}, for any $t\in [n]$ and $r\geq C_0\max\{\beta^{-1},1\}$, we have
\begin{eqnarray*}
  &&  \mathbb{E}[(s-Z_{t-1})\mathbbm{1}_{T=t}\mathbbm{1}_{\mathcal{K}_{t,2r}\backslash \mathcal{K}_{t,r}}] \nonumber\\
  &\leq& 2r(t-1)\Big(1-\frac{\min\{\beta,1\}}{2r}\Big)^{t-1}\mathbb{P}(\mathcal{K}_{t,2r}\backslash \mathcal{K}_{t,r}) \nonumber\\
  &\leq& Cr^3t^2\max\{\beta^{-6},1\}\Big(1-\frac{\min\{\beta,1\}}{2r}\Big)^{t-1} \exp(-c\min\{\beta^{1\slash 2},1\}\sqrt{r}). 
 \nonumber\\
 &&
\end{eqnarray*}
Thus for any $r\geq C_0\max\{\beta^{-1},1\}$,
\begin{eqnarray*}
 &&  \sum_{t=1}^n \mathbb{E}[(s-Z_{t-1})\mathbbm{1}_{T=t}\mathbbm{1}_{\mathcal{K}_{t,2r}\backslash \mathcal{K}_{t,r}}]\nonumber\\
 &\leq& Cr^3\max\{\beta^{-6},1\} \exp(-c\min\{\beta^{1\slash 2},1\}\sqrt{r})\Big(\sum_{t=1}^n t^2\Big(1-\frac{\min\{\beta,1\}}{2r}\Big)^{t-1}\Big)\nonumber\\
 &\leq& Cr^6\max\{\beta^{-9},1\}\exp(-c\min\{\beta^{1\slash 2},1\}\sqrt{r}).
\end{eqnarray*}
Hence we have
\begin{eqnarray}\label{Eq3.94}
    && \sum_{t=1}^n\sum_{l=0}^{l_0} \mathbb{E}[(s-Z_{t-1})\mathbbm{1}_{T=t}\mathbbm{1}_{\mathcal{K}_{t,2^{l+1}C_0\max\{\beta^{-2},1\}}\backslash \mathcal{K}_{t,2^{l}C_0\max\{\beta^{-2},1\}}}] \nonumber\\
    &\leq& C\max\{\beta^{-21},1\}\sum_{l=0}^{l_0} 2^{6l}  \exp(-c\max\{\beta^{-1\slash 2},1\}2^{l\slash 2}) \nonumber\\
    &\leq& C\max\{\beta^{-21},1\}\sum_{l=0}^{\infty}\exp(-c \max\{\beta^{-1\slash 2},1\} (l+1))\nonumber\\
    &\leq&   C\max\{  \beta^{-21},1 \}\exp(-c\max\{\beta^{-1\slash 2},1\}).
\end{eqnarray}

If $\beta\geq 2$, repeating the above argument but using Lemma \ref{L3.9} in place of Lemma \ref{L3.7}, we obtain that
\begin{equation}\label{Eq3.97}
    \sum_{t=1}^n\sum_{l=0}^{l_0} \mathbb{E}[(s-Z_{t-1})\mathbbm{1}_{T=t}\mathbbm{1}_{\mathcal{K}_{t,2^{l+1}C_0\max\{\beta^{-2},1\}}\backslash \mathcal{K}_{t,2^{l}C_0\max\{\beta^{-2},1\}}}] \leq  Ce^{-2\beta}. 
\end{equation}

By Lemma \ref{L3.8}, for any $t\in [n]$, we have
\begin{eqnarray*}
  &&  \mathbb{E}[(s-Z_{t-1})\mathbbm{1}_{T=t}\mathbbm{1}_{\mathcal{K}_{t,C_0\max\{\beta^{-2},1\}}}] \nonumber\\
  &\leq& C_0\max\{\beta^{-2},1\}(t-1)(1-C_0^{-1}\min\{\beta^3,1\})^{t-1},
\end{eqnarray*}
hence
\begin{eqnarray}\label{Eq3.93}
  &&  \sum_{\substack{t\in [n]:\\ t\geq\beta^{-4}  }} \mathbb{E}[(s-Z_{t-1})\mathbbm{1}_{T=t}\mathbbm{1}_{\mathcal{K}_{t,C_0\max\{\beta^{-2},1\}}}] \nonumber\\
  &\leq& C_0\max\{\beta^{-2},1\}\sum_{\substack{t\in [n]:\\ t\geq \beta^{-4}}} (t-1)(1-C_0^{-1}\min\{\beta^3,1\})^{t-1}\nonumber\\
  &\leq& C_0\max\{\beta^{-2},1\}\sum_{t=\lceil \max\{\beta^{-4},1\}\rceil-1}^{\infty} t (1-C_0^{-1}\min\{\beta^3,1\})^t\nonumber\\
  &\leq& C\max\{\beta^{-12},1\} \exp(-c\max\{\beta^{-1},1\}).
\end{eqnarray}

If $\beta\geq 2$, by Proposition \ref{P3.5} and Lemma \ref{L3.8}, for any $t\in [n]$, we have
\begin{eqnarray*}
    \mathbb{E}[(s-Z_{t-1})\mathbbm{1}_{T=t}\mathbbm{1}_{\mathcal{K}_{t,C_0\max\{\beta^{-2},1\}}}]&\leq& C_0(t-1)(1-C_0^{-1})^{t-1} \mathbb{P}(N_s\geq 2)\nonumber\\
    &\leq& Ce^{-2\beta}(t-1)(1-C_0^{-1})^{t-1},
\end{eqnarray*}
hence
\begin{eqnarray}\label{Eq3.98}
   && \sum_{t=1}^n \mathbb{E}[(s-Z_{t-1})\mathbbm{1}_{T=t}\mathbbm{1}_{\mathcal{K}_{t,C_0\max\{\beta^{-2},1\}}}] \nonumber\\
   &\leq&  Ce^{-2\beta}\sum_{t=1}^{n }(t-1)(1-C_0^{-1})^{t-1}\leq C e^{-2\beta}. 
\end{eqnarray}

The following lemma bounds $\sum_{j\in \mathcal{H}_l} (l-j)$ for any $l\in [n]$. The proof of this lemma will be given in Section \ref{Sect.3.2.3}.

\begin{lemma}\label{L3.5}
Assume that $\beta\in (0,1]$. There exist positive absolute constants $C,C',c$, such that for any $l\in [n]$, 
\begin{equation}
    \mathbb{P}\Big(\sum_{j\in \mathcal{H}_l}(l-j)>C'\beta^{-2}\Big)\leq C\exp(-c\beta^{-1\slash 16}).
\end{equation}
\end{lemma}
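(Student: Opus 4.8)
The plan is to condition on $\sigma_0$ and use that, given $\sigma_0$, each $b_j$ equals $\max\{j,\sigma_0(j)\}$ plus an independent $\mathrm{Exp}(2\beta)$ overshoot $E_j$ (Lemma \ref{L3.1}); thus $j\in\mathcal H_l$ iff $j\le l-1$ and $E_j\ge(l-\max\{j,\sigma_0(j)\})_+$. For such $j$ one has the elementary bound $l-j\le E_j\mathbbm 1_{\sigma_0(j)<l}+(\sigma_0(j)-j)_+$, which splits the quantity of interest as
\[
\sum_{j\in\mathcal H_l}(l-j)\le \underbrace{\sum_{j=1}^{l-1}E_j\mathbbm 1_{\sigma_0(j)<l}\mathbbm 1_{b_j\ge l}}_{S_{\mathrm{rand}}}+\underbrace{\sum_{j=1}^{l-1}(\sigma_0(j)-j)_+\mathbbm 1_{b_j\ge l}}_{S_{\mathrm{disp}}}.
\]
First I would dispose of $S_{\mathrm{rand}}$ by a conditional Chernoff bound: given $\sigma_0$, it is a sum of independent variables $E_j\mathbbm 1_{E_j\ge g_j}$ with $g_j:=l-\max\{j,\sigma_0(j)\}\ge1$, and since $\sigma_0$ is a bijection each value of $g_j$ is attained by at most two indices $j$, so $\sum_j e^{-\beta g_j}\le C\beta^{-1}$ and hence $\mathbb E[e^{\beta S_{\mathrm{rand}}}\mid\sigma_0]\le\exp(C\beta^{-1})$ uniformly in $\sigma_0$; therefore $\mathbb P(S_{\mathrm{rand}}>C'\beta^{-2})\le e^{-c\beta^{-1}}$ for $C'$ large, which already beats the target bound.

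All the difficulty lies in $S_{\mathrm{disp}}$. I would split according to whether $\sigma_0(j)\ge l$ — in which case $b_j\ge l$ automatically (Lemma \ref{L3.1}), so the contribution $\sum_{j\le l-1,\,\sigma_0(j)\ge l}(\sigma_0(j)-j)$ is a deterministic functional of $\sigma_0$ — or $j<\sigma_0(j)<l$, in which case, given $\sigma_0$, the contribution is a sum of independent $\mathrm{Bernoulli}(e^{-2\beta(l-\sigma_0(j))})$ variables weighted by the displacements $\sigma_0(j)-j$, again amenable to a conditional Chernoff bound once its $\sigma_0$-dependent mean and weights are controlled. In both cases one is led to bound sums of displacements of points $(x,\sigma_0(x))$ that cross or nearly cross level $l$. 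Writing $\sigma_0(j)-j=(l-j)+(\sigma_0(j)-l)$ and passing to $\sigma_0^{-1}$ for the second summand (which is again $\mathbb P_{n,\beta}$-distributed since $H(\sigma^{-1},Id)=H(\sigma,Id)$, and whose displacements obey Proposition \ref{P3.4}), it suffices to control
\[
\sum_{j\le l-1,\,\sigma_0(j)\ge l}(l-j)=\sum_{\Delta\ge1}\big|\{j\le l-\Delta:\sigma_0(j)\ge l\}\big|\le\sum_{\Delta\ge1}\big|\mathcal R'_{l-\lfloor\Delta/2\rfloor,\,\lfloor\Delta/2\rfloor}(\sigma_0)\big|,
\]
together with the analogous $\sigma_0^{-1}$-quantity (which is of the $\mathcal R_{j,\Delta}$ type after the reversal map of Proposition \ref{P3.4}). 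I would decompose the range of $\Delta$ dyadically: for scales $\Delta\le C\beta^{-1}$ bound each count crudely by $|\mathcal D_{l-1}(\sigma_0)|$ and invoke Proposition \ref{P2.1} ($|\mathcal D_{l-1}(\sigma_0)|\le C\beta^{-1}$ off an event of probability $e^{-c\beta^{-1}}$), so these scales contribute $\le C\beta^{-2}$; for each larger dyadic scale invoke Proposition \ref{P2.3} on $|\mathcal R'_{\cdot,\cdot}|$ (and $|\mathcal R_{\cdot,\cdot}|$), choosing the free parameter $r=r(\Delta)$ to decay with $\Delta$ so that the thresholds $C_2 r(\Delta)\beta^{-1}$, multiplied by the dyadic weights $\asymp\Delta$, still sum to $O(\beta^{-2})$, and finally union over the $O(\log\beta^{-1})$ relevant scales.

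The main obstacle — and, I expect, the source of the exponent $\beta^{-1/16}$ rather than $\beta^{-1}$ — is the interplay between this dyadic sum and the shape of Proposition \ref{P2.3}: its tail exponent is only $\exp(-c_1\min\{r,1\}\beta^{-1})$, capped at $r\le1$, so pushing $|\mathcal R'_{l-\Delta/2,\Delta/2}(\sigma_0)|$ down to a size that, when multiplied by the dyadic weight $\asymp\Delta$, still sums to $O(\beta^{-2})$ forces one to take $r(\Delta)$ small, which degrades the exponent. Since the scales that must be covered run up to $\Delta\asymp\beta^{-2}$ — the scale at which displacements near level $l$ become negligible — and since each further gain costs a root-type loss (as visible already in Propositions \ref{P3.4} and \ref{P2.3}), optimizing the scale decomposition (iterating a "halving" step a few times) leaves a probability bound of the form $\exp(-c\beta^{-\alpha})$ with $\alpha=1/16$. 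I therefore expect essentially all of the work, and all of the care with constants, to be in this multi-scale estimate for $S_{\mathrm{disp}}$, the reduction above and the treatment of $S_{\mathrm{rand}}$ being routine.
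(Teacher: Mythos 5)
Your proposal is correct and follows essentially the same route as the paper: your $S_{\mathrm{rand}}$ is the paper's Parts 1--2 of the proof (which it handles by truncation at scale $\beta^{-3/2}$ plus Bernstein rather than your cleaner exponential-moment bound), and your treatment of $S_{\mathrm{disp}}$ via multi-scale counts of near-crossings of level $l$ controlled by Propositions \ref{P2.1}, \ref{P2.3} and \ref{P3.4} is exactly the paper's Parts 3--4, including the correct diagnosis that the $\beta^{-1/16}$ exponent comes from applying the square-root displacement bound at the far truncation scale. The only place your sketch is materially thinner than what is required is the sub-case $j<\sigma_0(j)<l$, where controlling the conditional mean $\sum (\sigma_0(j)-j)e^{-2\beta(l-\sigma_0(j))}$ forces a two-parameter scale decomposition (the paper's sets $R_{k,k'}$, indexed jointly by $l-\sigma_0(j)$ and $\sigma_0(j)-j$) rather than the one-parameter decomposition over $\Delta$ that you spell out for the crossing part.
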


In the following (until (\ref{Eq3.96})), we assume that $\beta\in (0,1]$. Let $C_1\geq 1$ be the maximum of $C_0$ appearing in Proposition \ref{P3.3} and $C'$ appearing in Lemma \ref{L3.5}. Note that $C_1$ is a positive absolute constant. Let $\mathcal{W}$ be the event that $\sum_{j\in \mathcal{H}_l}(l-j)\leq C_1\beta^{-2}$ and $N_l\leq C_1\beta^{-1}$ for any $l\in [s-C_0\beta^{-6},s]\cap [n]$ and $N_l\geq e^{-2}\beta^{-1}\slash 4$ for any $l\in [\max\{s-C_0\beta^{-6},\beta^{-1}\},s]\cap [n]$. Note that $\mathcal{W}\in \mathcal{B}_n$. By Propositions \ref{P3.1} and \ref{P3.3}, Lemma \ref{L3.5}, and the union bound, we have
\begin{equation}\label{Eq3.74}
    \mathbb{P}(\mathcal{W}^c)\leq C\beta^{-6}\exp(-c\beta^{-1\slash 16})\leq C\exp(-c\beta^{-1\slash 16}).
\end{equation}
Note that
\begin{eqnarray}\label{Eq3.75}
 &&   \sum_{\substack{t\in [n]:\\ t  <  \beta^{-4}}} \mathbb{E}[(s-Z_{t-1})\mathbbm{1}_{T=t}\mathbbm{1}_{\mathcal{K}_{t,C_0\max\{\beta^{-2},1\}}}]  \nonumber\\
 &\leq&\sum_{\substack{t\in [n]:\\ t  <  \beta^{-4}}}\sum_{j=1}^{t-1} \mathbb{E}[(Z_{j-1}-Z_j)\mathbbm{1}_{T=t}\mathbbm{1}_{\mathcal{K}_{t,C_0 \beta^{-2}}}] \nonumber\\
 &\leq& \sum_{\substack{j\in [n]:\\ j\leq\beta^{-4}}}\sum_{\substack{t\in[n]:\\t\geq j+1}} \mathbb{E}[(Z_{j-1}-Z_j)\mathbbm{1}_{T=t}\mathbbm{1}_{\mathcal{K}_{j,C_0 \beta^{-2}}}]\nonumber\\
 &\leq&  \sum_{\substack{j\in [n]:\\ j\leq\beta^{-4}}} \mathbb{E}[(Z_{j-1}-Z_j)\mathbbm{1}_{T\geq j+1}\mathbbm{1}_{\mathcal{K}_{j,C_0 \beta^{-2}}}]\nonumber\\
 &\leq& \sum_{\substack{j\in [n]:\\ j\leq\beta^{-4}}} \mathbb{E}[(Z_{j-1}-Z_j)\mathbbm{1}_{W_1=1,\cdots,W_{j}=1}\mathbbm{1}_{\mathcal{K}_{j,C_0 \beta^{-2}}}].
\end{eqnarray}

Below we consider any $j\in [n]$ such that $j\leq \beta^{-4}$. By Lemma \ref{L3.6}, 
\begin{eqnarray*}
   && \mathbb{E}[(Z_{j-1}-Z_j)\mathbbm{1}_{W_j=1}|\mathcal{F}_{j-1}]=\sum_{k=1}^{n}\mathbb{E}[(Z_{j-1}-k)\mathbbm{1}_{Z_j=k,W_j=1}|\mathcal{F}_{j-1}]\nonumber\\
   &=& \sum_{k=1}^n (Z_{j-1}-k)\mathbb{P}(Z_j=k,W_j=1|\mathcal{F}_{j-1})=\frac{\sum_{k\in\mathcal{H}_{Z_{j-1}}}(Z_{j-1}-k)}{N_{Z_{j-1}}}\cdot \mathbbm{1}_{W_{j-1}=1}.
\end{eqnarray*}
Hence
\begin{eqnarray}\label{Eq3.71}
   && \mathbb{E}[(Z_{j-1}-Z_j)\mathbbm{1}_{W_1=1,\cdots,W_{j}=1}\mathbbm{1}_{\mathcal{K}_{j,C_0 \beta^{-2}}}] \nonumber\\
   &=& \mathbb{E}[\mathbb{E}[(Z_{j-1}-Z_j)\mathbbm{1}_{W_j=1}|\mathcal{F}_{j-1}]\mathbbm{1}_{W_1=1,\cdots,W_{j-1}=1}\mathbbm{1}_{\mathcal{K}_{j,C_0 \beta^{-2}}}] \nonumber\\
   &=& \mathbb{E}\bigg[\frac{\sum_{k\in \mathcal{H}_{Z_{j-1}}}(Z_{j-1}-k)}{N_{Z_{j-1}}}\mathbbm{1}_{W_1=1,\cdots,W_{j-1}=1}\mathbbm{1}_{\mathcal{K}_{j,C_0 \beta^{-2}}}\bigg].
\end{eqnarray}

When the event $\{W_1=1,\cdots,W_{j-1}=1\}\cap\mathcal{K}_{j,C_0\beta^{-2}}$ holds, by Lemma \ref{L3.3}, 
\begin{equation}\label{Eq3.70}
    s-Z_{j-1}\leq C_0\beta^{-2}  (j-1)\leq C_0\beta^{-6}, \text{ hence } Z_{j-1}\in [s-C_0\beta^{-6},s]\cap [n].
\end{equation}
By (\ref{Eq3.70}) and the definition of $\mathcal{W}$, when the event
\begin{equation*}
    \{W_1=1,\cdots,W_{j-1}=1\}\cap\mathcal{K}_{j,C_0\beta^{-2}}\cap\mathcal{W}
\end{equation*}
holds, if $Z_{j-1}<\beta^{-1}$, we have
\begin{equation}\label{Eq3.72}
    \frac{\sum_{k\in\mathcal{H}_{Z_{j-1}}}(Z_{j-1}-k)}{N_{Z_{j-1}}}\leq \frac{\beta^{-1}|\mathcal{H}_{Z_{j-1}}|}{N_{Z_{j-1}}}\leq \beta^{-1};
\end{equation}
if $Z_{j-1}\geq \beta^{-1}$, we have $Z_{j-1}\in [\max\{s-C_0\beta^{-6},\beta^{-1}\},s]\cap[n]$ and 
\begin{equation}\label{Eq3.73}
    \frac{\sum_{k\in\mathcal{H}_{Z_{j-1}}}(Z_{j-1}-k)}{N_{Z_{j-1}}}\leq \frac{C_1\beta^{-2}}{e^{-2}\beta^{-1}\slash 4}\leq C\beta^{-1}.
\end{equation}

By (\ref{Eq3.72}), (\ref{Eq3.73}), and Lemma \ref{L3.6}, 
\begin{eqnarray}\label{Eq3.76}
    && \mathbb{E}\bigg[\frac{\sum_{k\in \mathcal{H}_{Z_{j-1}}}(Z_{j-1}-k)}{N_{Z_{j-1}}}\mathbbm{1}_{W_1=1,\cdots,W_{j-1}=1}\mathbbm{1}_{\mathcal{K}_{j,C_0 \beta^{-2}}} \mathbbm{1}_{\mathcal{W}}\bigg]  
 \nonumber\\
 &\leq& C\beta^{-1}\mathbb{E}[\mathbbm{1}_{W_1=1,\cdots,W_{j-1}=1}\mathbbm{1}_{\mathcal{K}_{j,C_0 \beta^{-2}}} \mathbbm{1}_{\mathcal{W}}]\nonumber\\
 &=& C\beta^{-1}\mathbb{E}[\mathbb{P}(W_{j-1}=1|\mathcal{F}_{j-2})\mathbbm{1}_{W_1=1,\cdots,W_{j-2}=1}\mathbbm{1}_{\mathcal{K}_{j,C_0 \beta^{-2}}} \mathbbm{1}_{\mathcal{W}}]\nonumber\\
 &=& C\beta^{-1}\mathbb{E}\Big[\Big(1-\frac{1}{N_{Z_{j-2}}}\Big)\mathbbm{1}_{W_1=1,\cdots,W_{j-2}=1}\mathbbm{1}_{\mathcal{K}_{j,C_0 \beta^{-2}}} \mathbbm{1}_{\mathcal{W}}\Big]\nonumber\\
 &\leq& C\beta^{-1}(1-C_1^{-1}\beta)\mathbb{E}[\mathbbm{1}_{W_1=1,\cdots,W_{j-2}=1}\mathbbm{1}_{\mathcal{K}_{j,C_0 \beta^{-2}}} \mathbbm{1}_{\mathcal{W}}]\nonumber\\
 &\leq& \cdots \leq C\beta^{-1}(1-C_1^{-1}\beta)^{j-1},
\end{eqnarray}
where we note that $Z_{j-2}\in [s-C_0\beta^{-6},s]\cap [n]$ when the event $\{W_1=1,\cdots,W_{j-2}=1\}\cap\mathcal{K}_{j,C_0\beta^{-2}}$ holds (using Lemma \ref{L3.3} and arguing as in (\ref{Eq3.70})) and use the definition of $\mathcal{W}$ in the fifth line. When the event $\{W_1=1,\cdots,W_{j-1}=1\}\cap\mathcal{K}_{j,C_0\beta^{-2}}$ holds, by (\ref{Eq3.70}), $Z_{j-1}\in [s-C_0\beta^{-2}j,s]\cap [n]$; by the definition of $\mathcal{K}_{j,C_0\beta^{-2}}$, we have $\max_{k\in \mathcal{H}_{Z_{j-1}}}\{Z_{j-1}-k\}\leq C_0\beta^{-2}$. Hence by Lemma \ref{L3.2} and (\ref{Eq3.74}),
\begin{eqnarray}\label{Eq3.77}
    && \mathbb{E}\bigg[\frac{\sum_{k\in \mathcal{H}_{Z_{j-1}}}(Z_{j-1}-k)}{N_{Z_{j-1}}}\mathbbm{1}_{W_1=1,\cdots,W_{j-1}=1}\mathbbm{1}_{\mathcal{K}_{j,C_0 \beta^{-2}}} \mathbbm{1}_{\mathcal{W}^c}\bigg]  
 \nonumber\\
 &\leq&  \mathbb{E}\Big[\frac{C_0\beta^{-2}|\mathcal{H}_{Z_{j-1}}|}{N_{Z_{j-1}}}\mathbbm{1}_{W_1=1,\cdots,W_{j-1}=1}\mathbbm{1}_{\mathcal{K}_{j,C_0 \beta^{-2}}} \mathbbm{1}_{\mathcal{W}^c}\Big]  \nonumber\\
 &\leq& C_0\beta^{-2}\mathbb{P}(\mathcal{W}^c) \leq C\beta^{-2}\exp(-c\beta^{-1\slash 16})\leq C\exp(-c\beta^{-1\slash 16}).
\end{eqnarray}
By (\ref{Eq3.75}), (\ref{Eq3.71}), (\ref{Eq3.76}), and (\ref{Eq3.77}), \begin{eqnarray}\label{Eq3.96}
    && \sum_{\substack{t\in [n]:\\ t  <  \beta^{-4}}} \mathbb{E}[(s-Z_{t-1})\mathbbm{1}_{T=t}\mathbbm{1}_{\mathcal{K}_{t,C_0\max\{\beta^{-2},1\}}}] \nonumber\\
    &\leq& C\sum_{\substack{j\in [n]:\\ j \leq  \beta^{-4}}}(\beta^{-1}(1-C_1^{-1}\beta)^{j-1}+\exp(-c\beta^{-1\slash 16}))\nonumber\\
    &\leq& C\beta^{-1}\sum_{j=1}^{\infty} (1-C_1^{-1}\beta)^{j-1}+C\beta^{-4}\exp(-c\beta^{-1\slash 16})  \leq C\beta^{-2}.
\end{eqnarray} 

By (\ref{Eq3.95}), (\ref{Eq3.94}), (\ref{Eq3.93}), and (\ref{Eq3.96}), we have
\begin{equation}\label{Eq3.99}
    \mathbb{E}[s-\min(\mathcal{C}_s(\sigma))]\leq C\max\{\beta^{-2},1\}.
\end{equation}
If $\beta\geq 2$, by (\ref{Eq3.95}), (\ref{Eq3.97}), (\ref{Eq3.98}), we have 
\begin{equation}\label{Eq3.100}
    \mathbb{E}[s-\min(\mathcal{C}_s(\sigma))]\leq Ce^{-2\beta}.   
\end{equation}
Combining (\ref{Eq3.99}) and (\ref{Eq3.100}), noting that $s-\min(\mathcal{C}_s(\sigma))\leq s-1$, we have
\begin{equation}\label{Eq3.101}
    \mathbb{E}[s-\min(\mathcal{C}_s(\sigma))]\leq C\min\{e^{-2\beta}\max\{\beta^{-2},1\}, s-1\}.
\end{equation}

Let $\bar{\sigma}\in S_n$ be such that $\bar{\sigma}(j)=n+1-\sigma(n+1-j)$ for every $j\in [n]$. Note that $\min(\mathcal{C}_{n+1-s}(\bar{\sigma}))=n+1-\max(\mathcal{C}_s(\sigma))$. As the distribution of $\bar{\sigma}$ is given by $\mathbb{P}_{n,\beta}$, by (\ref{Eq3.101}), we have
\begin{eqnarray}\label{Eq3.102}
  &&   \mathbb{E}[\max(\mathcal{C}_s(\sigma))-s]= \mathbb{E}[(n+1-s)-\min(\mathcal{C}_{n+1-s}(\bar{\sigma}))] \nonumber\\
  &=&  \mathbb{E}[(n+1-s)-\min(\mathcal{C}_{n+1-s}(\sigma))]\leq  C\min\{e^{-2\beta}\max\{\beta^{-2},1\}, n-s\}.  \nonumber\\
  &&
\end{eqnarray}
By (\ref{Eq3.101}) and (\ref{Eq3.102}),
\begin{equation}
    \mathbb{E}[\max(\mathcal{C}_s(\sigma))-\min(\mathcal{C}_s(\sigma))]\leq C\min\{e^{-2\beta}\max\{\beta^{-2},1\}, n-1\}. 
\end{equation}

\subsubsection{Proof of Lemma \ref{L3.5}}\label{Sect.3.2.3}

In this part, we give the proof of Lemma \ref{L3.5}.

\begin{proof}[Proof of Lemma \ref{L3.5}]

Let $C_0$ be the constant appearing in Proposition \ref{P2.1}. Let $C_1,C_2$ be the constants appearing in Proposition \ref{P2.3} (with $C_0=1$). We also denote by $C_0'$ the constant $C_0$ appearing in Proposition \ref{P3.4}. Without loss of generality, we assume that $C_0,C_1 \geq 1$. We assume that $\beta\in (0,1]$ and $l\in [n]$. 

By the definition of $\mathcal{H}_l$, noting Lemma \ref{L3.1}, we have
\begin{eqnarray}\label{Eq3.13}
  &&  \sum_{j\in \mathcal{H}_l}(l-j)=\sum_{j\in [n]: j\leq l-1} (l-j) \mathbbm{1}_{b_j\geq l}\nonumber\\
  &=& \sum_{\substack{j\in [l-1]:\\ \sigma_0(j)<j}} (l-j)\mathbbm{1}_{b_j\geq l}+\sum_{\substack{j\in [l-1]:\\ \sigma_0(j)\geq j}} (l-\sigma_0(j))\mathbbm{1}_{b_j\geq l}\nonumber\\ 
    &&   +\sum_{\substack{j\in [l-1]:\\ \sigma_0(j)\geq l}} (\sigma_0(j)-j)+\sum_{\substack{j\in [l-1]:\\ j\leq \sigma_0(j)\leq l-1}} (\sigma_0(j)-j)\mathbbm{1}_{b_j\geq l}.
\end{eqnarray}
We bound the four terms on the right-hand side of (\ref{Eq3.13}) in \textbf{Parts 1-4} below.

\paragraph{Part 1}

Note that for any $j\in [l-1]$ such that $\sigma_0(j)<j$, by Lemma \ref{L3.1}, we have $\mathbb{P}(b_j\geq l|\sigma_0)=e^{-2\beta (l-j)}$.

Let $\mathcal{E}$ be the event that for any $j\in [n]$ such that $j\leq l-\beta^{-3\slash 2}$ and $\sigma_0(j)<j$, we have $b_j< l$. By the union bound and noting that $\beta\in (0,1]$, we obtain that
\begin{eqnarray}\label{Eq3.15}
    \mathbb{P}(\mathcal{E}^c|\sigma_0) &\leq& \sum_{\substack{j\in [n]:j\leq l-\beta^{-3\slash 2},\\\sigma_0(j)<j}}\mathbb{P}(b_j\geq l|\sigma_0)=\sum_{\substack{j\in [n]:j\leq l-\beta^{-3\slash 2},\\\sigma_0(j)<j}} e^{-2\beta(l-j)} \nonumber\\
  &\leq& \sum_{j=\lceil \beta^{-3\slash 2} \rceil}^{\infty} e^{-2\beta j} = \frac{e^{-2\beta \lceil \beta^{-3\slash 2}\rceil}}{1-e^{-2\beta}} \leq 4\beta^{-1} e^{-2\beta^{-1\slash 2}}. 
\end{eqnarray}
Hence
\begin{equation}\label{Eq3.17}
    \mathbb{P}(\mathcal{E}^c)\leq 4\beta^{-1} e^{-2\beta^{-1\slash 2}}.
\end{equation}

Note that for any $j\in [n]$ such that $l-\beta^{-3\slash 2}<j\leq l-1$ and $\sigma_0(j)<j$, we have $1\leq l-j \leq \beta^{-3\slash 2}$. As $\beta\in (0,1]$, we have
\begin{eqnarray}\label{Eq3.18}
    &&  \sum_{\substack{j\in [n]:l-\beta^{-3\slash 2}<j\leq l-1,\\ \sigma_0(j)<j}} \mathbb{E}[(l-j) \mathbbm{1}_{b_j\geq l}|\sigma_0]=\sum_{\substack{j\in [n]:l-\beta^{-3\slash 2}<j\leq l-1,\\ \sigma_0(j)<j}}(l-j) e^{-2\beta(l-j)} \nonumber\\
    &&\leq\sum_{j=1}^{\infty} j e^{-2\beta j} = \frac{e^{-2\beta}}{(1-e^{-2\beta})^2}\leq 16\beta^{-2},
\end{eqnarray}
\begin{eqnarray}\label{Eq3.20}
  &&  \sum_{\substack{j\in [n]:l-\beta^{-3\slash 2}<j\leq l-1,\\ \sigma_0(j)<j}} \mathbb{E}[(l-j)^2 \mathbbm{1}_{b_j\geq l}|\sigma_0]=\sum_{\substack{j\in [n]:l-\beta^{-3\slash 2}<j\leq l-1,\\ \sigma_0(j)<j}}(l-j)^2 e^{-2\beta(l-j)} \nonumber\\
  &&\leq\sum_{j=1}^{\infty} j^2 e^{-2\beta j} 
  =  \frac{e^{-2\beta}(1+e^{-2\beta})}{(1-e^{-2\beta})^3}\leq 200\beta^{-3}.  
\end{eqnarray}
Hence by Bernstein's inequality (see e.g. \cite[Corollary 2.11]{BLM} and the remarks below it), for any $t\geq 0$, 
\begin{eqnarray}\label{Eq3.14}
   &&  \mathbb{P}\Big(\sum_{\substack{j\in [n]:l-\beta^{-3\slash 2}<j\leq l-1,\\\sigma_0(j)<j}}(l-j)\mathbbm{1}_{b_j\geq l}\geq 16\beta^{-2}+t \Big|\sigma_0\Big) \nonumber\\
   &\leq& \exp\Big(-\frac{t^2}{2(200\beta^{-3}+\beta^{-3\slash 2}t\slash 3)}\Big).
\end{eqnarray}
Let $\mathcal{E}'$ be the event that $\sum\limits_{\substack{j\in [n]:l-\beta^{-3\slash 2}<j\leq l-1,\\\sigma_0(j)<j}}(l-j)\mathbbm{1}_{b_j\geq  l}\leq 20\beta^{-2}$. Taking $t=4\beta^{-2}$ in (\ref{Eq3.14}), we obtain that $\mathbb{P}((\mathcal{E}')^c |\sigma_0)\leq \exp(-c\beta^{-1\slash 2})$, hence
\begin{equation}\label{Eq3.16}
    \mathbb{P}((\mathcal{E}')^c)\leq \exp(-c\beta^{-1\slash 2}).
\end{equation}

When the events $\mathcal{E}$ and $\mathcal{E}'$ hold, 
\begin{equation*}
    \sum_{\substack{j\in [l-1]:\\ \sigma_0(j)<j}} (l-j)\mathbbm{1}_{b_j\geq l} =\sum_{\substack{j\in [n]:l-\beta^{-3\slash 2}< j\leq l-1,\\\sigma_0(j)<j}} (l-j)\mathbbm{1}_{b_j\geq l}\leq 20\beta^{-2}.
\end{equation*}
Hence by (\ref{Eq3.17}), (\ref{Eq3.16}), and the union bound, we have
\begin{equation}\label{Eq3.53}
    \mathbb{P}\Big(\sum_{\substack{j\in [l-1]:\\ \sigma_0(j)<j}} (l-j)\mathbbm{1}_{b_j\geq l} >
    20\beta^{-2}\Big)\leq \mathbb{P}(\mathcal{E}^c)+\mathbb{P}((\mathcal{E}')^c)\leq C\exp(-c\beta^{-1\slash 2}). 
\end{equation}

\paragraph{Part 2}

Note that for any $j\in [l-1]$ such that $j\leq \sigma_0(j)\leq l$, by Lemma \ref{L3.1}, we have $\mathbb{P}(b_j\geq l|\sigma_0)=e^{-2\beta (l-\sigma_0(j))}$. 

Let $\tilde{\mathcal{E}}$ be the event that for any $j\in [l-1]$ such that $j\leq \sigma_0(j)\leq l-\beta^{-3\slash 2}$, we have $b_j<l$. Arguing similarly as in (\ref{Eq3.15}), we obtain that
\begin{eqnarray*}
    \mathbb{P}(\tilde{\mathcal{E}}^c|\sigma_0)&\leq& \sum_{\substack{j\in [l-1]:\\ j\leq \sigma_0(j)\leq l-\beta^{-3\slash 2}}} \mathbb{P}(b_j\geq l|\sigma_0)= \sum_{\substack{j\in [l-1]:\\ j\leq \sigma_0(j)\leq l-\beta^{-3\slash 2}}} e^{-2\beta(l-\sigma_0(j))} \nonumber\\
    &\leq& \sum_{j=\lceil \beta^{-3\slash 2}\rceil}^{\infty} e^{-2\beta j} \leq  4\beta^{-1} e^{-2\beta^{-1\slash 2}}.
\end{eqnarray*}
Hence
\begin{equation}\label{Eq3.21}
    \mathbb{P}(\tilde{\mathcal{E}}^c)   \leq  4\beta^{-1} e^{-2\beta^{-1\slash 2}}.
\end{equation}

Note that for any $j\in [l-1]$ such that $\max\{j,l-\beta^{-3\slash 2}\}\leq \sigma_0(j) \leq l$, we have $0\leq l-\sigma_0(j)\leq \beta^{-3\slash 2}$ and 
\begin{equation*}
    \mathbb{E}[(l-\sigma_0(j))^2\mathbbm{1}_{b_j\geq l}|\sigma_0]=(l-\sigma_0(j))^2\mathbb{P}(b_j\geq l|\sigma_0)=(l-\sigma_0(j))^2 e^{-2\beta (l-\sigma_0(j))}. 
\end{equation*}
Arguing similarly as in (\ref{Eq3.18}) and (\ref{Eq3.20}), we obtain that
\begin{eqnarray*}
   && \sum_{\substack{j\in [l-1]:\\ \max\{j,l-\beta^{-3\slash 2}\}\leq \sigma_0(j) \leq l}} \mathbb{E}[(l-\sigma_0(j))\mathbbm{1}_{b_j\geq l}|\sigma_0]  \nonumber\\
   &=&  \sum_{\substack{j\in [l-1]:\\ \max\{j,l-\beta^{-3\slash 2}\}\leq \sigma_0(j) \leq l}} (l-\sigma_0(j)) e^{-2\beta(l-\sigma_0(j))} \leq \sum_{j=1}^{\infty} je^{-2\beta j}\leq 16\beta^{-2},
\end{eqnarray*}
\begin{eqnarray*}
    && \sum_{\substack{j\in [l-1]:\\ \max\{j,l-\beta^{-3\slash 2}\}\leq \sigma_0(j) \leq l}} \mathbb{E}[(l-\sigma_0(j))^2\mathbbm{1}_{b_j\geq l}|\sigma_0]  \nonumber\\
   &=&  \sum_{\substack{j\in [l-1]:\\ \max\{j,l-\beta^{-3\slash 2}\}\leq \sigma_0(j) \leq l}} (l-\sigma_0(j))^2 e^{-2\beta(l-\sigma_0(j))} \leq \sum_{j=1}^{\infty} j^2 e^{-2\beta j}\leq 200\beta^{-3}.
\end{eqnarray*}
Hence by Bernstein's inequality, for any $t\geq 0$,
\begin{eqnarray}\label{Eq3.19}
 &&   \mathbb{P}\Big(\sum_{\substack{j\in [l-1]:\\ \max\{j,l-\beta^{-3\slash 2}\}\leq \sigma_0(j) \leq l}} (l-\sigma_0(j))\mathbbm{1}_{b_j\geq l} \geq 16\beta^{-2}+t \Big|\sigma_0\Big) \nonumber\\
 &\leq&  \exp\Big(-\frac{t^2}{2(200\beta^{-3}+\beta^{-3\slash 2}t\slash 3)}\Big).
\end{eqnarray}
Let $\tilde{\mathcal{E}}'$ be the event that $\sum\limits_{\substack{j\in [l-1]:\\ \max\{j,l-\beta^{-3\slash 2}\}\leq \sigma_0(j) \leq l}} (l-\sigma_0(j))\mathbbm{1}_{b_j\geq l}\leq 20\beta^{-2}$. Taking $t=4\beta^{-2}$ in (\ref{Eq3.19}), we obtain that $\mathbb{P}((\tilde{\mathcal{E}}')^c|\sigma_0)\leq \exp(-c\beta^{-1\slash 2})$, hence
\begin{equation}\label{Eq3.22}
    \mathbb{P}((\tilde{\mathcal{E}}')^c)\leq \exp(-c\beta^{-1\slash 2}).
\end{equation}

When the events $\tilde{\mathcal{E}}$ and $\tilde{\mathcal{E}}'$ hold, we have 
\begin{eqnarray}
   && \sum_{\substack{j\in [l-1]:\\ \sigma_0(j)\geq j}} (l-\sigma_0(j))\mathbbm{1}_{b_j\geq l}\leq \sum_{\substack{j\in [l-1]:\\ j \leq \sigma_0(j)\leq l}} (l-\sigma_0(j))\mathbbm{1}_{b_j\geq l}\nonumber\\
    &=& \sum\limits_{\substack{j\in [l-1]:\\ \max\{j,l-\beta^{-3\slash 2}\}\leq \sigma_0(j) \leq l}} (l-\sigma_0(j))\mathbbm{1}_{b_j\geq l}\leq 20\beta^{-2}.
\end{eqnarray}
Hence by (\ref{Eq3.21}), (\ref{Eq3.22}), and the union bound, we have
\begin{equation}\label{Eq3.54}
    \mathbb{P}\Big(\sum_{\substack{j\in [l-1]:\\ \sigma_0(j)\geq j}} (l-\sigma_0(j))\mathbbm{1}_{b_j\geq l}>20\beta^{-2}\Big)\leq \mathbb{P}(\tilde{\mathcal{E}}^c)+\mathbb{P}((\tilde{\mathcal{E}}')^c)\leq C\exp(-c\beta^{-1\slash 2}). 
\end{equation}

\paragraph{Part 3}

For each $k\in \mathbb{N}$, we let 
\begin{equation*}
 I_k:=\{j\in [n]: j\leq l-k\beta^{-1}, \sigma_0(j)\geq l\}, 
\end{equation*}
\begin{equation*}
J_k:=\{j\in [n]: j\leq l-1, \sigma_0(j)\geq l+k\beta^{-1}\}.
\end{equation*}

For any $j\in [n]$ such that $k\beta^{-1}\leq l-j<(k+1)\beta^{-1}$ (where $k\in \mathbb{N}$) and $\sigma_0(j)\geq l$, we have $j\in I_0\cap\cdots\cap I_k$. Hence
\begin{equation}\label{Eq3.25}
    \sum_{\substack{j\in [l-1]:\\\sigma_0(j)\geq l, l-j\leq \beta^{-9\slash 8}}} (l-j) \leq \beta^{-1} \sum_{k=0}^{\lfloor \beta^{-1\slash 8}\rfloor} |I_k|.
\end{equation}
For any $j\in [n]$ such that $k\beta^{-1}\leq \sigma_0(j)-l<(k+1)\beta^{-1}$ (where $k\in \mathbb{N}$) and $j\leq l-1$, we have $j\in J_0\cap \cdots\cap J_k$. Hence
\begin{equation}\label{Eq3.24}
    \sum_{\substack{j\in [l-1]:\\\sigma_0(j)\geq l, \sigma_0(j)-l\leq \beta^{-9\slash 8}}} (\sigma_0(j)-l) \leq \beta^{-1} \sum_{k=0}^{\lfloor \beta^{-1\slash 8}\rfloor} |J_k|.
\end{equation}
By (\ref{Eq3.25}) and (\ref{Eq3.24}), we have
\begin{eqnarray}\label{Eq3.34}
   && \sum_{\substack{j\in [l-1]:\\\sigma_0(j)\geq l}} (\sigma_0(j)-j)\leq \beta^{-1}\sum_{k=0}^{\lfloor \beta^{-1\slash 8} \rfloor} |I_k|+\beta^{-1}\sum_{k=0}^{\lfloor \beta^{-1\slash 8} \rfloor} |J_k|\nonumber\\
   &&\quad\quad +  \sum_{\substack{j\in [l-1]:\\\sigma_0(j)\geq l, l-j>\beta^{-9\slash 8}}}(\sigma_0(j)-j)+\sum_{\substack{j\in [l-1]:\\\sigma_0(j)>l+\beta^{-9\slash 8}}} (\sigma_0(j) -j ).
\end{eqnarray}

Recall Definition \ref{Def2.2}. For any $k\in \mathbb{N}$, $I_k,J_k\subseteq \{\sigma_0^{-1}(l)\}\cup \mathcal{D}_l(\sigma_0)$, hence $\max\{|I_k|,|J_k|\}\leq |\mathcal{D}_l(\sigma_0)|+1$. Let $\mathcal{T}_1$ be the event that for any $k\in \mathbb{N}$, $\max\{|I_k|,|J_k|\}\leq 2C_0\beta^{-1}$. By Proposition \ref{P2.1}, we have
\begin{equation}\label{Eq3.35}
    \mathbb{P}(\mathcal{T}_1^c) \leq \mathbb{P}(|\mathcal{D}_l(\sigma_0)|\geq 2C_0\beta^{-1}-1)\leq \mathbb{P}(|\mathcal{D}_l(\sigma_0)|\geq C_0\beta^{-1}) \leq C\exp(-c\beta^{-1}). 
\end{equation}

Now consider any $k\in \mathbb{N}$ such that $4C_1\leq k\leq \lfloor \beta^{-1\slash 8}\rfloor$. If $l-\lfloor k\beta^{-1}\slash 2\rfloor\leq 0$, then $|I_k|=0$. Below we assume that $l-\lfloor k\beta^{-1}\slash 2\rfloor \geq 1$. For any $j\in I_k$, we have
\begin{equation*}
    j\leq l-k\beta^{-1}\leq (l-\lfloor k\beta^{-1}\slash 2\rfloor)-k\beta^{-1}\slash 4,
\end{equation*}
\begin{equation*}
    \sigma_0(j)\geq l\geq (l-\lfloor k\beta^{-1}\slash 2\rfloor)+k\beta^{-1}\slash 4,
\end{equation*}
and $(j,\sigma_0(j))\in \mathcal{R}'_{l-\lfloor k\beta^{-1}\slash 2\rfloor, k\beta^{-1}\slash 4}(\sigma_0)$. Hence $|I_k|\leq |\mathcal{R}'_{l-\lfloor k\beta^{-1}\slash 2\rfloor, k\beta^{-1}\slash 4}(\sigma_0)|$. Note that there exists a positive absolute constant $C_3$, such that for any $t\in \mathbb{N}^{*}$, $C_3 t^{-4}\slash C_2 \geq e^{-C_1^{-1}t\slash 4}$. Hence by Proposition \ref{P2.3} (taking $C_0=1$, $\Delta=k\beta^{-1}\slash 4$, and $r=C_3 k^{-4}\slash C_2$), 
\begin{eqnarray}\label{Eq3.28}
  &&  \mathbb{P}(|I_k|\geq C_3 k^{-4} \beta^{-1})\leq  \mathbb{P}(|\mathcal{R}'_{l-\lfloor k\beta^{-1}\slash 2\rfloor, k\beta^{-1}\slash 4}(\sigma_0)|\geq C_3 k^{-4} \beta^{-1}) \nonumber\\
  &\leq& C\max\{\beta k^4,k\beta^{-1}\} \exp(-c k^{-4} \beta^{-1})\leq C\exp(-c\beta^{-1\slash 2}). 
\end{eqnarray}
Similarly, we can deduce that
\begin{equation}\label{Eq3.29}
    \mathbb{P}(|J_k|\geq C_3 k^{-4}\beta^{-1})\leq C\exp(-c\beta^{-1\slash 2}). 
\end{equation}
Let $\mathcal{T}_2$ be the event that $\max\{|I_k|,|J_k|\}\leq C_3 k^{-4}\beta^{-1}$ for any $k\in\mathbb{N}$ such that $4C_1\leq k\leq \lfloor \beta^{-1\slash 8}\rfloor$. By (\ref{Eq3.28}), (\ref{Eq3.29}), and the union bound,
\begin{equation}\label{Eq3.36}
    \mathbb{P}(\mathcal{T}_2^c)\leq 2(\lfloor \beta^{-1\slash 8}\rfloor+1)\cdot C\exp(-c\beta^{-1\slash 2})\leq C\exp(-c\beta^{-1\slash 2}). 
\end{equation}

For any $j\in [l-1]$ such that $l-j>\beta^{-9\slash 8}$, by Proposition \ref{P3.4} (where we take $u=l-j$ and $v=C_0'\sqrt{l-j}\beta^{-1\slash 2}$), 
\begin{eqnarray}\label{Eq3.30}
  &&  \mathbb{P}(\sigma_0(j)\geq l)\leq \mathbb{P}(|\sigma_0(j)-j|\geq l-j) \nonumber\\
  &\leq&  C(l-j)\exp(-c\sqrt{l-j}\beta^{-1\slash 2})+C\exp(-c\sqrt{l-j}\beta^{1\slash 2}). 
\end{eqnarray}
Let $\mathcal{T}_3$ be the event that $\sigma_0(j)<l$ for any $j\in [l-1]$ such that $l-j>\beta^{-9\slash 8}$. By (\ref{Eq3.30}) and the union bound,
\begin{eqnarray}\label{Eq3.32}
  &&  \mathbb{P}(\mathcal{T}_3^c)\leq \sum_{\substack{j\in [l-1]:\\l-j>\beta^{-9\slash 8}}} \mathbb{P}(\sigma_0(j)\geq l)\nonumber\\
  &\leq& C \sum_{\substack{j\in [l-1]:\\l-j>\beta^{-9\slash 8}}} ((l-j)\exp(-c\sqrt{l-j}\beta^{-1\slash 2})+\exp(-c\sqrt{l-j}\beta^{1\slash 2})) \nonumber\\
  &\leq& C\sum_{j=\lceil \beta^{-9\slash 8}\rceil}^{\infty} j \exp(-c j^{1\slash 2}\beta^{1\slash 2})\leq C\beta^{-1} \sum_{j=\lceil \beta^{-9\slash 8}\rceil}^{\infty} \exp(-cj^{1\slash 2}\beta^{1\slash 2})\nonumber\\
 &\leq& C\beta^{-1}\Big(\exp(-c\beta^{-1\slash 16})\cdot (\beta^{-1}+1)\nonumber\\
 &&\quad\quad\quad +\sum_{k=\lceil\beta^{-1\slash 8}\rceil}^{\infty}\sum_{j\in[k\beta^{-1},(k+1)\beta^{-1})\cap\mathbb{N}} \exp(-c j^{1\slash 2}\beta^{1\slash 2})\Big)\nonumber\\
 &\leq& C\beta^{-2}\Big(\exp(-c\beta^{-1\slash 16})+\sum_{k=\lceil\beta^{-1\slash 8}\rceil}^{\infty}\exp(-c \sqrt{k})\Big)\leq C\exp(-c\beta^{-1\slash 16}).\nonumber\\
 &&
\end{eqnarray}

For any $j\in [l-1]$, by Proposition \ref{P3.4} (where we take $u=l-j+\beta^{-9\slash 8}$ and $v=C_0'\sqrt{u}\beta^{-1\slash 2}$), 
\begin{eqnarray}\label{Eq3.31}
  &&  \mathbb{P}(\sigma_0(j)>l+\beta^{-9\slash 8})\leq \mathbb{P}(|\sigma_0(j)-j|\geq l-j+\beta^{-9\slash 8}) \nonumber\\
  &\leq& C(l-j+\beta^{-9\slash 8})\exp(-c\sqrt{l-j+\beta^{-9\slash 8}}\beta^{-1\slash 2})\nonumber\\
  && +C\exp(-c\sqrt{l-j+\beta^{-9\slash 8}}\beta^{1\slash 2}).
\end{eqnarray}
Let $\mathcal{T}_4$ be the event that $\sigma_0(j)\leq l+\beta^{-9\slash 8}$ for any $j\in [l-1]$. By (\ref{Eq3.31}) and the union bound, following the argument in (\ref{Eq3.32}), we obtain that
\begin{eqnarray}\label{Eq3.33}
   && \mathbb{P}(\mathcal{T}_4^c)\leq \sum_{j\in[l-1]} \mathbb{P}(\sigma_0(j)>l+\beta^{-9\slash 8})\nonumber\\
    &\leq& C\sum_{j\in [l-1]}(l-j+\beta^{-9\slash 8})\exp(-c\sqrt{l-j+\beta^{-9\slash 8}}\beta^{-1\slash 2})\nonumber\\
    &&+C\sum_{j\in [l-1]} \exp(-c\sqrt{l-j+\beta^{-9\slash 8}}\beta^{1\slash 2})\nonumber\\
    &\leq&  C\sum_{j=1}^{\infty} (j+\beta^{-9\slash 8})\exp(-c\sqrt{j+\beta^{-9\slash 8}}\beta^{1\slash 2})\nonumber\\
    &\leq& C\sum_{j=\lceil \beta^{-9\slash 8}\rceil}^{\infty} j\exp(-cj^{1\slash 2}\beta^{1\slash 2})\leq C\exp(-c\beta^{-1\slash 16}).
\end{eqnarray}

By (\ref{Eq3.34}), when the event $\mathcal{T}_1\cap\mathcal{T}_2\cap\mathcal{T}_3\cap\mathcal{T}_4$ holds,
\begin{eqnarray*}
  &&  \sum_{\substack{j\in [l-1]:\\\sigma_0(j)\geq l}} (\sigma_0(j)-j) \leq \beta^{-1}\sum_{\substack{k\in \mathbb{N}:\\k\leq 4C_1}}(|I_k|+|J_k|)+\beta^{-1}\sum_{\substack{k\in \mathbb{N}:\\4C_1\leq k\leq \lfloor \beta^{-1\slash 8} \rfloor}} (|I_k|+|J_k|) \nonumber\\
    && \leq  \beta^{-1} \cdot (4C_1+1)\cdot 4C_0\beta^{-1}+\beta^{-1}\cdot 2C_3\beta^{-1}\sum_{\substack{k\in \mathbb{N}:\\4C_1\leq k\leq \lfloor \beta^{-1\slash 8} \rfloor}}k^{-4} \leq C_4\beta^{-2},
\end{eqnarray*}
where $C_4$ is a positive absolute constant. Hence by (\ref{Eq3.35}), (\ref{Eq3.36}), (\ref{Eq3.32}), (\ref{Eq3.33}), and the union bound, 
\begin{eqnarray}\label{Eq3.55}
    \mathbb{P}\Big(\sum_{\substack{j\in [l-1]:\\\sigma_0(j)\geq l}} (\sigma_0(j)-j)>C_4\beta^{-2}\Big)\leq \sum_{j=1}^4\mathbb{P}(\mathcal{T}_j^c)\leq C\exp(-c\beta^{-1\slash 16}).
\end{eqnarray}

\paragraph{Part 4}

For any $k,k'\in \mathbb{N}$ such that $k\leq k'$, let
\begin{equation*}
    R_{k,k'}:=\{j\in [l-1]: j\leq \sigma_0(j)\leq l-1, j\leq l-k'\beta^{-1}, l-(k+1)\beta^{-1}< \sigma_0(j) \leq l-k\beta^{-1}\}.
\end{equation*}
Note that 
\begin{equation}\label{Eq3.38}
    |R_{k,k'}|\leq \beta^{-1}+1\leq 2\beta^{-1}. 
\end{equation}

Consider any $k,k'\in \mathbb{N}$ such that $4C_1\leq k'-k\leq 2\beta^{-1\slash 8}$. For any $j\in R_{k,k'}$,
\begin{equation*}
    j\leq l-k'\beta^{-1}\leq \Big\lfloor l-\frac{k+k'}{2}\beta^{-1}\Big\rfloor-\frac{k'-k-2}{2}\beta^{-1},
\end{equation*}
\begin{equation*}
    \sigma_0(j)\geq l-(k+1)\beta^{-1}\geq \Big\lfloor l-\frac{k+k'}{2}\beta^{-1}  \Big\rfloor+\frac{k'-k-2}{2}\beta^{-1}.
\end{equation*}
Note that if $l-(k+k')\beta^{-1}\slash 2<1$, we have $|R_{k,k'}|=0$. Below we assume that $l-(k+k')\beta^{-1}\slash 2\geq 1$. For any $j\in R_{k,k'}$, 
\begin{equation*}
    (j,\sigma_0(j))\in\mathcal{R}'_{\lfloor l-(k+k')\beta^{-1}\slash 2 \rfloor,(k'-k-2)\beta^{-1}\slash 2}(\sigma_0).
\end{equation*}
Hence $|R_{k,k'}|\leq |\mathcal{R}'_{\lfloor l-(k+k')\beta^{-1}\slash 2 \rfloor,(k'-k-2)\beta^{-1}\slash 2}(\sigma_0)|$. Note that there exists a positive absolute constant $C_5$, such that $C_5\slash (C_2 t^4)\geq e^{-(t-2)\slash (2C_1)}$ for any $t\in \mathbb{N}^{*}$. By Proposition \ref{P2.3} (taking $C_0=1$, $\Delta=(k'-k-2)\beta^{-1}\slash 2$, and $r=C_5(k'-k)^{-4}\slash C_2$), 
\begin{eqnarray}\label{Eq3.40}
  &&  \mathbb{P}(|R_{k,k'}|\geq C_5(k'-k)^{-4}\beta^{-1})\nonumber\\
  &\leq& C\max\{(k'-k)^4\beta,(k'-k)\beta^{-1}\}\exp(-c(k'-k)^{-4}\beta^{-1})\nonumber\\
  &\leq& C\exp(-c\beta^{-1\slash 2}).
\end{eqnarray}
By (\ref{Eq3.38}) and (\ref{Eq3.40}),
\begin{equation}\label{Eq3.41}
    \mathbb{E}[|R_{k,k'}|]\leq C_5(k'-k)^{-4}\beta^{-1}+2\beta^{-1}\cdot C\exp(-c\beta^{-1\slash 2})\leq C(k'-k)^{-4}\beta^{-1}.
\end{equation}

Consider any $k,k'\in \mathbb{N}$ such that $k'-k>2\beta^{-1\slash 8}$. Note that for any $j\in R_{k,k'}$, $\sigma_0(j)-j\geq (k'-k-1)\beta^{-1}$. Hence
\begin{equation}\label{Eq3.37}
    |R_{k,k'}|\leq |\{j\in [n]:l-(k+1)\beta^{-1}<j\leq l-k\beta^{-1},j-\sigma_0^{-1}(j)\geq (k'-k-1)\beta^{-1}\}|.
\end{equation}
By (\ref{Eq3.37}), Proposition \ref{P3.4} (with $u=(k'-k-1)\beta^{-1}$ and $v=C_0'\sqrt{k'-k-1}\beta^{-1}$), and the union bound, noting that the distribution of $\sigma_0^{-1}$ is $\mathbb{P}_{n,\beta}$, we obtain that
\begin{eqnarray}\label{Eq3.39}
   \mathbb{P}(|R_{k,k'}|\geq 1)&\leq& \sum_{\substack{j\in [n]:\\ l-(k+1)\beta^{-1}<j\leq l-k\beta^{-1}}} \mathbb{P}(|\sigma_0^{-1}(j)-j|\geq (k'-k-1)\beta^{-1}) \nonumber\\
   &\leq& C(k'-k-1)\exp(-c\sqrt{k'-k-1}).
\end{eqnarray}
By (\ref{Eq3.38}) and (\ref{Eq3.39}),
\begin{equation}\label{Eq3.42}
    \mathbb{E}[|R_{k,k'}|]\leq 2\beta^{-1}\mathbb{P}(|R_{k,k'}|\geq 1)\leq C(k'-k-1)\exp(-c\sqrt{k'-k-1}).
\end{equation}

Now consider any $j\in [l-1]$ such that $j\leq \sigma_0(j)\leq l-1$. Note that there exist $k_1,k_2\in \mathbb{N}$ such that $k_1\geq k_2$ and 
\begin{equation*}
     l-(k_1+1)\beta^{-1}<j\leq l-k_1\beta^{-1}, \quad l-(k_2+1)\beta^{-1}<\sigma_0(j)\leq l-k_2\beta^{-1}.
\end{equation*}
It can be checked that $\sigma_0(j)-j\leq (k_1-k_2+1)\beta^{-1}$ and $j\in R_{k_2,k'}$ for any $k'\in [k_2,k_1]\cap\mathbb{N}$. Hence 
\begin{equation}\label{Eq3.46}
    \sum_{\substack{j\in [l-1]:\\ j\leq \sigma_0(j)\leq l-1}} (\sigma_0(j)-j)\mathbbm{1}_{b_j\geq l}\leq \beta^{-1} \sum_{\substack{k,k'\in\mathbb{N}:\\ k\leq k' }} \sum_{j\in R_{k,k'} } \mathbbm{1}_{b_j\geq l}. 
\end{equation}

For any $k,k'\in\mathbb{N}$ such that $k\leq k'$, let $\mathcal{G}_{k,k'}$ be the event that $b_j<l$ for any $j\in R_{k,k'}$, and let $\mathcal{L}_{k,k'}$ be the event that 
\begin{equation*}
    \sum_{j\in R_{k,k'}}\mathbbm{1}_{b_j\geq l} \leq e^{-2k}|R_{k,k'}|+\beta^{-1\slash 4} \sqrt{|R_{k,k'}|}. 
\end{equation*}
For any $j\in R_{k,k'}$, by Lemma \ref{L3.1},
\begin{equation*}\label{Eq3.26}
\mathbb{P}(b_j\geq l |\sigma_0)= e^{-2\beta(l-\sigma_0(j))}  \leq e^{-2k}. 
\end{equation*}
By the union bound, 
\begin{equation*}
    \mathbb{P}((\mathcal{G}_{k,k'})^c|\sigma_0)\leq \sum_{j\in R_{k,k'}}\mathbb{P}(b_j\geq l|\sigma_0)\leq e^{-2k}|R_{k,k'}|. 
\end{equation*}
Hence
\begin{equation}\label{Eq3.43}
    \mathbb{P}((\mathcal{G}_{k,k'})^c)\leq e^{-2k}\mathbb{E}[|R_{k,k'}|]. 
\end{equation}
By Hoeffding's inequality, for any $t\geq 0$, 
\begin{equation*}
    \mathbb{P}\Big(\sum_{j\in R_{k,k'}} \mathbbm{1}_{b_j\geq l} \geq (e^{-2k}+t)|R_{k,k'}|
\Big|\sigma_0\Big) \leq \exp(-2|R_{k,k'}|t^2).
\end{equation*}
Hence for any $t\geq 0$,
\begin{equation}\label{Eq3.27}
    \mathbb{P}\Big(\sum_{j\in R_{k,k'}}\mathbbm{1}_{b_j\geq l} > e^{-2k}|R_{k,k'}|+t\sqrt{|R_{k,k'}|}\Big|\sigma_0\Big) \leq \exp(-2t^2).
\end{equation}
Taking $t=\beta^{-1\slash 4}$ in (\ref{Eq3.27}), we obtain that
\begin{equation*}
    \mathbb{P}((\mathcal{L}_{k,k'})^c|\sigma_0) \leq \exp(-2\beta^{-1\slash 2}). 
\end{equation*}
Hence
\begin{equation}\label{Eq3.45}
    \mathbb{P}((\mathcal{L}_{k,k'})^c)\leq \exp(-2\beta^{-1\slash 2}).
\end{equation}
For any $k\in \mathbb{N}$, let $\mathcal{G}_k:=\bigcap\limits_{k'=k}^{\infty}  \mathcal{G}_{k,k'}$. By (\ref{Eq3.38}), (\ref{Eq3.41}), (\ref{Eq3.42}), (\ref{Eq3.43}), and the union bound, 
\begin{eqnarray}\label{Eq3.44}
  &&  \mathbb{P}(\mathcal{G}_k^c)\leq e^{-2k}\sum_{k'=k}^{\infty} \mathbb{E}[|R_{k,k'}|] \nonumber\\
  &\leq& e^{-2k} \Big(2\beta^{-1}(4C_1+1)+C\sum_{\substack{k'\in \mathbb{N}:\\4C_1\leq k'-k\leq 2\beta^{-1\slash 8}}} (k'-k)^{-4}\beta^{-1}\nonumber\\
  && \quad\quad +C\sum_{\substack{k'\in \mathbb{N}:\\k'-k>2\beta^{-1\slash 8}}}(k'-k-1)\exp(-c\sqrt{k'-k-1})\Big)\nonumber\\
  &\leq& C e^{-2k} \beta^{-1}. 
\end{eqnarray}
Let $\mathcal{W}_1:=\bigcap\limits_{\substack{k\in\mathbb{N}:\\k>\beta^{-1\slash 8}}} \mathcal{G}_k$. By (\ref{Eq3.44}) and the union bound, 
\begin{equation}\label{Eq3.50}
    \mathbb{P}(\mathcal{W}_1^c)\leq C\beta^{-1}\sum_{k=\lceil \beta^{-1\slash 8}\rceil}^{\infty} e^{-2k}\leq C\exp(-c\beta^{-1\slash 8}). 
\end{equation}

Let $\mathcal{W}_2$ be the event that $|R_{k,k'}|=0$ for any $k,k'\in \mathbb{N}$ such that $k\leq \beta^{-1\slash 8}$ and $k'-k>2\beta^{-1\slash 8}$. By (\ref{Eq3.39}) and the union bound,
\begin{eqnarray}
   && \mathbb{P}(\mathcal{W}_2^c)\leq \sum_{\substack{k,k'\in \mathbb{N}:\\k\leq \beta^{-1\slash 8}, k'-k>2\beta^{-1\slash 8}}} \mathbb{P}(|R_{k,k'}|\geq 1) \nonumber\\
   &\leq& C\sum_{k=0}^{\lfloor \beta^{-1\slash 8}\rfloor}\sum_{\substack{k'\in\mathbb{N}:\\k'-k>2\beta^{-1\slash 8}}} (k'-k-1)\exp(-c\sqrt{k'-k-1}) \nonumber\\
   &\leq& C\beta^{-1\slash 8} \sum_{t=\lceil \beta^{-1\slash 8} \rceil}^{\infty} t\exp(-c\sqrt{t})\leq C\exp(-c\beta^{-1\slash 16}).
\end{eqnarray}

Let $\mathcal{W}_3$ be the event that $|R_{k,k'}|\leq C_5(k'-k)^{-4}\beta^{-1}$ for any $k,k'\in \mathbb{N}$ such that $k\leq \beta^{-1\slash 8}$ and $4C_1\leq k'-k\leq 2\beta^{-1\slash 8}$, and let 
\begin{equation*}
     \mathcal{W}_4:=\bigcap\limits_{\substack{k,k'\in \mathbb{N}:\\k\leq \beta^{-1\slash 8},0\leq k'-k\leq 2\beta^{-1\slash 8}}} \mathcal{L}_{k,k'}.
\end{equation*}
By (\ref{Eq3.40}) and the union bound,
\begin{equation}
    \mathbb{P}(\mathcal{W}_3^c)\leq (1+\beta^{-1\slash 8})(1+2\beta^{-1\slash 8})\cdot C\exp(-c\beta^{-1\slash 2}) \leq C\exp(-c\beta^{-1\slash 2}). 
\end{equation}
By (\ref{Eq3.45}) and the union bound,
\begin{equation}\label{Eq3.51}
     \mathbb{P}(\mathcal{W}_4^c)\leq (1+\beta^{-1\slash 8})(1+2\beta^{-1\slash 8})\cdot \exp(-2\beta^{-1\slash 2}) \leq C\exp(-c\beta^{-1\slash 2}). 
\end{equation}

Assume that the event $\mathcal{W}_1\cap\mathcal{W}_2\cap\mathcal{W}_3\cap\mathcal{W}_4$ holds. By (\ref{Eq3.46}) and the definitions of $\mathcal{W}_1,\mathcal{W}_2,\mathcal{W}_4$,
\begin{eqnarray}\label{Eq3.47}
  &&  \sum_{\substack{j\in [l-1]:\\ j\leq \sigma_0(j)\leq l-1}} (\sigma_0(j)-j)\mathbbm{1}_{b_j\geq l }\leq \beta^{-1}\sum_{\substack{k,k'\in \mathbb{N}:\\k\leq \beta^{-1\slash 8},0\leq k'-k\leq 2\beta^{-1\slash 8}}}\sum_{j\in R_{k,k'}} \mathbbm{1}_{b_j\geq l} \nonumber\\
  &\leq& \beta^{-1}\sum_{\substack{k,k'\in \mathbb{N}:\\k\leq \beta^{-1\slash 8},0\leq k'-k\leq 2\beta^{-1\slash 8}}}  (e^{-2k}|R_{k,k'}|+\beta^{-1\slash 4} \sqrt{|R_{k,k'}|}).
\end{eqnarray}
By (\ref{Eq3.38}), 
\begin{eqnarray}\label{Eq3.48}
   && \sum_{\substack{k,k'\in \mathbb{N}:\\k\leq \beta^{-1\slash 8},0\leq k'-k<  4C_1}}(e^{-2k}|R_{k,k'}|+\beta^{-1\slash 4} \sqrt{|R_{k,k'}|})\nonumber\\
   &\leq& \sum_{k=0}^{\lfloor \beta^{-1\slash 8} \rfloor} (4C_1+1)(2e^{-2k}\beta^{-1}+\sqrt{2}\beta^{-3\slash 4})\leq C\beta^{-1}.
\end{eqnarray}
By the definition of $\mathcal{W}_3$,
\begin{eqnarray}\label{Eq3.49}
     && \sum_{\substack{k,k'\in \mathbb{N}:\\k\leq \beta^{-1\slash 8},4C_1\leq k'-k\leq 2\beta^{-1\slash 8}}}(e^{-2k}|R_{k,k'}|+\beta^{-1\slash 4} \sqrt{|R_{k,k'}|})\nonumber\\
     &\leq& C \sum_{k=0}^{\lfloor \beta^{-1\slash 8} \rfloor} \sum_{\substack{k'\in \mathbb{N}:\\4C_1\leq k'-k\leq 2\beta^{-1\slash 8}}} (e^{-2k}(k'-k)^{-4}\beta^{-1}+(k'-k)^{-2}\beta^{-3\slash 4})\nonumber\\
     &\leq& C\sum_{k=0}^{\lfloor \beta^{-1\slash 8} \rfloor} (e^{-2k}\beta^{-1}+\beta^{-3\slash 4})\leq C\beta^{-1}. 
\end{eqnarray}
By (\ref{Eq3.47})-(\ref{Eq3.49}), 
\begin{equation}
    \sum_{\substack{j\in [l-1]:\\ j\leq \sigma_0(j)\leq l-1}} (\sigma_0(j)-j)\mathbbm{1}_{b_j\geq l }\leq C_6 \beta^{-2},
\end{equation}
where $C_6$ is a positive absolute constant. Hence
\begin{equation}\label{Eq3.52}
    \mathcal{W}_1\cap\mathcal{W}_2\cap\mathcal{W}_3\cap\mathcal{W}_4 \subseteq \Big\{\sum_{\substack{j\in [l-1]:\\ j\leq \sigma_0(j)\leq l-1}} (\sigma_0(j)-j)\mathbbm{1}_{b_j\geq l }\leq C_6 \beta^{-2}\Big\}. 
\end{equation}
By (\ref{Eq3.50})-(\ref{Eq3.51}), (\ref{Eq3.52}), and the union bound,
\begin{equation}\label{Eq3.56}
    \mathbb{P}\Big(\sum_{\substack{j\in [l-1]:\\ j\leq \sigma_0(j)\leq l-1}} (\sigma_0(j)-j)\mathbbm{1}_{b_j\geq l }> C_6 \beta^{-2}\Big)\leq C\exp(-c\beta^{-1\slash 16}). 
\end{equation}

\bigskip

Combining (\ref{Eq3.13}), (\ref{Eq3.53}), (\ref{Eq3.54}), (\ref{Eq3.55}), and (\ref{Eq3.56}), we conclude that
\begin{equation}
    \mathbb{P}\Big(\sum_{j\in \mathcal{H}_l}(l-j)>C'\beta^{-2}\Big)\leq C\exp(-c\beta^{-1\slash 16}),
\end{equation}
where $C'$ is a positive absolute constant.

\end{proof}

\subsection{Proofs of Theorem \ref{Thm1.1} and the lower bound in Theorem \ref{Thm1.3.1}}\label{Sect.3.3}

In this subsection, we give the proofs of Theorem \ref{Thm1.1} and the lower bound in Theorem \ref{Thm1.3.1}. Assuming two auxiliary lemmas (Lemmas \ref{L3.11} and \ref{L3.12}), we present the proof of Theorem \ref{Thm1.1} in Section \ref{Sect.3.3.1}. The proofs of Lemmas \ref{L3.11} and \ref{L3.12} are given in Section \ref{Sect.3.3.2}. Finally, the proof of the lower bound in Theorem \ref{Thm1.3.1} is presented in Section \ref{Sect.3.3.3}.

Throughout this subsection, we assume the notations in Section \ref{Sect.3.2.1}.

\subsubsection{Proof of Theorem \ref{Thm1.1}}\label{Sect.3.3.1}

We consider any $s\in [n]$. Note that $|\mathcal{C}_s(\sigma)|\leq \max(\mathcal{C}_s(\sigma))-\min(\mathcal{C}_s(\sigma))+1$. Hence by the upper bound in Theorem \ref{Thm1.3.1} (which we have established in Section \ref{Sect.3.2}), 
\begin{equation}\label{Eq3.5.13}
    \mathbb{E}[|\mathcal{C}_s(\sigma)|]\leq C\min\{e^{-2\beta}\max\{\beta^{-2},1\}, n-1\}+1\leq C\min\{\max\{\beta^{-2},1\},n\}.
\end{equation}

The rest of this proof is devoted to the proof of the lower bound on $\mathbb{E}[|\mathcal{C}_s(\sigma)|]$. We start with the following lemma.

\begin{lemma}\label{L3.10}
Assume that $\beta\in (0,1]$. There exist positive absolute constants $C,c$, such that for any $l\in [n]$ and $u\geq \beta^{-1}$, we have 
\begin{equation*}
    \mathbb{P}(\max_{j\in \mathcal{H}_l}\{l-j\}\geq u) \leq Cu\beta^{-1}\exp(-c\sqrt{\beta u}).     
\end{equation*}
\end{lemma}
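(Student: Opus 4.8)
The plan is to bound $\mathbb{P}(\max_{j\in\mathcal{H}_l}\{l-j\}\geq u)$ by a union bound over the indices $j\leq l-u$ that could witness the event. Recall that $\mathcal{H}_l=\{j\in[n]:j\leq l-1,\ b_j\geq l\}$, so $\{\max_{j\in\mathcal{H}_l}\{l-j\}\geq u\}$ forces some $j\in[n]$ with $j\leq l-u$ and $b_j\geq l$. By Lemma \ref{L3.1} and conditioning on $\sigma_0$,
\begin{equation*}
\mathbb{P}(\max_{j\in\mathcal{H}_l}\{l-j\}\geq u\mid\sigma_0)\leq\sum_{\substack{j\in[n]:\\ j\leq l-u}}e^{-2\beta(l-\max\{j,\sigma_0(j)\})_+}.
\end{equation*}
The key difficulty, exactly as in the proof of Lemma \ref{L3.7}, is that the exponent $l-\max\{j,\sigma_0(j)\}$ can be small (or zero) when $\sigma_0(j)$ happens to be close to $l$, so one cannot simply sum the geometric series $\sum e^{-2\beta(l-j)}$; one must control how often $\sigma_0(j)$ is large.

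First I would split each summand according to whether $\sigma_0(j)\leq l-(l-j)/2$ or not. On the first event the exponent is at least $2\beta\cdot(l-j)/2=\beta(l-j)$, contributing at most $\sum_{j\leq l-u}e^{-\beta(l-j)}\leq e^{-\beta u}/(1-e^{-\beta})\leq C\beta^{-1}e^{-\beta u}$, which is already smaller than the claimed bound $Cu\beta^{-1}e^{-c\sqrt{\beta u}}$ (since $\beta u\geq c\sqrt{\beta u}$ for $\beta u\geq 1$). For the complementary event I would take expectations and use $\mathbb{E}[e^{-2\beta(\cdots)_+}\mathbbm{1}_{\sigma_0(j)>l-(l-j)/2}]\leq\mathbb{P}(\sigma_0(j)>l-(l-j)/2)\leq\mathbb{P}(|\sigma_0(j)-j|>(l-j)/2)$, then invoke Proposition \ref{P3.4} with $u_{\mathrm{par}}=(l-j)/2$ and $v=C_0\beta^{-1/2}\sqrt{(l-j)/2}$ (legitimate since $l-j\geq u\geq\beta^{-1}$ forces $v\geq C_0\beta^{-1}$). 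This yields a bound of the form $C(l-j)\exp(-c\beta^{1/2}\sqrt{l-j})$ for each such term.

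Finally I would sum $\sum_{j\leq l-u}(l-j)\exp(-c\beta^{1/2}\sqrt{l-j})=\sum_{m\geq u}m\exp(-c\beta^{1/2}\sqrt{m})$. Writing $m=u+m'$ and using $\sqrt{u+m'}\geq(\sqrt{u}+\sqrt{m'})/\sqrt 2$ (or simply $\sqrt{u+m'}\geq\max\{\sqrt u,\sqrt{m'}\}$ applied in two halves of the exponent), this factors as $e^{-c\beta^{1/2}\sqrt u}$ times $\sum_{m'\geq 0}(u+m')\exp(-c'\beta^{1/2}\sqrt{m'})$; comparing $\sqrt{m'}$ to $m'-1$ and summing the resulting geometric-type series gives a factor $C(u+\beta^{-2})\leq Cu\beta^{-1}$ (using $u\geq\beta^{-1}$, so $\beta^{-2}\leq u\beta^{-1}$). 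Combining the two contributions gives
\begin{equation*}
\mathbb{P}(\max_{j\in\mathcal{H}_l}\{l-j\}\geq u)\leq Cu\beta^{-1}\exp(-c\sqrt{\beta u}),
\end{equation*}
as claimed. The only mildly delicate point is bookkeeping the polynomial prefactors in the summation so that the final exponent is $\sqrt{\beta u}$ rather than $\beta^{1/2}\sqrt{u}$ with a worse constant — but since $\sqrt{\beta u}=\beta^{1/2}u^{1/2}$ these coincide, and absorbing polynomial factors into a slightly smaller $c$ is routine; no genuinely new estimate beyond Lemma \ref{L3.1} and Proposition \ref{P3.4} is needed.
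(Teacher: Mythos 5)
Your proposal is correct and follows essentially the same route as the paper: the union bound plus Lemma \ref{L3.1}, a split according to whether $\sigma_0(j)$ exceeds a threshold near $l$ (you use the $j$-dependent midpoint $(l+j)/2$ where the paper uses the fixed cutoff $l-u/2$, but either works), Proposition \ref{P3.4} applied with $v\asymp\beta^{-1/2}\sqrt{l-j}$ on the exceptional event, and the same summation producing the prefactor $u\beta^{-1}$ via $u\geq\beta^{-1}$. The only nit is that your choice $v=C_0\beta^{-1/2}\sqrt{(l-j)/2}$ is only guaranteed to be $\geq C_0\beta^{-1}/\sqrt{2}$ when $l-j\geq\beta^{-1}$, so you should insert a factor of $2$ (as the paper does) to actually meet the hypothesis $v\geq C_0\max\{\beta^{-1},1\}$ of Proposition \ref{P3.4}.
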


\begin{proof}

Note that if $\max_{j\in \mathcal{H}_l}\{l-j\}\geq u$, there exists some $j\in [n]$ such that $j\leq l-u$ and $b_j\geq l$. Hence by the union bound and Lemma \ref{L3.1},
\begin{eqnarray}\label{Eq3.103}
   &&   \mathbb{P}(\max_{j\in \mathcal{H}_l}\{l-j\}\geq u )\leq \sum_{j\in [n]:j\leq l- u }\mathbb{P}(b_j\geq l)\nonumber\\
    &=& \sum_{j\in [n]:j\leq l-u }\mathbb{E}[e^{-2\beta(l-\max\{j,\sigma_0(j)\})_{+}}].
\end{eqnarray}
Now note that 
\begin{eqnarray}\label{Eq3.104}
  &&  \sum_{j\in [n]:j\leq l-u} e^{-2\beta(l-\max\{j,\sigma_0(j)\})_{+}} 
  \leq \sum_{\substack{j\in [n]:j\leq l-u,\\\sigma_0(j)\leq j }} e^{-2\beta(l-j)}\nonumber\\
  &&\quad\quad\quad\quad\quad\quad\quad\quad+\sum_{\substack{j\in [n]:j\leq l-u, \\ j<\sigma_0(j)\leq l-u\slash 2}}   e^{-2\beta(l-\sigma_0(j))}
  +\sum_{\substack{j\in [n]:j\leq l-u,\\\sigma_0(j)> l-u\slash 2}} 1 \nonumber\\
  &\leq & 2\sum_{ j\in [n]: j\leq l-u\slash 2 } e^{-2\beta (l-j)}+\sum_{j\in [n]:j\leq l-u} \mathbbm{1}_{\sigma_0(j)>l-u\slash 2}.
\end{eqnarray}
By (\ref{Eq3.103}) and (\ref{Eq3.104}),
\begin{eqnarray}\label{Eq3.105}
   &&  \mathbb{P}(\max_{j\in \mathcal{H}_l}\{l-j\}\geq u) \nonumber\\
   &\leq& 2\sum_{j\in [n]:j\leq l-u\slash 2} e^{-2\beta (l-j)}+\sum_{j\in [n]:  j\leq l-  u} \mathbb{P}(\sigma_0(j)>l-u\slash 2).
\end{eqnarray}

Let $C_0$ be the constant in Proposition \ref{P3.3}. For any $j\in [n]$ such that $j\leq l-u$, by Proposition \ref{P3.4} (replacing $u$ by $l-j-u\slash 2\geq u\slash 2\geq \beta^{-1}\slash 2$ and taking $v=2C_0\sqrt{l-j-u\slash 2}\beta^{-1\slash 2}\geq C_0\beta^{-1}$), we have
\begin{eqnarray*}
  &&  \mathbb{P}(\sigma_0(j)>l-u\slash 2) \leq \mathbb{P}(|\sigma_0(j)-j|\geq l-j-u\slash 2) \nonumber\\
  &\leq& C(l-j-u\slash 2)\exp(-c\sqrt{l-j-u\slash 2}\beta^{-1\slash 2})+ C\exp(-c\sqrt{l-j-u\slash 2}\beta^{1\slash 2}) \nonumber\\
  &\leq& C(l-j-u\slash 2)\exp(-c\sqrt{l-j-u\slash 2}\beta^{1\slash 2}).
\end{eqnarray*}
Hence
\begin{eqnarray*}
   &&   \sum_{j\in [n]:  j\leq l-  u} \mathbb{P}(\sigma_0(j)>l-u\slash 2)  \nonumber\\
   &\leq& C\sum_{j\in [n]:  j\leq l-  u} (l-j-u\slash 2)\exp(-c\sqrt{l-j-u\slash 2}\beta^{1\slash 2})\nonumber\\
   &\leq& C\sum_{j=0}^{\infty} (j+u\slash 2)\exp(-c\beta^{1\slash 2}\sqrt{j+ u \slash 2})\nonumber\\
   &\leq&  C\exp(-c\sqrt{\beta u}) \sum_{j=0}^{\infty}(j+u)\exp(-c\sqrt{\beta j}).
\end{eqnarray*}
Note that
\begin{eqnarray*}
   && \sum_{j=0}^{\infty}(j+u)\exp(-c\sqrt{\beta j})\leq \sum_{k=0}^{\infty} \sum_{\substack{j\in \mathbb{N}:\\ k\beta^{-1}\leq j\leq(k+1)\beta^{-1}}} (j+u)\exp(-c\sqrt{\beta j}) \nonumber\\
   &\leq& \sum_{k=0}^{\infty} ((k+1)\beta^{-1}+u)(\beta^{-1}+1)\exp(-c\sqrt{k})\nonumber\\
   &\leq& C\beta^{-2}\sum_{k=0}^{\infty} (k+1)\exp(-c\sqrt{k})+C u \beta^{-1}\sum_{k=0}^{\infty} \exp(-c\sqrt{k})\leq Cu\beta^{-1}.
\end{eqnarray*}
Hence
\begin{equation}\label{Eq3.106}
     \sum_{j\in [n]:  j\leq l-  u} \mathbb{P}(\sigma_0(j)>l-u\slash 2) \leq Cu\beta^{-1}\exp(-c\sqrt{\beta u}).
\end{equation}
We also note that
\begin{equation}\label{Eq3.107}
    \sum_{j\in [n]:j\leq l-u\slash 2} e^{-2\beta (l-j)} \leq  \sum_{j=\lceil u\slash 2 \rceil}^{\infty} e^{-2\beta j}=\frac{e^{-2\beta   \lceil u \slash 2\rceil}}{1-e^{-2\beta  }} \leq 5\beta^{-1}e^{-\beta u }.
\end{equation}
By (\ref{Eq3.105}), (\ref{Eq3.106}), and (\ref{Eq3.107}), we conclude that
\begin{equation}
    \mathbb{P}(\max_{j\in \mathcal{H}_l}\{l-j\}\geq u)\leq Cu\beta^{-1}\exp(-c\sqrt{\beta u}). 
\end{equation}

\end{proof}

If $n\leq 100$ or $\beta\geq 1\slash 10000$, for any $s\in [n]$, we have
\begin{equation}\label{Eq3.5.14}
    \mathbb{E}[|\mathcal{C}_s(\sigma)|]\geq 1 \geq c\min\{\max\{\beta^{-2},1\},n\}.
\end{equation}
In the following, we assume that $n>100$ and $\beta\in (0,1\slash 10000)$, and consider any $s\in [n]$ such that $s\geq  (n+1)\slash 2$. We consider two cases: $n<\beta^{-2}$ or $n\geq \beta^{-2}$. 

\paragraph{Case 1: $n<\beta^{-2}$}

Let $\mathcal{P}$ be the event that $b_j\geq\beta^{-7\slash 16}$ for any $j\in [n]$. By Lemma \ref{L3.1}, $\mathbb{P}(\mathcal{P}|\sigma_0)=\prod\limits_{j\in [n]:j<\beta^{-7\slash 16}}\mathbb{P}(b_j\geq \beta^{-7\slash 16}|\sigma_0)\geq \exp(-2\beta^{1\slash 8})$, hence
\begin{equation}\label{Eq3.3.14}
    \mathbb{P}(\mathcal{P})=\mathbb{E}[\mathbb{P}(\mathcal{P}|\sigma_0)]\geq \exp(-2\beta^{1\slash 8}).
\end{equation}
Let $\mathcal{P}'$ be the event that $N_j\geq e^{-2}j\slash 2$ for any $j\in [n]$ with $\lfloor\beta^{-7\slash 16}\rfloor \leq j\leq \beta^{-1}$, $N_j\geq   e^{-2}\beta^{-1}\slash 4$ for any $j\in [n]$ with $j>\beta^{-1}$, and $\max_{k\in \mathcal{H}_j}\{j-k\}\leq \beta^{-9\slash 8}$ for any $j\in [n]$. By Propositions \ref{P3.1}-\ref{P3.1.v}, Lemma \ref{L3.10}, and the union bound, noting that $n<\beta^{-2}$, we have 
\begin{equation}\label{Eq3.3.9}
    \mathbb{P}((\mathcal{P}')^c)\leq n\exp(-c\lfloor\beta^{-7\slash 16}\rfloor)+C n \beta^{-17\slash 8} \exp(-c\beta^{-1\slash 16})\leq C\exp(-c\beta^{-1\slash 16}).
\end{equation}
We also note that $\mathcal{P},\mathcal{P}'\in \mathcal{B}_n$. 

For any $j\in [n]$ and $j'\in [n+1]$, we denote by $\mathcal{S}_{j,j'}$ the event that $j$ is contained in an open arc at step $j'$. Note that $\mathcal{S}_{j,j'}\in \mathcal{B}_{j'-1}$. We also let $d:=\min\{ \lfloor\beta^{-7\slash 16}\rfloor, n\}$. We have the following lemma, whose proof will be given in Section \ref{Sect.3.3.2}.

\begin{lemma}\label{L3.11}
Assume that $n>100$ and $\beta\in (0,1\slash 10000)$. There exist positive absolute constants $C_0,c_0$, such that for any $j\in [n]$,
\begin{eqnarray*}
    \mathbb{P}(\mathcal{S}_{j,d+1}|\mathcal{B}_n)\mathbbm{1}_{\mathcal{P}'} &\geq& (\exp(-C_0\beta^{1\slash 32})-\exp(-c_0\beta^{-1\slash 32}))_{+}^3\nonumber\\
  &&  \times (\exp(-C_0(\beta^{1\slash 2}+\beta^2j))-\exp(-c_0\beta^{-1\slash 4}))_{+}\mathbbm{1}_{\mathcal{P}'}.
\end{eqnarray*}
\end{lemma}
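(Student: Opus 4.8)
\emph{The plan.} I would translate $\mathcal{S}_{j,d+1}$ into a survival statement for the descending chain $Z_t:=Z_t^{(j)}$ (killed according to $W_t:=W_t^{(j)}$), and estimate the survival probability through the expected number of visits of a non-killed version of that chain, using Lemma~\ref{L3.6}. First the reductions: since $j$ lies in a closed arc at step $d+1$ exactly when $\min(\mathcal{C}_j(\sigma))=Z_{T-1}\ge d+1$, where $T:=\min\{t:W_t=0\}$, we have $\mathcal{S}_{j,d+1}^c=\{Z_{T-1}\ge d+1\}$. If $j\le d$ this event is empty ($Z_{T-1}\le Z_0=j\le d$), and if $d=n$ it is empty as well (since $\mathcal{A}_O(n+1)$ consists of singletons); in both cases $\mathbb{P}(\mathcal{S}_{j,d+1}\mid\mathcal{B}_n)=1$ and there is nothing to prove. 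So one may assume $d=\lfloor\beta^{-7/16}\rfloor$ (hence $d<\beta^{-1}$) and $d+1\le j\le n$.

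\emph{From survival to visit counts.} Conditionally on $\mathcal{B}_n$, Lemma~\ref{L3.6} describes $(Z_t,W_t)$ as a pure descending chain $\widetilde Z_t$ with $\widetilde Z_0=j$ and $\widetilde Z_{t+1}$ uniform on $\mathcal{H}_{\widetilde Z_t}$, together with independent killing at each step $t\to t+1$ with probability $1/N_{\widetilde Z_t}$. On $\mathcal{P}'$ one has $N_l\ge 2$ and $\mathcal{H}_l\neq\emptyset$ for every $l\ge d+1$, so $\widetilde Z$ is well defined and strictly decreasing until it first drops to level $\le d$, at a finite time $\widetilde\tau$; consequently
\begin{equation*}
  \mathbb{P}(\mathcal{S}_{j,d+1}\mid\mathcal{B}_n)=\mathbb{E}\Big[\prod_{t=0}^{\widetilde\tau-1}\Big(1-\tfrac{1}{N_{\widetilde Z_t}}\Big)\,\Big|\,\mathcal{B}_n\Big]\ge\mathbb{E}\Big[\exp\Big(-2\sum_{t=0}^{\widetilde\tau-1}\tfrac{1}{N_{\widetilde Z_t}}\Big)\,\Big|\,\mathcal{B}_n\Big]\ge\exp\Big(-2\sum_{l=d+1}^{j}\frac{\widetilde\pi(l)}{N_l}\Big),
\end{equation*}
using $\log(1-x)\ge-2x$ for $x\in[0,\tfrac12]$, then Jensen's inequality for $e^{-x}$, where $\widetilde\pi(l):=\mathbb{P}(\widetilde Z\text{ visits }l\mid\mathcal{B}_n)\le 1$ (each level is visited at most once, $\widetilde Z$ being strictly decreasing) and $\widetilde\pi(l)=0$ for $l>j$. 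It then remains to bound $\sum_{l=d+1}^{j}\widetilde\pi(l)/N_l$ from above on $\mathcal{P}'$; crucially this final estimate is an \emph{exponential}, so it stays informative even when $\beta^2 j$ is not small, which is why I route the argument through Jensen rather than a direct additive bound on the killing probability.

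\emph{Bounding the visit counts.} I would split the sum at $\lfloor\beta^{-1}\rfloor$ and control $\sum_{l\in I}\widetilde\pi(l)=\mathbb{E}[\#\{t<\widetilde\tau:\widetilde Z_t\in I\}\mid\mathcal{B}_n]$ for the two ranges by optional stopping, using only the $\mathcal{P}'$-data: the lower bounds on $N_l$ and the combinatorial fact that $\mathcal{H}_l$ is a set of $N_l-1$ distinct integers in $[1,l-1]$. In the range $l>\beta^{-1}$, $|\mathcal{H}_l|=N_l-1\gtrsim\beta^{-1}$, so a uniform point of $\mathcal{H}_l$ lies at least $c\beta^{-1}$ below $l$ in expectation, whence $\sum_{l>\beta^{-1}}\widetilde\pi(l)\le C\beta j$; since $1/N_l\le C\beta$ there, this range contributes at most $C\beta^2 j$. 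In the range $d+1\le l\le\beta^{-1}$, $|\mathcal{H}_l|=N_l-1\gtrsim l$, so $\mathbb{E}[\widetilde Z_{t+1}\mid\mathcal{F}_t]\le(1-c)\widetilde Z_t$ on $\{\widetilde Z_t=l\}$, hence by Jensen $\mathbb{E}[\log\widetilde Z_{t+1}-\log\widetilde Z_t\mid\mathcal{F}_t]\le-c'$; optional stopping for $\log\widetilde Z_t$ over the excursion through $[d+1,\beta^{-1}]$ gives $\sum_{d+1\le l\le\beta^{-1}}\widetilde\pi(l)\le C\log(1/\beta)$, and since $1/N_l\le C/l\le C/d$ there, this range contributes at most $Cd^{-1}\log(1/\beta)=C\beta^{7/16}\log(1/\beta)$. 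Thus $\big(\sum_{l=d+1}^{j}\widetilde\pi(l)/N_l\big)\mathbbm{1}_{\mathcal{P}'}\le\big(C\beta^{7/16}\log(1/\beta)+C\beta^2 j\big)\mathbbm{1}_{\mathcal{P}'}$; plugging this into the display, absorbing $\beta^{7/16}\log(1/\beta)\le C\beta^{1/32}$ (as $\beta^{13/32}\log(1/\beta)$ is bounded on $(0,1)$), and choosing $C_0$ large, yields $\mathbb{P}(\mathcal{S}_{j,d+1}\mid\mathcal{B}_n)\mathbbm{1}_{\mathcal{P}'}\ge\exp(-3C_0\beta^{1/32})\exp(-C_0(\beta^{1/2}+\beta^2 j))\mathbbm{1}_{\mathcal{P}'}$, which dominates the claimed product (whose factors are obtained from $\exp(-C_0\beta^{1/32})$, $\exp(-C_0(\beta^{1/2}+\beta^2 j))$ by subtracting the nonnegative quantities $\exp(-c_0\beta^{-1/32})$, $\exp(-c_0\beta^{-1/4})$ and replacing negatives by $0$); the grouping of the $\beta^{1/32}$-contribution as a cube is cosmetic and can also be read as splitting the medium range into a few logarithmic sub-bands.

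\emph{Main obstacle.} The heart of the argument is the two optional-stopping estimates: identifying the right stopping times (the killing time, the entry of $\widetilde Z$ into $[1,\beta^{-1}]$, and the exit below $d$), handling trajectories killed before entering a band (they contribute $0$), and — the delicate point — extracting the drifts from $\mathcal{P}'$ alone, which provides only \emph{lower} bounds on $N_l$; thus both the additive drift $\gtrsim\beta^{-1}$ (large range) and, more importantly, the multiplicative drift $\mathbb{E}[\widetilde Z_{t+1}/\widetilde Z_t\mid\mathcal{F}_t]\le1-c$ (medium range) must be deduced purely from $\mathcal{H}_l$ being $N_l-1$ distinct integers below $l$. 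The geometric descent in the medium range is essential: a purely additive estimate there only gives $\sum_{d<l\le\beta^{-1}}1/N_l\asymp\log(1/\beta)$, and it is precisely the combination of geometric descent with the factor $d^{-1}=\beta^{7/16}$ that produces the $\beta^{1/32}$-type gain. Everything else — the degenerate cases and the conversion of the additive visit bound into the multiplicative product in the statement — is routine bookkeeping.
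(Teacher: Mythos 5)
Your argument is correct, and it takes a genuinely different route from the paper's. The paper proves the lemma by a four-stage descent through the thresholds $d_2=\lfloor\beta^{-1}\rfloor$, $d_1=\lfloor\beta^{-3\slash 4}\rfloor$, $d_0=\lfloor\beta^{-5\slash 8}\rfloor$, $d=\lfloor\beta^{-7\slash 16}\rfloor$: in each band it lower-bounds the probability of surviving $t$ further steps by $(1-4e^2\beta^{\alpha})^t$, upper-bounds the probability of not crossing the next threshold within $t$ steps by a Chernoff bound on $\exp(-\theta\sum_k(Z_{k-1}^{(j)}-Z_k^{(j)}))$ via Hoeffding's lemma — which is exactly why $\mathcal{P}'$ carries the increment bound $\max_{k\in\mathcal{H}_l}\{l-k\}\le\beta^{-9\slash 8}$ — and then tunes $t,\theta$ per band; the four factors in the statement are the four stages. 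You instead factor the killed chain of Lemma \ref{L3.6} as an unkilled uniform-on-$\mathcal{H}$ chain weighted by $\prod_t(1-1/N_{Z_{t-1}})$, pass through $\log(1-x)\ge-2x$ and Jensen to the expected accumulated hazard $\sum_l\tilde\pi(l)/N_l$, and control the visit counts by two drift/optional-stopping estimates (additive drift of order $\beta^{-1}$ above level $\beta^{-1}$, multiplicative drift below it). This is cleaner and somewhat more robust: it uses only the lower bounds on $N_l$ from $\mathcal{P}'$ (on which $N_l\ge 3$ for all $l\ge d+1$ since $\beta<1/10000$, so the unkilled chain is well defined and $1/N_l\le 1/2$), it dispenses with the bounded-increment condition entirely, and it yields the slightly stronger exponent $\beta^{7\slash 16}\log(1/\beta)$ in place of $\beta^{1\slash 32}$. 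Your bound $\exp(-C'(\beta^{1\slash 32}+\beta^2 j))\mathbbm{1}_{\mathcal{P}'}$ dominates the stated right-hand side once $C_0\ge C'$, since each factor there is at most the corresponding pure exponential; the only step I would insist on seeing written out is the conditional Feynman--Kac identity itself, which follows from Lemma \ref{L3.6} and Lemma \ref{L3.2} by summing over unkilled trajectories, and the verification that the two supermartingales satisfy the hypotheses of the optional stopping theorem (both stopping times are bounded by $j$ because the chain is strictly decreasing).
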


By Lemma \ref{L3.11} and the assumption that $n<\beta^{-2}$, there exist positive absolute constants $c_1,c_2$ with $c_1<1\slash 10000$, such that when $\beta\in (0,c_1)$, we have $\mathbb{P}(\mathcal{S}_{j,d+1}|\mathcal{B}_n)\mathbbm{1}_{\mathcal{P}'}\geq 2 c_2 \mathbbm{1}_{\mathcal{P}'}$ for any $j\in [n]$ and $\mathbb{P}(\mathcal{S}_{j,d+1}|\mathcal{B}_n)\mathbbm{1}_{\mathcal{P}'}\geq (1-c_2)\mathbbm{1}_{\mathcal{P}'}$ for any $j\in [n]$ satisfying $j\leq c_1\beta^{-2}$.

Let $\mathcal{J}:=\{j\in [n]: j\leq c_1\beta^{-2} \}$. When $\beta\in (0,c_1)$, for any $j\in \mathcal{J}$,
\begin{equation}\label{Eq3.3.10}
    \mathbb{P}(\mathcal{S}_{j,d+1}\cap\mathcal{S}_{s,d+1}|\mathcal{B}_n) \mathbbm{1}_{\mathcal{P}'}\geq (\mathbb{P}(\mathcal{S}_{j,d+1}|\mathcal{B}_n)+\mathbb{P}(\mathcal{S}_{s,d+1}|\mathcal{B}_n)-1) \mathbbm{1}_{\mathcal{P}'}\geq c_2  \mathbbm{1}_{\mathcal{P}'}.
\end{equation}
We also note that when $\beta\in (0,c_1)$, $c_1\beta^{-2}\geq c_1^{-1}\geq 10000$, hence
\begin{equation}\label{Eq3.3.11}
    |\mathcal{J}|\geq \min\{n,\lfloor c_1\beta^{-2}\rfloor\}\geq  \min\{n,c_1\beta^{-2}-1\}\geq \frac{1}{2}\min\{n,c_1\beta^{-2}\}.
\end{equation}

Recall that $\mathcal{A}_O(d+1)$ is the set of open arcs at step $d+1$ and $\mathcal{A}_C(d+1)$ is the set of closed arcs at step $d+1$. Note that $|\mathcal{A}_O(d+1)|=d$. We denote by $\mathbf{a}_1,\mathbf{a}_2,\cdots,\mathbf{a}_d$ 
the elements of $\mathcal{A}_O(d+1)$, where $\mathbf{a}_1$ has the smallest tail, $\mathbf{a}_2$ has the second smallest tail, and so on. For each $j\in [d]$, we denote by $H_j$ and $T_j$ the head and the tail of $\mathbf{a}_j$, respectively. Note that for any $j\in [d]$, $T_j\in [d]$ and $Y_{T_j}\in\{H_1,\cdots,H_d\}$. Hence there exists $\tau\in S_d$, such that $Y_{T_j}=H_{\tau(j)}$ for any $j\in [d]$. 

Note that when the event $\mathcal{P}$ holds, $b_j\geq \beta^{-7\slash 16}\geq d$ for any $j\in [n]$. Hence conditional on $\mathcal{B}_d$, assuming that the event $\mathcal{P}$ holds, $\tau$ is a uniformly random permutation of $[d]$. The set of points in any cycle of $\sigma$ is either the set of points in a closed arc from $\mathcal{A}_C(d+1)$ or the set of points in a collection of open arcs $\mathbf{a}_{j_1},\cdots,\mathbf{a}_{j_k}$ (where $k\in \mathbb{N}^{*}$, $j_1,\cdots,j_k\in [d]$, $j_1<\cdots<j_k$, and $\{j_1,\cdots,j_k\}$ is the set of points in a cycle of $\tau$).

For any $j\in [n]$, we denote by $\mathcal{E}_j$ the event that $j$ and $s$ are contained in the same arc at step $d+1$. For any $j\in [n]$, $\mathcal{E}_j\in \mathcal{B}_d$ and $\mathcal{E}_j \subseteq \{j\in \mathcal{C}_s(\sigma)\}$, hence
\begin{eqnarray}\label{Eq3.3.7}
    \mathbb{P}(j\in \mathcal{C}_s(\sigma)|\mathcal{B}_d)\mathbbm{1}_{\mathcal{E}_j \cap \mathcal{S}_{j,d+1}\cap\mathcal{S}_{s,d+1}}&\geq& \mathbb{P}(\mathcal{E}_j|\mathcal{B}_d) \mathbbm{1}_{\mathcal{E}_j \cap \mathcal{S}_{j,d+1}\cap\mathcal{S}_{s,d+1}}\nonumber\\
    &=& \mathbbm{1}_{\mathcal{E}_j \cap \mathcal{S}_{j,d+1}\cap\mathcal{S}_{s,d+1}}.
\end{eqnarray}
Note that $d\geq 2$. For any $j,j'\in [d]$ with $j\neq j'$, the probability that $j,j'$ belong to the same cycle of a uniformly random permutation of $[d]$ is given by $1\slash 2$. For any $j\in [n]$, if $j$ is contained in an open arc at step $d+1$, we let $K_j\in [d]$ be such that $j\in \mathbf{a}_{K_j}$; otherwise we let $K_j=0$. For any $j\in [n]$, when the event $\mathcal{E}_j^c\cap \mathcal{S}_{j,d+1}\cap\mathcal{S}_{s,d+1}$ holds, $j$ and $s$ are contained in two different open arcs at step $d+1$, hence $j\in \mathcal{C}_s(\sigma)$ if and only if $K_j$ and $K_s$ belong to the same cycle of $\tau$. Therefore, for any $j\in [n]$, 
\begin{equation}\label{Eq3.3.8}
    \mathbb{P}(j\in \mathcal{C}_s(\sigma)|\mathcal{B}_d)\mathbbm{1}_{\mathcal{E}_j^c\cap \mathcal{S}_{j,d+1}\cap\mathcal{S}_{s,d+1}}\mathbbm{1}_{\mathcal{P}}= \frac{1}{2} \mathbbm{1}_{\mathcal{E}_j^c\cap \mathcal{S}_{j,d+1}\cap\mathcal{S}_{s,d+1}}\mathbbm{1}_{\mathcal{P}}.
\end{equation}
By (\ref{Eq3.3.7}) and (\ref{Eq3.3.8}), for any $j\in [n]$, we have
\begin{equation}\label{Eq3.3.12}
    \mathbb{P}(j\in \mathcal{C}_s(\sigma)|\mathcal{B}_d)\mathbbm{1}_{\mathcal{S}_{j,d+1}\cap\mathcal{S}_{s,d+1}}\mathbbm{1}_{\mathcal{P}}\geq \frac{1}{2} \mathbbm{1}_{\mathcal{S}_{j,d+1}\cap\mathcal{S}_{s,d+1}}\mathbbm{1}_{\mathcal{P}}.
\end{equation}

By (\ref{Eq3.3.9}), there exists a positive absolute constant $c_3\in (0,c_1)$, such that when $\beta\in (0,c_3)$, $\mathbb{P}((\mathcal{P}')^c)\leq e^{-2}\slash 2$. Hence by (\ref{Eq3.3.14}), when $\beta\in  (0,c_3)$,
\begin{equation}\label{Eq3.3.15}
    \mathbb{P}(\mathcal{P}\cap\mathcal{P}')\geq \mathbb{P}(\mathcal{P})-\mathbb{P}((\mathcal{P}')^c)\geq \frac{e^{-2}}{2}.
\end{equation}
By (\ref{Eq3.3.10}), (\ref{Eq3.3.12}), and (\ref{Eq3.3.15}), when $\beta\in (0,c_3)$, for any $j\in \mathcal{J}$, we have 
\begin{eqnarray}\label{Eq3.3.16}
  && \mathbb{P}(j\in \mathcal{C}_s(\sigma))\geq \mathbb{P}(\{j\in \mathcal{C}_s(\sigma)\}\cap \mathcal{S}_{j,d+1}\cap\mathcal{S}_{s,d+1}\cap\mathcal{P}\cap\mathcal{P}')\nonumber\\
  &=&\mathbb{E}[\mathbb{P}(j\in\mathcal{C}_s(\sigma)|\mathcal{B}_d)\mathbbm{1}_{\mathcal{S}_{j,d+1}\cap\mathcal{S}_{s,d+1}}\mathbbm{1}_{\mathcal{P}}\mathbbm{1}_{\mathcal{P}'}]\geq \frac{1}{2}\mathbb{E}[\mathbbm{1}_{\mathcal{S}_{j,d+1}\cap\mathcal{S}_{s,d+1}}\mathbbm{1}_{\mathcal{P}} \mathbbm{1}_{\mathcal{P}'}] \nonumber\\
  &=& \frac{1}{2}\mathbb{E}[\mathbb{P}(\mathcal{S}_{j,d+1}\cap\mathcal{S}_{s,d+1}|\mathcal{B}_n)\mathbbm{1}_{\mathcal{P}'}\mathbbm{1}_{\mathcal{P}}]\geq \frac{c_2}{2}\mathbb{P}(\mathcal{P}\cap\mathcal{P}')\geq \frac{e^{-2}c_2}{4}.
\end{eqnarray}
By (\ref{Eq3.3.11}) and (\ref{Eq3.3.16}), when $\beta\in (0,c_3)$, we have
\begin{eqnarray}\label{Eq3.3.17}
    \mathbb{E}[|\mathcal{C}_s(\sigma)|]\geq \sum_{j\in \mathcal{J}}\mathbb{P}(j\in \mathcal{C}_s(\sigma))\geq \frac{e^{-2}c_2}{4} |\mathcal{J}| \geq c\min\{\beta^{-2},n\}.
\end{eqnarray}
When $\beta\geq c_3$, we have
\begin{equation}\label{Eq3.3.18}
    \mathbb{E}[|\mathcal{C}_s(\sigma)|]\geq 1\geq c_3^2\beta^{-2}\geq c\min\{\beta^{-2},n\}.
\end{equation}
By (\ref{Eq3.3.17}) and (\ref{Eq3.3.18}), noting that $\beta\in (0,1\slash 10000)$, we conclude that
\begin{equation}\label{Eq3.5.15}
    \mathbb{E}[|\mathcal{C}_s(\sigma)|]\geq c\min\{\beta^{-2},n\}  = c\min\{\max\{\beta^{-2},1\},n\}.
\end{equation}

\paragraph{Case 2: $n\geq \beta^{-2}$} 

We fix two constants $c_4,c_5\in (0,1\slash 10)$ whose values will be determined later. Let
\begin{equation*}
    s':=\min\{j\in [n]: j\geq s- c_4 \beta^{-2}\}, \quad s'':=s'-\lceil \beta^{-2}\slash 10 \rceil.
\end{equation*}
Note that $s\geq s'>s''$ and 
\begin{equation}\label{Eq3.4.1}
    s''\geq s-(c_4+1\slash 10)\beta^{-2}-1 \geq s-n\slash 4\geq n\slash 4 \geq 25, \text{ hence } s''\in [n].
\end{equation}

Let $\tilde{Z}_0=s'$. If $s$ is contained in an open arc at step $s'+1$, we let $\tilde{W}_0=1$; if $s$ is contained in a closed arc at step $s'+1$, we let $\tilde{W}_0=0$. In the following, we define $\tilde{W}_t\in \{0,1\}$ and $\tilde{Z}_t\in [n]$ for each $t\in [n]$ inductively. For any $t\in [n]$, suppose that $\tilde{W}_{t-1}\in \{0,1\}$ and $\tilde{Z}_{t-1}\in [n]$ have been defined. If $\tilde{W}_{t-1}=0$, we let $\tilde{W}_t=0$ and $\tilde{Z}_t=\tilde{Z}_{t-1}$. Below we consider the case where $\tilde{W}_{t-1}=1$. If $s$ is contained in a closed arc at step $\tilde{Z}_{t-1}$, we let $\tilde{W}_t=0$ and $\tilde{Z}_t=\tilde{Z}_{t-1}$; if $s$ is contained in an open arc at step $\tilde{Z}_{t-1}$, we take $\tilde{W}_t=1$ and let $\tilde{Z}_t$ be the tail of this open arc (note that $\tilde{Z}_t<\tilde{Z}_{t-1}$ in this case).

For any $t\in [n]$, we define
\begin{equation*}
    U_t:=\{j\in [n]: s'\leq j\leq s-1, j\text{ is contained in an open arc at step }\tilde{Z}_t+1 \},
\end{equation*}
\begin{equation*}
    V_t:=\{j\in [n]: s'\leq j\leq s-1, j \text{ is contained in the arc that contains } s \text{ at step } \tilde{Z}_t+1\}.
\end{equation*}
We also define
\begin{equation*}
    U:=\{j\in [n]: s'\leq j\leq s-1, j\text{ is contained in an open arc at step } s''\},
\end{equation*}
\begin{equation*}
    V:=\{j\in [n]: s'\leq j\leq s-1, j \text{ is contained in the arc that contains } s \text{ at step } s''\}.
\end{equation*}
Note that for any $t\in [n]$, if $\tilde{Z}_t\geq s''$, then $U_t\supseteq U$ and $V_t\subseteq V$, hence 
\begin{equation}\label{Eq3.108}
    |U_t\backslash V_t|\geq |U\backslash V|.
\end{equation}

For any $t\in [n]$, we define $L_t$ to be the length of the arc containing $s$ at step $\tilde{Z}_{t}+1$ minus the length of the arc containing $s$ at step $\tilde{Z}_{t-1}+1$. By (\ref{Eq3.108}), recalling Definition \ref{Defi3.1}, we obtain that for any $t\in [n]\backslash \{1\}$,
\begin{eqnarray}\label{Eq3.4.2}
   && \mathbb{E}[L_t]\geq \mathbb{E}[L_t \mathbbm{1}_{\tilde{Z}_{t-1}\geq s'',\tilde{W}_{t-1}=1}]=\sum_{j=s''}^{s'} \mathbb{E}[L_t \mathbbm{1}_{\tilde{Z}_{t-1}=j,\tilde{W}_{t-1=1}}]\nonumber\\
   &=&\sum_{j=s''}^{s'} \mathbb{E}[\mathbb{E}[L_t|\mathcal{B}_j]\mathbbm{1}_{\tilde{Z}_{t-1}=j,\tilde{W}_{t-1}=1}]\nonumber\\
   &\geq& \sum_{j=s''}^{s'} \mathbb{E}\Big[N_j^{-1}\Big(\sum_{\substack{\mathbf{a}\in \mathcal{A}_O(j+1):|\mathbf{a}|\geq 2, \\\mathbf{a}\text{ does not contain }s}} |\mathbf{a}|\Big)\mathbbm{1}_{\tilde{Z}_{t-1}=j,\tilde{W}_{t-1}=1}\Big] \nonumber\\
   &=& \mathbb{E}\Big[N_{\tilde{Z}_{t-1}}^{-1}\Big(\sum_{\substack{\mathbf{a}\in \mathcal{A}_O(\tilde{Z}_{t-1}+1):|\mathbf{a}|\geq 2, \\\mathbf{a}\text{ does not contain }s}} |\mathbf{a}|\Big)\mathbbm{1}_{\tilde{Z}_{t-1}\geq s'',\tilde{W}_{t-1}=1}\Big] \nonumber\\
   &\geq& \mathbb{E}\Big[\frac{|U_{t-1}\backslash V_{t-1}|}{N_{\tilde{Z}_{t-1}}}\mathbbm{1}_{\tilde{Z}_{t-1}\geq s'',\tilde{W}_{t-1}=1}\Big]\geq \mathbb{E}\Big[\frac{|U\backslash V|}{N_{\tilde{Z}_{t-1}}}\mathbbm{1}_{\tilde{Z}_{t-1}\geq s'',\tilde{W}_{t-1}=1}\Big],  \nonumber\\
   &&
\end{eqnarray}
where we use the fact that $\{\tilde{Z}_{t-1}=j,\tilde{W}_{t-1}=1\}\in \mathcal{B}_{j}$ for any $j\in [s'',s']\cap\mathbb{N}$ in the second line.

We denote by $C_1\geq 1$ the constant $C_0$ appearing in Proposition \ref{P3.3}. Let $\mathcal{V}$ be the event that $e^{-2}\beta^{-1}\slash 4\leq N_l \leq C_1\beta^{-1}$ for any $l\in [s'',s]\cap\mathbb{N}$. By (\ref{Eq3.4.1}),
\begin{equation}\label{Eq3.4.10}
    s''\geq n\slash 4\geq \beta^{-2}\slash 4\geq \beta^{-1},  \quad s-s''+1\leq (c_4+1\slash 10)\beta^{-2}+2\leq C\beta^{-2}.
\end{equation}
Hence by Propositions \ref{P3.1} and \ref{P3.3} together with the union bound, we have
\begin{eqnarray}\label{Eq3.4.7}
     \mathbb{P}(\mathcal{V}^c)&\leq& C(s-s''+1)\exp(-c\beta^{-1})\nonumber\\
     &\leq& C\beta^{-2}\exp(-c\beta^{-1})\leq C\exp(-c\beta^{-1}). 
\end{eqnarray}

Note that $|\mathcal{C}_s(\sigma)|\geq \sum_{t=1}^n L_t$, $|\mathcal{C}_s(\sigma)|\geq |V|$, and $n\geq \beta^{-2}\geq c_5\beta^{-1}$. Hence
\begin{equation}\label{Eq3.4.3}
    \mathbb{E}[|\mathcal{C}_s(\sigma)|]\geq \frac{1}{2}\sum_{t=1}^n \mathbb{E}[L_t]+\frac{1}{2}\mathbb{E}[|V|]\geq \frac{1}{2}\sum_{t\in [2,c_5\beta^{-1}]\cap\mathbb{N}} \mathbb{E}[L_t]+\frac{1}{2}\mathbb{E}[|V|]. 
\end{equation}
We also note that
\begin{equation}\label{Eq3.4.6}
    |U\backslash V|\leq |U|\leq s-s'\leq c_4\beta^{-2}.
\end{equation}

For any $t\in [2,c_5\beta^{-1}]\cap\mathbb{N}$, when $\tilde{Z}_{t-1}\geq s''$ and the event $\mathcal{V}$ holds, we have $\tilde{Z}_{t-1}\in [s'',s]\cap\mathbb{N}$ and $N_{\tilde{Z}_{t-1}}\leq C_1\beta^{-1}$; hence by (\ref{Eq3.4.2}) and (\ref{Eq3.4.6}), we have
\begin{eqnarray}\label{Eq3.4.4}
 \mathbb{E}[L_t] &\geq& \mathbb{E}\Big[\frac{|U\backslash V|}{N_{\tilde{Z}_{t-1}}}\mathbbm{1}_{\tilde{Z}_{t-1}\geq s'',\tilde{W}_{t-1}=1}\mathbbm{1}_{\mathcal{V}}\Big]\geq C_1^{-1}\beta\mathbb{E}[|U\backslash V|\mathbbm{1}_{\tilde{Z}_{t-1}\geq s'',\tilde{W}_{t-1}=1}\mathbbm{1}_{\mathcal{V}}] \nonumber\\
  &\geq& C_1^{-1}\beta\mathbb{E}[|U\backslash V|(1-\mathbbm{1}_{\tilde{Z}_{t-1}< s''\text{ or }\tilde{W}_{t-1}=0}-\mathbbm{1}_{\mathcal{V}^c})]\nonumber\\
  &\geq& C_1^{-1}\beta\mathbb{E}[|U\backslash V|]-C_1^{-1}c_4\beta^{-1}\mathbb{P}(\{\tilde{Z}_{t-1}<s''\}\cup\{\tilde{W}_{t-1}=0\})\nonumber\\
  &&-C_1^{-1}c_4\beta^{-1}\mathbb{P}(\mathcal{V}^c).
\end{eqnarray}
By (\ref{Eq3.4.3}) and (\ref{Eq3.4.4}), noting that $|[2,c_5\beta^{-1}]\cap\mathbb{N}|\in [ c_5\beta^{-1}-3, c_5\beta^{-1}]$, we have
\begin{eqnarray}\label{Eq3.4.8}
     \mathbb{E}[|\mathcal{C}_s(\sigma)|]&\geq& \frac{1}{2}C_1^{-1}\beta(c_5\beta^{-1}-3)\mathbb{E}[|U\backslash V|]+\frac{1}{2}\mathbb{E}[|V|] -\frac{1}{2}C_{1}^{-1}c_4c_5\beta^{-2}\mathbb{P}(\mathcal{V}^c)\nonumber\\
     &&-\frac{1}{2}C_1^{-1}c_4 \beta^{-1}\sum_{t\in [2,c_5\beta^{-1}]\cap\mathbb{N}} \mathbb{P}(\{\tilde{Z}_{t-1}<s''\}\cup \{\tilde{W}_{t-1}=0\}).  \nonumber\\
     &&
\end{eqnarray}
By (\ref{Eq3.4.6}), we have 
\begin{eqnarray}\label{Eq3.4.9}
  &&  \frac{1}{2}C_1^{-1}\beta(c_5\beta^{-1}-3)\mathbb{E}[|U\backslash V|]+\frac{1}{2}\mathbb{E}[|V|]\nonumber\\
   &\geq& \frac{1}{2}C_1^{-1}c_5  \mathbb{E}[|U\backslash V|+|V|]-\frac{3}{2}C_1^{-1}\beta\cdot c_4\beta^{-2}\nonumber\\
   &\geq& \frac{1}{2}C_1^{-1}c_5\mathbb{E}[|U|]-2C_1^{-1}c_4\beta^{-1}.
\end{eqnarray}
By (\ref{Eq3.4.7}), (\ref{Eq3.4.8}), and (\ref{Eq3.4.9}), we obtain that
\begin{eqnarray}\label{Eq3.5.6}
    \mathbb{E}[|\mathcal{C}_s(\sigma)|]&\geq&-\frac{1}{2}C_1^{-1}c_4 \beta^{-1}\sum_{t\in [2,c_5\beta^{-1}]\cap\mathbb{N}} \mathbb{P}(\{\tilde{Z}_{t-1}<s''\}\cup \{\tilde{W}_{t-1}=0\}) \nonumber\\
    && +\frac{1}{2}C_1^{-1}c_5\mathbb{E}[|U|]-2C_1^{-1}c_4\beta^{-1}-C\beta^{-2}\exp(-c\beta^{-1})\nonumber\\
    &\geq& -\frac{1}{2}C_1^{-1}c_4 \beta^{-1}\sum_{t\in [2,c_5\beta^{-1}]\cap\mathbb{N}} \mathbb{P}(\{\tilde{Z}_{t-1}<s''\}\cup \{\tilde{W}_{t-1}=0\})\nonumber\\
    &&+\frac{1}{2}C_1^{-1}c_5\mathbb{E}[|U|]-C\beta^{-1}.
\end{eqnarray}

Let $\mathcal{V}'$ be the event that $\max_{k\in \mathcal{H}_l}\{l-k\}\leq \beta^{-9\slash 8}$ for any $l\in [s'',s]\cap\mathbb{N}$. By Lemma \ref{L3.10}, the union bound, and (\ref{Eq3.4.10}), we have
\begin{equation}\label{Eq3.4.14}
    \mathbb{P}((\mathcal{V}')^c)\leq C(s-s''+1)\beta^{-17\slash 8}\exp(-c\beta^{-1\slash 16})\leq C\exp(-c\beta^{-1\slash 16}).
\end{equation}

Recall the definition of $\mathcal{S}_{j,j'}$ for $j\in [n]$ and $j'\in [n+1]$ in \textbf{Case 1}. We have the following lemma, whose proof will be given in Section \ref{Sect.3.3.2}.

\begin{lemma}\label{L3.12}
Assume the above setup. There exist positive absolute constants $C_0',c_0'$, such that for any $j,j'\in [s'',s]\cap\mathbb{N}$ with $j\geq j'$, we have
\begin{eqnarray*}
    \mathbb{P}(\mathcal{S}_{j,j'})&\geq& (\exp(-C_0'\beta^2(j-j')-C_0'\beta^{1\slash 2})-\exp(-c_0'\beta^{-1\slash 4}))_{+} \nonumber\\
    && \times (1-C_0'\exp(-c_0'\beta^{-1\slash 16}))_{+}.
\end{eqnarray*}
\end{lemma}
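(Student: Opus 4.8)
The plan is to study $\mathcal{S}_{j,j'}$ (the event that $j$ lies in an open arc at step $j'$) through the pair of processes $(Z_t,W_t):=(Z_t^{(j)},W_t^{(j)})$ from Section~\ref{Sect.3.1}. The first observation is a reduction: since $W$ is absorbing at $0$ and $Z_t$ is strictly decreasing while $W_t=1$ (because $Z_t\in\mathcal{H}_{Z_{t-1}}\subseteq[1,Z_{t-1}-1]$ by Lemma~\ref{L3.2}), one checks that, with $\tau:=\min\{t\ge 1: Z_t<j'\text{ or }W_t=0\}$, the arc containing $j$ stays open down to step $j'$ exactly when $W_\tau=1$; moreover $\tau\le j-j'+1$ always, so $\mathcal{S}_{j,j'}\supseteq\{\tau\le M\}\cap\{W_{\tau\wedge M}=1\}$ for any $M$. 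It therefore suffices to bound $\mathbb{P}(\tau\le M,\,W_{\tau\wedge M}=1)$ from below for $M\asymp\beta(j-j')+\beta^{-1/2}$. (This parallels the argument to be used for Lemma~\ref{L3.11}.)

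Next I would introduce a $\mathcal{B}_n$-measurable good event $\mathcal{G}$ requiring, for every $l\in[s'',s]\cap\mathbb{N}$: (i) $e^{-2}\beta^{-1}/4\le N_l\le C_1\beta^{-1}$ (the event $\mathcal{V}$); (ii) $\max_{k\in\mathcal{H}_l}\{l-k\}\le\beta^{-9/8}$ (the event $\mathcal{V}'$); and (iii) a fast-decrease estimate $|\mathcal{H}_l\cap[l-C\beta^{-1},\,l-C'\beta^{-1}]|\ge c\beta^{-1}$ for appropriate absolute constants $C>C'$. By Propositions~\ref{P3.1} and \ref{P3.3}, Lemma~\ref{L3.10}, and a union bound over the $O(\beta^{-2})$ relevant $l$, (i) and (ii) fail with probability at most $C\exp(-c\beta^{-1/16})$. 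For (iii) I would first bound $|\mathcal{D}_{l-1}(\sigma_0)|\le C_0\beta^{-1}$ using Proposition~\ref{P2.1}; on that event the interval $[l-C\beta^{-1},l-C'\beta^{-1}]$ contains $\gtrsim\beta^{-1}$ indices $k$ with $\sigma_0(k)<l$, each satisfying $\mathbb{P}(b_k\ge l\mid\sigma_0)=e^{-2\beta(l-k)}\ge e^{-2C}$ by Lemma~\ref{L3.1}; Hoeffding's inequality conditionally on $\sigma_0$ then gives that (iii) fails with probability $\le C\exp(-c\beta^{-1})$ per $l$. Hence $\mathbb{P}(\mathcal{G}^c)\le C\exp(-c\beta^{-1/16})$.

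Then I would run the two estimates conditionally on $\mathcal{B}_n$. For survival: when $t\le\tau$ we have $Z_{t-1}\in[j',j]\subseteq[s'',s]$, so on $\mathcal{G}$ the survival factor in Lemma~\ref{L3.6} is $1-1/N_{Z_{t-1}}\ge 1-4e^2\beta$; iterating gives $\mathbb{P}(W_{\tau\wedge M}=1\mid\mathcal{B}_n)\mathbbm{1}_\mathcal{G}\ge(1-4e^2\beta)^M\mathbbm{1}_\mathcal{G}$. For the crossing time, on $\{W_M=1\}$ the increments $Z_{t-1}-Z_t$ ($t\le M$) are, by Lemma~\ref{L3.6}, uniform on $\{Z_{t-1}-k:k\in\mathcal{H}_{Z_{t-1}}\}$; hence on $\mathcal{G}$ they exceed $C'\beta^{-1}$ with conditional probability $\ge c'$ and are always $\le\beta^{-9/8}$. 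Comparing $\sum_{t=1}^M(Z_{t-1}-Z_t)=j-Z_M$ with a lower bound of the form $C'\beta^{-1}\cdot\mathrm{Bin}(M,c')$ and applying the Azuma--Hoeffding inequality to these conditioned increments (bounded by $\beta^{-9/8}$) lets me pick $M=C_2(\beta(j-j')+\beta^{-1/2})$ with $C_2$ large enough that $\{\tau>M,W_M=1\}\subseteq\{j-Z_M\le j-j'\}$ has conditional probability $\le\exp(-c\beta^{-1/4})$ on $\mathcal{G}$. Combining, $\mathbb{P}(\mathcal{S}_{j,j'})\ge\mathbb{E}[\mathbbm{1}_\mathcal{G}((1-4e^2\beta)^M-\exp(-c\beta^{-1/4}))]\ge((1-4e^2\beta)^M-\exp(-c\beta^{-1/4}))_+(1-C\exp(-c\beta^{-1/16}))_+$, and since $(1-4e^2\beta)^M\ge\exp(-C\beta M)\ge\exp(-C_0'\beta^2(j-j')-C_0'\beta^{1/2})$ for $\beta<1/10000$, the asserted bound follows.

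The step I expect to be the main obstacle is item (iii): establishing that the tail $Z_{t-1}$ of the arc containing $j$ descends by $\Theta(\beta^{-1})$ at a constant fraction of steps. This is precisely what upgrades the naive exponent $\beta(j-j')$ (from surviving $j-j'$ unit steps) to $\beta^2(j-j')$, and it forces a combination of the upper bound on $|\mathcal{D}_{l-1}(\sigma_0)|$, the lower bound on $N_l$, and a conditional concentration argument for the fresh randomness $\{b_k\}$. A secondary point demanding care is keeping the walk's tail inside the window $[s'',s]$ where $N_l$ is controlled, which is why the argument is phrased through the stopping time $\tau$ rather than a fixed horizon.
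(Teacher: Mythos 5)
Your proposal follows essentially the same route as the paper: reduce $\mathcal{S}_{j,j'}$ to a survival estimate times a crossing-time estimate for the tail process $(Z_t^{(j)},W_t^{(j)})$, work conditionally on $\mathcal{B}_n$ on a good event controlling $N_l$ and $\max_{k\in\mathcal{H}_l}\{l-k\}$ for $l\in[s'',s]$, and conclude by multiplying by the probability of the good event. The one genuine divergence is the step you flag as the main obstacle, your condition (iii). The paper does not need it: since $\mathcal{H}_l$ consists of $N_l-1$ \emph{distinct} integers, all at most $l-1$, one has deterministically $\sum_{k\in\mathcal{H}_l}(l-k)\geq 1+2+\cdots+(N_l-1)=\tbinom{N_l}{2}$, so on $\mathcal{V}$ the conditional mean decrement $\frac{1}{N_l-1}\sum_{k\in\mathcal{H}_l}(l-k)$ is already $\gtrsim\beta^{-1}$ with no further randomness from $\sigma_0$ or the $b_k$'s (this is (\ref{Eq3.1.3})--(\ref{Eq3.1.4})). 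Your (iii) is provable along the lines you sketch (every $k$ in the window satisfies $\mathbb{P}(b_k\geq l\mid\sigma_0)\geq e^{-2C}$ by Lemma \ref{L3.1}, so Hoeffding applies), but it is an extra estimate and an extra union bound that the pigeonhole observation makes unnecessary. One further point requiring care in your crossing-time step: you cannot literally condition the increments on the future event $\{W_M=1\}$ and then apply Azuma--Hoeffding; the legitimate version carries the indicator $\mathbbm{1}_{W_t^{(j)}=1}$ inside the expectation and peels off one factor at a time via $\mathcal{F}_{t-1}^{(j)}$-conditional exponential moments, using Hoeffding's lemma for each conditional MGF on $[0,\beta^{-9/8}]$ --- this is exactly the computation (\ref{Eq3.1.6})--(\ref{Eq3.1.9}) that the paper imports verbatim from the proof of Lemma \ref{L3.11}, and it yields the same $\exp(-c\beta^{-1/4})$ error term and the same exponent $\beta^2(j-j')+\beta^{1/2}$ that your choice $M\asymp\beta(j-j')+\beta^{-1/2}$ produces.
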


By (\ref{Eq3.4.10}), for any $j\in [s',s-1]\cap\mathbb{N}$, $j-s''\leq s-s''\leq C\beta^{-2}$. Hence by Lemma \ref{L3.12}, there exists a positive absolute constant $c_6<1\slash 10000$, such that when $\beta\in (0,c_6)$, for any $j\in [s',s-1]\cap\mathbb{N}$, $\mathbb{P}(\mathcal{S}_{j,s''})\geq c_6$. Moreover, noting that $s'\geq s-c_4\beta^{-2}\geq \frac{n+1}{2}-\frac{n}{10}\geq \frac{n}{4}\geq 2$, by the definition of $s'$, we have $s'-1<s-c_4\beta^{-2}$ and $s-s'> c_4\beta^{-2}-1$. Therefore, when $\beta\in (0,c_6)$, 
\begin{equation}\label{Eq3.5.7}
    \mathbb{E}[|U|]=\sum_{j\in [s',s-1]\cap\mathbb{N}} \mathbb{P}(\mathcal{S}_{j,s''})\geq c_6(s-s')\geq c_6(c_4\beta^{-2}-1).
\end{equation}

In the following, we consider any $t\in [2,c_5\beta^{-1}]\cap\mathbb{N}$. We denote by $C_2$ the constant $C'$ appearing in Lemma \ref{L3.5}, and let $\mathcal{V}''$ be the event that for any $l\in [s'',s]\cap\mathbb{N}$, $\sum_{k\in \mathcal{H}_l}(l-k)\leq C_2\beta^{-2}$. By Lemma \ref{L3.5}, the union bound, and (\ref{Eq3.4.10}), we have
\begin{equation}\label{Eq3.5.8}
    \mathbb{P}((\mathcal{V}'')^c)\leq C  (s-s''+1)  \exp(-c\beta^{-1\slash 16})\leq C\exp(-c\beta^{-1\slash 16}).
\end{equation}
By the union bound, we have
\begin{eqnarray}\label{Eq3.5.9}
  && \mathbb{P}(\{\tilde{Z}_{t-1}<s''\}\cup \{\tilde{W}_{t-1}=0\})\nonumber\\
  &=&\mathbb{P}(\{\tilde{Z}_{t-1}<s''\}\cup \{\tilde{Z}_{t-1}\geq s'',\tilde{W}_{t-1}=0\}) \nonumber\\
  &\leq&\mathbb{P}(\mathcal{V}^c)+\mathbb{P}((\mathcal{V}'')^c)+\mathbb{P}(\{\tilde{Z}_{t-1}<s''\}\cap\mathcal{V}\cap\mathcal{V}'')\nonumber\\
  &&+\mathbb{P}(\{\tilde{Z}_{t-1}\geq s'',\tilde{W}_{t-1}=0\}\cap\mathcal{V}\cap\mathcal{V}''). 
\end{eqnarray}

Note that when $\tilde{Z}_{t-1}< s''$, we have
\begin{eqnarray*}
    \sum_{k=1}^{t-1}(\tilde{Z}_{k-1}-\tilde{Z}_k)\mathbbm{1}_{\tilde{Z}_{k-1}\geq s''}\geq s'-s''\geq \frac{\beta^{-2}}{10}.
\end{eqnarray*}
Hence by Markov's inequality, 
\begin{eqnarray}\label{Eq3.4.18}
  &&  \mathbb{P}(\{\tilde{Z}_{t-1}<s''\}\cap\mathcal{V}\cap\mathcal{V}'')\nonumber\\
  &\leq& \mathbb{P}\Big(\Big\{\sum_{k=1}^{t-1}(\tilde{Z}_{k-1}-\tilde{Z}_k)\mathbbm{1}_{\tilde{Z}_{k-1}\geq s''}\geq  \frac{\beta^{-2}}{10}  \Big\}\cap\mathcal{V}\cap\mathcal{V}''\Big) \nonumber\\
  &\leq&  10\beta^2 \mathbb{E}\Big[\Big(\sum_{k=1}^{t-1}(\tilde{Z}_{k-1}-\tilde{Z}_k)\mathbbm{1}_{\tilde{Z}_{k-1}\geq s''}\Big)\mathbbm{1}_{\mathcal{V}\cap\mathcal{V}''}\Big]\nonumber\\
  &=& 10\beta^2\sum_{k=1}^{t-1} \mathbb{E}[(\tilde{Z}_{k-1}-\tilde{Z}_k)\mathbbm{1}_{\tilde{Z}_{k-1}\geq s''}\mathbbm{1}_{\mathcal{V}\cap\mathcal{V}''}].
\end{eqnarray}
For any $k\in [t-1]$, noting that $\tilde{Z}_{k-1}-\tilde{Z}_k=0$ when $\tilde{W}_{k-1}=0$, we obtain that 
\begin{eqnarray}\label{Eq3.4.17}
  &&  \mathbb{E}[(\tilde{Z}_{k-1}-\tilde{Z}_k)\mathbbm{1}_{\tilde{Z}_{k-1}\geq s''}\mathbbm{1}_{\mathcal{V}\cap\mathcal{V}''}]=\mathbb{E}[(\tilde{Z}_{k-1}-\tilde{Z}_k)\mathbbm{1}_{\tilde{Z}_{k-1}\geq s'',\tilde{W}_{k-1}=1}\mathbbm{1}_{\mathcal{V}\cap\mathcal{V}''}]\nonumber\\
  &=& \sum_{j=s''}^{s'} \mathbb{E}[(\tilde{Z}_{k-1}-\tilde{Z}_k)\mathbbm{1}_{\tilde{Z}_{k-1}=j,\tilde{W}_{k-1}=1}\mathbbm{1}_{\mathcal{V}\cap\mathcal{V}''}]\nonumber\\
  &=&\sum_{j=s''}^{s'} \mathbb{E}[(j-\tilde{Z}_k)\mathbbm{1}_{\tilde{Z}_{k-1}=j,\tilde{W}_{k-1}=1}\mathbbm{1}_{\mathcal{V}\cap\mathcal{V}''}]\nonumber\\
  &=& \sum_{j=s''}^{s'} \mathbb{E}[(j-\mathbb{E}[\tilde{Z}_k|\mathcal{B}_{j}])\mathbbm{1}_{\tilde{Z}_{k-1}=j,\tilde{W}_{k-1}=1}\mathbbm{1}_{\mathcal{V}\cap\mathcal{V}''}]\nonumber\\
  &=& \sum_{j=s''}^{s'}\mathbb{E}\bigg[\frac{\sum_{l\in \mathcal{H}_{j}}(j-l)}{N_{j}}\mathbbm{1}_{\tilde{Z}_{k-1}=j,\tilde{W}_{k-1}=1}\mathbbm{1}_{\mathcal{V}\cap\mathcal{V}''}\bigg]\nonumber\\
  &\leq& \mathbb{E}\bigg[\frac{\sum_{l\in\mathcal{H}_{\tilde{Z}_{k-1}}}(\tilde{Z}_{k-1}-l)}{N_{\tilde{Z}_{k-1}}}\mathbbm{1}_{\tilde{Z}_{k-1}\geq s''}\mathbbm{1}_{\mathcal{V}\cap\mathcal{V}''}\bigg],
\end{eqnarray}
where we note that $\{\tilde{Z}_{k-1}=j,\tilde{W}_{k-1}=1\}\in \mathcal{B}_{j}$ for any $j\in [s'',s']\cap\mathbb{N}$ in the fourth line. Note that when $\tilde{Z}_{k-1}\geq s''$ and the event $\mathcal{V}\cap\mathcal{V}''$ holds, we have $\tilde{Z}_{k-1}\in [s'',s]\cap\mathbb{N}$ and 
\begin{equation*}
    N_{\tilde{Z}_{k-1}}\geq e^{-2}\beta^{-1}\slash 4, \quad \sum_{l\in\mathcal{H}_{\tilde{Z}_{k-1}}}(\tilde{Z}_{k-1}-l)\leq C_2\beta^{-2}.
\end{equation*}
Hence by (\ref{Eq3.4.17}),
\begin{equation}\label{Eq3.4.19}
    \mathbb{E}[(\tilde{Z}_{k-1}-\tilde{Z}_k)\mathbbm{1}_{\tilde{Z}_{k-1}\geq s''}\mathbbm{1}_{\mathcal{V}\cap\mathcal{V}''}]\leq 4e^2C_2\beta^{-1}.
\end{equation}
By (\ref{Eq3.4.18}) and (\ref{Eq3.4.19}),
\begin{equation}\label{Eq3.5.10}
    \mathbb{P}(\{\tilde{Z}_{t-1}<s''\}\cap\mathcal{V}\cap\mathcal{V}'')\leq 40e^2C_2\beta(t-1)\leq 40e^2C_2c_5.
\end{equation}

By the union bound, we have 
\begin{eqnarray}\label{Eq3.5.3}
  &&  \mathbb{P}(\{\tilde{Z}_{t-1}\geq s'',\tilde{W}_{t-1}=0\}\cap\mathcal{V}\cap\mathcal{V}'')\nonumber\\
  &\leq& \sum_{k=1}^{t-1} \mathbb{P}(\{\tilde{Z}_{t-1}\geq s'',\tilde{W}_k=0,\tilde{W}_{0}=\cdots=\tilde{W}_{k-1}=1\}\cap\mathcal{V}\cap\mathcal{V}'')\nonumber\\
  &&  +\mathbb{P}(\tilde{W}_0=0).
\end{eqnarray}
When $\beta\in (0,c_4^{1\slash 2})$, $s'\leq s-1$; hence by Lemma \ref{L3.12}, noting that $s-s'\leq c_4\beta^{-2}$, we have 
\begin{eqnarray}\label{Eq3.5.4}
  &&  \mathbb{P}(\tilde{W}_0=0)=1-\mathbb{P}(\mathcal{S}_{s,s'+1})\nonumber\\
  &\leq& 1-(\exp(-C_0'(c_4+\beta^{1\slash 2}))-\exp(-c_0'\beta^{-1\slash 4}))_{+}(1-C_0'\exp(-c_0'\beta^{-1\slash 16}))_{+}.\nonumber\\
  &&
\end{eqnarray}
Below we consider any $k\in [t-1]$. Note that
\begin{eqnarray}\label{Eq3.5.2}
   && \mathbb{P}(\{\tilde{Z}_{t-1}\geq s'',\tilde{W}_k=0,\tilde{W}_{0}=\cdots=\tilde{W}_{k-1}=1\}\cap\mathcal{V}\cap\mathcal{V}'')\nonumber\\
   &\leq& \mathbb{P}(\{\tilde{W}_k=0\}\cap \{\tilde{Z}_{k-1}\geq s'',\tilde{W}_{k-1}=1\}\cap\mathcal{V}\cap\mathcal{V}'')\nonumber\\
   &\leq& \sum_{k'=s''}^{s'} \mathbb{P}(\{\tilde{W}_k=0\}\cap \{\tilde{Z}_{k-1}=k',\tilde{W}_{k-1}=1\}\cap\mathcal{V}\cap\mathcal{V}'')\nonumber\\
   &=& \sum_{k'=s''}^{s'}\mathbb{E}[\mathbb{P}(\tilde{W}_k=0|\mathcal{B}_{k'})\mathbbm{1}_{\tilde{Z}_{k-1}=k',\tilde{W}_{k-1}=1}\mathbbm{1}_{\mathcal{V}\cap\mathcal{V}''}]\nonumber\\
   &=& \sum_{k'=s''}^{s'}\mathbb{E}\Big[\frac{1}{N_{k'}}\cdot\mathbbm{1}_{\tilde{Z}_{k-1}=k',\tilde{W}_{k-1}=1}\mathbbm{1}_{\mathcal{V}\cap\mathcal{V}''}\Big]\nonumber\\
   &=& \mathbb{E}\Big[\frac{1}{N_{\tilde{Z}_{k-1}}}\cdot \mathbbm{1}_{\tilde{Z}_{k-1}\geq s'',\tilde{W}_{k-1}=1}\mathbbm{1}_{\mathcal{V}\cap\mathcal{V}''}\Big],
\end{eqnarray}
where we note that $\{\tilde{Z}_{k-1}=k',\tilde{W}_{k-1}=1\}\in \mathcal{B}_{k'}$ for any $k'\in [s'',s']\cap\mathbb{N}$ in the fourth line. Note that when $\tilde{Z}_{k-1}\geq s''$ and the event $\mathcal{V}\cap\mathcal{V}''$ holds, we have $\tilde{Z}_{k-1}\in [s'',s]\cap\mathbb{N}$ and $N_{\tilde{Z}_{k-1}}\geq e^{-2}\beta^{-1}\slash 4$. Hence by (\ref{Eq3.5.2}), 
\begin{equation}\label{Eq3.5.5}
\mathbb{P}(\{\tilde{Z}_{t-1}\geq s'',\tilde{W}_k=0,\tilde{W}_{0}=\cdots=\tilde{W}_{k-1}=1\}\cap\mathcal{V}\cap\mathcal{V}'')\leq 4e^2\beta.
\end{equation}
By (\ref{Eq3.5.3}), (\ref{Eq3.5.4}), and (\ref{Eq3.5.5}), noting that $t\leq c_5\beta^{-1}$, we obtain that when $\beta\in (0,c_4^{1\slash 2})$, 
\begin{eqnarray}\label{Eq3.5.11}
  &&  \mathbb{P}(\{\tilde{Z}_{t-1}\geq s'',\tilde{W}_{t-1}=0\}\cap\mathcal{V}\cap\mathcal{V}'')\nonumber\\
  &\leq& 1-(\exp(-C_0'(c_4+\beta^{1\slash 2}))-\exp(-c_0'\beta^{-1\slash 4}))_{+}(1-C_0'\exp(-c_0'\beta^{-1\slash 16}))_{+}\nonumber\\
  &&+4e^2c_5.
\end{eqnarray}

By (\ref{Eq3.4.7}), (\ref{Eq3.5.8}), (\ref{Eq3.5.9}), (\ref{Eq3.5.10}), and (\ref{Eq3.5.11}), when $\beta\in (0,c_4^{1\slash 2})$, for any $t\in [2,c_5\beta^{-1}]\cap\mathbb{N}$, 
\begin{eqnarray}\label{Eq3.5.12}
   && \mathbb{P}(\{\tilde{Z}_{t-1}<s''\}\cup \{\tilde{W}_{t-1}=0\})\nonumber\\
   &\leq& 1-(\exp(-C(c_4+\beta^{1\slash 2}))-\exp(-c\beta^{-1\slash 4}))_{+}(1-C\exp(-c\beta^{-1\slash 16}))_{+}\nonumber\\
   &&+Cc_5+C\exp(-c\beta^{-1\slash 16}).
\end{eqnarray}

By (\ref{Eq3.5.6}), (\ref{Eq3.5.7}), and (\ref{Eq3.5.12}), when $\beta\in (0,\min\{c_4^{1\slash 2},c_6\})$, we have
\begin{eqnarray*}
   && \mathbb{E}[|\mathcal{C}_s(\sigma)|] \nonumber\\
   &\geq &-Cc_4c_5\beta^{-2} (1-(\exp(-C(c_4+\beta^{1\slash 2}))-\exp(-c\beta^{-1\slash 4}))_{+}(1-C\exp(-c\beta^{-1\slash 16}))_{+})\nonumber\\
   && +cc_4c_5\beta^{-2}-C\beta^{-1}-Cc_4c_5^2\beta^{-2}.
\end{eqnarray*}
Hence when $\beta>0$ is sufficiently small (depending on $c_4,c_5$),
\begin{equation}
    \mathbb{E}[|\mathcal{C}_s(\sigma)|]\geq cc_4c_5\beta^{-2}-Cc_4c_5(c_4+c_5)\beta^{-2}. 
\end{equation}
Taking $c_4,c_5$ to be sufficiently small, we obtain that
\begin{equation*}
    \mathbb{E}[|\mathcal{C}_s(\sigma)|]\geq c\beta^{-2}
\end{equation*}
when $\beta\in (0,c_0'')$, where $c_0''$ is a positive absolute constant. When $\beta \geq c_0''$, 
\begin{equation*}
    \mathbb{E}[|\mathcal{C}_s(\sigma)|]\geq 1\geq (c_0'')^{2}\beta^{-2}\geq c\beta^{-2}.
\end{equation*}
Thus we conclude that
\begin{equation}\label{Eq3.5.16}
    \mathbb{E}[|\mathcal{C}_s(\sigma)|]\geq c\beta^{-2}\geq c\min\{\max\{\beta^{-2},1\},n\}.
\end{equation}

\bigskip

By (\ref{Eq3.5.14}), (\ref{Eq3.5.15}), and (\ref{Eq3.5.16}), for any $s\in [n]$ such that $s\geq (n+1)\slash 2$, 
\begin{equation}\label{Eq3.5.17}
    \mathbb{E}[|\mathcal{C}_s(\sigma)|]\geq c\min\{\max\{\beta^{-2},1\},n\}.
\end{equation}

Now consider any $s\in [n]$ such that $s< (n+1)\slash 2$. Let $\bar{\sigma}\in S_n$ be such that $\bar{\sigma}(j)=n+1-\sigma(n+1-j)$ for every $j\in [n]$. Note that $|\mathcal{C}_s(\sigma)|=|\mathcal{C}_{n+1-s}(\bar{\sigma})|$. As the distribution of $\bar{\sigma}$ is given by $\mathbb{P}_{n,\beta}$ and $n+1-s\geq (n+1)\slash 2$, by (\ref{Eq3.5.17}),
\begin{equation}\label{Eq3.5.18}
    \mathbb{E}[|\mathcal{C}_s(\sigma)|]=\mathbb{E}[|\mathcal{C}_{n+1-s}(\bar{\sigma})|]\geq c\min\{\max\{\beta^{-2},1\},n\}.
\end{equation}

By (\ref{Eq3.5.17}) and (\ref{Eq3.5.18}), for any $s\in [n]$,
\begin{equation}\label{Eq3.5.20}
     \mathbb{E}[|\mathcal{C}_s(\sigma)|]\geq c\min\{\max\{\beta^{-2},1\},n\}.
\end{equation}
The conclusion of Theorem \ref{Thm1.1} follows from (\ref{Eq3.5.13}) and (\ref{Eq3.5.20}).

\subsubsection{Proofs of Lemmas \ref{L3.11} and \ref{L3.12}}\label{Sect.3.3.2}

In this part, we give the proofs of Lemmas \ref{L3.11} and \ref{L3.12}. We assume the notations in Section \ref{Sect.3.3.1}.

We start with the proof of Lemma \ref{L3.11}.

\begin{proof}[Proof of Lemma \ref{L3.11}]

We consider any $j\in [n]$, and take 
\begin{equation*}
    d_0:=\min\{ \lfloor \beta^{-5\slash 8} \rfloor, n\},  \quad  d_1:= \min\{ \lfloor \beta^{-3\slash 4} \rfloor, n\},  \quad  d_2:= \min\{ \lfloor \beta^{-1} \rfloor, n\}.
\end{equation*}
For each $k\in \{0,1,2\} $, we denote by $\mathcal{T}_{j,k}$ the event that $j$ is contained in an open arc at step $d_k+1$. In the following, we bound $\mathbb{P}(\mathcal{T}_{j,2}|\mathcal{B}_n) \mathbbm{1}_{\mathcal{P}'}$, $\mathbb{P}(\mathcal{T}_{j,1}|\mathcal{B}_{d_2}) \mathbbm{1}_{\mathcal{P}'\cap \mathcal{T}_{j,2}}$, $\mathbb{P}(\mathcal{T}_{j,0}|\mathcal{B}_{d_1}) \mathbbm{1}_{\mathcal{P}'\cap\mathcal{T}_{j,1}}$, and $\mathbb{P}(\mathcal{S}_{j,d+1}|\mathcal{B}_{d_0}) \mathbbm{1}_{\mathcal{P}'\cap\mathcal{T}_{j,0}}$ in \textbf{Parts 1-4}, respectively.

\paragraph{Part 1}

If $j\leq d_2$, we have $\mathbb{P}(\mathcal{T}_{j,2}|\mathcal{B}_n) =1$. Below we consider the case where $j> d_2$.

Recall the definitions of $\{W_t^{(j)}\}_{t=0}^{\infty}$ and $\{Z_t^{(j)}\}_{t=0}^{\infty}$ from Section \ref{Sect.3.1}. For any $t\in \mathbb{N}^{*}$, we have
\begin{eqnarray*}
    \{Z_t^{(j)}\leq d_2\}\cup \{W_t^{(j)}=1\} &=& \{Z_t^{(j)}\leq d_2\}\cup \{Z_t^{(j)}>d_2, W_t^{(j)}=1\}\nonumber\\
    &\subseteq& \mathcal{T}_{j,2}\cup \{Z_t^{(j)}>d_2, W_t^{(j)}=1\},
\end{eqnarray*}
hence
\begin{eqnarray}\label{Eq3.1.7}
    \mathbb{P}(\mathcal{T}_{j,2}|\mathcal{B}_n)\mathbbm{1}_{\mathcal{P}'}&\geq& \mathbb{P}(\{Z_t^{(j)}\leq d_2\}\cup\{W_t^{(j)}=1\}|\mathcal{B}_n)\mathbbm{1}_{\mathcal{P}'}\nonumber\\
    && -\mathbb{P}(Z_t^{(j)}>d_2, W_t^{(j)}=1|\mathcal{B}_n)\mathbbm{1}_{\mathcal{P}'}.
\end{eqnarray}

We bound $\mathbb{P}(\{Z_t^{(j)}\leq d_2\}\cup\{W_t^{(j)}=1\}|\mathcal{B}_n)\mathbbm{1}_{\mathcal{P}'}$ for any $t\in \mathbb{N}^{*}$ as follows. Note that 
\begin{eqnarray}\label{Eq3.109}
  &&  \mathbb{P}(\{Z_t^{(j)}\leq d_2\}\cup\{W_t^{(j)}=1\}|\mathcal{B}_n) \geq \mathbb{P}(\{Z_{t-1}^{(j)}\leq d_2\}\cup\{W_t^{(j)}=1\}|\mathcal{B}_n)\nonumber\\
  &&=\mathbb{E}[\mathbbm{1}_{Z_{t-1}^{(j)}\leq d_2}+\mathbbm{1}_{Z_{t-1}^{(j)}>d_2}\mathbbm{1}_{W_t^{(j)}=1}|\mathcal{B}_n].
\end{eqnarray}
Let $\mathcal{F}_0^{(j)}:=\mathcal{B}_n$. For any $l\in \mathbb{N}^{*}$, let $\mathcal{F}_l^{(j)}$ be the $\sigma$-algebra generated by $\sigma_0$, $\{b_k\}_{k=1}^n$, $\{W_k^{(j)}\}_{k=1}^l$, and $\{Z_k^{(j)}\}_{k=1}^{l}$. Following the proof of Lemma \ref{L3.6}, we can deduce that for any $k\in \mathbb{N}^{*}$,
\begin{equation*}
    \mathbb{P}(W_k^{(j)}=1|\mathcal{F}_{k-1}^{(j)})=\Big(1-\frac{1}{N_{Z_{k-1}^{(j)}}}\Big)\mathbbm{1}_{W_{k-1}^{(j)}=1}.
\end{equation*}
Moreover, when the event $\{Z_{t-1}^{(j)}>d_2\}\cap\mathcal{P}'$ holds, $Z_{t-1}^{(j)}\geq \lfloor\beta^{-1}\rfloor+1> \beta^{-1}$ and $N_{Z_{t-1}^{(j)}}\geq e^{-2}\beta^{-1}\slash 4$. Hence   
\begin{eqnarray}\label{Eq3.110}
  &&  \mathbb{E}[\mathbbm{1}_{Z_{t-1}^{(j)}>d_2}\mathbbm{1}_{W_t^{(j)}=1}|\mathcal{B}_n]\mathbbm{1}_{\mathcal{P}'}=\mathbb{E}[\mathbbm{1}_{Z_{t-1}^{(j)}>d_2}\mathbbm{1}_{\mathcal{P}'} \mathbb{P}(W_t^{(j)}=1|\mathcal{F}_{t-1}^{(j)})|\mathcal{B}_n] \nonumber\\
  &=& \mathbb{E}\Big[\mathbbm{1}_{Z_{t-1}^{(j)}>d_2}\mathbbm{1}_{\mathcal{P}'}\Big(1-\frac{1}{N_{Z_{t-1}^{(j)}}}\Big)\mathbbm{1}_{W_{t-1}^{(j)}=1}\Big|\mathcal{B}_n\Big]\nonumber\\
  &\geq&  (1-4e^2\beta)\mathbb{E}[\mathbbm{1}_{Z_{t-1}^{(j)}>d_2}\mathbbm{1}_{\mathcal{P}'}\mathbbm{1}_{W_{t-1}^{(j)}=1}|\mathcal{B}_n].
\end{eqnarray}
By (\ref{Eq3.109}) and (\ref{Eq3.110}), we have
\begin{eqnarray*}
  &&  \mathbb{P}(\{Z_t^{(j)}\leq d_2\}\cup\{W_t^{(j)}=1\}|\mathcal{B}_n)\mathbbm{1}_{\mathcal{P}'}\nonumber\\
  &\geq& (1-4e^2\beta)\mathbb{E}[\mathbbm{1}_{Z_{t-1}^{(j)}\leq d_2}+\mathbbm{1}_{Z_{t-1}^{(j)}>d_2}\mathbbm{1}_{W_{t-1}^{(j)}=1}|\mathcal{B}_n] \mathbbm{1}_{\mathcal{P}'}\nonumber\\
  &=& (1-4e^2\beta)\mathbb{P}(\{Z_{t-1}^{(j)}\leq d_2\}\cup \{W_{t-1}^{(j)}=1\}|\mathcal{B}_n)\mathbbm{1}_{\mathcal{P}'}.
\end{eqnarray*}
Continuing similarly, noting that $W_0^{(j)}=1$, we obtain that
\begin{eqnarray}\label{Eq3.1.8}
  &&  \mathbb{P}(\{Z_t^{(j)}\leq d_2\}\cup\{W_t^{(j)}=1\}|\mathcal{B}_n)\mathbbm{1}_{\mathcal{P}'}\geq \cdots \nonumber\\
  &\geq& (1-4e^2\beta)^t \mathbb{P}(\{Z_0^{(j)}\leq d_2\}\cup \{W_0^{(j)}=1\}|\mathcal{B}_n)\mathbbm{1}_{\mathcal{P}'}=(1-4e^2\beta)^t \mathbbm{1}_{\mathcal{P}'}. \nonumber\\
  &&
\end{eqnarray}

We bound $\mathbb{P}(Z_t^{(j)}>d_2, W_t^{(j)}=1|\mathcal{B}_n)\mathbbm{1}_{\mathcal{P}'}$ for any $t\in \mathbb{N}^{*}$ as follows. Consider any $\theta\geq 0$. We have
\begin{eqnarray}\label{Eq3.1.6}
  &&  \mathbb{P}(Z_t^{(j)}>d_2, W_t^{(j)}=1|\mathcal{B}_n)\mathbbm{1}_{\mathcal{P}'}\nonumber\\
  &=& \mathbb{E}\Big[\mathbbm{1}_{\sum_{k=1}^t (Z_{k-1}^{(j)}-Z_{k}^{(j)})<j-d_2}\mathbbm{1}_{W_t^{(j)}=1} \mathbbm{1}_{Z_{t-1}^{(j)}>d_2} 
 \mathbbm{1}_{\mathcal{P}'} \Big| \mathcal{B}_n \Big] \nonumber\\
  &\leq& \exp((j-d_2)\theta)\mathbb{E}\Big[\exp\Big(-\theta\sum_{k=1}^t (Z_{k-1}^{(j)}-Z_k^{(j)})\Big)\mathbbm{1}_{W_t^{(j)}=1} \mathbbm{1}_{Z_{t-1}^{(j)}>d_2} \mathbbm{1}_{\mathcal{P}'}\Big|\mathcal{B}_n\Big] .  \nonumber\\
  &&
\end{eqnarray}
Following the proof of Lemma \ref{L3.6}, we obtain that for any $k\in [n]$,
\begin{equation*}
    \mathbb{P}(Z_t^{(j)}=k, W_t^{(j)}=1|\mathcal{F}_{t-1}^{(j)})=\frac{\mathbbm{1}_{k\in \mathcal{H}_{Z_{t-1}^{(j)}}}\mathbbm{1}_{W_{t-1}^{(j)}=1}}{N_{Z_{t-1}^{(j)}}}.
\end{equation*}
Hence 
\begin{eqnarray}\label{Eq3.1.1}
   &&  \mathbb{E}[\exp(-\theta (Z^{(j)}_{t-1}-Z^{(j)}_t))\mathbbm{1}_{W_t^{(j)}=1}|\mathcal{F}_{t-1}^{(j)}] \nonumber\\
   &=& \sum_{k=1}^n \mathbb{E}[\exp(-\theta(Z^{(j)}_{t-1}-k))\mathbbm{1}_{Z_t^{(j)}=k}\mathbbm{1}_{W_t^{(j)}=1}|\mathcal{F}_{t-1}^{(j)} ] \nonumber\\
   &=& \sum_{k=1}^n \exp(-\theta (Z_{t-1}^{(j)}-k))\mathbb{P}(Z_t^{(j)}=k, W_t^{(j)}=1|  \mathcal{F}_{t-1}^{(j)})\nonumber\\
   &=& \frac{\mathbbm{1}_{W_{t-1}^{(j)}=1}}{N_{Z_{t-1}^{(j)}}}\sum_{k\in \mathcal{H}_{Z_{t-1}^{(j)}}}\exp(-\theta (Z_{t-1}^{(j)}-k)).
\end{eqnarray}
Note that by Lemma \ref{L3.2}, $|\mathcal{H}_{Z_{t-1}^{(j)}}|=N_{Z_{t-1}^{(j)}}-1$. By the definition of $\mathcal{H}_{Z_{t-1}^{(j)}}$, 
\begin{equation}\label{Eq3.1.3}
    \sum_{k\in \mathcal{H}_{Z_{t-1}^{(j)}}}(Z_{t-1}^{(j)}-k)\geq \sum_{k=1}^{N_{Z_{t-1}^{(j)}}-1}k=\frac{1}{2}(N_{Z_{t-1}^{(j)}}-1)N_{Z_{t-1}^{(j)}}.
\end{equation}
When the event $\mathcal{P}' \cap \{Z_{t-1}^{(j)}>d_2\}$ holds, we have $Z_{t-1}^{(j)}>\lfloor \beta^{-1}\rfloor$ and 
\begin{equation*}
    N_{Z_{t-1}^{(j)}}\geq \frac{e^{-2}\lfloor \beta^{-1}\rfloor}{4}\geq \frac{e^{-2}\beta^{-1}}{8},
\end{equation*}
hence by (\ref{Eq3.1.3}), 
\begin{equation}\label{Eq3.1.4}
    \sum_{k\in \mathcal{H}_{Z_{t-1}^{(j)}}}(Z_{t-1}^{(j)}-k) \geq \frac{e^{-2}}{16}\beta^{-1}(N_{Z_{t-1}^{(j)}}-1).
\end{equation}
We also note that when the event $\mathcal{P}'$ holds, for any $k\in \mathcal{H}_{Z_{t-1}^{(j)}}$, 
\begin{equation}\label{Eq3.1.2}
    0\leq Z_{t-1}^{(j)}-k \leq \beta^{-9\slash 8}.
\end{equation}
If $N_{Z_{t-1}^{(j)}}=1$, the right-hand side of (\ref{Eq3.1.1}) is $0$. If $N_{Z_{t-1}^{(j)}}\geq 2$ and the event $\mathcal{P}'\cap \{Z_{t-1}^{(j)}>d_2\}$ holds, by (\ref{Eq3.1.4}), (\ref{Eq3.1.2}), and Hoeffding's lemma (see \cite[Lemma 2.6]{MR2319879}), we have
\begin{eqnarray}\label{Eq3.1.5}
  && \frac{1}{N_{Z_{t-1}^{(j)}}-1}\sum_{k\in \mathcal{H}_{Z_{t-1}^{(j)}}}\exp(-\theta (Z_{t-1}^{(j)}-k))\nonumber\\
  &\leq& \exp\Big(-\frac{\theta}{N_{Z_{t-1}^{(j)}}-1}  \sum_{k\in \mathcal{H}_{Z_{t-1}^{(j)}}}(Z_{t-1}^{(j)}-k)\Big)\exp\Big(\frac{1}{2}\beta^{-9\slash 4}\theta^2\Big) \nonumber\\
  &\leq& \exp\Big(-\frac{e^{-2}}{16}\beta^{-1}\theta+\frac{1}{2}\beta^{-9\slash 4}\theta^2\Big).
\end{eqnarray}
By (\ref{Eq3.1.1}) and (\ref{Eq3.1.5}), 
\begin{eqnarray*}
  &&  \mathbb{E}\Big[\exp\Big(-\theta\sum_{k=1}^t (Z_{k-1}^{(j)}-Z_k^{(j)})\Big)\mathbbm{1}_{W_t^{(j)}=1} \mathbbm{1}_{Z_{t-1}^{(j)}>d_2} \mathbbm{1}_{\mathcal{P}'}\Big|\mathcal{B}_n\Big] \nonumber\\
  &=& \mathbb{E}\Big[\mathbb{E}[\exp(-\theta(Z^{(j)}_{t-1}-Z^{(j)}_t))\mathbbm{1}_{W_t^{(j)}=1}|\mathcal{F}_{t-1}^{(j)} ]\nonumber\\
  &&\quad\times\exp\Big(-\theta\sum_{k=1}^{t-1} (Z_{k-1}^{(j)}-Z_k^{(j)})\Big)\mathbbm{1}_{Z_{t-1}^{(j)}>d_2} \mathbbm{1}_{\mathcal{P}'}\Big|\mathcal{B}_n\Big] \nonumber\\
  &\leq& \exp\Big(-\frac{e^{-2}}{16}\beta^{-1}\theta+\frac{1}{2}\beta^{-9\slash 4}\theta^2\Big) \nonumber\\
  && \times \mathbb{E}\Big[ \exp\Big(-\theta\sum_{k=1}^{t-1} (Z_{k-1}^{(j)}-Z_k^{(j)})\Big)\mathbbm{1}_{W_{t-1}^{(j)}=1}  \mathbbm{1}_{Z_{t-1}^{(j)}>d_2} \mathbbm{1}_{\mathcal{P}'} \Big|\mathcal{B}_n\Big],
\end{eqnarray*}
hence by (\ref{Eq3.1.6}), 
\begin{eqnarray*}
    &&  \mathbb{P}(Z_t^{(j)}>d_2, W_t^{(j)}=1|\mathcal{B}_n)\mathbbm{1}_{\mathcal{P}'}\nonumber\\
    &\leq& \exp((j-d_2)\theta)\exp(-e^{-2}\beta^{-1}  \theta \slash 16 +\beta^{-9\slash 4}\theta^2\slash 2)\nonumber\\
    && \times\mathbb{E}\Big[ \exp\Big(-\theta\sum_{k=1}^{t-1} (Z_{k-1}^{(j)}-Z_k^{(j)})\Big)\mathbbm{1}_{W_{t-1}^{(j)}=1}  \mathbbm{1}_{Z_{t-1}^{(j)}>d_2} \mathbbm{1}_{\mathcal{P}'} \Big|\mathcal{B}_n\Big].
\end{eqnarray*}
Continuing similarly, we obtain that for any $\theta\geq 0$,
\begin{eqnarray}\label{Eq3.1.9}
   && \mathbb{P}(Z_t^{(j)}>d_2, W_t^{(j)}=1|\mathcal{B}_n)\mathbbm{1}_{\mathcal{P}'} \nonumber\\
   &\leq& \exp((j-d_2)\theta-e^{-2}\beta^{-1}t\theta\slash 16+\beta^{-9\slash 4}t\theta^2\slash 2)\mathbbm{1}_{\mathcal{P}'}.
\end{eqnarray}

By (\ref{Eq3.1.7}), (\ref{Eq3.1.8}), and (\ref{Eq3.1.9}), for any $t\in \mathbb{N}^{*}$ and $\theta\geq 0$, we have
\begin{equation}\label{Eq3.1.11}
 \mathbb{P}(\mathcal{T}_{j,2}|\mathcal{B}_n)\mathbbm{1}_{\mathcal{P}'}\geq ((1-4e^2\beta)^t -\exp((j-d_2)\theta-e^{-2}\beta^{-1}t\theta\slash 16+\beta^{-9\slash 4}t\theta^2\slash 2))\mathbbm{1}_{\mathcal{P}'}.
\end{equation}
Note that (\ref{Eq3.1.11}) also holds when $j\leq d_2$. We take $t=\lceil 32e^2\max\{\beta^{-1\slash 2},\beta j\}\rceil$ and $\theta=e^{-2}\beta^{5\slash 4}\slash 32$. Note that  
\begin{equation*}
   \beta^{-9\slash 4}t\theta^2\slash 2=e^{-2}\beta^{-1}t\theta\slash 64, \quad e^{-2}  \beta^{-1}t\theta\slash 16   \geq  2j\theta\geq 2(j-d_2)\theta.
\end{equation*}
Hence by (\ref{Eq3.1.11}), noting that $t\in [32e^2\beta^{-1\slash 2},32e^2\beta j+33e^2\beta^{-1\slash 2}]$,\\ $4e^2\beta\in [0,e^{-1}]$, and $1-x\geq e^{-ex},\forall x\in [0,  e^{-1}]$, we have 
\begin{eqnarray}\label{Eq3.3.3}
    \mathbb{P}(\mathcal{T}_{j,2}|\mathcal{B}_n)\mathbbm{1}_{\mathcal{P}'}  &\geq&   (\exp(-4e^3\beta t)-\exp(-e^{-2}\beta^{-1}t\theta\slash 64))_{+}\mathbbm{1}_{\mathcal{P}'} \nonumber\\
    &\geq& (\exp(-C(\beta^{1\slash 2}+\beta^2j))-\exp(-c\beta^{-1\slash 4}))_{+}\mathbbm{1}_{\mathcal{P}'}.
\end{eqnarray}

\paragraph{Part 2}

If $j\leq d_1$ or $d_1=d_2$, we have $\mathbb{P}(\mathcal{T}_{j,1}|\mathcal{B}_{d_2})\mathbbm{1}_{\mathcal{P}'\cap\mathcal{T}_{j,2}}=\mathbbm{1}_{\mathcal{P}'\cap\mathcal{T}_{j,2}}$. Below we consider the case where $j> d_1$ and $d_1<d_2$. We define $N_{n+1}:=1$.

If $j$ is contained in an open arc at step $d_2+1$, we let $\tilde{Z}_0^{(j)}$ be the tail of this open arc and $\tilde{W}_0^{(j)}=1$; if $j$ is contained in a closed arc at step $d_2+1$, we let $\tilde{Z}_0^{(j)}=d_2+1$ and $\tilde{W}_0^{(j)}=0$. In the following, we define $\tilde{W}_t^{(j)}\in \{0,1\}$ and $\tilde{Z}_t^{(j)}\in [n+1]$ for each $t\in \mathbb{N}^{*}$ inductively. For any $t\in \mathbb{N}^{*}$, suppose that $\tilde{W}_{t-1}^{(j)}\in \{0,1\}$ and $\tilde{Z}_{t-1}^{(j)}\in [n+1]$ have been defined. If $\tilde{W}_{t-1}^{(j)}=0$, we let $\tilde{W}_t^{(j)}=0$ and $\tilde{Z}_t^{(j)}=\tilde{Z}_{t-1}^{(j)}$. Below we consider the case where $\tilde{W}_{t-1}^{(j)}=1$. If $j$ is contained in a closed arc at step $\tilde{Z}_{t-1}^{(j)}$, we let $\tilde{W}_t^{(j)}=0$ and $\tilde{Z}_t^{(j)}=\tilde{Z}_{t-1}^{(j)}$; if $j$ is contained in an open arc at step $\tilde{Z}_{t-1}^{(j)}$, we take $\tilde{W}_t^{(j)}=1$ and let $\tilde{Z}_t^{(j)}$ be the tail of this open arc (note that $\tilde{Z}_t^{(j)}<\tilde{Z}_{t-1}^{(j)}$ in this case). 

For any $t\in \mathbb{N}^{*}$, we have
\begin{eqnarray*}
    \{\tilde{Z}_t^{(j)}\leq d_1\}\cup \{\tilde{W}_t^{(j)}=1\} &=& \{\tilde{Z}_t^{(j)}\leq d_1\}\cup \{\tilde{Z}_t^{(j)}>d_1, \tilde{W}_t^{(j)}=1\}\nonumber\\
    &\subseteq& \mathcal{T}_{j,1}\cup \{\tilde{Z}_t^{(j)}>d_1, \tilde{W}_t^{(j)}=1\},
\end{eqnarray*}
hence
\begin{eqnarray}\label{Eq3.2.9}
    \mathbb{P}(\mathcal{T}_{j,1}|\mathcal{B}_{d_2})\mathbbm{1}_{\mathcal{P}'\cap\mathcal{T}_{j,2}}&\geq& \mathbb{P}(\{\tilde{Z}_t^{(j)}\leq d_1\}\cup\{\tilde{W}_t^{(j)}=1\}|\mathcal{B}_{d_2})\mathbbm{1}_{\mathcal{P}'\cap\mathcal{T}_{j,2}}\nonumber\\
    && -\mathbb{P}(\tilde{Z}_t^{(j)}>d_1, \tilde{W}_t^{(j)}=1|\mathcal{B}_{d_2})\mathbbm{1}_{\mathcal{P}'\cap\mathcal{T}_{j,2}}.
\end{eqnarray}

We bound $\mathbb{P}(\{\tilde{Z}_t^{(j)}\leq d_1\}\cup\{\tilde{W}_t^{(j)}=1\}|\mathcal{B}_{d_2})\mathbbm{1}_{\mathcal{P}'\cap\mathcal{T}_{j,2}}$ for any $t\in \mathbb{N}^{*}$ as follows. Note that 
\begin{eqnarray}\label{Eq3.2.1}
    && \mathbb{P}(\{\tilde{Z}_t^{(j)}\leq d_1\}\cup\{\tilde{W}_t^{(j)}=1\}|\mathcal{B}_{d_2})\geq \mathbb{P}(\{\tilde{Z}_{t-1}^{(j)}\leq d_1\}\cup\{\tilde{W}_t^{(j)}=1\}|\mathcal{B}_{d_2}) \nonumber\\
    &&= \mathbb{E}[\mathbbm{1}_{\tilde{Z}_{t-1}^{(j)}\leq d_1}+\mathbbm{1}_{\tilde{Z}_{t-1}^{(j)}> d_1}\mathbbm{1}_{\tilde{W}_t^{(j)}=1}|\mathcal{B}_{d_2}].
\end{eqnarray}
Note that when the event $\{\tilde{Z}_{t-1}^{(j)}>d_1\}\cap\mathcal{P}'\cap\mathcal{T}_{j,2}$ holds, $\tilde{Z}_{t-1}^{(j)} \leq \tilde{Z}_0^{(j)}\leq d_2\leq n$, $\tilde{Z}_{t-1}^{(j)}\geq \lfloor \beta^{-3\slash 4} \rfloor+1\geq \beta^{-3\slash 4}$, and $N_{\tilde{Z}_{t-1}^{(j)}}\geq e^{-2}\beta^{-3\slash 4}\slash 4$. 

Hence
\begin{eqnarray}\label{Eq3.2.2}
  &&  \mathbb{E}[\mathbbm{1}_{\tilde{Z}_{t-1}^{(j)}> d_1}\mathbbm{1}_{\tilde{W}_t^{(j)}=1}|\mathcal{B}_{d_2}] \mathbbm{1}_{\mathcal{P}'\cap\mathcal{T}_{j,2}}\nonumber\\
  &=& \sum_{k=d_1+1}^{d_2} \mathbb{E}[ \mathbbm{1}_{\tilde{Z}_{t-1}^{(j)}=k,\tilde{W}_{t-1}^{(j)}=1}\mathbbm{1}_{\tilde{W}_t^{(j)}=1}\mathbbm{1}_{\mathcal{P}'\cap\mathcal{T}_{j,2}}|\mathcal{B}_{d_2}]\nonumber\\
  &=& \sum_{k=d_1+1}^{d_2} \mathbb{E}[\mathbb{P}(\tilde{W}_t^{(j)}=1|\mathcal{B}_{k}) \mathbbm{1}_{\tilde{Z}_{t-1}^{(j)}=k,\tilde{W}_{t-1}^{(j)}=1} \mathbbm{1}_{\mathcal{P}'\cap\mathcal{T}_{j,2}}|\mathcal{B}_{d_2}]\nonumber\\
  &=& \sum_{k=d_1+1}^{d_2} \mathbb{E}\Big[\Big(1-\frac{1}{N_k}\Big)\mathbbm{1}_{\tilde{W}_{t-1}^{(j)}=1}\mathbbm{1}_{\tilde{Z}_{t-1}^{(j)}=k} \mathbbm{1}_{\mathcal{P}'\cap\mathcal{T}_{j,2}}\Big| \mathcal{B}_{d_2}\Big]\nonumber\\
  &=& \mathbb{E}\Big[\Big(1-\frac{1}{N_{\tilde{Z}_{t-1}^{(j)}}}\Big)\mathbbm{1}_{\tilde{W}_{t-1}^{(j)}=1} \mathbbm{1}_{\tilde{Z}_{t-1}^{(j)}>d_1} \mathbbm{1}_{\mathcal{P}'\cap\mathcal{T}_{j,2}}  \Big| \mathcal{B}_{d_2} \Big]\nonumber\\
  &\geq& (1-4e^2\beta^{3\slash 4}) \mathbb{E}[\mathbbm{1}_{\tilde{W}_{t-1}^{(j)}=1}\mathbbm{1}_{\tilde{Z}_{t-1}^{(j)}>d_1}\mathbbm{1}_{\mathcal{P}'\cap\mathcal{T}_{j,2}}| \mathcal{B}_{d_2} ].
\end{eqnarray}
By (\ref{Eq3.2.1}) and (\ref{Eq3.2.2}), we have
\begin{eqnarray*}
  &&  \mathbb{P}(\{\tilde{Z}_t^{(j)}\leq d_1\}\cup\{\tilde{W}_t^{(j)}=1\}|\mathcal{B}_{d_2})\mathbbm{1}_{\mathcal{P}'\cap\mathcal{T}_{j,2}} \nonumber\\
  &\geq& (1-4e^2\beta^{3\slash 4}) \mathbb{E}[\mathbbm{1}_{\tilde{Z}_{t-1}^{(j)}\leq d_1}+\mathbbm{1}_{\tilde{W}_{t-1}^{(j)}=1}\mathbbm{1}_{\tilde{Z}_{t-1}^{(j)}>d_1}| \mathcal{B}_{d_2} ]\mathbbm{1}_{\mathcal{P}'\cap\mathcal{T}_{j,2}}\nonumber\\
  &=&  (1-4e^2\beta^{3\slash 4}) \mathbb{P}(\{\tilde{Z}_{t-1}^{(j)}\leq d_1\}\cup \{\tilde{W}_{t-1}^{(j)}=1\}| \mathcal{B}_{d_2} )\mathbbm{1}_{\mathcal{P}'\cap\mathcal{T}_{j,2}}.
\end{eqnarray*}
Continuing similarly, noting that $\tilde{W}_0^{(j)}=1$ when the event $\mathcal{T}_{j,2}$ holds, we obtain that
\begin{eqnarray}\label{Eq3.2.10}
    &&  \mathbb{P}(\{\tilde{Z}_t^{(j)}\leq d_1\}\cup\{\tilde{W}_t^{(j)}=1\}|\mathcal{B}_{d_2})\mathbbm{1}_{\mathcal{P}'\cap\mathcal{T}_{j,2}} \nonumber\\
    &\geq& \cdots\geq (1-4e^2\beta^{3\slash 4})^t \mathbb{P}(\{\tilde{Z}_0^{(j)}\leq d_1\}\cup\{\tilde{W}_0^{(j)}=1\}|\mathcal{B}_{d_2})\mathbbm{1}_{\mathcal{P}'\cap\mathcal{T}_{j,2}}\nonumber\\
    &=& (1-4e^2\beta^{3\slash 4})^t \mathbbm{1}_{\mathcal{P}'\cap\mathcal{T}_{j,2}}.
\end{eqnarray}

We bound $\mathbb{P}(\tilde{Z}_t^{(j)}>d_1, \tilde{W}_t^{(j)}=1|\mathcal{B}_{d_2})\mathbbm{1}_{\mathcal{P}'\cap\mathcal{T}_{j,2}}$ for any $t\in \mathbb{N}^{*}$ as follows. Note that when the event $\mathcal{T}_{j,2}$ holds, $\tilde{Z}_0^{(j)}\leq d_2$. Consider any $\theta\geq 0$. We have
\begin{eqnarray}\label{Eq3.2.8}
    && \mathbb{P}(\tilde{Z}_t^{(j)}>d_1, \tilde{W}_t^{(j)}=1|\mathcal{B}_{d_2})\mathbbm{1}_{\mathcal{P}'\cap\mathcal{T}_{j,2}}  \nonumber\\
    & \leq & \mathbb{E}\Big[\mathbbm{1}_{\sum_{k=1}^t(\tilde{Z}_{k-1}^{(j)}-\tilde{Z}_{k}^{(j)})< d_2-d_1}\mathbbm{1}_{\tilde{W}_t^{(j)}=1}\mathbbm{1}_{\tilde{Z}_{t-1}^{(j)}>d_1}\mathbbm{1}_{\mathcal{P}'\cap\mathcal{T}_{j,2}}\Big|\mathcal{B}_{d_2}\Big] \nonumber\\
    &\leq& \exp(\theta(d_2-d_1))\mathbb{E}\Big[\exp\Big(-\theta\sum_{k=1}^t (\tilde{Z}_{k-1}^{(j)}-\tilde{Z}_k^{(j)})\Big)\mathbbm{1}_{\tilde{W}_t^{(j)}=1}\mathbbm{1}_{\tilde{Z}_{t-1}^{(j)}>d_1}\mathbbm{1}_{\mathcal{P}'\cap\mathcal{T}_{j,2}}\Big|\mathcal{B}_{d_2}\Big].\nonumber\\
    && 
\end{eqnarray}
Note that
\begin{eqnarray*}
  &&  \mathbb{E}\Big[\exp\Big(-\theta\sum_{k=1}^t (\tilde{Z}_{k-1}^{(j)}-\tilde{Z}_k^{(j)})\Big)\mathbbm{1}_{\tilde{W}_t^{(j)}=1}\mathbbm{1}_{\tilde{Z}_{t-1}^{(j)}>d_1}\mathbbm{1}_{\mathcal{P}'\cap\mathcal{T}_{j,2}}\Big|\mathcal{B}_{d_2}\Big] \nonumber\\
  &=&   \sum_{k'=d_1+1}^{d_2}   \mathbb{E}\Big[\exp\Big(-\theta\sum_{k=1}^t (\tilde{Z}_{k-1}^{(j)}-\tilde{Z}_k^{(j)})\Big)\mathbbm{1}_{\tilde{W}_t^{(j)}=1}\mathbbm{1}_{\tilde{Z}_{t-1}^{(j)}=k',\tilde{W}_{t-1}^{(j)}=1 }\mathbbm{1}_{\mathcal{P}'\cap\mathcal{T}_{j,2}}\Big|\mathcal{B}_{d_2}\Big]  \nonumber\\
  &=& \sum_{k'=d_1+1}^{d_2} \mathbb{E}\Big[\exp\Big(-\theta\sum_{k=1}^{t-1} (\tilde{Z}_{k-1}^{(j)}-\tilde{Z}_k^{(j)})\Big)\mathbbm{1}_{\tilde{Z}_{t-1}^{(j)}=k',\tilde{W}_{t-1}^{(j)}=1}\mathbbm{1}_{\mathcal{P}'\cap\mathcal{T}_{j,2}}\nonumber\\
  &&\quad\quad\quad\quad\quad\times\mathbb{E}[\exp(-\theta(\tilde{Z}_{t-1}^{(j)}-\tilde{Z}_t^{(j)}))\mathbbm{1}_{\tilde{W}_t^{(j)}=1}|\mathcal{B}_{k'}]\Big|\mathcal{B}_{d_2}\Big],
\end{eqnarray*}
and for any $k'\in [d_1+1,d_2]\cap\mathbb{N}$, 
\begin{eqnarray*}
  &&  \mathbb{E}[\exp(-\theta(\tilde{Z}_{t-1}^{(j)}-\tilde{Z}_t^{(j)}))\mathbbm{1}_{\tilde{W}_t^{(j)}=1}|\mathcal{B}_{k'}]\mathbbm{1}_{\tilde{Z}_{t-1}^{(j)}=k',\tilde{W}_{t-1}^{(j)}=1} \nonumber\\
  &=& \sum_{l=1}^{n} \mathbb{E}[\exp(-\theta(k'-l))\mathbbm{1}_{\tilde{Z}_t^{(j)}=l}\mathbbm{1}_{\tilde{W}_t^{(j)}=1}|\mathcal{B}_{k'}]\mathbbm{1}_{\tilde{Z}_{t-1}^{(j)}=k',\tilde{W}_{t-1}^{(j)}=1}\nonumber\\
  &=& \sum_{l=1}^{n} \exp(-\theta(k'-l))\mathbbm{1}_{\tilde{Z}_{t-1}^{(j)}=k',\tilde{W}_{t-1}^{(j)}=1} \mathbb{P}(\tilde{Z}_t^{(j)}=l,\tilde{W}_t^{(j)}=1|\mathcal{B}_{k'})\nonumber\\
  &=& \sum_{l=1}^n \exp(-\theta(k'-l))\frac{\mathbbm{1}_{\tilde{Z}_{t-1}^{(j)}=k',\tilde{W}_{t-1}^{(j)}=1}\mathbbm{1}_{l\in \mathcal{H}_{k'}}}{N_{k'}}   \nonumber\\                        &=&  \frac{\mathbbm{1}_{\tilde{Z}_{t-1}^{(j)}=k',  \tilde{W}_{t-1}^{(j)}=1}}{N_{k'}}\sum_{l\in \mathcal{H}_{k'}} \exp(-\theta(k'-l)).
\end{eqnarray*}
Hence
\begin{eqnarray}\label{Eq3.2.7}
      &&  \mathbb{E}\Big[\exp\Big(-\theta\sum_{k=1}^t (\tilde{Z}_{k-1}^{(j)}-\tilde{Z}_k^{(j)})\Big)\mathbbm{1}_{\tilde{W}_t^{(j)}=1}\mathbbm{1}_{\tilde{Z}_{t-1}^{(j)}>d_1}\mathbbm{1}_{\mathcal{P}'\cap\mathcal{T}_{j,2}}\Big|\mathcal{B}_{d_2}\Big] \nonumber\\
      &=& \sum_{k'=d_1+1}^{d_2} \mathbb{E}\Big[\exp\Big(-\theta\sum_{k=1}^{t-1} (\tilde{Z}_{k-1}^{(j)}-\tilde{Z}_k^{(j)})\Big)\mathbbm{1}_{\tilde{Z}_{t-1}^{(j)}=k',\tilde{W}_{t-1}^{(j)}=1 }\mathbbm{1}_{\mathcal{P}'\cap\mathcal{T}_{j,2}}\nonumber\\
  &&\quad\quad\quad\quad\quad\times\frac{1}{N_{k'}}\sum_{l\in \mathcal{H}_{k'}} \exp(-\theta(k'-l))\Big|\mathcal{B}_{d_2}\Big].
\end{eqnarray}
Below we consider any $k'\in [d_1+1,d_2]\cap\mathbb{N}$. By Lemma \ref{L3.2}, $|\mathcal{H}_{k'}|=N_{k'}-1$. By the definition of $\mathcal{H}_{k'}$, 
\begin{equation}\label{Eq3.2.3}
    \sum_{l\in\mathcal{H}_{k'}}(k'-l)\geq \sum_{l=1}^{N_{k'}-1}l=\frac{1}{2}(N_{k'}-1)N_{k'}.
\end{equation}
As $d_1+1\leq k' \leq d_2$, we have $k'\geq\lfloor \beta^{-3\slash 4} \rfloor+1\geq \beta^{-3\slash 4}$. Hence by (\ref{Eq3.2.3}), when the event $\mathcal{P}'$ holds, $N_{k'}\geq e^{-2}\beta^{-3\slash 4}\slash 4$ and 
\begin{equation}\label{Eq3.2.4}
    \sum_{l\in\mathcal{H}_{k'}}(k'-l)\geq \frac{e^{-2}}{8}\beta^{-3\slash 4}(N_{k'}-1).
\end{equation}
We also note that for any $l \in \mathcal{H}_{k'}$, 
\begin{equation}\label{Eq3.2.5}
    0\leq k'-l \leq k'\leq d_2\leq \beta^{-1}.
\end{equation}
If $N_{k'}\geq 2$ and the event $\mathcal{P}'$ holds, by (\ref{Eq3.2.4}), (\ref{Eq3.2.5}), and Hoeffding's lemma, 
\begin{eqnarray}\label{Eq3.2.6}
  &&  \frac{1}{N_{k'}}\sum_{l\in \mathcal{H}_{k'}} \exp(-\theta(k'-l)) \leq  \frac{1}{N_{k'}-1}\sum_{l\in \mathcal{H}_{k'}} \exp(-\theta(k'-l)) \nonumber\\
  &\leq&  \exp\Big(-\frac{\theta}{N_{k'}-1}\sum_{l\in\mathcal{H}_{k'}}(k'-l)\Big)\exp\Big(\frac{1}{2}\beta^{-2}\theta^2\Big)\nonumber\\
  &\leq& \exp\Big(-\frac{e^{-2}}{8}\beta^{-3\slash 4}\theta+\frac{1}{2}\beta^{-2}\theta^2\Big).
\end{eqnarray}
Note that if $N_{k'}=1$, (\ref{Eq3.2.6}) also holds. By (\ref{Eq3.2.7}) and (\ref{Eq3.2.6}),  
\begin{eqnarray*}
    &&  \mathbb{E}\Big[\exp\Big(-\theta\sum_{k=1}^t (\tilde{Z}_{k-1}^{(j)}-\tilde{Z}_k^{(j)})\Big)\mathbbm{1}_{\tilde{W}_t^{(j)}=1}\mathbbm{1}_{\tilde{Z}_{t-1}^{(j)}>d_1}\mathbbm{1}_{\mathcal{P}'\cap\mathcal{T}_{j,2}}\Big|\mathcal{B}_{d_2}\Big] \nonumber\\
    &\leq& \exp\Big(-\frac{e^{-2}}{8}\beta^{-3\slash 4}\theta+\frac{1}{2}\beta^{-2}\theta^2\Big)\nonumber\\
    &&  \times\sum_{k'=d_1+1}^{d_2} \mathbb{E}\Big[\exp\Big(-\theta\sum_{k=1}^{t-1} (\tilde{Z}_{k-1}^{(j)}-\tilde{Z}_k^{(j)})\Big)\mathbbm{1}_{\tilde{W}_{t-1}^{(j)}=1}\mathbbm{1}_{\tilde{Z}_{t-1}^{(j)}=k' }\mathbbm{1}_{\mathcal{P}'\cap\mathcal{T}_{j,2}}\Big|\mathcal{B}_{d_2}\Big]\nonumber\\
    &\leq& \exp\Big(-\frac{e^{-2}}{8}\beta^{-3\slash 4}\theta+\frac{1}{2}\beta^{-2}\theta^2\Big)\nonumber\\
    &&  \times
    \mathbb{E}\Big[\exp\Big(-\theta\sum_{k=1}^{t-1} (\tilde{Z}_{k-1}^{(j)}-\tilde{Z}_k^{(j)})\Big)\mathbbm{1}_{\tilde{W}_{t-1}^{(j)}=1}\mathbbm{1}_{\tilde{Z}_{t-1}^{(j)}>d_1 }\mathbbm{1}_{\mathcal{P}'\cap\mathcal{T}_{j,2}}\Big|\mathcal{B}_{d_2}\Big],
\end{eqnarray*}
hence by (\ref{Eq3.2.8}),
\begin{eqnarray*}
    && \mathbb{P}(\tilde{Z}_t^{(j)}>d_1, \tilde{W}_t^{(j)}=1|\mathcal{B}_{d_2})\mathbbm{1}_{\mathcal{P}'\cap\mathcal{T}_{j,2}}  \nonumber\\
    &\leq& \exp((d_2-d_1)\theta)\exp\Big(-\frac{e^{-2}}{8}\beta^{-3\slash 4}\theta+\frac{1}{2}\beta^{-2}\theta^2\Big)\nonumber\\
    &&  \times
    \mathbb{E}\Big[\exp\Big(-\theta\sum_{k=1}^{t-1} (\tilde{Z}_{k-1}^{(j)}-\tilde{Z}_k^{(j)})\Big)\mathbbm{1}_{\tilde{W}_{t-1}^{(j)}=1}\mathbbm{1}_{\tilde{Z}_{t-1}^{(j)}>d_1 }\mathbbm{1}_{\mathcal{P}'\cap\mathcal{T}_{j,2}}\Big|\mathcal{B}_{d_2}\Big].
\end{eqnarray*}
Continuing similarly, we obtain that for any $\theta\geq 0$,
\begin{eqnarray}\label{Eq3.2.11}
    && \mathbb{P}(\tilde{Z}_t^{(j)}>d_1, \tilde{W}_t^{(j)}=1|\mathcal{B}_{d_2})\mathbbm{1}_{\mathcal{P}'\cap\mathcal{T}_{j,2}}  \nonumber\\
    &\leq& \exp( (d_2-d_1)\theta-e^{-2} \beta^{-3\slash 4}t\theta \slash 8 +\beta^{-2} t \theta^2\slash 2 )\mathbbm{1}_{\mathcal{P}'\cap\mathcal{T}_{j,2}}.
\end{eqnarray}

By (\ref{Eq3.2.9}), (\ref{Eq3.2.10}), and (\ref{Eq3.2.11}), for any $t\in \mathbb{N}^{*}$ and $\theta\geq 0$, we have 
\begin{eqnarray}\label{Eq3.2.12}
   && \mathbb{P}(\mathcal{T}_{j,1}|\mathcal{B}_{d_2})\mathbbm{1}_{\mathcal{P}'\cap\mathcal{T}_{j,2}}\geq (1-4e^2\beta^{3\slash 4})^t\mathbbm{1}_{\mathcal{P}'\cap\mathcal{T}_{j,2}}\nonumber\\
   &&\quad\quad -\exp( (d_2-d_1)\theta-e^{-2} \beta^{-3\slash 4}t\theta \slash 8 +\beta^{-2} t \theta^2\slash 2 )\mathbbm{1}_{\mathcal{P}'\cap\mathcal{T}_{j,2}}.
\end{eqnarray}
Note that (\ref{Eq3.2.12}) also holds when $j\leq d_1$ or $d_1= d_2$. We take $t=\lceil 32e^2\beta^{-5\slash 8}\rceil$ and $\theta=e^{-2}\beta^{5\slash 4}\slash 16$. Note that 
\begin{equation*}
    \beta^{-2}t\theta^2\slash 2=e^{-2}\beta^{-3\slash 4}t\theta\slash 32,  e^{-2}\beta^{-3\slash 4}t\theta\slash 8\geq 4\beta^{-11\slash 8}\theta\geq 4\beta^{-1}\theta \geq 4(d_2-d_1)\theta.
\end{equation*}
Hence by (\ref{Eq3.2.12}), noting that $t\in [32e^2\beta^{-5\slash 8},33e^2\beta^{-5\slash 8}]$, $4e^2\beta^{3\slash 4}\in [0,e^{-1}]$, and $1-x\geq e^{-ex},\forall x\in [0,  e^{-1}]$, we have
\begin{eqnarray}\label{Eq3.3.4}
 \mathbb{P}(\mathcal{T}_{j,1}|\mathcal{B}_{d_2})\mathbbm{1}_{\mathcal{P}'\cap\mathcal{T}_{j,2}}&\geq& (\exp(-4e^3\beta^{3\slash 4} t  )-\exp(-e^{-2}\beta^{-3\slash 4}t\theta\slash 16) )_{+}\mathbbm{1}_{\mathcal{P}'\cap\mathcal{T}_{j,2}} \nonumber\\
 &\geq& (\exp(-C\beta^{1\slash 8})-\exp(-c\beta^{-1\slash 8}))_{+}\mathbbm{1}_{\mathcal{P}'\cap\mathcal{T}_{j,2}}.
\end{eqnarray}

\paragraph{Part 3}

Following a similar argument as in \textbf{Part 2}, using $d_1\leq \beta^{-3\slash 4}$ in the analog of (\ref{Eq3.2.5}), we can deduce that for any $t 
  \in \mathbb{N}^{*}$ and $\theta\geq 0$,
 \begin{eqnarray}\label{Eq3.3.1}
     && \mathbb{P}(\mathcal{T}_{j,0}|\mathcal{B}_{d_1}) \mathbbm{1}_{\mathcal{P}'\cap\mathcal{T}_{j,1}}\geq  (1-4e^2\beta^{5\slash 8})^t\mathbbm{1}_{\mathcal{P}'\cap\mathcal{T}_{j,1}}\nonumber\\
    &&\quad\quad -\exp( (d_1-d_0)\theta-e^{-2} \beta^{-5\slash 8}t\theta \slash 8 +\beta^{-3\slash 2} t \theta^2\slash 2 )\mathbbm{1}_{\mathcal{P}'\cap\mathcal{T}_{j,1}}.
\end{eqnarray}
We take $t=\lceil 32e^2\beta^{-3\slash 8} \rceil$ and $\theta=e^{-2}\beta^{7\slash 8}\slash 16$. Note that
\begin{equation*}
     \beta^{-3\slash 2} t \theta^2\slash 2=e^{-2}\beta^{-5\slash 8}t\theta\slash 32, e^{-2} \beta^{-5\slash 8}t\theta \slash 8\geq  4\beta^{-1}\theta\geq 4\beta^{-3\slash 4}\theta\geq 4(d_1-d_0)\theta.
\end{equation*}
Hence by (\ref{Eq3.3.1}), noting that $t\in [32e^2\beta^{-3\slash 8},33e^2\beta^{-3\slash 8}]$, $4e^2\beta^{5\slash 8}\in [0,e^{-1}]$, and $1-x\geq e^{-ex},\forall x\in [0,  e^{-1}]$, we have
\begin{eqnarray}\label{Eq3.3.5}
     \mathbb{P}(\mathcal{T}_{j,0}|\mathcal{B}_{d_1})  \mathbbm{1}_{\mathcal{P}'\cap\mathcal{T}_{j,1}}&\geq& (\exp(-4e^3\beta^{5\slash 8}t)-\exp(-e^{-2}\beta^{-5\slash 8}t\theta\slash 16))_{+}  \mathbbm{1}_{\mathcal{P}'\cap\mathcal{T}_{j,1}} \nonumber\\
     &\geq& (\exp(-C\beta^{1\slash 4})-\exp(-c\beta^{-1\slash 8}))_{+} \mathbbm{1}_{\mathcal{P}'\cap\mathcal{T}_{j,1}}. 
\end{eqnarray}

\paragraph{Part 4}

Following a similar argument as in \textbf{Part 2}, using $d_0\leq \beta^{-5\slash 8}$ in the analog of (\ref{Eq3.2.5}), we can deduce that for any $t 
 \in \mathbb{N}^{*}$ and $\theta\geq 0$,
\begin{eqnarray}\label{Eq3.3.2}
    && \mathbb{P}(\mathcal{S}_{j,d+1}|\mathcal{B}_{d_0}) \mathbbm{1}_{\mathcal{P}'\cap\mathcal{T}_{j,0}}\geq  (1-4e^2\beta^{7\slash 16})^t\mathbbm{1}_{\mathcal{P}'\cap\mathcal{T}_{j,0}}\nonumber\\
   &&\quad\quad -\exp( (d_0-d)\theta-e^{-2} \beta^{-7\slash 16}t\theta \slash 8 +\beta^{-5\slash 4} t \theta^2\slash 2 )\mathbbm{1}_{\mathcal{P}'\cap\mathcal{T}_{j,0}}.
\end{eqnarray}
We take $t=\lceil 32e^2\beta^{-13\slash 32}\rceil$ and $\theta=e^{-2}\beta^{13\slash 16}\slash 16$. Note that
\begin{equation*}
    \beta^{-5\slash 4}t\theta^2\slash 2=e^{-2}\beta^{-7\slash 16}t\theta\slash 32, 
\end{equation*}
\begin{equation*}
    e^{-2} \beta^{-7\slash 16}t\theta \slash 8\geq 4\beta^{-27\slash 32}\theta\geq 4\beta^{-5\slash 8}\theta\geq 4(d_0-d)\theta.
\end{equation*}
Hence by (\ref{Eq3.3.2}), noting that $t\in [32e^2\beta^{-13\slash 32},33e^2\beta^{-13\slash 32}]$, $4e^2\beta^{7\slash 16}\in [0,e^{-0.5}]$, and $1-x\geq e^{-ex},\forall x\in [0,  e^{-0.5}]$, we have
\begin{eqnarray}\label{Eq3.3.6}
    \mathbb{P}(\mathcal{S}_{j,d+1}|\mathcal{B}_{d_0}) \mathbbm{1}_{\mathcal{P}'\cap\mathcal{T}_{j,0}}&\geq& (\exp(-4e^3\beta^{7\slash 16}t)-\exp(-e^{-2}\beta^{-7\slash 16}t\theta\slash 16))_{+}  \mathbbm{1}_{\mathcal{P}'\cap\mathcal{T}_{j,0}} \nonumber\\
    &\geq& (\exp(-C\beta^{1\slash 32})-\exp(-c\beta^{-1\slash 32}))_{+}\mathbbm{1}_{\mathcal{P}'\cap\mathcal{T}_{j,0}}.
\end{eqnarray}

\bigskip

Note that $\mathcal{S}_{j,d+1}\subseteq \mathcal{T}_{j,0}\subseteq \mathcal{T}_{j,1}\subseteq \mathcal{T}_{j,2}$. Hence we conclude that
\begin{eqnarray*}
  &&  \mathbb{P}(\mathcal{S}_{j,d+1}|\mathcal{B}_n)\mathbbm{1}_{\mathcal{P}'}\nonumber\\
  &=&\mathbb{E}[\mathbb{E}[\mathbbm{1}_{\mathcal{S}_{j,d+1}}\mathbbm{1}_{\mathcal{T}_{j,0}}|\mathcal{B}_{d_0}]|\mathcal{B}_n]\mathbbm{1}_{\mathcal{P}'}=\mathbb{E}[\mathbb{E}[\mathbbm{1}_{\mathcal{S}_{j,d+1}}|\mathcal{B}_{d_0}]\mathbbm{1}_{\mathcal{T}_{j,0}}\mathbbm{1}_{\mathcal{P}'}|\mathcal{B}_n] \nonumber\\
  &\geq& (\exp(-C\beta^{1\slash 32})-\exp(-c\beta^{-1\slash 32}))_{+}\mathbb{E}[\mathbbm{1}_{\mathcal{T}_{j,0}}\mathbbm{1}_{\mathcal{P}'}|\mathcal{B}_n]\nonumber\\
  &=& (\exp(-C\beta^{1\slash 32})-\exp(-c\beta^{-1\slash 32}))_{+}\mathbb{E}[\mathbb{E}[\mathbbm{1}_{\mathcal{T}_{j,0}}|\mathcal{B}_{d_1}]\mathbbm{1}_{\mathcal{T}_{j,1}}\mathbbm{1}_{\mathcal{P}'}|\mathcal{B}_n]\nonumber\\
  &\geq& (\exp(-C\beta^{1\slash 32})-\exp(-c\beta^{-1\slash 32}))_{+}^2  \mathbb{E}[\mathbbm{1}_{\mathcal{T}_{j,1}}\mathbbm{1}_{\mathcal{P}'}|\mathcal{B}_n]\nonumber\\
  &=& (\exp(-C\beta^{1\slash 32})-\exp(-c\beta^{-1\slash 32}))_{+}^2 
  \mathbb{E}[\mathbb{E}[\mathbbm{1}_{\mathcal{T}_{j,1}}|\mathcal{B}_{d_2}]\mathbbm{1}_{\mathcal{T}_{j,2}}\mathbbm{1}_{\mathcal{P}'}|\mathcal{B}_n]\nonumber\\
  &\geq& (\exp(-C\beta^{1\slash 32})-\exp(-c\beta^{-1\slash 32}))_{+}^3 \mathbb{E}[\mathbbm{1}_{\mathcal{T}_{j,2}}\mathbbm{1}_{\mathcal{P}'}|\mathcal{B}_n]\nonumber\\
  &=&(\exp(-C\beta^{1\slash 32})-\exp(-c\beta^{-1\slash 32}))_{+}^3 \mathbb{E}[\mathbbm{1}_{\mathcal{T}_{j,2}}|\mathcal{B}_n]\mathbbm{1}_{\mathcal{P}'}\nonumber\\
  &\geq& (\exp(-C\beta^{1\slash 32})-\exp(-c\beta^{-1\slash 32}))_{+}^3\nonumber\\
  &&  \times (\exp(-C(\beta^{1\slash 2}+\beta^2j))-\exp(-c\beta^{-1\slash 4}))_{+}\mathbbm{1}_{\mathcal{P}'},
\end{eqnarray*}
where we use (\ref{Eq3.3.6}) in the third line, (\ref{Eq3.3.5}) in the fifth line, (\ref{Eq3.3.4}) in the seventh line, and (\ref{Eq3.3.3}) in the ninth line.

\end{proof}

Now we give the proof of Lemma \ref{L3.12}.

\begin{proof}[Proof of Lemma \ref{L3.12}]

Consider any $j,j'\in [s'',s]\cap\mathbb{N}$ with $j\geq j'$. For any $t\in\mathbb{N}$, we recall the definitions of $W_t^{(j)}$ and $Z_t^{(j)}$ from Section \ref{Sect.3.1}, and define $\mathcal{F}_t^{(j)}$ as in \textbf{Part 1} of the proof of Lemma \ref{L3.11}. For any $t\in \mathbb{N}^{*}$, we have
\begin{eqnarray*}
    \{Z_t^{(j)}\leq j'-1\}\cup \{W_t^{(j)}=1\}&=&\{Z_t^{(j)}\leq j'-1\}\cup \{Z_t^{(j)}\geq j', W_t^{(j)}=1\} \nonumber\\
    &\subseteq& \mathcal{S}_{j,j'}\cup \{Z_t^{(j)}\geq j', W_t^{(j)}=1\},
\end{eqnarray*}
hence
\begin{eqnarray}\label{Eq3.4.11}
    \mathbb{P}(\mathcal{S}_{j,j'}|\mathcal{B}_n)\mathbbm{1}_{\mathcal{V}\cap\mathcal{V}'}&\geq& \mathbb{P}(\{Z_t^{(j)}\leq j'-1\}\cup \{W_t^{(j)}=1\}|\mathcal{B}_n)\mathbbm{1}_{\mathcal{V}\cap\mathcal{V}'}\nonumber\\
    && -\mathbb{P}(Z_t^{(j)}\geq j', W_t^{(j)}=1|\mathcal{B}_n)\mathbbm{1}_{\mathcal{V}\cap\mathcal{V}'}.
\end{eqnarray}

For any $t\in \mathbb{N}^{*}$ and $\theta\geq 0$, following the argument between (\ref{Eq3.1.7}) and (\ref{Eq3.1.9}) in \textbf{Part 1} of the proof of Lemma \ref{L3.11} (note that when $Z_{t-1}^{(j)}\geq j'$ and the event $\mathcal{V}\cap\mathcal{V}'$ holds, we have $Z_{t-1}^{(j)}\in [s'',s]\cap\mathbb{N}$, hence $N_{Z_{t-1}^{(j)}}\geq e^{-2}\beta^{-1}\slash 4 $ and $\max_{k\in\mathcal{H}_{Z_{t-1}^{(j)}}}\{Z_{t-1}^{(j)}-k\}\leq\beta^{-9\slash 8}$), we obtain that 
\begin{eqnarray*}
    \mathbb{P}(\{Z_t^{(j)}\leq j'-1\}\cup \{W_t^{(j)}=1\}|\mathcal{B}_n)\mathbbm{1}_{\mathcal{V}\cap\mathcal{V}'}\geq   (1-4e^2\beta)^t \mathbbm{1}_{\mathcal{V}\cap\mathcal{V}'},
\end{eqnarray*}
\begin{eqnarray*}
  &&  \mathbb{P}(Z_t^{(j)}\geq j', W_t^{(j)}=1|\mathcal{B}_n)\mathbbm{1}_{\mathcal{V}\cap\mathcal{V}'} \nonumber\\
  &\leq& \exp((j-j')\theta-e^{-2}\beta^{-1}t\theta\slash 16+\beta^{-9\slash 4}t\theta^2\slash 2)\mathbbm{1}_{\mathcal{V}\cap\mathcal{V}'}.
\end{eqnarray*}
Hence by (\ref{Eq3.4.11}), for any $t\in \mathbb{N}^{*}$ and $\theta\geq 0$,
\begin{eqnarray}\label{Eq3.4.12}
  &&  \mathbb{P}(\mathcal{S}_{j,j'}|\mathcal{B}_n)\mathbbm{1}_{\mathcal{V}\cap\mathcal{V}'} \nonumber\\
  &\geq& ((1-4e^2\beta)^t-\exp((j-j')\theta-e^{-2}\beta^{-1}t\theta\slash 16+\beta^{-9\slash 4}t\theta^2\slash 2))\mathbbm{1}_{\mathcal{V}\cap\mathcal{V}'}. \nonumber\\
  &&
\end{eqnarray}
We take $t=\lceil 16e^2\max\{4(j-j')\beta,\beta^{-1\slash 2}\}\rceil$ and $\theta=e^{-2}\beta^{5\slash 4}\slash 32$. Note that 
\begin{equation*}
    \beta^{-9\slash 4}t\theta^2\slash 2=e^{-2}\beta^{-1}t\theta\slash 64, \quad e^{-2}\beta^{-1}t\theta\slash 16\geq 4(j-j')\theta.
\end{equation*}
Hence by (\ref{Eq3.4.12}), noting that $1-x\geq e^{-ex},\forall x\in [0,  e^{-1}]$, $4e^2\beta\leq e^{-1}$, and $t\in [16e^2\beta^{-1\slash 2},64e^2(j-j')\beta+17 e^2\beta^{-1\slash 2}]$, we have
\begin{eqnarray}\label{Eq3.5.1}
  \mathbb{P}(\mathcal{S}_{j,j'}|\mathcal{B}_n)\mathbbm{1}_{\mathcal{V}\cap\mathcal{V}'}&\geq& (\exp(-4e^3\beta t)-\exp(-e^{-2}\beta^{-1}t\theta\slash 32))_{+}\mathbbm{1}_{\mathcal{V}\cap\mathcal{V}'}\nonumber\\
     &\geq& (\exp(-C\beta^2(j-j')-C\beta^{1\slash 2})-\exp(-c\beta^{-1\slash 4}))_{+}\mathbbm{1}_{\mathcal{V}\cap\mathcal{V}'}.\nonumber\\
     &&
\end{eqnarray}

By (\ref{Eq3.4.7}), (\ref{Eq3.4.14}), and the union bound,
\begin{equation*}
    \mathbb{P}(\mathcal{V}\cap\mathcal{V}')\geq 1-\mathbb{P}(\mathcal{V}^c)-\mathbb{P}((\mathcal{V}')^c)\geq 1-C\exp(-c\beta^{-1\slash 16}).
\end{equation*}
Hence by (\ref{Eq3.5.1}),
\begin{eqnarray}
  &&  \mathbb{P}(\mathcal{S}_{j,j'})\geq \mathbb{E}[\mathbb{P}(\mathcal{S}_{j,j'}|\mathcal{B}_n)\mathbbm{1}_{\mathcal{V}\cap\mathcal{V}'}] \nonumber\\
  &\geq& (\exp(-C\beta^2(j-j')-C\beta^{1\slash 2})-\exp(-c\beta^{-1\slash 4}))_{+}\mathbb{P}(\mathcal{V}\cap\mathcal{V}')\nonumber\\
  &\geq& (\exp(-C\beta^2(j-j')-C\beta^{1\slash 2})-\exp(-c\beta^{-1\slash 4}))_{+}(1-C\exp(-c\beta^{-1\slash 16}))_{+}.\nonumber\\
  &&
\end{eqnarray}

\end{proof}

\subsubsection{Proof of the lower bound in Theorem \ref{Thm1.3.1}}\label{Sect.3.3.3}

In this part, we give the proof of the lower bound in Theorem \ref{Thm1.3.1}. 

We first consider any $s\in [n]$ such that $s\geq 2$. By (\ref{Basic_n}) and Lemma \ref{L3.1}, 
\begin{equation}\label{Eq3.20.1}
    \mathbb{P}(N_s\geq 2)\geq \mathbb{P}(b_{s-1}\geq s)=\mathbb{E}[e^{-2\beta(s-\max\{s-1,\sigma_0(s-1)\})_{+}}]\geq e^{-2\beta}.
\end{equation}
We also note that
\begin{equation}\label{Eq3.20.2}
    \mathbb{P}(\sigma(s)\neq s|\mathcal{B}_s)\mathbbm{1}_{N_s\geq 2}=\mathbb{P}(Y_s\neq s|\mathcal{B}_s)\mathbbm{1}_{N_s\geq 2}\geq \frac{1}{2}\mathbbm{1}_{N_s\geq 2}.
\end{equation}
By (\ref{Eq3.20.1}) and (\ref{Eq3.20.2}), we have 
\begin{eqnarray}\label{Eq3.20.3}
 \mathbb{P}(\max(\mathcal{C}_s(\sigma))-\min(\mathcal{C}_s(\sigma))\geq 1)&\geq&\mathbb{P}(\sigma(s)\neq s)\geq \mathbb{E}[\mathbb{P}(\sigma(s)\neq s|\mathcal{B}_s)\mathbbm{1}_{N_s\geq 2}] \nonumber\\
 &\geq&\frac{1}{2}\mathbb{P}(N_s\geq 2)\geq \frac{1}{2}e^{-2\beta}. 
\end{eqnarray}

Let $\bar{\sigma}\in S_n$ be such that $\bar{\sigma}(j)=n+1-\sigma(n+1-j)$ for any $j\in [n]$. As the distribution of $\bar{\sigma}$ is $\mathbb{P}_{n,\beta}$, if $n\geq 2$, by (\ref{Eq3.20.3}), we have
\begin{equation}\label{Eq3.20.4}
   \mathbb{P}(\max(\mathcal{C}_1(\sigma))-\min(\mathcal{C}_1(\sigma))\geq 1) \geq \mathbb{P}(\sigma(1)\neq 1)=\mathbb{P}(\bar{\sigma}(n)\neq n)\geq \frac{1}{2}e^{-2\beta}.
\end{equation}

For any $s\in [n]$, by (\ref{Eq3.20.3}) and (\ref{Eq3.20.4}), 
\begin{equation}\label{Eq3.20.6}
    \mathbb{E}[\max(\mathcal{C}_s(\sigma))-\min(\mathcal{C}_s(\sigma))]\geq \frac{1}{2}\min\{e^{-2\beta},n-1\}.
\end{equation}
Moreover, as $|\mathcal{C}_s(\sigma)|\leq \max(\mathcal{C}_s(\sigma))-\min(\mathcal{C}_s(\sigma))+1$, by Theorem \ref{Thm1.1} (which we have established in Sections \ref{Sect.3.3.1} and \ref{Sect.3.3.2}), we have
\begin{equation}\label{Eq3.20.7}
    \mathbb{E}[\max(\mathcal{C}_s(\sigma))-\min(\mathcal{C}_s(\sigma))]\geq \mathbb{E}[|\mathcal{C}_s(\sigma)|]-1\geq c \min\{\max\{\beta^{-2},1\},n\}-1.
\end{equation}
If $\beta\leq 1$ and $n\geq 2$, by (\ref{Eq3.20.6}), $\mathbb{E}[\max(\mathcal{C}_s(\sigma))-\min(\mathcal{C}_s(\sigma))]\geq e^{-2}\slash 2$, hence by (\ref{Eq3.20.7}),
\begin{equation*}
    \mathbb{E}[\max(\mathcal{C}_s(\sigma))-\min(\mathcal{C}_s(\sigma))]\geq c \min\{\max\{\beta^{-2},1\},n\}.
\end{equation*}
If $\beta>1$ and $n\geq 2$, we have $\beta^{-2}\leq 1$, hence by (\ref{Eq3.20.6}), 
\begin{equation*}
    \mathbb{E}[\max(\mathcal{C}_s(\sigma))-\min(\mathcal{C}_s(\sigma))]\geq \frac{1}{2}\min\{e^{-2\beta}\max\{\beta^{-2},1\}, n-1\}.
\end{equation*}
Therefore, for any $\beta>0$, $n\in \mathbb{N}^{*}$, and $s\in [n]$,
\begin{equation}
    \mathbb{E}[\max(\mathcal{C}_s(\sigma))-\min(\mathcal{C}_s(\sigma))]\geq c\min\{e^{-2\beta}\max\{\beta^{-2},1\}, n-1\}.
\end{equation}

\subsection{Proof of Theorem \ref{Thm1.2}}\label{Sect.3.4}

In this subsection, we give the proof of Theorem \ref{Thm1.2}. We assume the setup in Section \ref{Sect.3.1}, taking $\beta=\beta_n$. We recall the definitions of $\{W_t^{(j)}\}_{t=0}^{\infty}$ and $\{Z_t^{(j)}\}_{t=0}^{\infty}$ for any $j\in [n]$ from Section \ref{Sect.3.1}. 

The following lemma is adapted from \cite[Claim 4.14]{GP}.

\begin{lemma}\label{L3.13}
Let $I$ be a non-empty finite set. Let $A_1,\cdots,A_k$, where $k\in\mathbb{N}$ is random, be pairwise disjoint random subsets of $I$. We further assume that for each $j\in [k]$, there is a random linear order $\preceq$ on $A_j$. For each $i\in I$ such that $i\in A_j$ for some $j\in [k]$ (note that $j$ is uniquely determined by $i$), we let $\alpha_i$ be the size of $A_j$ and $l_i$ the size of the set $\{a\in A_j: a\preceq i\}$. For each $i\in I$ such that $i\notin \bigcup_{j=1}^k A_j$, we let $\alpha_i=0$ and $l_i=0$. Then we have
\begin{equation}\label{Eq3.7.1}
    \Big(\frac{1}{|I|}\mathbb{E}[\max_{i\in I}\alpha_i]\Big)^3\leq \frac{128}{|I|}\max_{i\in I}\mathbb{E}[l_i].
\end{equation}
\end{lemma}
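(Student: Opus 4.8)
The plan is to bound $\mathbb{E}[\max_{i\in I}\alpha_i]$ by a truncation-plus-tail argument and then convert the tail bound into the $\max_{i\in I}\mathbb{E}[l_i]$ appearing on the right. First I would fix a threshold $m\in\mathbb{N}^*$ (to be optimized at the end) and split $\mathbb{E}[\max_{i\in I}\alpha_i]\le m+\sum_{r>m}\mathbb{P}(\max_{i\in I}\alpha_i\ge r+1)$ — actually more cleanly $\mathbb{E}[\max_i\alpha_i]\le m+\sum_{r\ge m}\mathbb{P}(\max_i\alpha_i> r)$. The key observation is that the event $\{\max_{i\in I}\alpha_i> r\}$ means some set $A_j$ has size $>r$, and for that $A_j$, if we let $i^{*}$ be the $\preceq$-maximal element of $A_j$, then $l_{i^{*}}=|A_j|> r$. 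More usefully: on this event, at least $r$ elements $i\in A_j$ satisfy $l_i\ge 1$ and moreover the element that is $r$-th from the bottom in the $\preceq$-order on $A_j$ has $l_i = r$; so there is always \emph{some} $i$ with $l_i\ge r$ — but I want to count such $i$'s to get a first-moment handle.

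The core step is: on the event $\{\max_{i\in I}\alpha_i > r\}$, choose the block $A_j$ witnessing it (break ties arbitrarily); among its $>r$ elements, the set of elements $i\in A_j$ with $l_i\ge \lceil r/2\rceil$ has size $> r - \lceil r/2\rceil \ge r/2$. Hence $\sum_{i\in I}\mathbbm{1}_{l_i\ge \lceil r/2\rceil} > r/2$ on that event, so by Markov's inequality
\begin{equation*}
\mathbb{P}\Big(\max_{i\in I}\alpha_i> r\Big)\le \mathbb{P}\Big(\sum_{i\in I}\mathbbm{1}_{l_i\ge \lceil r/2\rceil} > r/2\Big)\le \frac{2}{r}\sum_{i\in I}\mathbb{P}(l_i\ge \lceil r/2\rceil)\le \frac{2}{r}\cdot\frac{|I|}{\lceil r/2\rceil}\max_{i\in I}\mathbb{E}[l_i],
\end{equation*}
using $\mathbb{P}(l_i\ge \lceil r/2\rceil)\le \mathbb{E}[l_i]/\lceil r/2\rceil$ in the last step. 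This gives $\mathbb{P}(\max_i\alpha_i>r)\le C|I|\,M /r^2$ where $M:=\max_{i\in I}\mathbb{E}[l_i]$ and $C$ is an explicit constant (around $4$). Summing over $r\ge m$ yields $\sum_{r\ge m}\mathbb{P}(\max_i\alpha_i>r)\le C|I|M/m$ (up to constants), so $\mathbb{E}[\max_i\alpha_i]\le m + C|I|M/m$. Optimizing over $m$ — i.e. taking $m\asymp \sqrt{|I|M}$ — gives $\mathbb{E}[\max_i\alpha_i]\le C'\sqrt{|I|M}$, hence $(\mathbb{E}[\max_i\alpha_i])^2\le C'^{2}|I|M$. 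But the claim has a \emph{cube}, so I need to squeeze one more factor.

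To get the cube I would instead not truncate at all but bound directly: for each $r$, $\mathbb{P}(\max_i\alpha_i\ge r)$ also satisfies the cruder bound $\mathbb{P}(\max_i\alpha_i\ge r)\le \mathbb{P}(\sum_i\mathbbm{1}_{l_i\ge 1}\ge r)\le (1/r)\sum_i\mathbb{P}(l_i\ge 1)\le (1/r)\sum_i\mathbb{E}[l_i]\le |I|M/r$. Combining the two bounds: let $R:=\mathbb{E}[\max_i\alpha_i]$. Then $R = \sum_{r\ge 1}\mathbb{P}(\max_i\alpha_i\ge r)$, and I split the sum at some level $L$: for $r\le L$ use $\mathbb{P}(\dots)\le 1$ contributing $\le L$; for $r>L$ use $\mathbb{P}(\dots)\le C|I|M/r^2$ from the refined estimate, contributing $\le C|I|M/L$; optimizing gives $R\le C\sqrt{|I|M}$ again. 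The extra power must come from a \emph{different} use of the hypothesis — namely that once $\max_i\alpha_i\ge r$, a positive fraction of the $r$ witnessing elements have $l_i\gtrsim r$, so in fact $\mathbb{P}(\max_i\alpha_i\ge r)\le \mathbb{P}(\exists i: l_i\ge r/2)$ is too lossy, but $\sum_i\mathbb{P}(l_i\ge r/2)$ is controlled by $\sum_i\mathbb{E}[l_i\mathbbm{1}_{l_i\ge r/2}]/(r/2)\le 2M|I|/r$, and \emph{also} we can write $r\,\mathbb{P}(\max_i\alpha_i\ge r)\le 2\sum_i\mathbb{P}(l_i\ge r/2)$, then multiply by $r$ once more using that on the event the \emph{average} of the relevant $l_i$ over the block is $\ge r/4$... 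The cleanest route, and the one I expect the authors take (following \cite[Claim 4.14]{GP}), is: set $R=\mathbb{E}[\max_i\alpha_i]$ and observe by Jensen/Markov that $\mathbb{P}(\max_i\alpha_i\ge R/2)\ge$ (some constant)$/$ nothing — rather, fix $r=\lceil R/2\rceil$; then $\mathbb{P}(\max_i\alpha_i\ge r)\ge 1/2$ would be false in general, so instead use $\mathbb{E}[\max_i\alpha_i\,\mathbbm{1}_{\max_i\alpha_i\ge R/2}]\ge R/2$, i.e. there is substantial mass of the max above $R/2$; on $\{\max_i\alpha_i\ge R/2\}$ the witnessing block has $\ge R/4$ elements with $l_i\ge R/4$, so $R/4\cdot\mathbbm{1}_{\max_i\alpha_i\ge R/2}\le \sum_i\mathbbm{1}_{l_i\ge R/4}$, and then $\frac{R}{4}\cdot\frac{R}{4}\le \frac{R}{4}\cdot\frac{4}{R}\sum_i\mathbb{P}(l_i\ge R/4)\cdot(\text{wrong})$ — I will instead chain: $\frac{R}{4}\mathbb{P}(\max_i\alpha_i\ge R/2)\le \sum_i\mathbb{P}(l_i\ge R/4)\le \frac{4}{R}\sum_i\mathbb{E}[l_i]\le \frac{4|I|M}{R}$, and separately $\mathbb{P}(\max_i\alpha_i\ge R/2)\ge \frac{R/2}{\max_i\alpha_i \text{'s range}}$ is not available, so use $R=\mathbb{E}[\max_i\alpha_i]\le \frac{R}{2}+ |I|\,\mathbb{P}(\max_i\alpha_i\ge R/2)$ (since $\alpha_i\le|I|$ always), giving $\mathbb{P}(\max_i\alpha_i\ge R/2)\ge \frac{R}{2|I|}$. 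Plugging into the previous display: $\frac{R}{4}\cdot\frac{R}{2|I|}\le \frac{4|I|M}{R}$, i.e. $\frac{R^3}{8|I|}\le 4|I|M$, hence $R^3\le 32|I|^2 M$, i.e. $\big(\tfrac{R}{|I|}\big)^3\le \tfrac{32}{|I|}M\le \tfrac{128}{|I|}M$.

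The main obstacle is getting the constants and the $\lceil\cdot\rceil$ rounding to line up so that the final constant is at most $128$ — the structural inequality (a constant fraction of the witnessing block's elements have $l_i$ comparable to the block size, combined with $\alpha_i\le|I|$ deterministically and Markov applied twice) is robust, but one must be a little careful choosing $R/2$ versus $R/4$ and handling the case $R$ small (where the bound is trivial since the left side is at most $1$ when $R\le|I|$ and one can check $R\le 2$ say gives the bound directly from $M\ge\mathbb{E}[l_i]$ for any fixed $i$ in a nonempty block). I would also verify the edge case $k=0$ (all $\alpha_i=l_i=0$, both sides zero) and the case where some $\mathbb{E}[l_i]$-maximizer lies in no block.
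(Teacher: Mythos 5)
Your final chain of inequalities is correct and is essentially the paper's argument: the paper likewise shows $\mathbb{P}(\max_i\alpha_i\geq R/2)\geq R/(2|I|)$ via reverse Markov (using $\alpha_i\leq|I|$), observes that on this event a constant fraction of the witnessing block's elements have $l_i$ at least a constant fraction of $R$, and closes with a first-moment bound on $\sum_i l_i$ (you use $\sum_i\mathbbm{1}_{l_i\geq R/4}$ plus one more Markov step, a cosmetic variant). The lengthy exploration of truncation arguments before that point is dead weight, but the proof you land on is sound and even yields the better constant $32$.
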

\begin{proof}

Let $L:=\max_{i\in I}\alpha_i$ and $\gamma:=\mathbb{E}[L]\slash |I|$. As $0\leq L\leq |I|$, we have $\gamma\in [0,1]$. If $\gamma=0$, (\ref{Eq3.7.1}) holds. Below we assume that $\gamma\in (0,1]$. Applying Markov's inequality to $|I|-L$, we obtain that
\begin{equation*}
    \mathbb{P}(L<\gamma|I|\slash 2)=\mathbb{P}(|I|-L>(1-\gamma\slash 2)|I|)\leq \frac{\mathbb{E}[|I|-L]}{(1-\gamma\slash 2)|I|}=\frac{1-\gamma}{1-\gamma\slash 2}.
\end{equation*}
Hence
\begin{equation}\label{EE1}
    \mathbb{P}(L\geq \gamma |I|\slash 2) \geq 1-\frac{1-\gamma}{1-\gamma\slash 2}\geq  \frac{1}{2}\gamma.
\end{equation}

Now assume that the event $\{L\geq\gamma |I|\slash 2\}$ holds. Then there exists some $i_0\in I$ such that $\alpha_{i_0}\geq \gamma |I|\slash 2$. Suppose that $i_0\in A_{j_0}$, where $j_0\in [k]$. We have $|A_{j_0}|=\alpha_{i_0}\geq \gamma|I|\slash 2$. If $|A_{j_0}|=1$, then $i_0$ is the unique element of $A_{j_0}$, and $l_{i_0}=1=\alpha_{i_0}\geq \gamma |I|\slash 2$. If $|A_{j_0}|\geq 2$, then for any $i\in I$ that is one of the largest $\lfloor |A_{j_0}|\slash 2 \rfloor$ elements of $A_{j_0}$ (under the linear order $\preceq$), $l_i\geq |A_{j_0}|\slash 2$. Note that when $|A_{j_0}|\geq 2$,
\begin{equation*}
    |A_{j_0}|\slash 2\geq \lfloor |A_{j_0}|\slash 2 \rfloor\geq \frac{|A_{j_0}|-1}{2}\geq \frac{|A_{j_0}|}{4}\geq \frac{\gamma}{8}|I|.
\end{equation*}
Hence for at least $\lceil \gamma|I|\slash 8\rceil$ of $i\in A_{j_0}$, $l_i\geq \gamma|I|\slash 8$. Thus we have
\begin{equation}\label{Eq3.7.3}
    \sum_{i\in I} l_i\geq \Big(\sum_{i\in I}l_i\Big) \mathbbm{1}_{L\geq \gamma|I|\slash 2}\geq \frac{\gamma^2}{64} |I|^2 \mathbbm{1}_{L\geq \gamma |I|\slash 2}.
\end{equation}

By (\ref{EE1}) and (\ref{Eq3.7.3}), we have
\begin{equation*}
    |I|\max_{i\in I}\mathbb{E}[l_i]\geq \mathbb{E}\Big[\sum_{i\in I} l_i\Big]\geq \frac{\gamma^2}{64}|I|^2\mathbb{P}(L\geq \gamma |I|\slash 2)\geq \frac{\gamma^3}{128}|I|^2,
\end{equation*}
which leads to (\ref{Eq3.7.1}).

\end{proof}

We recall Definition \ref{Defi3.1}. Throughout the rest of this subsection, for any $l \in [n+1]$ and any open arc $\mathbf{a}=(a_1,a_2,\cdots,a_s)$ (where $s\in \mathbb{N}^{*}$) at step $l$, we take the linear order $a_1 \succ a_2 \succ \cdots \succ a_s$ on $\mathbf{a}$.

In the following, we assume that $n>100$, $\beta_n \in (0,1\slash 10000)$, and $n<\beta_n^{-2}$. We define $d$, $\mathcal{P}$, $\mathcal{P}'$, and $\mathcal{S}_{j,j'}$ (where $j\in [n]$ and $j'\in [n+1]$) as in \textbf{Case 1} of Section \ref{Sect.3.3.1} (with $\beta=\beta_n$). For any $j\in [n]$, if $j$ is contained in an open arc at step $d+1$, we let $\mathscr{L}^{(j)}$ be the number of elements in this open arc that are $\preceq j$; otherwise we let $\mathscr{L}^{(j)}=0$. 

We have the following two lemmas. 

\begin{lemma}\label{L3.14}
Assume the preceding setup. There exists a positive absolute constant $C_0$, such that for any $j\in [n]$, we have
\begin{equation*}
    \mathbb{E}[\mathscr{L}^{(j)}|\mathcal{B}_n]\mathbbm{1}_{\mathcal{P}'}\leq C_0(\beta_n^2n+\beta_n^{1\slash 32}+\exp(-c\beta_n^{-1\slash 32}))n+1.
\end{equation*}
\end{lemma}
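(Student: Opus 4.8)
We may assume $j\ge d+1$: if $j\le d$, then $j$ cannot lie in a closed arc at step $d+1$ (all of whose elements are $\ge d+1$), and in the open arc $(a_1,\dots,a_s)$ containing it we must have $j=a_s$ since $a_1,\dots,a_{s-1}\ge d+1$, so $\mathscr{L}^{(j)}=1$ and the bound holds; thus $j\ge d+1$, which forces $d=\lfloor\beta_n^{-7/16}\rfloor$. Write $(W_t,Z_t):=(W_t^{(j)},Z_t^{(j)})$ for the tail‑tracing process of Section \ref{Sect.3.1}.

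The first step is to decompose $\mathscr{L}^{(j)}$ along this process. If $j$ lies in an open arc at step $d+1$, that arc is built by appending, at each round $t$ with $W_{t-1}=1$ and $Z_{t-1}\ge d+1$, the open arc $\mathbf c_t$ at step $Z_{t-1}+1$ whose head is $Y_{Z_{t-1}}$ (set $\mathbf c_t:=\emptyset$ on rounds that close the arc, i.e.\ $W_t=0$), the process freezing once $Z_t\le d$, and then $\mathscr{L}^{(j)}=1+\sum_{t\ge1}|\mathbf c_t|\,\mathbbm 1_{W_{t-1}=1,\,Z_{t-1}\ge d+1}$; if instead $j$ lies in a closed arc at step $d+1$ then $\mathscr{L}^{(j)}=0$ while the sum is $\ge0$. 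In all cases
\[
 \mathscr{L}^{(j)}\ \le\ 1+\sum_{t\ge1}|\mathbf c_t|\,\mathbbm 1_{W_{t-1}=1,\,Z_{t-1}\ge d+1}.
\]

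Next I would estimate the summands. As in the proof of Lemma \ref{L3.6}, $\{W_{t-1}=1,Z_{t-1}=l\}\in\mathcal B_l$, and conditionally on $\mathcal B_l$ and on this event $Y_l$ is uniform over the $N_l$ heads of the arcs of $\mathcal O_l$; dropping $j$'s own arc and using that the arcs of $\mathcal O_l$ are pairwise disjoint, this gives $\mathbb E[\,|\mathbf c_t|\,\mathbbm 1_{W_{t-1}=1,\,Z_{t-1}=l}\mid\mathcal B_l\,]\le(n/N_l)\,\mathbbm 1_{W_{t-1}=1,\,Z_{t-1}=l}$, while the formula for $\mathbb P(W_t=0\mid\mathcal F_{t-1})$ in Lemma \ref{L3.6} yields $\mathbb E[\,\mathbbm 1_{W_t=0}\mathbbm 1_{W_{t-1}=1,\,Z_{t-1}=l}\mid\mathcal B_l\,]=(1/N_l)\,\mathbbm 1_{W_{t-1}=1,\,Z_{t-1}=l}$. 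So each summand above is at most $n$ times the corresponding ``closing'' term. Summing over $l\ge d+1$ and over $t\ge1$, and carrying out the same $\mathcal B_l$‑to‑$\mathcal B_n$ conditioning as in Lemma \ref{L3.6} (the events $\{W_{t-1}=1,W_t=0\}$ being disjoint in $t$), we arrive at
\[
 \mathbb E[\mathscr{L}^{(j)}\mid\mathcal B_n]\ \le\ 1+n\,\mathbb P\big(\exists\,t:\ W_t=0,\ W_{t-1}=1,\ Z_{t-1}\ge d+1\ \big|\ \mathcal B_n\big)\ =\ 1+n\big(1-\mathbb P(\mathcal S_{j,d+1}\mid\mathcal B_n)\big),
\]
the last equality because $\{\exists\,t:\ W_t=0,\ W_{t-1}=1,\ Z_{t-1}\ge d+1\}$ is exactly the event that the arc of $j$ closes while its tail is still $\ge d+1$, i.e.\ that $j$ does \emph{not} lie in an open arc at step $d+1$.

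Finally, multiply by $\mathbbm 1_{\mathcal P'}$ and apply Lemma \ref{L3.11}: with $a=(\exp(-C_0\beta_n^{1/32})-\exp(-c_0\beta_n^{-1/32}))_+\in[0,1]$ and $b=(\exp(-C_0(\beta_n^{1/2}+\beta_n^2 j))-\exp(-c_0\beta_n^{-1/4}))_+\in[0,1]$, Lemma \ref{L3.11} gives $\mathbb P(\mathcal S_{j,d+1}\mid\mathcal B_n)\mathbbm 1_{\mathcal P'}\ge a^3b\,\mathbbm 1_{\mathcal P'}$, and then $1-a^3b\le 3(1-a)+(1-b)$ together with $1-e^{-x}\le x$, $j\le n$, $\beta_n^{1/2}\le\beta_n^{1/32}$ and $\exp(-c_0\beta_n^{-1/4})\le\exp(-c\beta_n^{-1/32})$ yield $(1-\mathbb P(\mathcal S_{j,d+1}\mid\mathcal B_n))\mathbbm 1_{\mathcal P'}\le C(\beta_n^2 n+\beta_n^{1/32}+\exp(-c\beta_n^{-1/32}))\mathbbm 1_{\mathcal P'}$; combined with the previous display this gives the claimed bound. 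The step needing the most care is the decomposition in the second paragraph together with the $\mathcal B_l$‑to‑$\mathcal B_n$ tower argument — one must check that before step $d+1$ the only elements appended below $j$ are the $\mathbf c_t$ with $Z_{t-1}\ge d+1$ and that no merge between steps $Z_{\tau-1}$ and $d+1$ touches $j$'s arc, and then repeat the conditioning of Lemma \ref{L3.6} verbatim; the conceptual content, by contrast, is the simple identity $\mathbb E[|\mathbf c_t|\mid\cdots]\le n/N_l=n\cdot\mathbb P(\text{arc of }j\text{ closes at round }t\mid\cdots)$, which trades all the appended‑arc sizes for $n$ times the single event $(\mathcal S_{j,d+1})^c$.
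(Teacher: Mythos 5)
Your proof is correct, and it takes a genuinely different and noticeably slicker route than the paper's. The paper bounds $\mathbb{E}[\mathscr{L}^{(j)}|\mathcal{B}_n]\mathbbm{1}_{\mathcal{P}'}$ by splitting the descent of the tail process into four phases $[d_2,n],[d_1,d_2],[d_0,d_1],[d,d_0]$ and, in each phase, multiplying the per-round increment bound $n/N_l\leq 4e^2\beta_n^{\alpha}n$ (valid only on $\mathcal{P}'$, with a different exponent $\alpha$ per phase) by an exponential-moment control on the number of rounds spent in that phase — essentially re-running the estimates (\ref{Eq3.1.9}) and (\ref{Eq3.2.11}) from the proof of Lemma \ref{L3.11}. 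Your key observation is that the per-round expected increment $n/N_l$ is exactly $n$ times the per-round closing probability $1/N_l$ (both conditional on $\mathcal{B}_l$ on the event $\{W_{t-1}=1,Z_{t-1}=l\}\in\mathcal{B}_l$), so that after summing over rounds and using disjointness of the events $\{W_{t-1}=1,W_t=0\}$ the entire accumulated increment collapses to $n\,\mathbb{P}((\mathcal{S}_{j,d+1})^c\mid\mathcal{B}_n)$; Lemma \ref{L3.11}, which is already proved at this point and is invoked by Lemma \ref{L3.15} anyway, then finishes via the elementary reduction $1-a^3b\leq 3(1-a)+(1-b)$ together with $(x-y)_+\geq x-y$ and $1-e^{-x}\leq x$ (this last chain correctly handles the $(\cdot)_+$ truncations, since when the truncation bites, $\beta_n$ is bounded below and the bound is absorbed into $C\beta_n^{1/32}$). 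I verified the decomposition itself: head-merges only prepend elements $\succ j$ and so never change $\mathscr{L}^{(j)}$, tail-appends at steps $\leq d$ occur after step $d+1$ and so do not contribute, and the union $\bigcup_t\{W_t=0,W_{t-1}=1,Z_{t-1}\geq d+1\}$ is exactly $(\mathcal{S}_{j,d+1})^c$. What your approach buys is brevity, the elimination of the phase structure and of $\mathcal{P}'$ from everything except the final application of Lemma \ref{L3.11}, and a transparent pointwise inequality $\mathbb{E}[\mathscr{L}^{(j)}|\mathcal{B}_n]\leq 1+n(1-\mathbb{P}(\mathcal{S}_{j,d+1}|\mathcal{B}_n))$; what the paper's version buys is that it is self-contained at the level of the intermediate exponential-moment displays rather than citing Lemma \ref{L3.11} as a black box, though at the cost of repeating that machinery.
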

\begin{proof}

We consider any $j\in [n]$. For each $k\in\{0,1,2\}$, we let $d_k$ and $\mathcal{T}_{j,k}$ be defined as in the proof of Lemma \ref{L3.11} in Section \ref{Sect.3.3.2} (with $\beta=\beta_n$), and define $\tau_k^{(j)}:=\inf\{t\in\mathbb{N}:Z_t^{(j)}\leq d_k\}$. We also define $\tau^{(j)}:=\inf\{t\in\mathbb{N}:Z_t^{(j)}\leq d\}$. Note that $\tau^{(j)}\geq \tau_0^{(j)}\geq \tau_1^{(j)}\geq\tau_2^{(j)}$.

If $j$ is contained in a closed arc at step $j$, we let $Q_0^{(j)}=0$; if $j$ is contained in an open arc at step $j$, we let $Q_0^{(j)}$ be the number of elements in this open arc that are $\prec j$. For any $t\in \mathbb{N}^{*}$, if $j$ is contained in a closed arc at step $Z_t^{(j)}$, we let $Q_t^{(j)}=0$; if $j$ is contained in an open arc at step $Z_t^{(j)}$ (note that $j$ is also contained in an open arc at step $Z_{t-1}^{(j)}$ in this case), we let $Q_t^{(j)}$ be the number of elements in the open arc containing $j$ at step $Z_t^{(j)}$ that are $\prec j$ minus the number of elements in the open arc containing $j$ at step $Z_{t-1}^{(j)}$ that are $\prec j$.

We let 
\begin{equation*}
    \mathcal{L}^{(j)}:=\mathbbm{1}_{\tau^{(j)}<\infty}\Big(\sum_{t\in[\tau_0^{(j)},\tau^{(j)}-1]\cap\mathbb{N}}Q_t^{(j)}\Big),
\end{equation*}
\begin{equation*}
    \mathcal{L}_0^{(j)}:=\mathbbm{1}_{\tau_0^{(j)}<\infty}\Big(\sum_{t\in[\tau_1^{(j)},\tau_0^{(j)}-1]\cap\mathbb{N}}Q_t^{(j)}\Big),
\end{equation*}
\begin{equation*}
    \mathcal{L}_1^{(j)}:=\mathbbm{1}_{\tau_1^{(j)}<\infty}\Big(\sum_{t\in [\tau_2^{(j)},\tau_1^{(j)}-1]\cap\mathbb{N}}Q_t^{(j)}\Big),
\end{equation*}
\begin{equation*}
    \mathcal{L}_2^{(j)}:=\mathbbm{1}_{\tau_2^{(j)}<\infty}\Big(\sum_{t\in[0,\tau_2^{(j)}-1]\cap\mathbb{N}}Q_t^{(j)}\Big).
\end{equation*}
Note that
\begin{equation}\label{Eq3.9.20}
    \mathscr{L}^{(j)} \leq  \mathbbm{1}_{\tau^{(j)}<\infty}\Big(1+\sum_{t\in [0,\tau^{(j)}-1]\cap\mathbb{N}} Q_t^{(j)}\Big)\leq 1+\mathcal{L}^{(j)}+\mathcal{L}_0^{(j)}+\mathcal{L}_1^{(j)}+\mathcal{L}_2^{(j)}.
\end{equation}
In the following, we bound $\mathbb{E}[\mathcal{L}_k^{(j)}|\mathcal{B}_n]\mathbbm{1}_{\mathcal{P}'}$ for $k=2,1,0$ and $\mathbb{E}[\mathcal{L}^{(j)}|\mathcal{B}_n]\mathbbm{1}_{\mathcal{P}'}$ in \textbf{Parts 1-4}, respectively.

\paragraph{Part 1}

If $j\leq d_2$, we have $\tau_2^{(j)}=0$ and $\mathcal{L}_2^{(j)}=0$. Below we consider the case where $j>d_2$. 

For any $k\in [d_2+1,n]\cap\mathbb{N}$, $k\geq \beta_n^{-1}$; hence when the event $\mathcal{P}'$ holds, we have $N_k\geq e^{-2}\beta_n^{-1}\slash 4$. Hence for any $t\in \mathbb{N}$ and $k\in [d_2+1,n]\cap\mathbb{N}$, 
\begin{eqnarray}\label{Eq3.9.1}
   && \mathbb{E}[Q_t^{(j)}|\mathcal{B}_{k}]\mathbbm{1}_{Z_t^{(j)}=k, W_t^{(j)}=1}\mathbbm{1}_{\mathcal{P}'}\leq \frac{\sum_{\mathbf{a}\in\mathcal{A}_O(k+1)}|\mathbf{a}|}{N_k}\mathbbm{1}_{Z_t^{(j)}=k,W_t^{(j)}=1}\mathbbm{1}_{\mathcal{P}'}\nonumber\\
   &\leq& \frac{n}{N_k}\mathbbm{1}_{Z_t^{(j)}=k,W_t^{(j)}=1}\mathbbm{1}_{\mathcal{P}'} \leq 4e^2\beta_n n\mathbbm{1}_{Z_t^{(j)}=k}.
\end{eqnarray}
For any $t\in \mathbb{N}$, if $t\leq \tau_2^{(j)}-1$, then $Z_t^{(j)}>d_2$; if $W_t^{(j)}=0$, then $Q_t^{(j)}=0$. Hence by (\ref{Eq3.9.1}), for any $t\in \mathbb{N}$,
\begin{eqnarray}\label{Eq3.9.2}
&& \mathbb{E}[Q_t^{(j)}\mathbbm{1}_{t\leq \tau_2^{(j)}-1}|\mathcal{B}_n]\mathbbm{1}_{\mathcal{P}'}\nonumber\\
&=&\sum_{k\in [d_2+1,n]\cap\mathbb{N}} \mathbb{E}[Q_t^{(j)}\mathbbm{1}_{t\leq \tau_2^{(j)}-1}\mathbbm{1}_{Z_{t}^{(j)}=k,W_t^{(j)}=1}|\mathcal{B}_n]\mathbbm{1}_{\mathcal{P}'} \nonumber\\
    &\leq& \sum_{k\in[d_2+1,n]\cap\mathbb{N}} \mathbb{E}[Q_t^{(j)} \mathbbm{1}_{Z_{t}^{(j)}=k,W_t^{(j)}=1}|\mathcal{B}_n]\mathbbm{1}_{\mathcal{P}'} \nonumber\\
    &=& \sum_{k\in[d_2+1,n]\cap\mathbb{N}} \mathbb{E}[\mathbb{E}[Q_t^{(j)}|\mathcal{B}_{k}]\mathbbm{1}_{Z_{t}^{(j)}=k,W_t^{(j)}=1}\mathbbm{1}_{\mathcal{P}'}|\mathcal{B}_n]\nonumber\\
    &\leq& 4e^2\beta_n n \sum_{k\in [d_2+1,n]\cap\mathbb{N}}\mathbb{E}[\mathbbm{1}_{Z_t^{(j)}=k}|\mathcal{B}_n]\leq 4e^2\beta_n n.
\end{eqnarray}

Note that for any $t\in \mathbb{N}^{*}$, when $t<\tau_2^{(j)}<\infty$, we have $Z_t^{(j)}>d_2$ and $W_t^{(j)}=1$ (if $W_t^{(j)}=0$, then $Z_s^{(j)}=Z_t^{(j)}>d_2$ for any $s\in\mathbb{N}^{*}$ with $s\geq t$, which leads to $\tau_2^{(j)}=\infty$). Hence by (\ref{Eq3.1.9}) (with $\beta=\beta_n$), for any $t\in \mathbb{N}^{*}$ and $\theta\geq 0$,
\begin{eqnarray}\label{Eq3.9.3}
 && \mathbb{P}(t<\tau_2^{(j)}<\infty|\mathcal{B}_n)\mathbbm{1}_{\mathcal{P}'}\leq \mathbb{P}(Z_t^{(j)}>d_2, W_t^{(j)}=1|\mathcal{B}_n)\mathbbm{1}_{\mathcal{P}'} \nonumber\\
   &\leq& \exp((j-d_2)\theta-e^{-2}\beta_n^{-1}t\theta\slash 16+\beta_n^{-9\slash 4}t\theta^2\slash 2).
\end{eqnarray}

Consider any $t\in \mathbb{N}^{*}$ and $\theta\geq 0$. By (\ref{Eq3.9.2}) and the definition of $\mathcal{L}_2^{(j)}$, 
\begin{eqnarray}\label{Eq3.9.4}
  &&  \mathbb{E}[\mathcal{L}_2^{(j)}\mathbbm{1}_{\tau_2^{(j)}\leq t}|\mathcal{B}_n] \mathbbm{1}_{\mathcal{P}'}=\mathbb{E}\Big[\Big(\sum_{s=0}^{\infty}Q_s^{(j)}\mathbbm{1}_{s  \leq  \tau_2^{(j)}-1}\Big)\mathbbm{1}_{\tau_2^{(j)}\leq t}\Big|\mathcal{B}_n\Big] \mathbbm{1}_{\mathcal{P}'} \nonumber\\
  &\leq& \sum_{s=0}^{t-1}\mathbb{E}[Q_s^{(j)}\mathbbm{1}_{s\leq \tau_2^{(j)}-1}|\mathcal{B}_n] \mathbbm{1}_{\mathcal{P}'}\leq 4e^2\beta_n n t.
\end{eqnarray}
By (\ref{Eq3.9.3}) and the definition of $\mathcal{L}_2^{(j)}$, 
\begin{eqnarray}\label{Eq3.9.5}
  &&  \mathbb{E}[\mathcal{L}_2^{(j)}\mathbbm{1}_{\tau_2^{(j)}>t}|\mathcal{B}_n]\mathbbm{1}_{\mathcal{P}'}=\mathbb{E}\Big[\Big(\sum_{s\in [0,\tau_2^{(j)}-1]\cap\mathbb{N}}Q_s^{(j)}\Big)\mathbbm{1}_{t<\tau_2^{(j)}<\infty}\Big|\mathcal{B}_n\Big]\mathbbm{1}_{\mathcal{P}'} \nonumber\\
  &\leq& n \mathbb{P}(t<\tau_2^{(j)}<\infty|\mathcal{B}_n)\mathbbm{1}_{\mathcal{P}'} \nonumber\\
  &\leq& n\exp((j-d_2)\theta-e^{-2}\beta_n^{-1}t\theta\slash 16+\beta_n^{-9\slash 4}t\theta^2\slash 2).
\end{eqnarray}
Combining (\ref{Eq3.9.4}) and (\ref{Eq3.9.5}), we obtain that
\begin{equation}\label{Eq3.9.6}
    \mathbb{E}[\mathcal{L}_2^{(j)}|\mathcal{B}_n]\mathbbm{1}_{\mathcal{P}'}\leq 4e^2\beta_n n t+n\exp((j-d_2)\theta-e^{-2}\beta_n^{-1}t\theta\slash 16+\beta_n^{-9\slash 4}t\theta^2\slash 2).
\end{equation}
We take $t=\lceil 64 e^2\max\{\beta_n n, \beta_n^{-1\slash 2}\} \rceil$ and $\theta=e^{-2}\beta_n^{5\slash 4}\slash 32$. Note that 
\begin{equation*}
    \beta_n^{-9\slash 4}t\theta^2\slash 2=e^{-2}\beta_n^{-1}t\theta\slash 64, \quad e^{-2}\beta_n^{-1}t\theta\slash 16\geq  4n\theta\geq 4(j-d_2)\theta.
\end{equation*}
Hence by (\ref{Eq3.9.6}), we have
\begin{eqnarray}\label{Eq3.9.7}
    \mathbb{E}[\mathcal{L}_2^{(j)}|\mathcal{B}_n]\mathbbm{1}_{\mathcal{P}'}&\leq& 4e^2\beta_n n t +n\exp(-e^{-2}\beta_n^{-1}t\theta\slash 32)\nonumber\\
    &\leq& C(\beta_n^2 n+\beta_n^{1\slash 2}+\exp(-c\beta_n^{-1\slash 4}))n.
\end{eqnarray}
Note that (\ref{Eq3.9.7}) also holds when $j\leq d_2$. 

\paragraph{Part 2}

If $j\leq d_1$, we have $\tau_1^{(j)}=0$ and $\mathcal{L}_1^{(j)}=0$. Below we consider the case where $j>d_1$. 

For any $k\in [d_1+1,d_2]\cap\mathbb{N}$, $k\geq \beta_n^{-3\slash 4}$; hence when the event $\mathcal{P}'$ holds, we have $N_k\geq e^{-2}\beta_n^{-3\slash 4}\slash 4$. Hence for any $t\in \mathbb{N}$ and $k\in [d_1+1,d_2]\cap\mathbb{N}$, 
\begin{eqnarray}\label{Eq3.9.10}
  && \mathbb{E}[Q_t^{(j)}|\mathcal{B}_k]\mathbbm{1}_{Z_t^{(j)}=k,W_t^{(j)}=1}\mathbbm{1}_{\mathcal{P}'}\leq \frac{\sum_{\mathbf{a}\in \mathcal{A}_O(k+1)}|\mathbf{a}|}{N_k}\mathbbm{1}_{Z_t^{(j)}=k,W_t^{(j)}=1}\mathbbm{1}_{\mathcal{P}'}\nonumber\\
  &\leq& \frac{n}{N_k}\mathbbm{1}_{Z_t^{(j)}=k,W_t^{(j)}=1}\mathbbm{1}_{\mathcal{P}'}\leq 4e^2\beta_n^{3\slash 4} n \mathbbm{1}_{Z_t^{(j)}=k}.
\end{eqnarray}
Now note that for any $s,t\in \mathbb{N}$, if $W_{s+t}^{(j)}=0$, then $Q_{s+t}^{(j)}=0$; if $\tau_2^{(j)}=s$ and $s+t+1\leq \tau_1^{(j)}<\infty$, then $d_1+1\leq Z_{s+t}^{(j)}\leq Z_s^{(j)}\leq d_2$. It can also be checked that for any $k\in [d_1+1,d_2]\cap\mathbb{N}$, $\{\tau_2^{(j)}=s\}\cap\{Z_{s+t}^{(j)}=k, W_{s+t}^{(j)}=1\}\in \mathcal{B}_k$. Hence by (\ref{Eq3.9.10}) (replacing $t$ by $s+t$), for any $s,t\in\mathbb{N}$, 
\begin{eqnarray}\label{Eq3.9.11}
  &&  \mathbb{E}[Q_{s+t}^{(j)}\mathbbm{1}_{\tau_2^{(j)}=s}\mathbbm{1}_{s+t+1\leq \tau_1^{(j)}<\infty}|\mathcal{B}_n]\mathbbm{1}_{\mathcal{P}'} \nonumber\\
  &=& \sum_{k\in [d_1+1,d_2]\cap\mathbb{N}} \mathbb{E}[Q_{s+t}^{(j)}\mathbbm{1}_{\tau_2^{(j)}=s}\mathbbm{1}_{s+t+1\leq \tau_1^{(j)}<\infty}\mathbbm{1}_{Z_{s+t}^{(j)}=k,W_{s+t}^{(j)}=1}|\mathcal{B}_n]\mathbbm{1}_{\mathcal{P}'}\nonumber\\
  &\leq& \sum_{k\in [d_1+1,d_2]\cap\mathbb{N}} \mathbb{E}[Q_{s+t}^{(j)}\mathbbm{1}_{\tau_2^{(j)}=s}\mathbbm{1}_{Z_{s+t}^{(j)}=k,W_{s+t}^{(j)}=1}|\mathcal{B}_n]\mathbbm{1}_{\mathcal{P}'}\nonumber\\
  &=& \sum_{k\in [d_1+1,d_2]\cap\mathbb{N}} \mathbb{E}[\mathbb{E}[Q_{s+t}^{(j)}|\mathcal{B}_k]\mathbbm{1}_{\tau_2^{(j)}=s}\mathbbm{1}_{Z_{s+t}^{(j)}=k,W_{s+t}^{(j)}=1}\mathbbm{1}_{\mathcal{P}'}|\mathcal{B}_n] \nonumber\\
  &\leq& 4e^2\beta_n^{3\slash 4}n \sum_{k\in [d_1+1,d_2]\cap\mathbb{N}} \mathbb{E}[\mathbbm{1}_{\tau_2^{(j)}=s}\mathbbm{1}_{Z_{s+t}^{(j)}=k}|\mathcal{B}_n]\nonumber\\
  &\leq& 4e^2\beta_n^{3\slash 4}n\mathbb{E}[\mathbbm{1}_{\tau_2^{(j)}=s}|\mathcal{B}_n]. 
\end{eqnarray}

Let $\{\tilde{Z}_t^{(j)}\}_{t=0}^{\infty}$ and $\{\tilde{W}_t^{(j)}\}_{t=0}^{\infty}$ be defined as in \textbf{Part 2} of the proof of Lemma \ref{L3.11} (in Section \ref{Sect.3.3.2}). For any $t\in\mathbb{N}^{*}$, if the event $\{\tau_1^{(j)}<\infty, \tau_1^{(j)}-\tau_2^{(j)}-1>t\}$ holds, as $\tau_2^{(j)}\leq \tau_1^{(j)}<\infty$, $j$ is contained in an open arc at step $d_2+1$, and the event $\mathcal{T}_{j,2}$ holds; moreover, for any $l\in [0,\tau_1^{(j)}-\tau_2^{(j)}-1]\cap\mathbb{N}$, $\tilde{W}_l^{(j)}=1$ and $\tilde{Z}_l^{(j)}=Z_{\tau_2^{(j)}+l}^{(j)}$, hence $\tilde{W}_t^{(j)}=1$ and $\tilde{Z}_t^{(j)}= Z_{\tau_2^{(j)}+t}^{(j)}\geq Z_{\tau_1^{(j)}-1}^{(j)}>d_1$. Hence by (\ref{Eq3.2.11}) (with $\beta=\beta_n$), for any $t\in \mathbb{N}^{*}$ and $\theta\geq 0$,  
\begin{eqnarray}\label{Eq3.9.12}
  &&  \mathbb{P}(\tau_1^{(j)}<\infty, \tau_1^{(j)}-\tau_2^{(j)}-1>t|\mathcal{B}_{d_2})\mathbbm{1}_{\mathcal{P}'} \nonumber\\
  &\leq& \mathbb{P}(\{\tilde{Z}_t^{(j)}>d_1,\tilde{W}_t^{(j)}=1\}\cap\mathcal{T}_{j,2}|\mathcal{B}_{d_2})\mathbbm{1}_{\mathcal{P}'} \nonumber\\
  &=& \mathbb{P}(\tilde{Z}_t^{(j)}>d_1,\tilde{W}_t^{(j)}=1|\mathcal{B}_{d_2})\mathbbm{1}_{\mathcal{T}_{j,2}}\mathbbm{1}_{\mathcal{P}'} \nonumber\\
  &\leq& \exp( (d_2-d_1)\theta-e^{-2} \beta_n^{-3\slash 4}t\theta \slash 8 +\beta_n^{-2} t \theta^2\slash 2 ).
\end{eqnarray}

By the definition of $\mathcal{L}_1^{(j)}$, 
\begin{eqnarray}\label{Eq3.9.15}
  && \mathbb{E}[\mathcal{L}_1^{(j)}|\mathcal{B}_n]=\mathbb{E}\Big[\mathbbm{1}_{\tau_1^{(j)}<\infty}\Big(\sum_{l\in [\tau_2^{(j)},\tau_1^{(j)}-1]\cap\mathbb{N}}Q_l^{(j)}\Big)\Big|\mathcal{B}_n\Big] \nonumber\\
  &=& \mathbb{E}\Big[\sum_{s=0}^{\infty}\mathbbm{1}_{\tau_2^{(j)}=s}\mathbbm{1}_{\tau_1^{(j)}<\infty}\Big(\sum_{l\in [s,\tau_1^{(j)}-1]\cap\mathbb{N}}Q_l^{(j)}\Big)\Big|\mathcal{B}_n\Big] \nonumber\\
  &=& \mathbb{E}\Big[\sum_{s=0}^{\infty}\mathbbm{1}_{\tau_2^{(j)}=s}\mathbbm{1}_{\tau_1^{(j)}<\infty}\Big(\sum_{l=0}^{\infty}  Q_{s+l}^{(j)}\mathbbm{1}_{l\leq\tau_1^{(j)}-s-1}  \Big)\Big|\mathcal{B}_n\Big]\nonumber\\
  &=& \sum_{s=0}^{\infty}\sum_{l=0}^{\infty}\mathbb{E}\Big[ \mathbbm{1}_{\tau_2^{(j)}=s}\mathbbm{1}_{l+s+1\leq \tau_1^{(j)}<\infty} Q_{s+l}^{(j)} \Big|\mathcal{B}_n\Big].
\end{eqnarray}
Below we consider any $t\in \mathbb{N}^{*}$ and $\theta\geq 0$. By (\ref{Eq3.9.11}), 
\begin{eqnarray}\label{Eq3.9.13}
  && \sum_{s=0}^{\infty}\sum_{l=0}^{t} \mathbb{E}\Big[ \mathbbm{1}_{\tau_2^{(j)}=s}\mathbbm{1}_{l+s+1\leq \tau_1^{(j)}<\infty} Q_{s+l}^{(j)} \Big|\mathcal{B}_n\Big] \mathbbm{1}_{\mathcal{P}'} \nonumber\\
  &\leq& 4e^2\beta_n^{3\slash 4}n\sum_{s=0}^{\infty}\sum_{l=0}^{t} \mathbb{E}[\mathbbm{1}_{\tau_2^{(j)}=s}|\mathcal{B}_n]\leq  8e^2\beta_n^{3\slash 4}n t.
\end{eqnarray}
By (\ref{Eq3.9.12}),
\begin{eqnarray}\label{Eq3.9.14}
    && \sum_{s=0}^{\infty}\sum_{l=t+1}^{\infty} \mathbb{E}\Big[ \mathbbm{1}_{\tau_2^{(j)}=s}\mathbbm{1}_{l+s+1\leq \tau_1^{(j)}<\infty} Q_{s+l}^{(j)} \Big|\mathcal{B}_n\Big] \mathbbm{1}_{\mathcal{P}'} \nonumber\\
    &\leq&  \sum_{s=0}^{\infty} \mathbb{E}\Big[ \mathbbm{1}_{\tau_2^{(j)}=s}\mathbbm{1}_{\tau_1^{(j)}<\infty,\tau_1^{(j)}-\tau_2^{(j)}-1>t}\Big(\sum_{l=t+1}^{\infty} Q_{s+l}^{(j)}\Big) \Big|\mathcal{B}_n\Big] \mathbbm{1}_{\mathcal{P}'}\nonumber\\
    &\leq& n \mathbb{P}(\tau_1^{(j)}<\infty,\tau_1^{(j)}-\tau_2^{(j)}-1>t|\mathcal{B}_n)\mathbbm{1}_{\mathcal{P}'}\nonumber\\
    &=& n\mathbb{E}[\mathbb{P}(\tau_1^{(j)}<\infty,\tau_1^{(j)}-\tau_2^{(j)}-1>t|\mathcal{B}_{d_2})\mathbbm{1}_{\mathcal{P}'}|\mathcal{B}_n]\nonumber\\
    &\leq& n \exp( (d_2-d_1)\theta-e^{-2} \beta_n^{-3\slash 4}t\theta \slash 8 +\beta_n^{-2} t \theta^2\slash 2 ).
\end{eqnarray}
Combining (\ref{Eq3.9.15})-(\ref{Eq3.9.14}), we obtain that
\begin{equation}\label{Eq3.9.16}
    \mathbb{E}[\mathcal{L}_1^{(j)}|\mathcal{B}_n]\mathbbm{1}_{\mathcal{P}'}\leq 8e^2\beta_n^{3\slash 4}nt+ n \exp( (d_2-d_1)\theta-e^{-2} \beta_n^{-3\slash 4}t\theta \slash 8 +\beta_n^{-2} t \theta^2\slash 2 ).
\end{equation}
We take $t=\lceil 32 e^2 \beta_n^{-5\slash 8}\rceil$ and $\theta=e^{-2}\beta_n^{5\slash 4}\slash 16$. Note that  
\begin{equation*}
    \beta_n^{-2} t \theta^2\slash 2=e^{-2} \beta_n^{-3\slash 4}t\theta \slash 32, \quad e^{-2} \beta_n^{-3\slash 4}t\theta \slash 8\geq 4\beta_n^{-11\slash 8}\theta \geq 4\beta_n^{-1}\theta \geq 4(d_2-d_1)\theta.
\end{equation*}
Hence by (\ref{Eq3.9.16}), we have
\begin{eqnarray}\label{Eq3.9.17}
    \mathbb{E}[\mathcal{L}_1^{(j)}|\mathcal{B}_n]\mathbbm{1}_{\mathcal{P}'}&\leq& 8e^2\beta_n^{3\slash 4}nt+n\exp(-e^{-2} \beta_n^{-3\slash 4}t\theta \slash 16) \nonumber\\
    &\leq& C(\beta_n^{1\slash 8}+\exp(-c\beta_n^{-1\slash 8}))n.
\end{eqnarray}
Note that (\ref{Eq3.9.17}) also holds when $j\leq d_1$.

\paragraph{Part 3}

Following a similar argument as in \textbf{Part 2}, we can deduce that for any $t\in \mathbb{N}^{*}$ and $\theta\geq 0$,  
\begin{equation}\label{Eq3.9.18}
    \mathbb{E}[\mathcal{L}_0^{(j)}|\mathcal{B}_n]\mathbbm{1}_{\mathcal{P}'}\leq 8e^2\beta_n^{5\slash 8}nt+n\exp( (d_1-d_0)\theta-e^{-2} \beta_n^{-5\slash 8}t\theta \slash 8 +\beta_n^{-3\slash 2} t \theta^2\slash 2 ).
\end{equation}
We take $t=\lceil 32 e^2 \beta_n^{-1\slash 2}\rceil$ and $\theta=e^{-2}\beta_n^{7\slash 8}\slash 16$. Note that 
\begin{equation*}
    \beta_n^{-3\slash 2} t \theta^2\slash 2=e^{-2}\beta_n^{-5\slash 8}t\theta\slash 32, \quad e^{-2} \beta_n^{-5\slash 8}t\theta \slash 8\geq 4\beta_n^{-9\slash 8}\theta\geq 4(d_1-d_0)\theta.
\end{equation*}
Hence by (\ref{Eq3.9.18}), we have
\begin{eqnarray}\label{Eq3.9.21}
    \mathbb{E}[\mathcal{L}_0^{(j)}|\mathcal{B}_n]\mathbbm{1}_{\mathcal{P}'}&\leq& 8e^2\beta_n^{5\slash 8}nt+n\exp(-e^{-2} \beta_n^{-5\slash 8}t\theta \slash 16) \nonumber\\
    &\leq& C(\beta_n^{1\slash 8}+\exp(-c\beta_n^{-1\slash 4}))n.
\end{eqnarray}

\paragraph{Part 4}

Following a similar argument as in \textbf{Part 2}, we can deduce that for any $t\in \mathbb{N}^{*}$ and $\theta\geq 0$,  
\begin{equation}\label{Eq3.9.19}
    \mathbb{E}[\mathcal{L}^{(j)}|\mathcal{B}_n]\mathbbm{1}_{\mathcal{P}'}\leq 8e^2\beta_n^{7\slash 16}nt+n\exp( (d_0-d)\theta-e^{-2} \beta_n^{-7\slash 16}t\theta \slash 8 +\beta_n^{-5\slash 4} t \theta^2\slash 2 ).
\end{equation}
We take $t=\lceil 32e^2\beta_n^{-13\slash 32}\rceil$ and $\theta=e^{-2}\beta_n^{13\slash 16}\slash 16$. Note that
\begin{equation*}
   \beta_n^{-5\slash 4} t \theta^2\slash 2= e^{-2} \beta_n^{-7\slash 16}t\theta \slash 32,\quad e^{-2} \beta_n^{-7\slash 16}t\theta \slash 8\geq 4\beta_n^{-27\slash 32}\theta\geq 4(d_0-d)\theta.
\end{equation*}
Hence by (\ref{Eq3.9.19}), we have
\begin{eqnarray}\label{Eq3.9.22}
    \mathbb{E}[\mathcal{L}^{(j)}|\mathcal{B}_n]\mathbbm{1}_{\mathcal{P}'}&\leq& 8e^2\beta_n^{7\slash 16}nt+n\exp(-e^{-2} \beta_n^{-7\slash 16}t\theta \slash 16) \nonumber\\
    &\leq& C(\beta_n^{1\slash 32}+\exp(-c\beta_n^{-1\slash 32}))n.
\end{eqnarray}

\bigskip

By (\ref{Eq3.9.20}), (\ref{Eq3.9.7}), (\ref{Eq3.9.17}), (\ref{Eq3.9.21}), and (\ref{Eq3.9.22}), we conclude that
\begin{equation}
    \mathbb{E}[\mathscr{L}^{(j)}|\mathcal{B}_n]\mathbbm{1}_{\mathcal{P}'}\leq C(\beta_n^2n+\beta_n^{1\slash 32}+\exp(-c\beta_n^{-1\slash 32}))n+1.
\end{equation}

\end{proof}

\begin{lemma}\label{L3.15}
Assume the preceding setup. There exist positive absolute constants $C_0,c_0$, such that the following holds. We have
\begin{eqnarray}\label{Eq3.12.3}
   && \mathbb{E}\Big[\sum_{\mathbf{a}\in \mathcal{A}_O(d+1)}|\mathbf{a}|\Big|\mathcal{B}_n\Big]\mathbbm{1}_{\mathcal{P}'} \geq  n\mathbbm{1}_{\mathcal{P}'} (\exp(-C_0\beta_n^{1\slash 32})-\exp(-c_0\beta_n^{-1\slash 32}))_{+}^3   \nonumber\\
  && \quad\quad\quad\quad\quad\quad\quad\quad\times(\exp(-C_0(\beta_n^{1\slash 2}+\beta_n^2 n ))-\exp(-c_0\beta_n^{-1\slash 4}))_{+}.
\end{eqnarray}
Moreover, for any $\delta>0$, we have
\begin{eqnarray}\label{Eq3.9.23}
 && \mathbb{P}\Big(\sum_{\mathbf{a}\in \mathcal{A}_O(d+1)}|\mathbf{a}|\leq (1-\delta)n \Big| \mathcal{B}_n\Big)\mathbbm{1}_{\mathcal{P}'} \nonumber\\
 &\leq& (1-(\exp(-C_0\beta_n^{1\slash 32})-\exp(-c_0\beta_n^{-1\slash 32}))_{+}^3  \nonumber\\
 &&\quad\quad\times(\exp(-C_0(\beta_n^{1\slash 2}+\beta_n^2 n ))-\exp(-c_0\beta_n^{-1\slash 4}))_{+})  \delta^{-1}  \mathbbm{1}_{\mathcal{P}'},
\end{eqnarray}
\begin{eqnarray}\label{Eq3.9.24}
  &&  \mathbb{P}\Big(\max_{\mathbf{a}\in \mathcal{A}_O(d+1)}|\mathbf{a}|\geq \delta n\Big|\mathcal{B}_n\Big)\mathbbm{1}_{\mathcal{P}'}\leq (\delta n)^{-1}\mathbb{E}\Big[\max_{\mathbf{a}\in\mathcal{A}_O(d+1)}|\mathbf{a}|\Big|\mathcal{B}_n\Big]\mathbbm{1}_{\mathcal{P}'} \nonumber\\
  &\leq& C_0(\beta_n^2n+\beta_n^{1\slash 32}+\exp(-c_0\beta_n^{-1\slash 32})+n^{-1})^{1\slash 3}\delta^{-1}\mathbbm{1}_{\mathcal{P}'}.
\end{eqnarray}
\end{lemma}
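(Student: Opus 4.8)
## Proof Plan for Lemma \ref{L3.15}

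The plan is to derive all three estimates from the two auxiliary lemmas just proved (Lemmas \ref{L3.14} and \ref{L3.11}) together with the combinatorial inequality in Lemma \ref{L3.13}. First I would record the observation that connects the random quantities: the open arcs in $\mathcal{A}_O(d+1)$, together with their linear orders $a_1 \succ \cdots \succ a_s$, form a family of pairwise disjoint subsets of $I:=[n]$ whose union is $[n]\setminus(\text{points in closed arcs at step }d+1)$. For $j\in[n]$ lying in an open arc, $\mathscr{L}^{(j)}$ is exactly the quantity called $l_j$ in Lemma \ref{L3.13} (the number of elements $\preceq j$ in its arc), while $|\mathbf{a}|$ for the arc containing $j$ is the quantity $\alpha_j$; for $j$ in a closed arc both are $0$. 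Crucially all of this is happening conditionally on $\mathcal{B}_n$, so I would apply Lemma \ref{L3.13} with all probabilities and expectations replaced by conditional ones given $\mathcal{B}_n$ (the proof of Lemma \ref{L3.13} goes through verbatim under conditioning, since it only uses Markov's inequality). This gives, on the event $\mathcal{P}'$,
\begin{equation*}
\Big(\frac{1}{n}\mathbb{E}\big[\max_{\mathbf{a}\in\mathcal{A}_O(d+1)}|\mathbf{a}|\,\big|\,\mathcal{B}_n\big]\Big)^3 \leq \frac{128}{n}\max_{j\in[n]}\mathbb{E}[\mathscr{L}^{(j)}|\mathcal{B}_n].
\end{equation*}

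For \eqref{Eq3.9.24}, I would combine the displayed inequality above with the bound $\mathbb{E}[\mathscr{L}^{(j)}|\mathcal{B}_n]\mathbbm{1}_{\mathcal{P}'}\leq C_0(\beta_n^2 n+\beta_n^{1/32}+\exp(-c\beta_n^{-1/32}))n+1$ from Lemma \ref{L3.14}, take cube roots, and then apply conditional Markov's inequality to the nonnegative random variable $\max_{\mathbf{a}}|\mathbf{a}|$ to pass from the expectation bound to the tail bound; the factor $(\delta n)^{-1}$ appears exactly as written. One must be slightly careful that $\mathbbm{1}_{\mathcal{P}'}$ is $\mathcal{B}_n$-measurable so it can be pulled through all conditional expectations freely, and that the ``$+1$'' inside the Lemma \ref{L3.14} bound becomes ``$+n^{-1}$'' after dividing by $n$, matching the $n^{-1}$ term in \eqref{Eq3.9.24}.

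For \eqref{Eq3.12.3} and \eqref{Eq3.9.23}, I would work from the lower side. Note $\sum_{\mathbf{a}\in\mathcal{A}_O(d+1)}|\mathbf{a}| = \sum_{j\in[n]}\mathbbm{1}_{\mathcal{S}_{j,d+1}}$, since a point $j$ contributes $1$ to the total arc-length precisely when it lies in an open arc at step $d+1$, i.e. when $\mathcal{S}_{j,d+1}$ holds (recall $\mathcal{S}_{j,d+1}$ is $\mathcal{B}_d$-measurable, hence its conditional probability given $\mathcal{B}_n$ makes sense and by the tower property one can take $\mathbb{E}[\cdot|\mathcal{B}_n]$). Then Lemma \ref{L3.11} gives, for every $j\in[n]$ and on $\mathcal{P}'$, the lower bound $\mathbb{P}(\mathcal{S}_{j,d+1}|\mathcal{B}_n)\mathbbm{1}_{\mathcal{P}'}\geq p(j)\mathbbm{1}_{\mathcal{P}'}$ where $p(j)$ is the product of truncated exponentials in the statement; since $j\leq n$ and we are in the regime $n<\beta_n^{-2}$, the worst case is $j=n$, which yields the uniform bound with $\beta_n^2 n$ in place of $\beta_n^2 j$ in the last factor. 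Summing over $j\in[n]$ gives \eqref{Eq3.12.3}. For \eqref{Eq3.9.23}, I would apply conditional Markov's inequality to the nonnegative variable $n-\sum_{\mathbf{a}}|\mathbf{a}|\in[0,n]$: $\mathbb{P}(\sum_{\mathbf{a}}|\mathbf{a}|\leq(1-\delta)n\,|\,\mathcal{B}_n)=\mathbb{P}(n-\sum_{\mathbf{a}}|\mathbf{a}|\geq\delta n\,|\,\mathcal{B}_n)\leq(\delta n)^{-1}(n-\mathbb{E}[\sum_{\mathbf{a}}|\mathbf{a}|\,|\,\mathcal{B}_n])$, and plug in \eqref{Eq3.12.3} for the conditional expectation, giving the $(1-(\cdots))\delta^{-1}$ form stated.

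The main obstacle — really the only nonroutine point — is verifying that Lemma \ref{L3.13} applies \emph{conditionally on $\mathcal{B}_n$}: one must check that, given $\mathcal{B}_n$, the collection $\mathcal{A}_O(d+1)$ with its orders is a legitimate instance of the abstract setup (pairwise disjoint subsets of $[n]$, with a linear order on each), and that the inequality's proof, which is purely a Markov-inequality argument, is insensitive to conditioning. Everything else is bookkeeping: identifying $\mathscr{L}^{(j)}$ with $l_j$ and $|\mathbf{a}|$ with $\alpha_j$, keeping track of the $\mathcal{B}_n$-measurability of $\mathbbm{1}_{\mathcal{P}'}$, taking cube roots of the Lemma \ref{L3.14} bound, and applying conditional Markov's inequality in the two directions needed.
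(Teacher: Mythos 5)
Your proposal is correct and follows essentially the same route as the paper: the lower bound $\sum_{\mathbf{a}\in\mathcal{A}_O(d+1)}|\mathbf{a}|\geq\sum_{j=1}^n\mathbbm{1}_{\mathcal{S}_{j,d+1}}$ plus Lemma \ref{L3.11} for (\ref{Eq3.12.3}), conditional Markov applied to $n-\sum_{\mathbf{a}}|\mathbf{a}|$ for (\ref{Eq3.9.23}), and Lemma \ref{L3.13} (applied conditionally on $\mathcal{B}_n$, with $l_i=\mathscr{L}^{(i)}$ and $\max_i\alpha_i=\max_{\mathbf{a}}|\mathbf{a}|$) combined with Lemma \ref{L3.14} and Markov for (\ref{Eq3.9.24}). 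The one point you flag as nonroutine—that Lemma \ref{L3.13} holds under conditioning on $\mathcal{B}_n$—is indeed how the paper uses it, and your justification (the proof is a pure Markov-inequality argument insensitive to conditioning, with $\mathbbm{1}_{\mathcal{P}'}$ being $\mathcal{B}_n$-measurable) is exactly the needed observation.
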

\begin{proof}

We start with the proof of (\ref{Eq3.12.3}) and (\ref{Eq3.9.23}). Note that 
\begin{equation*}
    \sum_{\mathbf{a}\in \mathcal{A}_O(d+1)}|\mathbf{a}|\geq \sum_{j=1}^n  \mathbbm{1}_{\mathcal{S}_{j,d+1}}.
\end{equation*}
Hence by Lemma \ref{L3.11} (with $\beta=\beta_n$),
\begin{eqnarray*}
  &&  \mathbb{E}\Big[\sum_{\mathbf{a}\in \mathcal{A}_O(d+1)}|\mathbf{a}|\Big|\mathcal{B}_n\Big]\mathbbm{1}_{\mathcal{P}'} \geq 
  \sum_{j=1}^n \mathbb{P}(\mathcal{S}_{j,d+1}|\mathcal{B}_n)\mathbbm{1}_{\mathcal{P}'}  \nonumber\\
  &\geq& (\exp(-C\beta_n^{1\slash 32})-\exp(-c\beta_n^{-1\slash 32}))_{+}^3   \nonumber\\
  &&  \times  (\exp(-C(\beta_n^{1\slash 2}+\beta_n^2 n ))-\exp(-c\beta_n^{-1\slash 4}))_{+}n\mathbbm{1}_{\mathcal{P}'},
\end{eqnarray*}
which leads to
\begin{eqnarray*}
 && \mathbb{P}\Big(\sum_{\mathbf{a}\in \mathcal{A}_O(d+1)}|\mathbf{a}|\leq (1-\delta)n \Big| \mathcal{B}_n\Big)\mathbbm{1}_{\mathcal{P}'} \nonumber\\
 &=& \mathbb{P}\Big(n-\sum_{\mathbf{a}\in \mathcal{A}_O(d+1)}|\mathbf{a}|\geq \delta n \Big| \mathcal{B}_n\Big)\mathbbm{1}_{\mathcal{P}'} \leq (\delta n)^{-1} \mathbb{E}\Big[n-\sum_{\mathbf{a}\in \mathcal{A}_O(d+1)}|\mathbf{a}|\Big|\mathcal{B}_n\Big]\mathbbm{1}_{\mathcal{P}'}\nonumber\\
 &\leq& (1-(\exp(-C\beta_n^{1\slash 32})-\exp(-c\beta_n^{-1\slash 32}))_{+}^3  \nonumber\\
 &&\quad\quad\times(\exp(-C(\beta_n^{1\slash 2}+\beta_n^2 n ))-\exp(-c\beta_n^{-1\slash 4}))_{+})  \delta^{-1}  \mathbbm{1}_{\mathcal{P}'}.
\end{eqnarray*}

Now we turn to the proof of (\ref{Eq3.9.24}). In Lemma \ref{L3.13}, we take $I=[n]$, and let $A_1,\cdots,A_k$ be the open arcs at step $d+1$ (with the linear order $\preceq$ on each open arc specified as in the preceding). Note that for any $i\in I$, $l_i=\mathscr{L}^{(i)}$. Moreover, for any $i\in I$, if $i$ is contained in an open arc at step $d+1$, $\alpha_i$ is the length of this open arc; if $i$ is contained in a closed arc at step $d+1$, $\alpha_i=0$. Hence we have $\max_{i\in I}\alpha_i=\max_{\mathbf{a}\in\mathcal{A}_O(d+1)}|\mathbf{a}|$. Thus by Lemma \ref{L3.13}, we have
\begin{equation*}
   \frac{1}{n}\mathbb{E}\Big[\max_{\mathbf{a}\in\mathcal{A}_O(d+1)}|\mathbf{a}|\Big|\mathcal{B}_n\Big]\leq  C\Big(\frac{1}{n}\max_{j\in [n]}\mathbb{E}[\mathscr{L}^{(j)}|\mathcal{B}_n]\Big)^{1\slash 3},
\end{equation*}
which by Lemma \ref{L3.14} leads to
\begin{eqnarray*}
  &&  \mathbb{E}\Big[\max_{\mathbf{a}\in\mathcal{A}_O(d+1)}|\mathbf{a}|\Big|\mathcal{B}_n\Big]\mathbbm{1}_{\mathcal{P}'}\leq Cn\Big(\frac{1}{n}\max_{j\in [n]}\mathbb{E}[\mathscr{L}^{(j)}|\mathcal{B}_n]\mathbbm{1}_{\mathcal{P}'}\Big)^{1\slash 3}\mathbbm{1}_{\mathcal{P}'} \nonumber\\
    &\leq& C(\beta_n^2n+\beta_n^{1\slash 32}+\exp(-c\beta_n^{-1\slash 32})+n^{-1})^{1\slash 3} n\mathbbm{1}_{\mathcal{P}'}.
\end{eqnarray*}
Therefore, we have
\begin{eqnarray*}
  &&  \mathbb{P}\Big(\max_{\mathbf{a}\in \mathcal{A}_O(d+1)}|\mathbf{a}|\geq \delta n\Big|\mathcal{B}_n\Big)\mathbbm{1}_{\mathcal{P}'}\leq (\delta n)^{-1}\mathbb{E}\Big[\max_{\mathbf{a}\in\mathcal{A}_O(d+1)}|\mathbf{a}|\Big|\mathcal{B}_n\Big]\mathbbm{1}_{\mathcal{P}'} \nonumber\\
  &\leq& C(\beta_n^2n+\beta_n^{1\slash 32}+\exp(-c\beta_n^{-1\slash 32})+n^{-1})^{1\slash 3}\delta^{-1}\mathbbm{1}_{\mathcal{P}'}.
\end{eqnarray*}

\end{proof}

In the following, we give the proof of (\ref{PoissonDirichlet}). We denote by $\mathcal{X}$ the space of sequences $(x_j)_{j=1}^{\infty}$ of nonnegative real numbers satisfying $x_1\geq x_2\geq \cdots$ and $\sum_{j=1}^{\infty} x_j<\infty$, and endow $\mathcal{X}$ with the topology induced from the product topology on $\mathbb{R}^{\mathbb{N}}$. Following \cite[Section 4.5]{GP}, we identify $\mathcal{X}$ with the space $\mathcal{M}$ of multisets consisting of countably infinite nonnegative real numbers with finite sum (summing elements of a multiset in terms of their multiplicities), and denote by $\mathscr{I}: \mathcal{X}\rightarrow\mathcal{M}$ the identification map. The topology on $\mathcal{M}$ is induced from the topology on $\mathcal{X}$ by the identification map (that is, $U$ is an open subset of $\mathcal{M}$ if and only if $\mathscr{I}^{-1}(U)$ is an open subset of $\mathcal{X}$). For a multiset that consists of finitely many nonnegative real numbers, we also regard it as an element of $\mathcal{M}$ by adding countably infinite zeros to it. 

We denote by $\mathscr{D}_0$ the $\ell^2$ metric on $\mathcal{X}$: for any $(x_j)_{j=1}^{\infty},(y_j)_{j=1}^{\infty}\in\mathcal{X}$,
\begin{equation*}
    \mathscr{D}_0((x_j)_{j=1}^{\infty},(y_j)_{j=1}^{\infty}):=\sqrt{\sum_{j=1}^{\infty} (x_j-y_j)^2}.
\end{equation*}
Note that convergence in $\mathscr{D}_0$ implies convergence in the topology of $\mathcal{X}$. We denote by $\mathscr{D}$ the following metric on $\mathcal{M}$: for any $A,B\in\mathcal{M}$,
\begin{equation*}
    \mathscr{D}(A,B):=\mathscr{D}_0(\mathscr{I}^{-1}(A),\mathscr{I}^{-1}(B))=\min_{\substack{\phi:A\rightarrow B\\\text{ is a bijection}}}\sqrt{\sum_{x\in A}(\phi(x)-x)^2},
\end{equation*}
where the second equality follows from the rearrangement inequality. Note that if $\sum_{x\in A} x\leq 1$, and $\sum_{x\in B}x\leq 1$, then $\sup_{x\in A \cup B}x\leq 1$ and
\begin{equation}\label{Eq3.14.1}
    \mathscr{D}^2(A,B)\leq\sum_{x\in A}(\phi(x)-x)^2\leq \sum_{x\in B}x^2+\sum_{x\in A}x^2\leq \sum_{x\in B}x+\sum_{x\in A}x\leq 2,
\end{equation}
where $\phi$ is any bijection from $A$ to $B$.

We recall the argument in the two paragraphs after (\ref{Eq3.3.11}) (in Section \ref{Sect.3.3.1}), and define $\{\mathbf{a}_j\}_{j=1}^d$, $\{H_j\}_{j=1}^d$, $\{T_j\}_{j=1}^d$, and $\tau$ as there. Note that conditional on $\mathcal{B}_d$, when the event $\mathcal{P}$ holds, $\tau$ is a uniformly random permutation of $[d]$. Hence for any $j,j'\in [d]$,
\begin{equation}\label{Eq3.12.2}
    \mathbb{P}(j'\in\mathcal{C}_j(\tau)|\mathcal{B}_d)\mathbbm{1}_{\mathcal{P}}=\Big(\frac{1}{2}+\frac{1}{2}\mathbbm{1}_{j=j'}\Big)\mathbbm{1}_{\mathcal{P}}.
\end{equation}
We also define $\tau_0\in S_n$ as follows: conditional on $\mathcal{B}_d$, when the event $\mathcal{P}$ holds, we let $\tau_0=\tau$; otherwise, we draw $\tau_0$ uniformly from $S_d$. Note that $\tau_0$ is a uniformly random permutation of $[d]$.

For any $k\in \mathbb{N}^{*}$, $\gamma\in S_k$, and $w=(w_1,\cdots,w_k)\in\mathbb{R}^k$, we define $\mathscr{M}(\gamma,w)$ to be the multiset $\big\{\sum_{j\in \mathcal{C}^1}w_j,\cdots,\sum_{j\in \mathcal{C}^m}w_j\big\}$, where $\mathcal{C}^1,\cdots,\mathcal{C}^m$ are the cycles of $\gamma$. Note that
\begin{equation*}
    \mathscr{M}\Big(\sigma,\frac{(1,\cdots,1)}{n}\Big)=\Big\{\frac{\sum_{j\in \mathcal{C}}|\mathbf{a}_j|}{n}:\mathcal{C}\text{ is a cycle of }\tau\Big\}\bigcup\Big\{\frac{|\mathbf{a}|}{n}:\mathbf{a}\in \mathcal{A}_C(d+1)\Big\}.
\end{equation*}
Hence 
\begin{eqnarray}\label{Eq3.12.1}
 && \mathscr{D}^2\Big(\mathscr{M}\Big(\sigma,\frac{(1,\cdots,1)}{n}\Big),\mathscr{M}\Big(\tau,\frac{(|\mathbf{a}_1|,\cdots,|\mathbf{a}_d|)}{n}\Big)\Big) \nonumber\\
 &\leq& \sum_{\mathbf{a}\in\mathcal{A}_C(d+1)}\frac{|\mathbf{a}|^2}{n^2}\leq \sum_{\mathbf{a}\in \mathcal{A}_C(d+1)} \frac{|\mathbf{a}|}{n}\leq 1-\sum_{\mathbf{a}\in\mathcal{A}_O(d+1)}\frac{|\mathbf{a}|}{n}.
\end{eqnarray}
By (\ref{Eq3.3.9}), (\ref{Eq3.12.1}), and Lemma \ref{L3.15}, we have
\begin{eqnarray}\label{Eq3.15.1}
&& \mathbb{E}\Big[\mathscr{D}^2\Big(\mathscr{M}\Big(\sigma,\frac{(1,\cdots,1)}{n}\Big),\mathscr{M}\Big(\tau,\frac{(|\mathbf{a}_1|,\cdots,|\mathbf{a}_d|)}{n}\Big)\Big)\Big] \nonumber\\
&\leq& 1-\mathbb{E}\Big[\Big(\sum_{\mathbf{a}\in\mathcal{A}_O(d+1)}\frac{|\mathbf{a}|}{n}\Big)\mathbbm{1}_{\mathcal{P}'}\Big]=1-\mathbb{E}\Big[\frac{1}{n} \mathbb{E}\Big[\sum_{\mathbf{a}\in\mathcal{A}_O(d+1)}|\mathbf{a}|\Big|\mathcal{B}_n\Big]\mathbbm{1}_{\mathcal{P}'}\Big]\nonumber\\
&\leq& 1-(\exp(-C\beta_n^{1\slash 32})-\exp(-c\beta_n^{-1\slash 32}))_{+}^3   \nonumber\\
  && \quad\quad\times(\exp(-C(\beta_n^{1\slash 2}+\beta_n^2 n ))-\exp(-c\beta_n^{-1\slash 4}))_{+}\mathbb{P}(\mathcal{P}')\nonumber\\
  &\leq& 1-(\exp(-C\beta_n^{1\slash 32})-\exp(-c\beta_n^{-1\slash 32}))_{+}^3   \nonumber\\
  && \quad\times(\exp(-C(\beta_n^{1\slash 2}+\beta_n^2 n ))-\exp(-c\beta_n^{-1\slash 4}))_{+}(1-C\exp(-c\beta_n^{-1\slash 16}))_{+}.\nonumber\\
   &&
\end{eqnarray}
Hence as $n\rightarrow\infty$, 
\begin{eqnarray}\label{Eq3.14.2}
&& \mathbb{E}\Big[\mathscr{D}\Big(\mathscr{M}\Big(\sigma,\frac{(1,\cdots,1)}{n}\Big),\mathscr{M}\Big(\tau,\frac{(|\mathbf{a}_1|,\cdots,|\mathbf{a}_d|)}{n}\Big)\Big)\Big] \nonumber\\
&\leq& \sqrt{\mathbb{E}\Big[\mathscr{D}^2\Big(\mathscr{M}\Big(\sigma,\frac{(1,\cdots,1)}{n}\Big),\mathscr{M}\Big(\tau,\frac{(|\mathbf{a}_1|,\cdots,|\mathbf{a}_d|)}{n}\Big)\Big)\Big]}\rightarrow 0.
\end{eqnarray}

We denote by $\mathscr{C}_1,\cdots,\mathscr{C}_m$ the cycles of $\tau$. Note that
\begin{eqnarray*}
 && \mathscr{D}^2\Big(\mathscr{M}\Big(\tau,\frac{(|\mathbf{a}_1|,\cdots,|\mathbf{a}_d|)}{n}\Big),\mathscr{M}\Big(\tau,\frac{(1,\cdots,1)}{d}\Big)\Big) \nonumber\\
 &\leq & \sum_{k=1}^m \Big(\sum_{j\in \mathscr{C}_k}\Big(\frac{|\mathbf{a}_j|}{n}-\frac{1}{d}\Big)\Big)^2=\sum_{k=1}^m\sum_{j,j'\in\mathscr{C}_k}\Big(\frac{|\mathbf{a}_j|}{n}-\frac{1}{d}\Big)\Big(\frac{|\mathbf{a}_{j'}|}{n}-\frac{1}{d}\Big)\nonumber\\
 &=& \sum_{j=1}^d\sum_{j'=1}^d\Big(\frac{|\mathbf{a}_j|}{n}-\frac{1}{d}\Big)\Big(\frac{|\mathbf{a}_{j'}|}{n}-\frac{1}{d}\Big)\mathbbm{1}_{j'\in\mathcal{C}_j(\tau)}.
\end{eqnarray*}
Hence by (\ref{Eq3.12.2}), 
\begin{eqnarray}\label{Eq3.12.4}
    &&  \mathbb{E}\Big[\mathscr{D}^2\Big(\mathscr{M}\Big(\tau,\frac{(|\mathbf{a}_1|,\cdots,|\mathbf{a}_d|)}{n}\Big),\mathscr{M}\Big(\tau,\frac{(1,\cdots,1)}{d}\Big)\Big)\Big|\mathcal{B}_d\Big] \mathbbm{1}_{\mathcal{P}}\nonumber\\
    &\leq& \sum_{j=1}^d\sum_{j'=1}^d\Big(\frac{|\mathbf{a}_j|}{n}-\frac{1}{d}\Big)\Big(\frac{|\mathbf{a}_{j'}|}{n}-\frac{1}{d}\Big)\mathbb{P}(j'\in\mathcal{C}_j(\tau)|\mathcal{B}_d)\mathbbm{1}_{\mathcal{P}}\nonumber\\
    &=& \sum_{j=1}^d\sum_{j'=1}^d\Big(\frac{|\mathbf{a}_j|}{n}-\frac{1}{d}\Big)\Big(\frac{|\mathbf{a}_{j'}|}{n}-\frac{1}{d}\Big)\Big(\frac{1}{2}+\frac{1}{2}\mathbbm{1}_{j=j'}\Big)\mathbbm{1}_{\mathcal{P}}\nonumber\\
    &\leq& \frac{1}{2}\sum_{j=1}^d\Big(\frac{|\mathbf{a}_j|}{n}-\frac{1}{d}\Big)^2+\frac{1}{2}\Big(\frac{\sum_{j=1}^d|\mathbf{a}_j|}{n}-1\Big)^2\nonumber\\
    &\leq& \frac{1}{2d}+\frac{\sum_{j=1}^d|\mathbf{a}_j|^2}{2n^2}+\frac{1}{2}\Big(\frac{\sum_{j=1}^d|\mathbf{a}_j|}{n}-1\Big)^2\nonumber\\
    &\leq& \frac{1}{2d}+\frac{\max_{j\in [d]}|\mathbf{a}_j|}{2n}+\frac{1}{2}\Big(\frac{\sum_{j=1}^d|\mathbf{a}_j|}{n}-1\Big)^2.
\end{eqnarray}
For any $\delta>0$, let $\mathcal{N}_{\delta}$ be the event that $\sum_{\mathbf{a}\in \mathcal{A}_O(d+1)}|\mathbf{a}|\geq (1-\delta)n$ and $\max_{\mathbf{a}\in \mathcal{A}_O(d+1)}|\mathbf{a}|\leq \delta n$. By (\ref{Eq3.3.9}) and Lemma \ref{L3.15}, for any $\delta>0$, 
\begin{eqnarray}\label{Eq3.12.9}
   && \mathbb{P}((\mathcal{N}_{\delta})^c)\leq\mathbb{P}((\mathcal{P}')^c)+\mathbb{P}\Big(\Big\{\max_{\mathbf{a}\in \mathcal{A}_O(d+1)}|\mathbf{a}|\geq \delta n\Big\}\cap\mathcal{P}'\Big)\nonumber\\
    &&\quad\quad\quad\quad\quad+\mathbb{P}\Big(\Big\{\sum_{\mathbf{a}\in \mathcal{A}_O(d+1)}|\mathbf{a}|\leq (1-\delta)n\Big\}\cap\mathcal{P}'\Big)\nonumber\\
    &\leq& \mathbb{P}((\mathcal{P}')^c)+\mathbb{E}\Big[\mathbb{P}\Big(\max_{\mathbf{a}\in \mathcal{A}_O(d+1)}|\mathbf{a}|\geq \delta n\Big|\mathcal{B}_n\Big)\mathbbm{1}_{\mathcal{P}'}\Big]\nonumber\\
    &&+\mathbb{E}\Big[\mathbb{P}\Big(\sum_{\mathbf{a}\in \mathcal{A}_O(d+1)}|\mathbf{a}|\leq (1-\delta)n\Big|\mathcal{B}_n\Big)\mathbbm{1}_{\mathcal{P}'}\Big]\nonumber\\
    &\leq& C\exp(-c\beta_n^{-1\slash 16})+C(\beta_n^2n+\beta_n^{1\slash 32}+\exp(-c\beta_n^{-1\slash 32})+n^{-1})^{1\slash 3}\delta^{-1}\nonumber\\
    &&+ (1-(\exp(-C\beta_n^{1\slash 32})-\exp(-c\beta_n^{-1\slash 32}))_{+}^3  \nonumber\\
 &&\quad\quad\times(\exp(-C(\beta_n^{1\slash 2}+\beta_n^2 n ))-\exp(-c\beta_n^{-1\slash 4}))_{+})  \delta^{-1}.
\end{eqnarray}
By (\ref{Eq3.12.4}), (\ref{Eq3.12.9}), and Lemma \ref{L3.15}, for any $\delta\in (0,1)$,
\begin{eqnarray}\label{Eq3.15.2}
    && \mathbb{E}\Big[\mathscr{D}^2\Big(\mathscr{M}\Big(\tau,\frac{(|\mathbf{a}_1|,\cdots,|\mathbf{a}_d|)}{n}\Big),\mathscr{M}\Big(\tau,\frac{(1,\cdots,1)}{d}\Big)\Big) \mathbbm{1}_{\mathcal{P}} \Big] \nonumber\\
    &=& \mathbb{E}\Big[\mathbb{E}\Big[\mathscr{D}^2\Big(\mathscr{M}\Big(\tau,\frac{(|\mathbf{a}_1|,\cdots,|\mathbf{a}_d|)}{n}\Big),\mathscr{M}\Big(\tau,\frac{(1,\cdots,1)}{d}\Big)\Big)\Big|\mathcal{B}_d\Big] \mathbbm{1}_{\mathcal{P}}\Big] \nonumber\\
    &\leq& \frac{1}{2d}+\frac{\delta}{2}+\frac{\delta^2}{2}+2\mathbb{P}((\mathcal{N}_{\delta})^c)\nonumber\\
    &\leq& \max \{\beta_n^{7\slash 16},n^{-1}\}+\delta+
    C\exp(-c\beta_n^{-1\slash 16})\nonumber\\
    &&  +C(\beta_n^2n+\beta_n^{1\slash 32}+\exp(-c\beta_n^{-1\slash 32})+n^{-1})^{1\slash 3}\delta^{-1}\nonumber\\
    &&  + 2 (1-(\exp(-C\beta_n^{1\slash 32})-\exp(-c\beta_n^{-1\slash 32}))_{+}^3  \nonumber\\
 &&\quad\quad\times(\exp(-C(\beta_n^{1\slash 2}+\beta_n^2 n ))-\exp(-c\beta_n^{-1\slash 4}))_{+})  \delta^{-1}.
\end{eqnarray}
First letting $n\rightarrow\infty$, and then letting $\delta\rightarrow 0^{+}$, we obtain that
\begin{equation}\label{Eq3.12.6}
 \lim_{n\rightarrow\infty}\mathbb{E}\Big[\mathscr{D}^2\Big(\mathscr{M}\Big(\tau,\frac{(|\mathbf{a}_1|,\cdots,|\mathbf{a}_d|)}{n}\Big),\mathscr{M}\Big(\tau,\frac{(1,\cdots,1)}{d}\Big)\Big) \mathbbm{1}_{\mathcal{P}} \Big]=0.
\end{equation}
By (\ref{Eq3.14.1}), we have
\begin{equation*}
    \mathscr{D}^2\Big(\mathscr{M}\Big(\tau,\frac{(|\mathbf{a}_1|,\cdots,|\mathbf{a}_d|)}{n}\Big),\mathscr{M}\Big(\tau,\frac{(1,\cdots,1)}{d}\Big)\Big)\leq 2,
\end{equation*}
which by (\ref{Eq3.3.14}) leads to
\begin{eqnarray*}
  &&\mathbb{E}\Big[\mathscr{D}^2\Big(\mathscr{M}\Big(\tau,\frac{(|\mathbf{a}_1|,\cdots,|\mathbf{a}_d|)}{n}\Big),\mathscr{M}\Big(\tau,\frac{(1,\cdots,1)}{d}\Big)\Big)\mathbbm{1}_{\mathcal{P}^c}\Big]\nonumber\\
  &\leq& 2\mathbb{P}(\mathcal{P}^c)\leq 2(1-\exp(-2\beta_n^{1\slash 8})).
\end{eqnarray*}
Hence
\begin{equation}\label{Eq3.12.7}
    \lim_{n\rightarrow\infty} \mathbb{E}\Big[\mathscr{D}^2\Big(\mathscr{M}\Big(\tau,\frac{(|\mathbf{a}_1|,\cdots,|\mathbf{a}_d|)}{n}\Big),\mathscr{M}\Big(\tau,\frac{(1,\cdots,1)}{d}\Big)\Big)\mathbbm{1}_{\mathcal{P}^c}\Big]=0.
\end{equation}
By (\ref{Eq3.12.6}) and (\ref{Eq3.12.7}), as $n\rightarrow\infty$, 
\begin{eqnarray}\label{Eq3.14.3}
    && \mathbb{E}\Big[\mathscr{D}\Big(\mathscr{M}\Big(\tau,\frac{(|\mathbf{a}_1|,\cdots,|\mathbf{a}_d|)}{n}\Big),\mathscr{M}\Big(\tau,\frac{(1,\cdots,1)}{d}\Big)\Big)\Big]\nonumber\\
    &\leq& \sqrt{\mathbb{E}\Big[\mathscr{D}^2\Big(\mathscr{M}\Big(\tau,\frac{(|\mathbf{a}_1|,\cdots,|\mathbf{a}_d|)}{n}\Big),\mathscr{M}\Big(\tau,\frac{(1,\cdots,1)}{d}\Big)\Big)\Big]}\rightarrow 0.
\end{eqnarray}

By (\ref{Eq3.14.1}), we have
\begin{equation*}
\mathscr{D}^2\Big(\mathscr{M}\Big(\tau,\frac{(1,\cdots,1)}{d}\Big),\mathscr{M}\Big(\tau_0,\frac{(1,\cdots,1)}{d}\Big)\Big)  \leq  2\mathbbm{1}_{\tau\neq \tau_0}.
\end{equation*}
Hence by (\ref{Eq3.3.14}), as $n\rightarrow\infty$,
\begin{eqnarray}\label{Eq3.14.4}
&& \mathbb{E}\Big[\mathscr{D}\Big(\mathscr{M}\Big(\tau,\frac{(1,\cdots,1)}{d}\Big),\mathscr{M}\Big(\tau_0,\frac{(1,\cdots,1)}{d}\Big)\Big)\Big] \nonumber\\
&\leq& \sqrt{\mathbb{E}\Big[\mathscr{D}^2\Big(\mathscr{M}\Big(\tau,\frac{(1,\cdots,1)}{d}\Big),\mathscr{M}\Big(\tau_0,\frac{(1,\cdots,1)}{d}\Big)\Big)\Big]}\nonumber\\
&\leq& \sqrt{2\mathbb{P}(\tau\neq \tau_0)}\leq \sqrt{2\mathbb{P}(\mathcal{P}^c)}\leq \sqrt{2(1-\exp(-2\beta_n^{1\slash 8}))}\rightarrow 0.
\end{eqnarray}

By (\ref{Eq3.14.2}), (\ref{Eq3.14.3}), and (\ref{Eq3.14.4}), as $n\rightarrow\infty$,
\begin{equation}\label{Eq3.14.5}
\mathbb{E}\Big[\mathscr{D}\Big(\mathscr{M}\Big(\sigma,\frac{(1,\cdots,1)}{n}\Big),\mathscr{M}\Big(\tau_0,\frac{(1,\cdots,1)}{d}\Big)\Big)\Big]\rightarrow      0.
\end{equation}
When $n\rightarrow\infty$, we have $d\rightarrow\infty$; as $\tau_0$ is a uniformly random permutation of $[d]$, $\mathscr{M}\big(\tau_0,\frac{(1,\cdots,1)}{d}\big)$ converges in distribution to the Poisson-Dirichlet law with parameter $1$ (see e.g. \cite{Kin,Feng}). This combined with (\ref{Eq3.14.5}) shows that $\mathscr{M}\big(\sigma,\frac{(1,\cdots,1)}{n}\big)$ also converges in distribution to the Poisson-Dirichlet law with parameter $1$.

Now we turn to the proof of (\ref{Unif}). We consider any non-random $s_n\in [n]$. If $s_n$ is contained in a closed arc at step $d+1$, we let $I=0$; otherwise we let $I\in [d]$ be such that $s_n\in \mathbf{a}_I$. We also let $\mathcal{C}_0(\gamma):=\emptyset$ for any $\gamma\in S_d$. Note that 
\begin{equation}\label{Eq3.15.3}
    \Big|\frac{|\mathcal{C}_{s_n}(\sigma)|}{n}-\frac{|\mathcal{C}_I(\tau_0)|}{d}\Big|\leq \frac{|\mathcal{C}_{s_n}(\sigma)|}{n}\mathbbm{1}_{I=0}+\Big|\frac{|\mathcal{C}_{s_n}(\sigma)|}{n}-\frac{|\mathcal{C}_I(\tau)|}{d}\Big|\mathbbm{1}_{I\neq 0}+\mathbbm{1}_{\tau\neq \tau_0}.
\end{equation}
By the argument in (\ref{Eq3.15.1}), as $n\rightarrow\infty$,
\begin{eqnarray}
   && \mathbb{E}\Big[\frac{|\mathcal{C}_{s_n}(\sigma)|}{n}\mathbbm{1}_{I=0}\Big]\leq \mathbb{E}\Big[1-\sum_{\mathbf{a}\in\mathcal{A}_O(d+1)}\frac{|\mathbf{a}|}{n}\Big]\nonumber\\
   &\leq& 1-\mathbb{E}\Big[\Big(\sum_{\mathbf{a}\in\mathcal{A}_O(d+1)}\frac{|\mathbf{a}|}{n}\Big)\mathbbm{1}_{\mathcal{P}'}\Big]\rightarrow 0.
\end{eqnarray}
Recall that the cycles of $\tau$ are denoted by $\mathscr{C}_1,\cdots,\mathscr{C}_m$. Note that 
\begin{eqnarray*}
   &&  \Big(\frac{|\mathcal{C}_{s_n}(\sigma)|}{n}-\frac{|\mathcal{C}_I(\tau)|}{d}\Big)^2\mathbbm{1}_{I\neq 0}=\Big(\sum_{j\in\mathcal{C}_I(\tau)}\Big(\frac{|\mathbf{a}_j|}{n}-\frac{1}{d}\Big)\Big)^2\mathbbm{1}_{I\neq 0}\nonumber\\
   &\leq & \sum_{k=1}^m \Big(\sum_{j\in \mathscr{C}_k}\Big(\frac{|\mathbf{a}_j|}{n}-\frac{1}{d}\Big)\Big)^2= \sum_{j=1}^d\sum_{j'=1}^d \Big(\frac{|\mathbf{a}_j|}{n}-\frac{1}{d}\Big)\Big(\frac{|\mathbf{a}_{j'}|}{n}-\frac{1}{d}\Big)\mathbbm{1}_{j'\in\mathcal{C}_j(\tau)}.
\end{eqnarray*}
Hence by the argument in (\ref{Eq3.12.4})-(\ref{Eq3.15.2}), 
\begin{eqnarray*}
  &&  \mathbb{E}\Big[\Big|\frac{|\mathcal{C}_{s_n}(\sigma)|}{n}-\frac{|\mathcal{C}_I(\tau)|}{d}\Big|\mathbbm{1}_{I\neq 0}\mathbbm{1}_{\mathcal{P}}\Big]^2\leq \mathbb{E}\Big[\Big(\frac{|\mathcal{C}_{s_n}(\sigma)|}{n}-\frac{|\mathcal{C}_I(\tau)|}{d}\Big)^2\mathbbm{1}_{I\neq 0}\mathbbm{1}_{\mathcal{P}}\Big] \nonumber\\
  &\leq& \mathbb{E}\Big[\sum_{j=1}^d\sum_{j'=1}^d\Big(\frac{|\mathbf{a}_j|}{n}-\frac{1}{d}\Big)\Big(\frac{|\mathbf{a}_{j'}|}{n}-\frac{1}{d}\Big)\mathbb{P}(j'\in\mathcal{C}_j(\tau)|\mathcal{B}_d)\mathbbm{1}_{\mathcal{P}}\Big] \nonumber\\
  &\leq& \frac{1}{2d}+\frac{\delta}{2}+\frac{\delta^2}{2}+2\mathbb{P}((\mathcal{N}_{\delta})^c)\nonumber\\
    &\leq& \max \{\beta_n^{7\slash 16},n^{-1}\}+\delta+
    C\exp(-c\beta_n^{-1\slash 16})\nonumber\\
    &&  +C(\beta_n^2n+\beta_n^{1\slash 32}+\exp(-c\beta_n^{-1\slash 32})+n^{-1})^{1\slash 3}\delta^{-1}\nonumber\\
    &&  + 2(1-(\exp(-C\beta_n^{1\slash 32})-\exp(-c\beta_n^{-1\slash 32}))_{+}^3  \nonumber\\
 &&\quad\quad\times(\exp(-C(\beta_n^{1\slash 2}+\beta_n^2 n ))-\exp(-c\beta_n^{-1\slash 4}))_{+})  \delta^{-1}.
\end{eqnarray*}
First letting $n\rightarrow\infty$, and then letting $\delta\rightarrow 0^{+}$, we obtain that
\begin{equation}
    \lim_{n\rightarrow\infty} \mathbb{E}\Big[\Big|\frac{|\mathcal{C}_{s_n}(\sigma)|}{n}-\frac{|\mathcal{C}_I(\tau)|}{d}\Big|\mathbbm{1}_{I\neq 0}\mathbbm{1}_{\mathcal{P}}\Big]=0.
\end{equation}
By (\ref{Eq3.3.14}), as $n\rightarrow\infty$,
\begin{equation}
    \mathbb{E}\Big[\Big|\frac{|\mathcal{C}_{s_n}(\sigma)|}{n}-\frac{|\mathcal{C}_I(\tau)|}{d}\Big|\mathbbm{1}_{I\neq 0}\mathbbm{1}_{\mathcal{P}^c}\Big]\leq \mathbb{P}(\mathcal{P}^c)\leq 1-\exp(-2\beta_n^{1\slash 8})\rightarrow 0,
\end{equation}
\begin{equation}\label{Eq3.15.4}
    \mathbb{P}(\tau\neq \tau_0)\leq\mathbb{P}(\mathcal{P}^c)\leq 1-\exp(-2\beta_n^{1\slash 8})\rightarrow 0.
\end{equation}
By (\ref{Eq3.15.3})-(\ref{Eq3.15.4}), as $n\rightarrow\infty$,
\begin{equation}\label{Eq3.15.5}
    \mathbb{E}\Big[\Big|\frac{|\mathcal{C}_{s_n}(\sigma)|}{n}-\frac{|\mathcal{C}_I(\tau_0)|}{d}\Big|\Big]\rightarrow 0.
\end{equation}

As $\tau_0$ is a uniformly random permutation of $[d]$ (conditional on $\mathcal{B}_d$), for any $x \in [0,1]$, we have
\begin{equation*}
    \mathbb{P}\Big(\frac{|\mathcal{C}_I(\tau_0)|}{d}\leq x\Big|\mathcal{B}_d\Big)\mathbbm{1}_{I\neq 0}= \frac{\lfloor xd\rfloor}{d} \mathbbm{1}_{I\neq 0},
\end{equation*}
hence
\begin{equation}\label{Eq3.15.7}
    \mathbb{P}\Big(\frac{|\mathcal{C}_I(\tau_0)|}{d}\leq x\Big)
    \begin{cases}
        \geq \frac{\lfloor x d \rfloor}{d}\mathbb{P}(I\neq 0)\\
        \leq \frac{\lfloor x d \rfloor}{d}\mathbb{P}(I\neq 0)+\mathbb{P}(I=0)
    \end{cases}. 
\end{equation}
By Lemma \ref{L3.11} and (\ref{Eq3.3.9}) (taking $\beta=\beta_n$), as $n\rightarrow\infty$,  
\begin{eqnarray}\label{Eq3.15.6}
  &&  \mathbb{P}(I\neq 0)=\mathbb{P}(\mathcal{S}_{s_n,d+1})\geq\mathbb{E}[\mathbb{P}(\mathcal{S}_{s_n,d+1}|\mathcal{B}_n)\mathbbm{1}_{\mathcal{P}'}] \nonumber\\
  &\geq& (\exp(-C\beta_n^{1\slash 32})-\exp(-c\beta_n^{-1\slash 32}))_{+}^3\nonumber\\
  &&  \times (\exp(-C(\beta_n^{1\slash 2}+\beta_n^2n))-\exp(-c\beta_n^{-1\slash 4}))_{+}\mathbb{P}(\mathcal{P}')\nonumber\\
  &\geq& (1-C\exp(-c\beta_n^{-1\slash 16}))_{+}(\exp(-C\beta_n^{1\slash 32})-\exp(-c\beta_n^{-1\slash 32}))_{+}^3\nonumber\\
  &&\times (\exp(-C(\beta_n^{1\slash 2}+\beta_n^2n))-\exp(-c\beta_n^{-1\slash 4}))_{+}\rightarrow 1.
\end{eqnarray}
By (\ref{Eq3.15.7}) and (\ref{Eq3.15.6}), for any $x\in [0,1]$, $\lim_{n\rightarrow\infty}\mathbb{P}(|\mathcal{C}_I(\tau_0)|\slash d\leq x) \rightarrow x$. Hence $|\mathcal{C}_I(\tau_0)|\slash d$ converges in distribution to $U([0,1])$. Noting (\ref{Eq3.15.5}), we conclude that $|\mathcal{C}_{s_n}(\sigma)|\slash n$ converges in distribution to $U([0,1])$. 

\begin{appendices}
  
\section{Proof of Proposition \ref{P2.1_n}}

In this section, we give the proof of Proposition \ref{P2.1_n}. We start with the following definition.

\begin{definition}\label{Regions}
For any $j\in [n]$ and $l\in\mathbb{N}$, we define
\begin{equation*}
    \mathcal{E}_{j,l}:=\{(x,y)\in [n]^2:x\leq j,y\geq j+1,  \min\{j-x,y-j-1\}=l \},
\end{equation*}
\begin{equation*}
    \mathcal{E}'_{j,l}:=\{(x,y)\in [n]^2:x\geq j+1, y\leq j,   \min\{x-j-1,j-y\}=l\};
\end{equation*}
\begin{equation*}
\mathcal{G}_{j,l}:=\{(x,y)\in \mathcal{E}_{j,l}:j-x\leq y-j-1\},
\end{equation*}
\begin{equation*}
\mathcal{H}_{j,l}:=\{(x,y)\in \mathcal{E}_{j,l}: j-x>y-j-1\},
\end{equation*}
\begin{equation*}
\mathcal{G}'_{j,l}:=\{(x,y)\in \mathcal{E}'_{j,l}: x-j-1\leq j-y\},
\end{equation*}
\begin{equation*}
\mathcal{H}'_{j,l}:=\{(x,y)\in \mathcal{E}'_{j,l}: x-j-1>j-y\}. 
\end{equation*}
For any $j\in [n]$ and $l,l'\in\mathbb{N}$ such that $l\leq l'$, we define
\begin{equation*}
\mathcal{G}_{j,l,l'}:=\{(x,y)\in \mathcal{G}_{j,l}: y-j-1=l'\},
\end{equation*}
\begin{equation*}
\mathcal{H}_{j,l,l'}:=\{(x,y)\in \mathcal{H}_{j,l}: j-x=l'\},
\end{equation*}
\begin{equation*}
\mathcal{G}'_{j,l,l'}:=\{(x,y)\in \mathcal{G}'_{j,l}: j-y=l' \},
\end{equation*}
\begin{equation*}
\mathcal{H}'_{j,l,l'}:=\{(x,y)\in \mathcal{H}'_{j,l}: x-j-1=l'\}.
\end{equation*}
Note that $|\mathcal{G}_{j,l,l'}|,|\mathcal{H}_{j,l,l'}|,|\mathcal{G}'_{j,l,l'}|,|\mathcal{H}'_{j,l,l'}|\in \{0,1\}$. 
\end{definition}

We also have the following lemma from \cite{Zhong2}.

\begin{lemma}[\cite{Zhong2}, Lemma 5.3.1]
For any $d\in\mathbb{N}^{*}$ such that $d\leq m$, we have
\begin{equation}
    \binom{m}{d}\leq \Big(\frac{em}{d}\Big)^d.
\end{equation}
\end{lemma}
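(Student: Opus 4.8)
The plan is to prove the bound by two elementary steps: first bound $\binom{m}{d}$ by $m^d/d!$, then bound $d!$ from below by $(d/e)^d$. First I would write $\binom{m}{d} = \frac{m!}{d!\,(m-d)!} = \frac{1}{d!}\prod_{i=0}^{d-1}(m-i)$, and observe that each factor $m-i$ in the product satisfies $m-i \le m$ for $i \in \{0,1,\dots,d-1\}$, so $\prod_{i=0}^{d-1}(m-i) \le m^d$. This gives $\binom{m}{d} \le m^d/d!$, valid for any $d \in \mathbb{N}^{*}$ with $d \le m$.

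Next I would establish the factorial lower bound $d! \ge (d/e)^d$, equivalently $d^d/d! \le e^d$. This follows immediately from the series expansion $e^d = \sum_{k=0}^{\infty} \frac{d^k}{k!} \ge \frac{d^d}{d!}$, since every term in the series is nonnegative and $d^d/d!$ is the $k=d$ term. Combining the two inequalities yields
\begin{equation*}
    \binom{m}{d} \le \frac{m^d}{d!} \le \frac{m^d}{(d/e)^d} = \Big(\frac{em}{d}\Big)^d,
\end{equation*}
which is the desired conclusion.

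There is essentially no obstacle here: both ingredients are standard. The only point requiring a line of care is ensuring the hypothesis $d \le m$ is used correctly so that the factors $m-i$ are all positive (hence the product bound is a genuine inequality rather than vacuous), but since $0 \le i \le d-1 \le m-1 < m$ this is automatic. One could alternatively invoke $d! \ge (d/e)^d$ as a well-known consequence of Stirling's formula, but the series argument above is self-contained and avoids citing anything, so I would present it that way.
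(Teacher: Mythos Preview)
Your proof is correct and complete; both steps are standard and the argument is self-contained. Note that the paper does not actually prove this lemma: it is simply quoted from \cite{Zhong2} with no proof given, so there is nothing to compare your approach against.
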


We give the proof of Proposition \ref{P2.1_n} as follows. The proof is modified from the proof of \cite[Proposition 5.3.1]{Zhong2}.

\begin{proof}[Proof of Proposition \ref{P2.1_n}]

Throughout the proof, we consider any $r\in \mathbb{N}^{*}$ and $j\in [n]$. Note that for any $\sigma\in S_n$ and any $u,v\in [n]$ such that $u\leq j$, $v\geq j+1$, $\sigma(u)\geq j+1$, $\sigma(v)\leq j$, we have 
\begin{eqnarray}\label{E1}
    &&(\sigma(u)-u)_{+}+(\sigma(v)-v)_{+}-(\sigma(u)-v)_{+}-(\sigma(v)-u)_{+} \nonumber  \\
    &=&\min\{v,\sigma(u)\}-\max\{\sigma(v),u\}\nonumber\\
    &=&\min\{v-j-1,\sigma(u)-j-1\}+\min\{j-\sigma(v),j-u\}+1. 
\end{eqnarray}
For any $\sigma\in S_n$, we let 
\begin{equation*}
    N_1(\sigma):=S(\sigma)\bigcap\big(\bigcup_{l= 0}^{\infty}\mathcal{E}_{j,l}\big), \quad N_2(\sigma):=S(\sigma)\bigcap \big(\bigcup_{l=0}^{\infty}\mathcal{E}'_{j,l}\big).
\end{equation*}
Note that $|N_1(\sigma)|=|N_2(\sigma)|$.

\paragraph{Pairing and coloring} 

Consider any $\sigma\in S_n$. We pair each point in $N_1(\sigma)$ with a point in $N_2(\sigma)$ in a one-to-one manner, such that a point in $N_1(\sigma)$ with larger first coordinate is paired to a point in $N_2(\sigma)$ with larger first coordinate. We denote this pairing by \textbf{Pairing}. 

Below we consider any $l_1,l_1',l_2,l_2'\in \mathbb{N}$ with $l_1 \leq l_1'$ and $l_2\leq l_2'$. 

For any pair of points $P_1,P_2$ from \textbf{Pairing} such that $P_1\in\mathcal{G}'_{j,l_1,l_1'}$ and $P_2\in\mathcal{H}_{j,l_2,l_2'}$, if $l_1'\leq l_2'$, we mark $P_1$ with red color and $P_2$ with blue color; if $l_1'>l_2'$, we mark $P_2$ with red color and $P_1$ with blue color. 

For any pair of points $P_1,P_2$ from \textbf{Pairing} such that $P_1\in \mathcal{G}_{j,l_1,l_1'}$ and $P_2\in\mathcal{H}'_{j,l_2,l_2'}$, if $l_1'\leq l_2'$, we mark $P_1$ with red color and $P_2$ with blue color; if $l_1'>l_2'$, we mark $P_2$ with red color and $P_1$ with blue color. 

For any pair of points $P_1,P_2$ from \textbf{Pairing} such that $P_1\in \mathcal{G}_{j,l_1,l_1'}$ and $P_2\in \mathcal{G}'_{j,l_2,l_2'}$, if $l_1'\geq l_2$ and $l_2'\geq l_1$, we mark both points with black color. If $l_1'<l_2$ or $l_2'<l_1$, there are two cases: if further $l_1'\leq l_2'$, we mark $P_1$ with red color and $P_2$ with blue color; if further $l_1'>l_2'$, we mark $P_2$ with red color and $P_1$ with blue color. 

For any pair of points $P_1,P_2$ from \textbf{Pairing} such that $P_1\in\mathcal{H}_{j,l_1,l_1'}$ and $P_2\in \mathcal{H}'_{j,l_2,l_2'}$, if $l_1'\geq l_2$ and $l_2'\geq l_1$, we mark both points with black color. If $l_1'<l_2$ or $l_2'<l_1$, there are two cases: if further $l_1'\leq l_2'$, we mark $P_1$ with red color and $P_2$ with blue color; if further $l_1'>l_2'$, we mark $P_2$ with red color and $P_1$ with blue color. 

\paragraph{Switching operation} 

Now we define the ``switching operation''. Starting from a permutation $\sigma_0\in S_n$, the ``switching operation'' corresponding to the points $(j,\sigma_0(j))$ and $(j',\sigma_0(j'))$ (where $j,j'\in [n]$) is defined as the modification from $\sigma_0$ to the new permutation $\sigma_0'=\sigma_0 (j,j')$. Note that $\sigma_0'(j)=\sigma_0(j')$ and $\sigma_0'(j')=\sigma_0(j)$.

For any $\sigma\in S_n$, we define $T(\sigma)\in S_n$ to be the permutation obtained from $\sigma$ by sequentially applying the switching operation for each pair of points in \textbf{Pairing}.

\paragraph{Data associated with each permutation} 

For any $l,l'\in \mathbb{N}$, we denote by $e_{l,l'}(\sigma)$ the number of pairs of black points between $\mathcal{G}_{j,l}$ and $\mathcal{G}'_{j,l'}$, and denote by $f_{l,l'}(\sigma)$ the number of pairs of black points between $\mathcal{H}_{j,l}$ and $\mathcal{H}'_{j,l'}$. We denote by $E_{l,l',1}(\sigma)$ the set of first coordinates of those black points in $\mathcal{G}_{j,l}$ that are paired to a black point in $\mathcal{G}'_{j,l'}$, and denote by $E_{l,l',2}(\sigma)$ the set of first coordinates of those black points in $\mathcal{G}'_{j,l'}$ that are paired to a black point in $\mathcal{G}_{j,l}$. We also denote by $F_{l,l',1}(\sigma)$ the set of second coordinates of those black points in $\mathcal{H}_{j,l}$ that are paired to a black point in $\mathcal{H}'_{j,l'}$, and denote by $F_{l,l',2}(\sigma)$ the set of second coordinates of those black points in $\mathcal{H}'_{j,l'}$ that are paired to a black point in $\mathcal{H}_{j,l}$.  

For any $l,l'\in \mathbb{N}$ such that $l\leq l'$, we denote by $a_{l,l'}(\sigma),a'_{l,l'}(\sigma),b_{l,l'}(\sigma),b'_{l,l'}(\sigma)$ the number of red points in $\mathcal{G}_{j,l,l'},\mathcal{G}'_{j,l,l'},\mathcal{H}_{j,l,l'},\mathcal{H}'_{j,l,l'}$, respectively. We denote by $A_{l,l',1}(\sigma)$ the set of first coordinates of the red points in $\mathcal{G}_{j,l,l'}$ and $A_{l,l',2}(\sigma)$ the set of second coordinates of the red points in $\mathcal{G}_{j,l,l'}$. We can similarly define $A'_{l,l',1}(\sigma), A'_{l,l',2}(\sigma)$ for $\mathcal{G}'_{j,l,l'}$, $B_{l,l',1}(\sigma), B_{l,l',2}(\sigma)$ for $\mathcal{H}_{j,l,l'}$, and $B'_{l,l',1}(\sigma), B'_{l,l',2}(\sigma)$ for $\mathcal{H}'_{j,l,l'}$. 

We note that
\begin{eqnarray}\label{Eq4.4}
  && \quad\quad  \quad\quad e_{l,l'}(\sigma), f_{l,l'}(\sigma)\in \{0,1\}, \text{ for any } l,l'\in \mathbb{N},\nonumber\\
  && a_{l,l'}(\sigma), a'_{l,l'}(\sigma),b_{l,l'}(\sigma),b'_{l,l'}(\sigma) \in \{0,1\}, \text{ for any } l,l'\in \mathbb{N}\text{ with } l\leq l';\nonumber\\
  &&
\end{eqnarray}
\begin{eqnarray}\label{Eq4.5}
 &&  \quad\quad e_{l,l'}(\sigma)=f_{l,l'}(\sigma)=0, \text{ for any } l,l'\in \mathbb{N} \text{ with } \max\{l,l'\}\geq n+1,\nonumber\\
 &&   a_{l,l'}(\sigma)=a'_{l,l'}(\sigma)=b_{l,l'}(\sigma)=b'_{l,l'}(\sigma)=0, \text{ for any } l,l'\in \mathbb{N}\text{ with }  l \leq l', l'\geq n+1.\nonumber\\
 &&
\end{eqnarray}

Now to each $\sigma\in S_n$, we associate the following collection of objects (denoted by $M(\sigma)$ and referred to as the ``data'' associated to $\sigma$ in the sequel):
\begin{equation*}
    e_{l,l'}(\sigma), f_{l,l'}(\sigma), E_{l,l',1}(\sigma), E_{l,l',2}(\sigma), F_{l,l',1}(\sigma), F_{l,l',2}(\sigma) \text{ for every }l,l'\in\mathbb{N};
\end{equation*}
\begin{eqnarray*}
 && a_{l,l'}(\sigma), a'_{l,l'}(\sigma), b_{l,l'}(\sigma), b'_{l,l'}(\sigma), A_{l,l',1}(\sigma), A_{l,l',2}(\sigma), A'_{l,l',1}(\sigma), A'_{l,l',2}(\sigma),\\
 &&  B_{l,l',1}(\sigma), B_{l,l',2}(\sigma), B'_{l,l',1}(\sigma), B'_{l,l',2}(\sigma) \text{ for every }l,l'\in \mathbb{N}\text{ such that } l\leq l'.
\end{eqnarray*}

\paragraph{Injective property of the mapping $T$} 

Consider any choice of data $m$ such that $\mathcal{Q}(m):=\{\sigma\in S_n:M(\sigma)=m\}$ is non-empty. Below we show that the mapping $T$ is injective on $\mathcal{Q}(m)$. We denote by $e_{l,l'},f_{l,l'},E_{l,l',1},E_{l,l',2},\cdots$ the corresponding components of $m$ in the following.

Consider any $\sigma'\in S_n$. Suppose that $T(\sigma)=\sigma'$ for some $\sigma\in\mathcal{Q}(m)$. Note that for any $l,l'\in \mathbb{N}$ with $l\leq l'$, we have $|A_{l,l',1}|=|A_{l,l',2}|\in \{0,1\}$, and we only need to consider the case where $|A_{l,l',1}|=|A_{l,l',2}|=1$ (which we assume in the sequel). Using $A_{l,l',1}, A_{l,l',2}$ together with $\sigma'$, we can identity the red point in $\mathcal{G}_{j,l,l'}$ and the blue point paired with this red point as follows. Suppose that $A_{l,l',1}=\{x\}$ and $A_{l,l',2}=\{y\}$. Note that after the switching operation, $(x,\sigma'(x))$ is paired with $((\sigma')^{-1}(y),y)$. Reversing the switching operation (which corresponds to replacing the points $(x,\sigma'(x))$ and $((\sigma')^{-1}(y),y)$ by $(x,y)$ and $((\sigma')^{-1}(y),\sigma'(x))$), we identify the red point in $\mathcal{G}_{j,l,l'}$ and the blue point paired with this red point based on $\sigma'$ and $m$. Similar procedures can be performed with $\mathcal{G}_{j,l,l'}$ replaced by $\mathcal{G}'_{j,l,l'},\mathcal{H}_{j,l,l'},\mathcal{H}'_{j,l,l'}$.

Moreover, for any $l,l'\in\mathbb{N}$, using $E_{l,l',1},E_{l,l',2}$ together with $\sigma'$, we can identify the set of pairs of black points $P_1,P_2$ with $P_1\in\mathcal{G}_{j,l}$ and $P_2\in\mathcal{G}_{j,l'}'$ as follows. Suppose that $E_{l,l',1}=\{x_1,\cdots,x_t\}$ and $E_{l,l',2}=\{x_1',\cdots,x_t'\}$, where $x_1<\cdots<x_t$ and $x_1'<\cdots<x_t'$. After the switching operation, such black points are changed to $\{(x_j,\sigma'(x_j))\}_{j=1}^t$ and $\{(x_j',\sigma'(x_j'))\}_{j=1}^t$. Due to the construction of \textbf{Pairing}, for all pairs of black points $P_1,P_2$ such that $P_1\in\mathcal{G}_{j,l}$ and $P_2\in\mathcal{G}_{j,l'}'$, we have that those black points in $\mathcal{G}_{j,l}$ with larger first coordinate are paired with black  points in $\mathcal{G}'_{j,l'}$ with larger first coordinate. Therefore, before switching, $(x_j,\sigma'(x_j'))$ is paired with $(x_j',\sigma'(x_j))$ for each $j\in [t]$. Thus we identify the set of pairs of black points $P_1,P_2$ with $P_1\in\mathcal{G}_{j,l}$ and $P_2\in\mathcal{G}_{j,l'}'$ based on $\sigma'$ and $m$.

Similarly, for any $l,l'\in\mathbb{N}$, using $F_{l,l',1},F_{l,l',2}$ together with $\sigma'$, we can identify the set of pairs of black points $P_1,P_2$ such that $P_1\in\mathcal{H}_{j,l}$, $P_2\in\mathcal{H}_{j,l'}'$ as follows. Suppose that $F_{l,l',1}=\{y_1,\cdots,y_t\}$ and $F_{l,l',2}=\{y_1',\cdots,y_t'\}$, where $y_1<\cdots<y_t$ and $y_1'<\cdots<y_t'$. After the switching operation, such black points are changed to $\{((\sigma')^{-1}(y_j),y_j)\}_{j=1}^t$ and $\{(\sigma')^{-1}(y_j'),y_j')\}_{j=1}^t$. Now suppose that $(\sigma')^{-1}(y_{k_1})< \cdots < (\sigma')^{-1}(y_{k_t})$ and $(\sigma')^{-1}(y_{k_1'}')< \cdots < (\sigma')^{-1}(y_{k_t'}')$, where $\{k_1,\cdots,k_t\}=\{k_1',\cdots,k_t'\}=[t]$. Due to the construction of \textbf{Pairing}, for all pairs of black points $P_1,P_2$ such that $P_1\in\mathcal{H}_{j,l}$ and $P_2\in\mathcal{H}_{j,l'}'$, we have that those black points in $\mathcal{H}_{j,l}$ with larger first coordinate are paired with black  points in $\mathcal{H}'_{j,l'}$ with larger first coordinate. Therefore, before switching, $((\sigma')^{-1}(y'_{k_j'}),y_{k_j})$ is paired with $((\sigma')^{-1}(y_{k_j}),y'_{k_j'})$ for each $j\in [t]$. Thus we identify the set of pairs of black points $P_1,P_2$ with $P_1\in\mathcal{H}_{j,l}$ and $P_2\in\mathcal{H}_{j,l'}'$ based on $\sigma'$ and $m$.

Therefore, $\sigma$ can be uniquely reconstructed based on $\sigma'$ and $m$. This shows that $T$ is injective on $\mathcal{Q}(m)$.

\paragraph{Tail bound}

Below we consider any $\sigma\in S_n$.

Consider any $l,l'\in\mathbb{N}$ such that $l\leq l'$. For any red point $(u,\sigma(u))$ in $\mathcal{G}_{j,l,l'}$ and the blue point $(v,\sigma(v))$ paired with it in \textbf{Pairing}, there are two possible cases: (a) $(v,\sigma(v))\in \mathcal{G}'_{j,l_2,l_2'}$ for some $l_2,l_2'\in \mathbb{N}$ with $l_2\leq l_2'$; (b) $(v,\sigma(v))\in \mathcal{H}'_{j,l_2,l_2'}$ for some $l_2,l_2'\in \mathbb{N}$ with $l_2\leq l_2'$. First consider case (a). Note that by the construction of \textbf{Pairing}, we have $l'\leq l_2'$, and either $l'<l_2$ or $l_2'<l$. If $l_2'<l$, then $l_2'<l\leq l'$, which contradicts the fact that $l'\leq l_2'$. Hence $l'\leq l_2'$ and $l'<l_2$. By (\ref{E1}), 
\begin{eqnarray*}
   && (\sigma(u)-u)_{+}+(\sigma(v)-v)_{+}-(\sigma(u)-v)_{+}-(\sigma(v)-u)_{+} \nonumber\\
   &\geq& \min\{l',l_2\}+\min\{l,l_2'\}+1\geq l'+l+1.
\end{eqnarray*}
Now we consider case (b). By the construction of \textbf{Pairing}, $l'\leq l_2'$. By (\ref{E1}),
\begin{eqnarray*}
   && (\sigma(u)-u)_{+}+(\sigma(v)-v)_{+}-(\sigma(u)-v)_{+}-(\sigma(v)-u)_{+} \nonumber\\
   &\geq& \min\{l',l_2'\}+\min\{l,l_2\}+1\geq l'+1.
\end{eqnarray*}
Thus we conclude that in both cases,
\begin{equation}\label{E2}
    (\sigma(u)-u)_{+}+(\sigma(v)-v)_{+}-(\sigma(u)-v)_{+}-(\sigma(v)-u)_{+}\geq l'+1.
\end{equation}
The same bound for $\mathcal{G}'_{j,l,l'},\mathcal{H}_{j,l,l'},\mathcal{H}'_{j,l,l'}$ can be obtained similarly.

Consider any pair of black points $(u,\sigma(u))$ and $(v,\sigma(v))$. Without loss of generality we assume that $(u,\sigma(u))\in\mathcal{G}_{j,l}$ and $(v,\sigma(v))\in\mathcal{G}'_{j,l'}$, where $l,l'\in\mathbb{N}$ (other cases can be dealt with similarly). Further assume that $(u,\sigma(u))\in\mathcal{G}_{j,l,\tilde{l}}$ and $(v,\sigma(v))\in\mathcal{G}'_{j,l',\tilde{l}'}$, where $\tilde{l},\tilde{l}'\in\mathbb{N}$ and $\tilde{l}\geq l,\tilde{l}'\geq l'$. By the construction of \textbf{Pairing}, $\tilde{l}\geq l'$ and $\tilde{l}'\geq l$. By (\ref{E1}), 
\begin{eqnarray}\label{E3}
    &&(\sigma(u)-u)_{+}+(\sigma(v)-v)_{+}-(\sigma(u)-v)_{+}-(\sigma(v)-u)_{+} \nonumber \\
    &\geq& \min\{l',\tilde{l}\}+\min\{\tilde{l}',l\}+1\geq l'+l+1.
\end{eqnarray}

Note that for any $\sigma\in S_n$, 
\begin{equation*}
    \sum_{j=1}^n(\sigma(j)-j)_{+}-\sum_{j=1}^n(\sigma(j)-j)_{-}=\sum_{j=1}^n (\sigma(j)-j)=0,
\end{equation*}
hence 
\begin{equation*}
    H(\sigma,Id)= \sum_{j=1}^n(\sigma(j)-j)_{+}+\sum_{j=1}^n(\sigma(j)-j)_{-}=2\sum_{j=1}^n(\sigma(j)-j)_{+}.
\end{equation*}
By (\ref{E2}) and (\ref{E3}), for any $\sigma\in \mathcal{Q}(m)$, we have
\begin{equation*}
   \mathbb{P}_{n,\beta} (\sigma) \leq  e^{-2\beta (\sum\limits_{l,l'\in\mathbb{N}}(l+l'+1)(e_{l,l'}+f_{l,l'})+\sum\limits_{l,l'\in\mathbb{N}:l\leq l'}(l'+1)(a_{l,l'}+a'_{l,l'}+b_{l,l'}+b'_{l,l'}))  }\mathbb{P}_{n,\beta}(T(\sigma)). 
\end{equation*}
As $T$ is injective on $\mathcal{Q}(m)$, we have 
\begin{eqnarray}\label{Eq4.1}
  &&  \mathbb{P}_{n,\beta}(\mathcal{Q}(m))=\sum_{\sigma\in\mathcal{Q}(m)} \mathbb{P}_{n,\beta}(\sigma) \nonumber\\
  &\leq& \sum_{\sigma\in\mathcal{Q}(m)} e^{-2\beta (\sum\limits_{l,l'\in\mathbb{N}}(l+l'+1)(e_{l,l'}+f_{l,l'})+\sum\limits_{l,l'\in\mathbb{N}:l\leq l'}(l'+1)(a_{l,l'}+a'_{l,l'}+b_{l,l'}+b'_{l,l'}))  }\mathbb{P}_{n,\beta}(T(\sigma)) \nonumber\\
  &\leq& e^{-2\beta (\sum\limits_{l,l'\in\mathbb{N}}(l+l'+1)(e_{l,l'}+f_{l,l'})+\sum\limits_{l,l'\in\mathbb{N}:l\leq l'}(l'+1)(a_{l,l'}+a'_{l,l'}+b_{l,l'}+b'_{l,l'}))  }\mathbb{P}_{n,\beta}(S_n)\nonumber\\
  &=& e^{-2\beta (\sum\limits_{l,l'\in\mathbb{N}}(l+l'+1)(e_{l,l'}+f_{l,l'})+\sum\limits_{l,l'\in\mathbb{N}:l\leq l'}(l'+1)(a_{l,l'}+a'_{l,l'}+b_{l,l'}+b'_{l,l'}))  }.
\end{eqnarray}

For any $\sigma\in S_n$, let $M'(\sigma)$ be the following collection of quantities: 
\begin{equation*}
    e_{l,l'}(\sigma), f_{l,l'}(\sigma) \text{ for every } l,l'\in\mathbb{N};
\end{equation*}
\begin{equation*}
    a_{l,l'}(\sigma), a'_{l,l'}(\sigma), b_{l,l'}(\sigma), b'_{l,l'}(\sigma) \text{ for every } l,l'\in\mathbb{N}  \text{ such that }l\leq l'.
\end{equation*}
Note that for any given $\{a_{l,l'}, a'_{l,l}, b_{l,l'}, b'_{l,l'}\}_{l,l'\in \mathbb{N}:l\leq l'}, \{e_{l,l'}, f_{l,l'}\}_{l,l'\in \mathbb{N}}$ (below we denote by $m'$ such sub-data), there is at most one choice of data $m$ such that this part of $m$ matches $m'$. By (\ref{Eq4.1}), for any sub-data $m'$ such that\\ $\{\sigma\in S_n: M'(\sigma)=m'\}\neq \emptyset$,
\begin{eqnarray}\label{Eq4.2}
  &&  \mathbb{P}_{n,\beta}(M'(\sigma)=m') \nonumber\\
  &\leq& e^{-2\beta (\sum\limits_{l,l'\in\mathbb{N}}(l+l'+1)(e_{l,l'}+f_{l,l'})+\sum\limits_{l,l'\in\mathbb{N}:l\leq l'}(l'+1)(a_{l,l'}+a'_{l,l'}+b_{l,l'}+b'_{l,l'}))  }.
\end{eqnarray}
Note that for any $\sigma\in S_n$,
\begin{equation}\label{Eq4.3}
    |\mathcal{D}_j(\sigma)|=\sum\limits_{l,l'\in\mathbb{N}}(e_{l,l'}(\sigma)+f_{l,l'}(\sigma))+\sum\limits_{l,l'\in\mathbb{N}:l\leq l'}(a_{l,l'}(\sigma)+a'_{l,l'}(\sigma)+b_{l,l'}(\sigma)+b'_{l,l'}(\sigma)).
\end{equation}

For any $t\in \mathbb{N}$, let $Q(t)$ be the number of choices of 
\begin{equation*}
   \{e_{l,l'}, f_{l,l'}\}_{l,l'\in \mathbb{N}:\max\{l,l'\}\leq n},\quad \{a_{l,l'}, a'_{l,l}, b_{l,l'}, b'_{l,l'}\}_{l,l'\in \mathbb{N}:l\leq l'\leq n}
\end{equation*}
such that $e_{l,l'},f_{l,l'},a_{l,l'},a'_{l,l'},b_{l,l'},b'_{l,l'}\in\{0,1\}$ and
\begin{equation*}
\sum_{\substack{l,l'\in \mathbb{N}:\\\max\{l,l'\}\leq n}}(e_{l,l'}+f_{l,l'})+\sum_{\substack{l,l'\in \mathbb{N}:\\ l\leq l'\leq n}}(a_{l,l'}+a'_{l,l'}+b_{l,l'}+b'_{l,l'})\geq t.
\end{equation*}
By (\ref{Eq4.4})-(\ref{Eq4.5}) and (\ref{Eq4.2})-(\ref{Eq4.3}), 
\begin{eqnarray}\label{Eq4.6}
  &&  \mathbb{P}_{n,\beta}(|\mathcal{D}_j(\sigma)|\geq r)\nonumber\\
  &\leq& \sum_{Q(r)}e^{-2\beta (\sum\limits_{l,l'\in\mathbb{N}:\max\{l,l'\}\leq n}(l+l'+1)(e_{l,l'}+f_{l,l'})+\sum\limits_{l,l'\in\mathbb{N}:l\leq l'\leq n}(l'+1)(a_{l,l'}+a'_{l,l'}+b_{l,l'}+b'_{l,l'}))} \nonumber\\
  &\leq & e^{-2\beta r} \sum_{Q(r)}e^{-2\beta (\sum\limits_{l,l'\in\mathbb{N}:\max\{l,l'\}\leq n}(l+l')(e_{l,l'}+f_{l,l'})+\sum\limits_{l,l'\in\mathbb{N}:l\leq l'\leq n}l'(a_{l,l'}+a'_{l,l'}+b_{l,l'}+b'_{l,l'}))}\nonumber\\
  &\leq& e^{-2\beta r} \Big(\prod_{\substack{l,l'\in \mathbb{N}:\\ \max\{l,l'\}\leq n}}(1+e^{-2\beta(l+l')})\Big)^2 \Big(\prod_{\substack{l,l'\in \mathbb{N}:\\ l\leq l'\leq n}}(1+e^{-2\beta l'})\Big)^4.
\end{eqnarray}
As $\beta\geq c_0$, 
\begin{eqnarray*}
&& \prod_{\substack{l,l'\in \mathbb{N}:\\\max\{l,l'\}\leq n}}(1+e^{-2\beta(l+l')})\leq \prod_{\substack{l,l'\in \mathbb{N}:\\l\leq l'}} (1+e^{-2\beta l'}) \prod_{\substack{l,l'\in \mathbb{N}:\\l>l'}}(1+e^{-2\beta l})\nonumber\\
  &\leq& \Big(\prod_{l=0}^{\infty} (1+e^{-2\beta l})^{l +1}\Big)^2 
   \leq  \Big(\prod_{l=0}^{\infty} \exp((l+1) e^{-2\beta l}) \Big)^2\nonumber\\
   &\leq&\exp\Big(2\sum_{l=0}^{\infty} (l+1) e^{-2c_0 l}\Big)=\exp(2(1-e^{-2c_0})^{-2}),
\end{eqnarray*}
\begin{equation*}
    \prod_{\substack{l,l'\in \mathbb{N}:\\ l\leq l'\leq n}}(1+e^{-2\beta l'})\leq \prod_{l=0}^{\infty} (1+e^{-2\beta l})^{l+1}\leq \exp((1-e^{-2c_0})^{-2}).
\end{equation*}
Hence by (\ref{Eq4.6}), we conclude that
\begin{equation}
    \mathbb{P}_{n,\beta}(|\mathcal{D}_j(\sigma)|\geq r)\leq C_0e^{-2\beta r},
\end{equation}
where $C_0$ is a positive constant that only depends on $c_0$. 

\end{proof}

\end{appendices}

\bibliographystyle{acm}
\bibliography{Mallows.bib}

\begin{thebibliography}{10}

\bibitem{ABT}
{\sc Arratia, R., Barbour, A.~D., and Tavar\'{e}, S.}
\newblock {\em Logarithmic combinatorial structures: a probabilistic approach}.
\newblock EMS Monographs in Mathematics. European Mathematical Society (EMS), Z\"{u}rich, 2003.

\bibitem{BV}
{\sc Betz, V.}
\newblock Random permutations of a regular lattice.
\newblock {\em J. Stat. Phys. 155}, 6 (2014), 1222--1248.

\bibitem{BDJ2}
{\sc Borodin, A., Diaconis, P., and Fulman, J.}
\newblock On adding a list of numbers (and other one-dependent determinantal processes).
\newblock {\em Bull. Amer. Math. Soc. (N.S.) 47}, 4 (2010), 639--670.

\bibitem{BLM}
{\sc Boucheron, S., Lugosi, G., and Massart, P.}
\newblock {\em Concentration inequalities}.
\newblock Oxford University Press, Oxford, 2013.
\newblock A nonasymptotic theory of independence, With a foreword by Michel Ledoux.

\bibitem{D}
{\sc Diaconis, P.}
\newblock {\em Group representations in probability and statistics}, vol.~11 of {\em Institute of Mathematical Statistics Lecture Notes---Monograph Series}.
\newblock Institute of Mathematical Statistics, Hayward, CA, 1988.

\bibitem{Feng}
{\sc Feng, S.}
\newblock {\em The {P}oisson-{D}irichlet distribution and related topics}.
\newblock Probability and its Applications (New York). Springer, Heidelberg, 2010.
\newblock Models and asymptotic behaviors.

\bibitem{FM}
{\sc Fyodorov, Y.~V., and Muirhead, S.}
\newblock The band structure of a model of spatial random permutation.
\newblock {\em Probab. Theory Related Fields 179}, 3-4 (2021), 543--587.

\bibitem{GRU}
{\sc Gandolfo, D., Ruiz, J., and Ueltschi, D.}
\newblock On a model of random cycles.
\newblock {\em J. Stat. Phys. 129}, 4 (2007), 663--676.

\bibitem{GP}
{\sc Gladkich, A., and Peled, R.}
\newblock On the cycle structure of {M}allows permutations.
\newblock {\em Ann. Probab. 46}, 2 (2018), 1114--1169.

\bibitem{Kin}
{\sc Kingman, J. F.~C.}
\newblock The population structure associated with the {E}wens sampling formula.
\newblock {\em Theoret. Population Biol. 11}, 2 (1977), 274--283.

\bibitem{Mal}
{\sc Mallows, C.~L.}
\newblock Non-null ranking models. {I}.
\newblock {\em Biometrika 44\/} (1957), 114--130.

\bibitem{MR2319879}
{\sc Massart, P.}
\newblock {\em Concentration inequalities and model selection}, vol.~1896 of {\em Lecture Notes in Mathematics}.
\newblock Springer, Berlin, 2007.
\newblock Lectures from the 33rd Summer School on Probability Theory held in Saint-Flour, July 6--23, 2003, With a foreword by Jean Picard.

\bibitem{SL}
{\sc Shepp, L.~A., and Lloyd, S.~P.}
\newblock Ordered cycle lengths in a random permutation.
\newblock {\em Trans. Amer. Math. Soc. 121\/} (1966), 340--357.

\bibitem{Zho1}
{\sc Zhong, C.}
\newblock Mallows permutation models with {$L^1$} and {$L^2$} distances: hit and run algorithms and mixing times.
\newblock {\em arXiv preprint arXiv:2112.13456\/} (2021).

\bibitem{Zhong2}
{\sc Zhong, C.}
\newblock {\em Mallows permutation model: Sampling algorithms and probabilistic properties}.
\newblock PhD thesis, Stanford University, 2022.

\end{thebibliography}
\end{document}